\documentclass[11pt]{amsart}
\usepackage{amssymb, amsmath, amsfonts, amsthm, graphics,xcolor}
\usepackage{MnSymbol}
\usepackage[hmargin=1 in, vmargin = 1 in]{geometry}
\usepackage{youngtab} % This is the package I use to draw Young Tableaux. Check out http://www.ctex.org/documents/packages/math/youngtab.pdf for documentation.
\usepackage[all]{xy}

\usepackage{caption}
\usepackage{subcaption}

% When you first define a new word, use this macro to make it stand out
% EG We say that an abelian group $I$ is \newword{injective} if, for any
% injection $G \to H$, and any map $G \to I$, there is a map $H \to I$ making the
% obvious diagram commute.
\newcommand{\newword}[1]{\textbf{\emph{#1}}}

%tikz, associated packages and short macros
\usepackage{tikz}
\usetikzlibrary{arrows,decorations.pathmorphing,decorations.markings,backgrounds,positioning,fit,patterns,calc}
\tikzstyle{mutable}=[inner sep=0.5mm,circle,draw,minimum size=2mm]
\tikzstyle{frozen}=[inner sep=.9mm,rectangle,draw]
\tikzstyle{dot} = [fill=black!25,inner sep=0.5mm,circle,draw,minimum size=1mm]
\tikzstyle{blue dot} = [draw=blue,fill=blue!25,inner sep=0.5mm,circle,draw,minimum size=1mm]
\tikzstyle{marked}=[inner sep=0.5mm,circle,draw,blue!75!black,fill=blue!50]
\tikzstyle{outline}=[thick,line width=1.5mm,draw=black!10]
\tikzstyle{oriented}=[draw=red,thick,decoration={markings,mark=at position 0.52 with {\arrow{>}}},postaction={decorate}]
\tikzstyle{antioriented}=[draw=red,thick,decoration={markings,mark=at position 0.52 with {\arrow{<}}},postaction={decorate}]
\tikzstyle{faded oriented}=[draw=black!25,thick,decoration={markings,mark=at position 0.52 with {\arrow{>}}},postaction={decorate}]\tikzstyle{invisible}=[inner sep=-.3, minimum size=-.3]
\tikzstyle{matching}=[line width=1.5pt,blue]

%Arrows

%This code copied from http://tex.stackexchange.com/questions/152485/long-dashed-right-arrow; I don't understand it.
\makeatletter
\newcommand{\xleftrightarrow}[2][]{\ext@arrow 3359\leftrightarrowfill@{#1}{#2}}
\newcommand{\xdashrightarrow}[2][]{\ext@arrow 0359\rightarrowfill@@{#1}{#2}}
\newcommand{\xdashleftarrow}[2][]{\ext@arrow 3095\leftarrowfill@@{#1}{#2}}
\newcommand{\xdashleftrightarrow}[2][]{\ext@arrow 3359\leftrightarrowfill@@{#1}{#2}}
\def\rightarrowfill@@{\arrowfill@@\relax\relbar\rightarrow}
\def\leftarrowfill@@{\arrowfill@@\leftarrow\relbar\relax}
\def\leftrightarrowfill@@{\arrowfill@@\leftarrow\relbar\rightarrow}
\def\arrowfill@@#1#2#3#4{%
  $\m@th\thickmuskip0mu\medmuskip\thickmuskip\thinmuskip\thickmuskip
   \relax#4#1
   \xleaders\hbox{$#4#2$}\hfill
   #3$%
}
\makeatother

%Things LaTeX names by appearance, rather than meaning
% By now, I've learned the standard LaTeX names, but I remember they used to give me trouble, so here are some macros
 %The isomorphism symbol

%Some multiletter functions

%Their calligraphic versions; use these for the sheaf constructions

%This makes alternating tensors look right in displayed equations

%Blackboard bold letters.

\newcommand{\CC}{\mathbb{C}}
\newcommand{\DD}{\mathbb{D}}

\newcommand{\FF}{\mathbb{F}}
\newcommand{\GG}{\mathbb{G}}

\newcommand{\MM}{\mathbb{M}}

\newcommand{\RR}{\mathbb{R}}

\newcommand{\ZZ}{\mathbb{Z}}

%Calligraphic letters

\newcommand{\cI}{\mathcal{I}}

\newcommand{\cM}{\mathcal{M}}

\newcommand{\tPi}{\widetilde{\Pi}}
\newcommand{\tPio}{\tPi^{\circ}}
\newcommand{\Pio}{\Pi^{\circ}}
\newcommand{\tDD}{\widetilde{\DD}}
\newcommand{\Mat}{\mathrm{Mat}}
\newcommand{\Mato}{\Mat^{\circ}}

%Notation I can't decide on

\newcommand{\veccI}{\vec{\cI}}%A (forward) Grassman necklace
\newcommand{\vecI}{\vec{I}}%An element of a Grassman necklace
\newcommand{\cevcI}{\cev{\cI}}%A reverse Grassman necklace
\newcommand{\cevI}{\cev{I}}%An element of a reverse Grassman necklace
\newcommand{\sI}{\overset{\bullet \rightarrow}{I}}%the source-indexed label of a face
\newcommand{\tI}{\overset{\bullet \leftarrow}{I}}%the target-indexed label of a face
\newcommand{\sF}{\overset{\bullet \rightarrow}{\FF}}%the ensemble of source-indexed face Pluckers
\newcommand{\tF}{\overset{\bullet \leftarrow}{\FF}}%the ensemble of target-indexed face Pluckers
\newcommand{\vecM}{\overrightarrow{M}}%the matching from downstream wedges
\newcommand{\cevM}{\overleftarrow{M}}%the matchings from upstream wedges
\newcommand{\rM}{\overrightarrow{\MM}}% The monomial map G^E/G^{V-1} ---> G^F from downstream wedge matchings
\newcommand{\lM}{\overleftarrow{\MM}}% The monomial map G^E/G^{V-1} ---> G^F from upstream wedge matchings
\newcommand{\rpartial}{\overrightarrow{\partial}} % The inverse to \lM
\newcommand{\lpartial}{\overleftarrow{\partial}} % The inverse to \rM
\newcommand{\Monom}{X} % The 2x2 matrix (1 \cup E) \times (F \cup V).
\newcommand{\rt}{\vec{\tau}}%the (right) twist of a matrix, principal form or subspace
\newcommand{\lt}{\cev{\tau}}%the left twist of a matrix, principal form or subspace
\newcommand{\impto}{\Rightarrow}
\newcommand{\impfrom}{\Leftarrow}

%Greg's macros
\usepackage{graphicx}
\newcommand{\gmat}[2][ccccccccccccccccccccccccccccccccc]{\left(\begin{array}{#1} #2\\ \end{array}\right)}

\newcommand{\garrow}[1]{\stackrel{#1}{\longrightarrow}}

\newcommand{\cev}[1]{\reflectbox{\ensuremath{\vec{\reflectbox{\ensuremath{#1}}}}}}

\theoremstyle{plain}
\newtheorem{theorem}{Theorem}

\newtheorem{thm}[theorem]{Theorem}
\newtheorem{Theorem}[theorem]{Theorem}
\newtheorem{Proposition}[theorem]{Proposition}

\newtheorem{prop}[theorem]{Proposition}

\newtheorem{corollary}[theorem]{Corollary}
\newtheorem{cor}[theorem]{Corollary}

\newtheorem{lemma}[theorem]{Lemma}
\newtheorem{lem}[theorem]{Lemma}
\newtheorem*{theoremMT}{Theorem~\ref{main theorem}}
\newtheorem*{TheoremPluck}{Theorem~\ref{theorem obey Plucker relations}}
\newtheorem*{PropInverseTorii}{Corollary~\ref{prop: isotori}}
\newtheorem*{PropInCellOpen}{Propositions~\ref{in open cell} and~\ref{inversion}}
\newtheorem*{TheoremInverseTwists}{Theorem~\ref{thm: inversetwists}}
\newtheorem*{CorollaryTwistQuotient}{Corollary~\ref{twist quotient}}

%Command for computing hexagonal coordinates
\newcommand{\hexcoor}[3]{($ #1*(0,1) + #2*(.866,-.5) + #3*(-.866,-.5)$)}

\theoremstyle{remark}

\newtheorem{Corollary/Definition}[theorem]{Corollary/Definition}

%New environment to end examples with symbol
%\newenvironment{myexample}{\begin{example}\renewcommand*{\qedsymbol}{\(\diamondsuit\)}}{\qed\end{example}}
\newenvironment{myexample}{\refstepcounter{theorem}\begin{proof}[Example \emph{\thetheorem}]}{\end{proof}}
\newenvironment{remark}{\refstepcounter{theorem}\begin{proof}[Remark \emph{\thetheorem}]}{\end{proof}}

\numberwithin{theorem}{section}

\title{The twist for positroid varieties}
\author{Greg Muller and David E Speyer}

\begin{document}
% 
% 
%\begin{abstract}
%There are two reasonable ways to put a cluster structure on a positroid variety. In one, the initial seed is a set of Pl\"ucker coordinates. In the other, the initial seed consists of certain monomials in the edge weights of a planar bipartite graph. We will describe an automorphism of the positroid variety, the \preword{twist}, which takes one to the other.
%For the big positroid cell, this was already done by Marsh and Scott~\cite{MS16}; we generalize their results to all positroid varieties. 
%This also provides an inversion of the boundary measurement map which is more general than Talaska's~\cite{Tal11}, in that it works for all reduced bipartite graphs rather than just \raisebox{0.8 em}{\rotatebox{180}{$\Gamma$}} diagrams. 
%This is the analogue for positroid varieties of the twist map of Berenstein,  Fomin and Zelevinsky for double Bruhat cells.
%\end{abstract}

\begin{abstract}
The purpose of this note is to connect two maps related to certain graphs embedded in the disc. The first is Postnikov's \emph{boundary measurement map}, which combines partition functions of matchings in the graph into a map from an algebraic torus to an \emph{open positroid variety} in a Grassmannian. The second is a rational map from the open positroid variety to an algebraic torus, given by certain Pl\"ucker coordinates which are expected to be a cluster in a \emph{cluster structure}.

This paper clarifies the relationship between these two maps, which has been ambiguous since they were introduced by Postnikov in 2001. The missing ingredient supplied by this paper is a \emph{twist} automorphism of the open positroid variety, which takes the target of the boundary measurement map to the domain of the (conjectural) cluster. Among other applications, this provides an inverse to the boundary measurement map, as well as Laurent formulas for twists of Pl\"ucker coordinates.

%For the big open positroid variety in a Grassmannian, this was already done by Marsh and Scott~\cite{MS16}; we generalize their results to all positroid varieties. This also provides an inversion of the boundary measurement map which is more general than Talaska's~\cite{Tal11}, in that it works for all reduced bipartite graphs rather than just \raisebox{0.8 em}{\rotatebox{180}{$\Gamma$}} diagrams. 
%This is the analogue for positroid varieties of the twist map of Berenstein,  Fomin and Zelevinsky for double Bruhat cells.
\end{abstract}

%Those familiar with Posnikov's manuscript~\cite{Pos} will be aware of the unwritten section ``Chamber Ansatz"; we believe that we have recovered most of the results which would have appeared there.

\maketitle

%\tableofcontents

\section{Introduction and survey of results}~\label{Introduction}

%The Grassmannian of $k$-planes in $\mathbb{C}^n$ admits a stratification into \emph{positroid varieties}, analogous to the double Bruhat stratification of a semisimple Lie group.  In \cite{Pos}, Postnikov used certain bipartite graphs in the disc (\emph{reduced graphs}) to parametrize the `totally positive' part of each positroid variety, by means of a \emph{boundary measurement map}. Simultaneously, Scott's thesis \cite{Sco} demonstrated how the same reduced graphs define special collections of homogeneous coordinates on the positroid varieties, which form a `cluster' in a conjectural \emph{cluster structure} on the positroid variety.

In Section~\ref{sec informal}, we will provide an overview of our results. In Sections~\ref{sec positroid background} through~\ref{sec twist intro}, we will state the necessary definitions as rapidly as possible to give a full statement of our main results in Section~\ref{sec main theorem intro}. These definitions will reappear later with more detail, motivation and context.
In Section~\ref{sec outline}, the reader can find an outline of the rest of the paper.

\subsection{Informal summary} \label{sec informal}

The Grassmannian of $k$-planes in $\mathbb{C}^n$ admits a decomposition into \emph{open positroid varieties} $\Pio(\cM)$, analogous to the decomposition of a semisimple Lie group into double Bruhat cells~\cite{FZ99}.  Postnikov~\cite{Pos} showed that an appropriate choice of \emph{reduced graph} $G$ defines a \emph{boundary measurement map}
\[ (\CC^\times)^{\text{Edges}(G)}/\text{Gauge} \longrightarrow \Pio(\cM)\]
Among other properties, this map can be used to parametrize the `totally positive part' of $\Pio(\cM)$.

Scott~\cite{Sco06} gave a combinatorial recipe which assigns, to each face of the reduced graph, a homogenous coordinate on $\Pio(\cM)$. 
Scott works only with the largest positroid, so that $\Pio(\cM)$ is a dense open subset of $Gr(k,n)$, but her recipe makes sense for any positroid.
 These homogeneous coordinates collectively define a rational coordinate chart, the \emph{face Pl\"ucker map}:
\[ \Pio(\cM) \xdashrightarrow{\qquad} \CC^{\text{Faces(G)}}/\text{Scaling} \]
Despite the fact that these two maps are both defined by the same combinatorial input (a choice of reduced graph), the relation between them has been elusive.

Moreover, the results of Postnikov and Scott are weaker than we have stated in the two proceeding paragraphs. Postnikov only shows that the boundary measurement map exists as a rational map, which is well defined on $(\RR_{>0})^{\text{Edges}(G)}/\text{Gauge}$. Scott only studies the case of the largest positroid; when one turns to other positroids, it is not clear that the coordinates of the face Pl\"ucker map generate the function field of $\Pio(\cM)$. %, or even that they are nonzero, as one would hope for rational coordinates! 
%(It is clear that $\dim \Pio(\cM) = \dim  \CC^{\text{Faces(G)}}/\text{Scaling} $.) 
In fairness, at the time Postnikov and Scott were working, the algebraic structure on $\Pio(\cM)$ had not been defined, so these questions would have been difficult to formulate.\footnote{Postnikov's manuscript~\cite{Pos} was in private circulation since at least 2001, was placed on the arXiv in 2006, and is yet unpublished. Scott's result was first presented in her dissertation in 2001~\cite{ScoThesis}, and was placed on the arXiv as a separate paper in 2003~\cite{Sco06} (publication date 2006). At the time, positroid cells were defined only as real semi-algebraic sets. Knutson, Lam and Speyer identified the corresponding complex varieties in work that appeared on the arXiv in 2009~\cite{KLS09} and in improved form in 2011~\cite{KLS13} (publication date 2013). The varieties in question had been studied earlier by Lusztig~\cite{Lus98}, Rietsch~\cite{Rie06} and others, but the connection to Postnikov's theory was not made in that earlier work. %CHECK ALL DATES! ADD CITATIONS!}
}
However, now that we have such algebraic structures, these omissions form a major gap in our understanding.

In this paper, we relate the two maps by introducing a \emph{twist automorphism} $\rt$ of each open positroid variety.  The main theorem of this paper then states that the composition
\[ (\mathbb{C}^\times)^{\text{Edges}(G)}/\text{Gauge} \longrightarrow \Pio(\cM) \stackrel{\rt}{\longrightarrow} \Pio(\cM) \xdashrightarrow{\qquad} \mathbb{C}^{\text{Faces}(G)}/\text{Scaling} \]
is an isomorphism of algebraic tori. Each coordinate is given by a monomial which is defined by a distinguished \emph{matching} on $G$.  %Furthermore, the composite map of tori is an isomorphism.

As a consequence, we deduce that the boundary measurement map is a well defined inclusion from $(\mathbb{C}^\times)^{\text{Edges}(G)}/\text{Gauge}$ to $\Pio(\cM)$. We also learn that the face Pl\"ucker map is well defined on an open torus, and gives rational coordinates on $\Pio(\cM)$.
Thus, we show that the statements of the first two paragraphs are correct after all.
Furthermore, we obtain explicit birational inverses to these maps. 

\subsection{Earlier work}
The most important precedent for our work is that of Marsh and Scott~\cite{MS16}. 
They construct a twist map\footnote{Their twist map differs from ours by a rescaling of the columns; see Remark \ref{rem: MStwist}.} for the largest positroid variety in a Grassmannian, although they only give explicit formulas for the composite map above when $G$ is a certain standard reduced graph known as a Le diagram. 

Talaska~\cite{Tal11} provided a birational inverse to the boundary measurement map for any positroid when $G$ is a Le-diagram; her inverse was not formulated in terms of a twist map and seems unlikely to generalize to other reduced graphs.

A double wiring diagram for a type $A$ double Bruhat cell can be converted to a reduced graph for a corresponding positroid variety. In this setting, the twist map was defined by Berenstein, Fomin and Zelevinsky \cite{BFZ96}, and it was proved that an analogous composite map is an isomorphism of tori  (see Appendix \ref{app: DBC}).
%We will discuss these precedents more fully in section~CITATION, once we have given all necessary definitions.

Our result combines and generalizes the above results, to a setting that works for all positroid varieties and all reduced graphs.
%Our result build on the above results, but are stronger than any of them, because they work for all positroid varieties and all reduced plabic graphs, which none of the above results do. 
We also hope that the unified presentation in this paper clarifies the nature of the previous results.

The authors have had many productive conversations with all the above named mathematicians, and are very grateful to them for their generous assistance.
%I (David) have had valuable conversations with all of them, so I think we can write the above sentence even if Greg hasn't. 

\subsection{Notations}
We use the following standard notations for combinatorial sets:
\[ [n] := \{1,2,...,n\} \]
\[ \binom{[n]}{k} := \{ I \subset [n] \mid |I| =k\}, \text{ the set of $k$-element subsets of $[n]$.} \]

We will write $\mathbb{G}_m$ for the nonzero complex numbers, considered as an abelian group.  
For any finite set $X$, we write $\CC^X$ for the $\CC$-vector space with basis labeled by $X$, and write $\RR^X$ and $\GG_m^X$ similarly.
We write $Gr(k,n)$ for the Grassmannian of $k$-planes in $\CC^n$.
%For $V$ a $\CC$-vector space, $\PP(V)$ is the projective space of lines in $V$.

For a $k\times n$ matrix $A$ and $a\in [n]$, define
\[ A_a := \text{the $a$th column of $A$} \]
Given a $k$-element set $I \subset [n]$, write it as $I=\{i_1<i_2<\cdots < i_k\}$ and define the \newword{$I$th maximal minor} of $A$ by
\[ \Delta_I(A) := \det(A_{i_1},A_{i_2},...,A_{i_k})\]
that is, the determinant of the matrix with columns $A_{i_1},A_{i_2},...,A_{i_k}$.

\subsection{Positroids and positroid varieties} \label{sec positroid background}

The definitions in this section can all be found in Knutson, Lam and Speyer~\cite{KLS13}, and are due either to those authors or to Postnikov~\cite{Pos}.
See Section~\ref{sec positroid} for many alternative formulations of these definitions.

Given a $k$-dimensional subspace $V\subset \CC^n$, the corresponding \emph{matroid} is the collection of $k$-element subsets\footnote{Throughout, a matroid is a collection of  `bases', rather than `independent sets' or other conventions.}
\[ \cM = \{ I\subset [n] \mid \text{the projection $\CC^n \rightarrow \CC^I$ restricts to an isomorphism $V\stackrel{\sim}{\longrightarrow} \CC^I$} \} \]
The Grassmannian $Gr(k,n)$ can then be decomposed into pieces, each parametrizing those subspaces with a fixed matroid.  Unfortunately, this decomposition is incredibly poorly-behaved; its many transgressions are explored elsewhere~\cite{Mnev88},  \cite{Stu87}, \cite{GGMS87}.
We focus on a related decomposition of $Gr(k,n)$ which is much nicer.  

\newword{Positroids} are a special class of matroid with many equivalent characterizations.  The shortest definition~\cite{Pos} is that a positroid is a matroid $\cM$ with a `totally non-negative' representation.  
That is, it is the matroid of the columns of a real matrix whose maximal minors are non-negative real numbers.
%That is, there exists a (non-unique) subspace $\RR^k\hookrightarrow \RR^n$ with matroid $\cM$, such that, for each $k$-element $I\subset [n]$, the determinant of the projection
%$ \RR^k \hookrightarrow \RR^n\rightarrow \RR^I $
%is a non-negative real number.
Every matroid $\cM$ has a \newword{positroid envelope}; the unique smallest positroid containing $\cM$~\cite[Section 3]{KLS13}. %I feel a little guilty about this citation, since it states this result without giving a full proof, but it does outline the key points. 

Given a positroid $\cM$, the \newword{(open) positroid variety} $\Pio(\cM)$ is the subvariety of $Gr(k,n)$ parametrizing those subspaces whose matroid has positroid envelope $\cM$. 
We obtain a stratification
\[ Gr(k,n) = \bigsqcup_{\stackrel{\text{positroids $\cM$}}{\text{of rank $k$ on $[n]$}}} \Pio(\cM) \]
which groups together matroid strata with the same positroid envelope.
This decomposition of $Gr(k,n)$ arises naturally from several different perspectives and the positroid varieties avoid many of the pathologies exhibited by the matroid strata.

While the Grassmannian and its decomposition are the intrinsically interesting objects, the results of this paper will be most easily stated on the affine cone $\widetilde{Gr(k,n)}$ over the Pl\"ucker embedding of the Grassmannian. 
Denote by $\tPio(\cM)$ the lift of a positroid variety $\Pio(\cM)$ to $\widetilde{Gr(k,n)} \setminus \{ 0 \}$.
%Each $\tPio(\cM)$ is an affine variety whose coordinate ring is generated by \emph{Pl\"ucker coordinates}.

We write $\tPi(\cM)$ (respectively $\Pi(\cM)$) for the closure of $\tPio(\cM)$ in $\CC^{\binom{[n]}{k}}$ (respectively, the closure of $\Pio(\cM)$ in $Gr(k,n)$).\footnote{We systematically use the following notational conventions: For some sort of algebraic object $X$, a notation like $X^{\circ}$ will always denote an open dense subvariety of $X$ and $\widetilde{X}$ will always denote something like a cone over $X$. So $\tPio(\cM)$ is a torus bundle over $\Pio(\cM)$, and is open and dense in $\tPi(\cM)$. Similarly, $\Pio(\cM)$ is open and dense in $\Pi(\cM)$, while $\tPi(\cM)$ is the affine cone over the Pl\"ucker embedding of $\Pi(\cM)$.}
The origin of $\CC^{\binom{[n]}{k}}$ is in every $\tPi(\cM)$ and in no $\tPio(\cM)$.

\subsection{The boundary measurement map}

Let $G$ be a graph embedded in a disc, with a 2-coloring of its internal vertices as either black or white (e.g. Figure \ref{fig: intrograph}).  For this introduction, we assume that each boundary vertex is adjacent to one white vertex and no other vertices.  Let $n$ denote the number of boundary vertices, and index the boundary vertices by $1,2,...,n$ in a clockwise order.  %Let $m$ denote the number of black vertices, and $m+k$ be the number of white vertices.

\begin{figure}[h!t]
\centering
\begin{subfigure}[b]{.45\textwidth}
\centering
\begin{tikzpicture}[baseline=(current bounding box.center)]
\begin{scope}[scale=.55]
		\path[use as  bounding box] (-4.5,-4.5) rectangle (4.5,4.5);
		\draw[fill=black!10] (0,0) circle (4);
		\node[invisible] (1) at (180:4) {};
		\node[invisible] (2) at (120:4) {};
		\node[invisible] (3) at (60:4) {};
		\node[invisible] (4) at (0:4) {};
		\node[invisible] (5) at (-60:4) {};
		\node[invisible] (6) at (-120:4) {};
		
		\node[dot, fill=white] (a) at (-2.75,.25) {};
		\node[dot, fill=black!50] (b) at (-1.5,2) {};
		\node[dot, fill=white] (c) at (1.5,2) {};
		\node[dot, fill=black!50] (d) at (2.75,.25) {};
		\node[dot, fill=black!50] (e) at (-1.5,-.8) {};
		\node[dot, fill=white] (f) at (-.45,.75) {};
		\node[dot, fill=black!50] (g) at (.45,.75) {};
		\node[dot, fill=white] (h) at (1.5,-.8) {};
		\node[dot, fill=black!50] (i) at (.5,-2) {};
		\node[dot, fill=white] (j) at (-.5,-2) {};
		
		\node[dot,fill=white] (b') at (-1.75,2.75) {};
		\node[dot,fill=white] (d') at (3.4,.125) {};
		\node[dot,fill=white] (i') at (1.25,-2.75) {};
		
		\draw (1) to (a) to (b) to (b') to (2);
		\draw (b) to (f) to (g) to (c) to (3);
		\draw (c) to (d) to (d') to (4);
		\draw (d) to (h) to (i) to (i') to (5);
		\draw (i) to (j) to (6);
		\draw (j) to (e) to (a);
		\draw (e) to (f);
		\draw (g) to (h);	
\end{scope}
\end{tikzpicture}
\caption{A 2-colored graph embedded in the disc.}
\label{fig: intrograph}
\end{subfigure}
\begin{subfigure}[b]{.45\textwidth}
\centering
\begin{tikzpicture}[baseline=(current bounding box.center)]
\begin{scope}[xshift=3in,scale=.55]
		\path[use as  bounding box] (-4.5,-4.5) rectangle (4.5,4.5);
		\draw[fill=black!10] (0,0) circle (4);
		\node[invisible] (1) at (180:4) {};
		\node[invisible] (2) at (120:4) {};
		\node[invisible] (3) at (60:4) {};
		\node[invisible] (4) at (0:4) {};
		\node[invisible] (5) at (-60:4) {};
		\node[invisible] (6) at (-120:4) {};
		
		\node[left] at (1) {$2$};
		\node[above left] at (2) {$3$};
		\node[above right] at (3) {$4$};
		\node[right] at (4) {$5$};
		\node[below right] at (5) {$6$};
		\node[below left] at (6) {$1$};
%		\node[left] at (1) {$\ell_5$};
%		\node[above left] at (2) {$\ell_6$};
%		\node[above right] at (3) {$\ell_1$};
%		\node[right] at (4) {$\ell_2$};
%		\node[below right] at (5) {$\ell_3$};
%		\node[below left] at (6) {$\ell_4$};
		
		\node[dot, fill=white] (a) at (-2.75,.25) {};
		\node[dot, fill=black!50] (b) at (-1.5,2) {};
		\node[dot, fill=white] (c) at (1.5,2) {};
		\node[dot, fill=black!50] (d) at (2.75,.25) {};
		\node[dot, fill=black!50] (e) at (-1.5,-.8) {};
		\node[dot, fill=white] (f) at (-.45,.75) {};
		\node[dot, fill=black!50] (g) at (.45,.75) {};
		\node[dot, fill=white] (h) at (1.5,-.8) {};
		\node[dot, fill=black!50] (i) at (.5,-2) {};
		\node[dot, fill=white] (j) at (-.5,-2) {};
		
		\node[dot,fill=white] (b') at (-1.75,2.75) {};
		\node[dot,fill=white] (d') at (3.4,.125) {};
		\node[dot,fill=white] (i') at (1.25,-2.75) {};
		
		\draw (1) to (a);
		\draw[matching] (a) to (b);
		\draw (b) to (b');
		\draw[matching] (b') to (2);
		\draw (b) to (f);
		\draw[matching] (f) to (g);
		\draw (g) to (c) to (3);
		\draw[matching] (c) to (d);
		\draw (d) to (d');
		\draw[matching] (d') to (4);
		\draw (d) to (h);
		\draw[matching] (h) to (i);
		\draw (i) to (i');
		\draw[matching] (i') to (5);
		\draw (i) to (j);
		\draw (j) to (6);
		\draw[matching] (j) to (e);
		\draw (e) to (a);
		\draw (e) to (f);
		\draw (g) to (h);	
\end{scope}
\end{tikzpicture}
\caption{A matching with boundary $356$.}
\label{fig: intromatching}
\end{subfigure}
\caption{A graph and a matching.}
\end{figure}
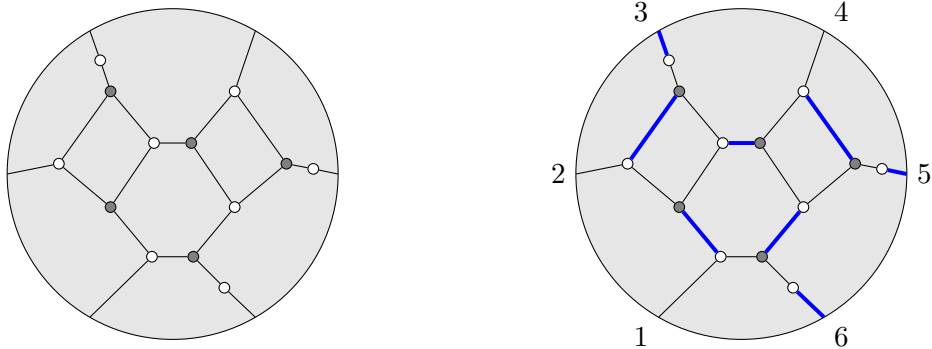
% THIS IMAGE LOOKS TERRIBLE IN BLACK AND WHITE

A \newword{matching} of $G$ is a collection of edges in $G$ which cover each internal vertex exactly once.  For a matching $M$, we let $\partial M$ denote the subset of the boundary vertices covered by $M$, which we identify with a subset of $[n]:=\{1,2,...,n\}$ (e.g. Figure \ref{fig: intromatching}).  That is,
\[ \partial M := \{ i : \text{vertex $i$ is covered by }M \} \subset [n] \]
The cardinality $k$ of $\partial M$ is constant for any matching of $G$, and given by
\[ k := (\#\text{ of white vertices}) - (\# \text{ of black vertices})\]
%The \newword{positroid} of $G$ is defined to be the set of subsets%\footnote{The fact that this is a `positroid' in the sense of Postnikov and Section \ref{???} is a non-trivial result (Theorem \ref{???}).}
As long as $G$ admits a matching, the graph $G$ determines a positroid (Theorem~\ref{theorem is a positroid}) defined as
\[ \cM := \{ I\subset [n] \mid \text{there exists a matching $M$ with $\partial M=I$}\} \]
%Theorem \ref{???} states that this is a positroid, and every positroid can be realized by some graph.
%The graph $G$ is \newword{reduced} if the number of faces of $G$ is minimal among all graphs with the same positroid as $G$.
%The \newword{positroids} on $[n]$ are the sets of subsets of $[n]$ which are the positroid of some graph with $n$-many boundary vertices.
%The \newword{positroids} of rank $k$ on $[n]$ are the sets of subsets of $[n]$ which are the positroid of some graph with $n$-many boundary vertices and $k$-many more white vertices than black vertices.%\footnote{See Section \ref{section: positroid} for equivalent characterizations of positroids, and Theorem \ref{???} for the equivalence to this definition.}
A \newword{reduced graph} is a graph $G$ as defined above, such that the number of \emph{faces} of $G$ (that is, components of the complement) is minimal among all graphs with the same positroid as $G$.%\footnote{A more effective criterion for reducedness is given in Theorem \ref{theorem face count reduced}.}
%A graph $G$ as above is \newword{reduced} if the number of \emph{faces} of $G$ (that is, components of the complement in the disc) is minimal among all graphs with the same positroid as $G$.\footnote{A more effective criterion for reducedness is given in Theorem \ref{theorem face count reduced}.}

%\begin{Remark}
%Postnikov's original definition of `positroid' is the column matroid of a matrix with non-negative maximal minors; see Section \ref{section: positroid} (which has other equivalent definitions).  The equivalence to the definition used here is Theorem \ref{???}.
%\end{Remark}

%\subsection{The boundary measurement map}

The matchings of $G$ with a fixed boundary may be collected into a \newword{partition function} as follows.  Let $\{z_e\}$ be a set of formal variables indexed by edges $e$ of $G$.  For a matching $M$ of $G$, define $z^M:= \prod_{e\in M} z_e$, and for a $k$-element subset $I$ of $[n]$, define
\[ D_I := \sum_{\stackrel{\text{matchings }M }{\text{with } \partial M=I}} z^M \]
%
%The partition functions $\{D(I)\}_{I\subset [n]}$ are not algebraically independent, as we shall soon see.
Plugging complex numbers into the formal variables realizes $D_I$ as a regular function $\CC^E\rightarrow \CC$, where $E$ denotes the set of edges of $G$.  Running over all $k$-elements subsets of $[n]$, the partition functions define a regular map
\[ \CC^E\longrightarrow \CC^{\binom{[n]}{k}} \]
The partition functions are not algebraically independent, so this map lands in a subvariety.
\begin{TheoremPluck}
For any graph $G$ as above, the partition functions satisfy the Pl\"ucker relations.  Therefore, the map $\CC^E\rightarrow \CC^{\binom{[n]}{k}}$ with coordinates $\{D_I\}$
has image contained $\widetilde{Gr(k,n)}\subset \CC^{\binom{[n]}{k}}$.\footnote{Throughout this introduction, results which are proved later in the paper are numbered according to where their proofs can be found.}
\end{TheoremPluck}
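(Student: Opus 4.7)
The plan is to reduce to the three-term Pl\"ucker relations
\[ D_{I\cup\{a,c\}} D_{I\cup\{b,d\}} = D_{I\cup\{a,b\}} D_{I\cup\{c,d\}} + D_{I\cup\{a,d\}} D_{I\cup\{b,c\}} \]
for every $(k-2)$-subset $I \subset [n]$ and every $a < b < c < d$ in $[n]\setminus I$, and then invoke the classical fact that these relations generate the full Pl\"ucker ideal cutting out $\widetilde{Gr(k,n)} \subset \CC^{\binom{[n]}{k}}$. Each three-term relation I would prove by a weight-preserving bijection on pairs of matchings, in the spirit of Kuo's graphical condensation.

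The central gadget is the overlay multigraph $M_1 \cup M_2$ of a pair of matchings. Each internal vertex of $G$ is covered exactly once by each matching, so the overlay has degree two there; a boundary vertex has degree zero, one, or two according to how many of the two matchings cover it. Hence the overlay decomposes into a disjoint union of cycles (with doubled edges counted as degenerate 2-cycles) and paths, whose endpoints are exactly the vertices of $\partial M_1 \triangle \partial M_2$. Along any single path $P$ the edges strictly alternate between $M_1$ and $M_2$, so \emph{swapping} which of these edges belong to which matching yields a new pair of matchings of the same monomial weight $z^{M_1} z^{M_2}$, differing from the old pair only in that the two endpoints of $P$ have been exchanged between $\partial M_1$ and $\partial M_2$.

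For the relation above, $\partial M_1 \triangle \partial M_2 = \{a,b,c,d\}$. Because $G$ is bipartite with all boundary vertices lying in a single color class (they are adjacent only to white vertices), every path between two boundary vertices has even length, and a brief alternation check shows each such path connects a vertex of $\partial M_1$ to a vertex of $\partial M_2$. Since $G$ is embedded in the disc and $a,b,c,d$ lie in this cyclic order on its boundary, the pairing of $\{a,b,c,d\}$ induced by paths must be non-crossing. I would then define the involution $\phi$ which swaps matching labels along the unique overlay path through $a$. A case analysis by non-crossing pairing shows $\phi$ gives a bijection between pairs of type $(I\cup\{a,c\},\, I\cup\{b,d\})$ and the disjoint union of pairs of types $(I\cup\{b,c\},\, I\cup\{a,d\})$ and $(I\cup\{c,d\},\, I\cup\{a,b\})$; summing the common monomial weights yields the identity.

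The principal obstacle will be the planarity bookkeeping: from the LHS type, both non-crossing pairings $\{a,b\}\{c,d\}$ and $\{a,d\}\{b,c\}$ are a priori feasible, and one must check that $\phi$ distributes them correctly, whereas from each RHS type only one non-crossing pairing is compatible with the given $(\partial M_1, \partial M_2)$, so $\phi$ is a well-defined exhaustive involution with no missed or duplicated pairs. A secondary technical point is that the cycles (and doubled edges) of the overlay are untouched by $\phi$ since it acts only on the path through $a$, so they introduce no obstruction; likewise the bipartiteness/parity step must be invoked to rule out the forbidden crossing pairing $\{a,c\}\{b,d\}$ arising in any of the three types.
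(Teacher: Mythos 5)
Your bijective argument (path-swapping in the overlay $M_1\cup M_2$, bipartite parity forcing each path to connect a $\partial M_1$-vertex to a $\partial M_2$-vertex, non-crossingness of the pairing) is a correct rederivation of Kuo's graphical condensation, and it does establish the three-term Pl\"ucker relations for the partition functions. The difficulty is the step you call classical: the assertion that the three-term Pl\"ucker relations generate the full Pl\"ucker ideal cutting out $\widetilde{Gr(k,n)}\subset \CC^{\binom{[n]}{k}}$ is \emph{false} as soon as $k\geq 3$ and $n\geq 2k$. The paper confronts this exact point in Remark~\ref{Kuo credit}: Kuo's result ``strongly suggests that the map $\tDD$ lands in $\widetilde{Gr(k,n)}$ but does not prove it, since the ideal of $\widetilde{Gr(k,n)}$ is not generated by the $3$-term Pl\"ucker relations.'' A concrete counterexample is the point of $\CC^{\binom{[6]}{3}}$ with $p_{123}=p_{456}=1$ and all other coordinates zero: no single three-term relation can involve both $\{1,2,3\}$ and $\{4,5,6\}$, since together they use all six indices while a three-term relation only involves five, so every three-term relation holds vacuously. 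Yet the $4$-term Pl\"ucker relation $\Delta_{123}\Delta_{456}-\Delta_{124}\Delta_{356}+\Delta_{125}\Delta_{346}-\Delta_{126}\Delta_{345}=0$ is violated, so the point is not on the Grassmannian. Thus your argument, as stated, does not close.

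To fix this you would need to establish the full family of quadratic Pl\"ucker relations, including the multi-term ones where the two index sets are disjoint; that requires a more elaborate combinatorial identity than a single path-swap (the overlay's symmetric difference then has $2(k+1-|I\cap J|)$ boundary endpoints and the bookkeeping of the signed sum over non-crossing pairings is substantially harder). The paper sidesteps Pl\"ucker relations altogether: its proof sketch cites Postnikov (loop-erased walks), Talaska (flows), Postnikov--Speyer--Williams (matchings), Lam, and Speyer for the fact that the partition functions $D_I$ are literally the maximal minors of an explicit $k\times n$ matrix built from the graph (cf.\ the running example matrix in the introduction), so that membership in $\widetilde{Gr(k,n)}$ holds by definition of the Pl\"ucker embedding without any appeal to Pl\"ucker relations. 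That ``realize as actual minors'' route, rather than ``verify relations,'' is the standard way this is proved and avoids the trap you fell into.
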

\noindent The correct attribution for this result is difficult, see the discussion near the proof.

This map is almost never injective because of the following \newword{gauge transformations}: if $v$ is an internal vertex of $G$, $(z_e)$ is a point of $\CC^E$, and $t$ is a nonzero complex number, then define a new point $(z_e')$ of $\CC^E$ by
\[ z'_e = \begin{cases} t z_e & v \in e \\ z_E & \mbox{otherwise} \end{cases}. \]
Since each matching of $G$ contains exactly one edge covering $v$, we know that $(z')^M=t(z^M)$ and that $D_I(z') = tD_I(z)$.

The gauge transformations can be encoded more elegantly as follows.
The group $\mathbb{G}_m^E$ acts on $\CC^E$ by scaling the individual coordinates; in this way, $\mathbb{G}_m^E$ may be identified with ways to assign a nonzero `weight' to each edge.  Letting $V$ denote the set of internal vertices of $G$, the action of $\mathbb{G}_m^V$ by gauge transformations is equivalent to a map of algebraic groups
\[ \mathbb{G}_m^V\longrightarrow \mathbb{G}_m^E\]
where the coordinate at each edge is the product of the coordinates at its endpoints.

%Partition functions are not invariant under gauge transformations; they scale by the product of the weights at each vertex.  

Before this paper, the following was known but not written explicitly; see Remark~\ref{Kuo credit}. Let $\mathbb{G}_m^{V-1}$ denote the subgroup of $\mathbb{G}_m^V$ such that the product of the coordinates is $1$; equivalently, this is the subgroup of the gauge group which leaves the partition functions invariant.
\begin{Proposition}
For a graph $G$ with positroid $\cM$, the map $\mathbb{G}_m^E\rightarrow \CC^{\binom{[n]}{k}}$ given in Pl\"ucker coordinates by the partition functions $D_I$ factors through $\mathbb{G}_m^E/\mathbb{G}_m^{V-1}$ and lands in $\tPi(\cM)$.
\end{Proposition}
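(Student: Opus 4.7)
My plan is to prove the two assertions separately: first the factoring through $\mathbb{G}_m^E/\mathbb{G}_m^{V-1}$, then the containment of the image in $\tilde{\Pi}(\cM)$.

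For the factoring claim I would use a direct monomial calculation. Let $\mathbf{t} = (t_v)_{v\in V} \in \mathbb{G}_m^V$ act on $\mathbb{G}_m^E$ via the gauge map $z_e \mapsto z_e \prod_{v \in e \cap V} t_v$. Because every matching $M$ of $G$ covers each internal vertex exactly once, the monomial $z^M = \prod_{e \in M} z_e$ transforms as $z^M \mapsto z^M \prod_{v\in V} t_v$; each internal $v$ contributes precisely one factor of $t_v$. This common scalar pulls out of the sum $D_I = \sum_{\partial M = I} z^M$, so every partition function scales by the single character $\chi(\mathbf{t}) = \prod_v t_v$. The kernel of $\chi$ is, by definition, $\mathbb{G}_m^{V-1}$, so each $D_I$ (and thus the entire map to $\mathbb{C}^{\binom{[n]}{k}}$) descends to the quotient $\mathbb{G}_m^E / \mathbb{G}_m^{V-1}$.

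For the image containment, Theorem~\ref{theorem obey Plucker relations} already places the image in $\widetilde{Gr(k,n)}$, so only the extra vanishings that cut $\tilde{\Pi}(\cM)$ out inside $\widetilde{Gr(k,n)}$ remain to be verified. My strategy is a Zariski-density argument leveraging Postnikov's original result for the totally positive part. Postnikov's theorem, applied to the positroid $\cM$ attached to $G$, states that on $\mathbb{R}_{>0}^E$ the boundary measurement map lands in the totally positive part of $\Pi^\circ(\cM)$, so on that real locus our map takes values in $\tilde{\Pi}^\circ(\cM) \subset \tilde{\Pi}(\cM)$. Since $\mathbb{R}_{>0}^E$ is Zariski-dense in the complex torus $\mathbb{G}_m^E$, and $\tilde{\Pi}(\cM)$ is Zariski-closed in $\widetilde{Gr(k,n)}$, the preimage of $\tilde{\Pi}(\cM)$ under our map is a Zariski-closed subset of $\mathbb{G}_m^E$ containing the dense subset $\mathbb{R}_{>0}^E$; hence it is all of $\mathbb{G}_m^E$.

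The main obstacle is really a bookkeeping one: confirming that the combinatorial positroid $\cM$ defined here (via boundaries of matchings, Theorem~\ref{theorem is a positroid}) is the same positroid that labels the positroid variety containing the image of the positive boundary measurement map. Once this identification is in place, both assertions reduce to the short arguments above, which is presumably why this result was folklore but unrecorded prior to this paper.
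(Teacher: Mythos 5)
Your proof of the factoring claim matches the paper's argument exactly: the gauge action scales every $D_I$ by $\prod_v t_v$, so the whole map is $\mathbb{G}_m^{V-1}$-invariant.

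For the image containment, you and the paper diverge. Your route is real-analytic in spirit: invoke Postnikov's theorem that the map on $\mathbb{R}_{>0}^E$ lands in the totally nonnegative part of $\Pi^\circ(\cM)$, then use Zariski density of $\mathbb{R}_{>0}^E$ in $\mathbb{G}_m^E$ together with closedness of $\tilde{\Pi}(\cM)$ in $\widetilde{Gr(k,n)}$. This is correct, but it imports more machinery than needed, and—as you yourself flag—it inherits the bookkeeping burden of matching your combinatorial $\cM$ (defined by boundaries of matchings) against the positroid in Postnikov's flow/loop-erased-walk framework. The paper takes a shorter, purely formal route: by \cite[Theorem 5.15]{KLS13}, $\tilde{\Pi}(\cM)$ is cut out of $\widetilde{Gr(k,n)}$ precisely by the vanishing of the Pl\"ucker coordinates $p_I$ with $I \notin \cM$; and by the very definition of the positroid $\cM$ of $G$, there are no matchings with boundary $I$ when $I\notin\cM$, so $D_I \equiv 0$ identically. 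Once Theorem~\ref{theorem obey Plucker relations} gives image in $\widetilde{Gr(k,n)}$, the extra containment is immediate and requires no positivity or density argument. What your approach buys in exchange is that it sidesteps the need to know the scheme-theoretic description of $\tilde{\Pi}(\cM)$ inside $\widetilde{Gr(k,n)}$; what it costs is the reliance on the positive boundary measurement map and the translation between matchings and flows, which the paper's combinatorial argument avoids entirely.
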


When $G$ is reduced, we sharpen this to the following.
\begin{PropInCellOpen}
For a reduced graph $G$ with positroid $\cM$, the map $\mathbb{G}_m^E\rightarrow \CC^{\binom{[n]}{k}}$ given in Pl\"ucker coordinates by the partition functions $D_I$ factors through $\mathbb{G}_m^E/\mathbb{G}_m^{V-1}$ and lands in $\tPio(\cM)$, giving an inclusion
\[ \tDD:\mathbb{G}_m^E/\mathbb{G}_m^{V-1} \longrightarrow \tPio(\cM)\]
The map $\tDD$ descend to a well-defined quotient inclusion
\[ \DD: \mathbb{G}_m^E/\mathbb{G}_m^V \longrightarrow \Pio(\cM)\]
%Moreover, the maps $\DD$ and $\tDD$ are open inclusions.
\end{PropInCellOpen}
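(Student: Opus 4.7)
The plan is to derive both propositions as corollaries of the main theorem, which asserts that the composition
\[ \mathbb{G}_m^E/\mathbb{G}_m^{V-1} \xrightarrow{\;\tDD\;} \tPio(\cM) \xrightarrow{\;\rt\;} \tPio(\cM) \xdashrightarrow{\quad} \text{a torus} \]
(with the dashed arrow being the face Pl\"ucker map) is an isomorphism of algebraic tori. The preceding proposition already supplies that $\tDD$ is a well-defined morphism $\mathbb{G}_m^E/\mathbb{G}_m^{V-1} \to \tPi(\cM)$ factoring through the gauge action, so the only work left is (i) to upgrade the image to $\tPio(\cM)$, and (ii) to deduce injectivity.

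For (i), the face Pl\"ucker map is only rational on $\tPi(\cM)$; it becomes a regular morphism into a torus precisely on the open cluster chart, which for a reduced graph is contained in $\tPio(\cM)$. Since the main theorem realizes the full composite as a torus isomorphism, the image of $\rt \circ \tDD$ must lie inside this open chart, hence inside $\tPio(\cM)$. Because $\rt$ is a regular automorphism of $\tPio(\cM)$, it follows that $\tDD$ itself lands in $\tPio(\cM)$.

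For (ii), if $\tDD(z) = \tDD(z')$, then the main theorem's composite sends $z$ and $z'$ to the same point of the target torus, and injectivity of an isomorphism forces $z = z'$; hence $\tDD$ is an inclusion. The descent to $\DD: \mathbb{G}_m^E/\mathbb{G}_m^V \to \Pio(\cM)$ is then formal: the quotient $\mathbb{G}_m \cong \mathbb{G}_m^V/\mathbb{G}_m^{V-1}$ acts on $\mathbb{G}_m^E$ by a uniform rescaling of all edge weights (since every matching uses exactly one edge at each internal vertex, a uniform gauge $t$ scales every $D_I$ by a common factor), and this matches exactly the scaling ambiguity in the passage from the affine cone $\tPio(\cM)$ to the projective $\Pio(\cM)$. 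Injectivity descends to $\DD$ because both $\mathbb{G}_m$-actions are free.

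The principal obstacle is of course the main theorem itself, on which everything rests. Without invoking it, a direct proof of (i) would require showing $D_I(z) \neq 0$ for every $z \in \mathbb{G}_m^E$ and every $I \in \cM$, a delicate problem because complex-weighted matchings \emph{a priori} permit cancellations, even though non-vanishing is manifest for positive real weights. The twist sidesteps this difficulty by reducing non-vanishing of $D_I$ to non-vanishing of a single Laurent monomial in the edge weights, determined by a distinguished matching on $G$.
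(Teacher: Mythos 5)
Your argument for part (i) is circular. In the paper's logical structure, Proposition~\ref{in open cell} (that $\tDD$ lands in $\tPio(\cM)$) must be established \emph{before} the main theorem, not deduced from it: the statement of Theorem~\ref{main theorem} already treats $\tDD$ as a map into $\tPio(\cM)$, and Lemma~\ref{lemma: main} (the workhorse of the main theorem's proof) applies $\rt$ to $\tDD(z)$, which is only a well-defined regular operation once one knows that $\tDD(z)$ lies in the open stratum $\tPio(\cM)$. Even the base case Proposition~\ref{prop: lemmaboundary} rests on the same combinatorial fact that you would need here. Deriving the landing-in-$\tPio$ statement from the main theorem therefore presupposes what you are trying to prove.

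You also overestimate what a direct proof of (i) requires. You write that one would need $D_I(z)\neq 0$ for \emph{every} $I\in\cM$, which would indeed be delicate given possible cancellations over $\CC$. But $\tPio(\cM)$ is cut out of $\tPi(\cM)$ by the nonvanishing of only the Pl\"ucker coordinates indexed by the (reverse) Grassmann necklace $\{\cevI_a\}$, per \cite[Theorem 5.1]{KLS13}, and these are precisely the source-labels of the \emph{boundary} faces of $G$ (Proposition~\ref{prop: boundarylabel}). The key combinatorial input is Proposition~\ref{prop: boundaryunique}: each such $D_{\cevI_a}$ is a sum over a \emph{unique} matching, hence a single monomial in the edge weights, and thus manifestly nonzero on the torus $\GG_m^E$ with no possibility of cancellation. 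No twist is involved; that monomiality is a consequence of the Euler-characteristic argument in Proposition~\ref{prop: boundaryunique}, not of the twist machinery. Your final remark that ``the twist sidesteps this difficulty'' conflates this pre-twist combinatorial fact with the much more substantial content of Lemma~\ref{lemma: main}.

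For part (ii), injectivity of $\tDD$, your route via the main theorem does agree with the paper (Proposition~\ref{inversion} literally takes $\lpartial\circ\sF\circ\rt$ as the rational inverse), and your descent argument from $\tDD$ to $\DD$ is fine. So the proposal is sound for (ii) but collapses for (i) unless you replace the appeal to the main theorem with the direct monomiality argument above.
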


\noindent We will refer to the maps $\DD$ and $\tDD$ as \newword{boundary measurement maps}. The map $\DD$ is equal to the boundary measurement map of Postnikov~\cite{Pos}; see the proof of Theorem~\ref{theorem obey Plucker relations} for a discussion of the equivalence between Postnikov's definition and our own.

\begin{myexample}
Consider the graph $G$ in Figure \ref{fig: intrograph}. Of all the $3$-element subsets of $[6]$, only $\{1,2,3\}$ is not the boundary of a matching. %Hence, the associated positroid $\cM$ contains , and 
The open positroid variety $\Pi^\circ(\cM)$ is defined inside $Gr(3,6)$ by the vanishing of the Pl\"ucker coordinate $\Delta_{123}$ and the non-vanishing of $\Delta_{124}, \Delta_{234}, \Delta_{345}, \Delta_{456}, \Delta_{156}$, and $\Delta_{126}$.\footnote{The non-vanishing of these Pl\"ucker coordinates removes subspaces with a smaller positroid envelope than $\cM$.} As a consequence, the closure $\Pi(\cM)$ of $\Pio(\cM)$ is the Schubert divisor in $Gr(3,6)$.

\begin{figure}[h!t]
\centering
\begin{tikzpicture}
\begin{scope}[scale=.55]
		\draw[fill=black!10] (0,0) circle (4);
		\node[invisible] (1) at (180:4) {};
		\node[invisible] (2) at (120:4) {};
		\node[invisible] (3) at (60:4) {};
		\node[invisible] (4) at (0:4) {};
		\node[invisible] (5) at (-60:4) {};
		\node[invisible] (6) at (-120:4) {};
		
		\node[left] at (1) {$2$};
		\node[above left] at (2) {$3$};
		\node[above right] at (3) {$4$};
		\node[right] at (4) {$5$};
		\node[below right] at (5) {$6$};
		\node[below left] at (6) {$1$};
		
		\node[dot, fill=white] (a) at (-2.75,.25) {};
		\node[dot, fill=black!50] (b) at (-1.5,2) {};
		\node[dot, fill=white] (c) at (1.5,2) {};
		\node[dot, fill=black!50] (d) at (2.75,.25) {};
		\node[dot, fill=black!50] (e) at (-1.5,-.8) {};
		\node[dot, fill=white] (f) at (-.45,.75) {};
		\node[dot, fill=black!50] (g) at (.45,.75) {};
		\node[dot, fill=white] (h) at (1.5,-.8) {};
		\node[dot, fill=black!50] (i) at (.5,-2) {};
		\node[dot, fill=white] (j) at (-.5,-2) {};
		
		\node[dot,fill=white] (b') at (-1.75,2.75) {};
		\node[dot,fill=white] (d') at (3.4,.125) {};
		\node[dot,fill=white] (i') at (1.25,-2.75) {};
		
		\draw (1) to (a) to (b) to (b') to (2);
		\draw (b) to (f) to (g) to (c) to (3);
		\draw (c) to (d) to (d') to (4);
		\draw (d) to (h) to (i) to (i') to (5);
		\draw (i) to (j) to (6);
		\draw (j) to (e) to (a);
		\draw (e) to (f);
		\draw (g) to (h);	
		
		\draw (1) to node[above,blue] {$i$} (a) to node[above,blue] {$d$} (b) to node[right,blue] {$b$} (b') to node[right,blue] {$a$} (2);
		\draw (b) to node[above,blue] {$e$} (f) to node[above,blue] {$f$} (g) to node[above,blue] {$g$} (c) to node[left,blue] {$c$} (3);
		\draw (c) to node[above,blue] {$h$} (d) to node[above,blue] {$n$} (d') to node[above,blue] {$o$} (4);
		\draw (d) to node[above,blue] {$m$} (h) to node[above,blue] {$r$} (i) to node[above,blue] {$t$} (i') to node[above,blue] {$u$} (5);
		\draw (i) to node[above,blue] {$q$} (j) to node[above,blue] {$s$} (6);
		\draw (j) to node[above,blue] {$p$} (e) to node[above,blue] {$j$} (a);
		\draw (e) to node[above,blue] {$k$} (f);
		\draw (g) to node[above,blue] {$l$} (h);
%		\draw (1) to (a) to node[above,blue] {$a$} (b) to (b') to (2);
%		\draw (b) to node[above,blue] {$b$} (f) to node[above,blue] {$c$} (g) to node[above,blue] {$d$} (c) to (3);
%		\draw (c) to node[above,blue] {$e$} (d) to (d') to (4);
%		\draw (d) to node[above,blue] {$i$} (h) to node[above,blue] {$l$} (i) to (i') to(5);
%		\draw (i) to node[above,blue] {$k$} (j) to (6);
%		\draw (j) to node[above,blue] {$j$} (e) to node[above,blue] {$f$} (a);
%		\draw (e) to node[above,blue] {$g$} (f);
%		\draw (g) to node[above,blue] {$h$} (h);
	
\end{scope}
\end{tikzpicture}
\caption{A general set of edge weights.}
\label{fig: edgeweights}
\end{figure}
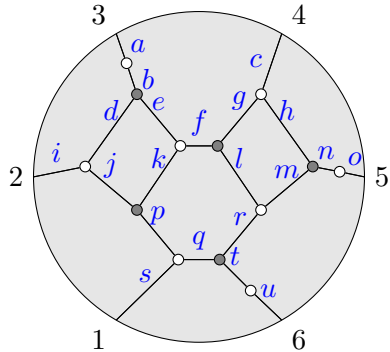

Let us describe a general point in $\mathbb{G}_m^E$ by assigning an indeterminant weight in $\mathbb{G}_m$ to each edge in $G$, as in Figure \ref{fig: edgeweights}. By Theorem \ref{theorem obey Plucker relations}, there exists a $3\times 6$ matrix such that, for any $I\in \binom{[6]}{3}$, the minor with columns in $I$ is equal to $D_I$. One such matrix is given below in \eqref{eq: matrix}.\footnote{Note that such a matrix is not uniquely determined; however, its row-span is.}
\begin{equation}\label{eq: matrix}
\begin{bmatrix}
1 & 0 & -\frac{aep}{bks} & 0 & \frac{fmop}{klns} & \frac{klqu+fpru}{klst}  \\
0 & 1 & \frac{adk+aej}{bik} & 0 & -\frac{fjmo}{ikln} & -\frac{fjru}{iklt}  \\
0 & 0 & 0 & bciklnst & bikost(hl+gm) & bgiknrsu 
\end{bmatrix}
\end{equation}
The boundary measurement map $\mathbb{D}$ for $G$ is the map which sends the edge weights given in Figure \ref{fig: edgeweights} to the row-span of the matrix in \eqref{eq: matrix}. % (which is well-defined because the matrix is uniquely determined up to the left action of $SL_3(\mathbb{C})$).
%The content of Proposition \ref{openTorus} is that the map $\mathbb{D}$ is unaffected by the gauge action (for example, replacing $b,d$, and $e$ by $\lambda b, \lambda d$, and $\lambda e$), and that the image lands in $\Pi^\circ(\cM)$.
\end{myexample}

\subsection{Pl\"ucker coordinates associated to faces}

%We fix a reduced graph $G$ with positroid $\cM$.  Let $F$ denote the set of \emph{faces} of $G$; that is, components of the complement of $G$ in the disc.

In \cite{Pos}, Postnikov showed how a reduced graph determines a collection of \newword{strands}: oriented curves in the disc beginning and ending at boundary vertices of $G$ (e.g. Figure \ref{fig: introstrands}).  The details of this construction may be found in Section~\ref{sec strand}.

\begin{figure}[h!t]

\begin{subfigure}[b]{.3\textwidth}
\centering
\begin{tikzpicture}[baseline=(current bounding box.center)]
\begin{scope}[scale=.5]
		\path[use as  bounding box] (-4.5,-4.5) rectangle (4.5,4.5);
		\draw[fill=black!10] (0,0) circle (4);
		\node[invisible] (1) at (180:4) {};
		\node[invisible] (2) at (120:4) {};
		\node[invisible] (3) at (60:4) {};
		\node[invisible] (4) at (0:4) {};
		\node[invisible] (5) at (-60:4) {};
		\node[invisible] (6) at (-120:4) {};
		
%		\node[left] at (1) {$\ell_5$};
%		\node[above left] at (2) {$\ell_6$};
%		\node[above right] at (3) {$\ell_1$};
%		\node[right] at (4) {$\ell_2$};
%		\node[below right] at (5) {$\ell_3$};
%		\node[below left] at (6) {$\ell_4$};
		\node[left] at (1) {$2$};
		\node[above left] at (2) {$3$};
		\node[above right] at (3) {$4$};
		\node[right] at (4) {$5$};
		\node[below right] at (5) {$6$};
		\node[below left] at (6) {$1$};
				
		\node[dot, fill=white] (a) at (-2.75,.25) {};
		\node[dot, fill=black!50] (b) at (-1.5,2) {};
		\node[dot, fill=white] (c) at (1.5,2) {};
		\node[dot, fill=black!50] (d) at (2.75,.25) {};
		\node[dot, fill=black!50] (e) at (-1.5,-.8) {};
		\node[dot, fill=white] (f) at (-.45,.75) {};
		\node[dot, fill=black!50] (g) at (.45,.75) {};
		\node[dot, fill=white] (h) at (1.5,-.8) {};
		\node[dot, fill=black!50] (i) at (.5,-2) {};
		\node[dot, fill=white] (j) at (-.5,-2) {};
		
		\node[dot,fill=white] (b') at (-1.75,2.75) {};
		\node[dot,fill=white] (d') at (3.4,.125) {};
		\node[dot,fill=white] (i') at (1.25,-2.75) {};
		
		\draw (1) to (a) to node[invisible] (E) {} (b) to node[invisible] (F) {}  (b') to (2);
		\draw (b) to node[invisible] (G) {} (f) to node[invisible] (H) {} (g) to node[invisible] (I) {} (c) to (3);
		\draw (c) to node[invisible] (A) {} (d) to node[invisible] (J) {} (d') to (4);
		\draw (d) to node[invisible] (B) {} (h) to node[invisible] (C) {} (i) to node[invisible] (K) {} (i') to (5);
		\draw (i) to node[invisible] (D) {} (j) to (6);
		\draw (j) to node[invisible] (L) {} (e) to node[invisible] (M) {} (a);
		\draw (e) to node[invisible] (N) {} (f);
		\draw (g) to node[invisible] (O) {} (h);	
		
		\draw[oriented,red,out=285,in=80] (3) to (A);
		\draw[oriented,red,out=260,in=100] (A) to (B);
		\draw[oriented,red,out=280,in=0] (B) to (C);
		\draw[oriented,red,out=180,in=45] (C) to (D);
		\draw[oriented,red,out=225,in=0] (D) to (6);
		
		\draw[oriented,red,out=225,in=-55] (4) to (J);
		\draw[oriented,red,out=125,in=0] (J) to (A);
		\draw[oriented,red,out=180,in=0] (A) to (I);
		\draw[oriented,red,out=180,in=45] (I) to (H);
		\draw[oriented,red,out=225,in=0] (H) to (N);
		\draw[oriented,red,out=180,in=0] (N) to (M);
		\draw[oriented,red,out=180,in=-35] (M) to (1);
		
		\draw[oriented,red,out=180,in=-90] (5) to (K);
		\draw[oriented,red,out=90,in=-90] (K) to (C);
		\draw[thick,red,out=90,in=-90] (C) to (O);
		\draw[oriented,red,out=90,in=-90] (O) to (I);
		\draw[oriented,red,out=90,in=215] (I) to (3);
		
		\draw[oriented,red,out=80,in=-90] (6) to (L);
		\draw[thick,red,out=90,in=-90] (L) to (N);
		\draw[oriented,red,out=90,in=-90] (N) to (G);
		\draw[oriented,red,out=90,in=-30] (G) to (F);
		\draw[oriented,red,out=200,in=240] (F) to (2);
		
		\draw[oriented,red,out=55,in=180] (1) to (E);
		\draw[oriented,red,out=0,in=180] (E) to (G);
		\draw[oriented,red,out=0,in=135] (G) to (H);
		\draw[oriented,red,out=-45,in=180] (H) to (O);
		\draw[oriented,red,out=0,in=180] (O) to (B);
		\draw[oriented,red,out=0,in=210] (B) to (J);
		\draw[oriented,red,out=30,in=120] (J) to (4);
		
		\draw[oriented,red,out=-30,in=60] (2) to (F);
		\draw[oriented,red,out=240,in=90] (F) to (E);
		\draw[oriented,red,out=-90,in=90] (E) to (M);
		\draw[oriented,red,out=-90,in=180] (M) to (L);
		\draw[oriented,red,out=0,in=135] (L) to (D);
		\draw[oriented,red,out=-45,in=180] (D) to (K);
		\draw[oriented,red,out=0,in=90] (K) to (5);
		
\end{scope}
\end{tikzpicture}
\caption{The strands of the graph.}
\label{fig: introstrands}
\end{subfigure}
\begin{subfigure}[b]{.3\textwidth}
\centering
\begin{tikzpicture}[baseline=(current bounding box.center)]
\begin{scope}[scale=.5]
		\path[use as  bounding box] (-4.5,-4.5) rectangle (4.5,4.5);
		\draw[fill=black!10] (0,0) circle (4);
		\node[invisible] (1) at (180:4) {};
		\node[invisible] (2) at (120:4) {};
		\node[invisible] (3) at (60:4) {};
		\node[invisible] (4) at (0:4) {};
		\node[invisible] (5) at (-60:4) {};
		\node[invisible] (6) at (-120:4) {};
		
%		\node[left] at (1) {\footnotesize $5$};
%		\node[above left] at (2) {\footnotesize $6$};
%		\node[above right] at (3) {\footnotesize $1$};
%		\node[right] at (4) {\footnotesize $2$};
%		\node[below right] at (5) {\footnotesize $3$};
%		\node[below left] at (6) {\footnotesize $4$};
		
		\node[dot, fill=white] (a) at (-2.75,.25) {};
		\node[dot, fill=black!50] (b) at (-1.5,2) {};
		\node[dot, fill=white] (c) at (1.5,2) {};
		\node[dot, fill=black!50] (d) at (2.75,.25) {};
		\node[dot, fill=black!50] (e) at (-1.5,-.8) {};
		\node[dot, fill=white] (f) at (-.45,.75) {};
		\node[dot, fill=black!50] (g) at (.45,.75) {};
		\node[dot, fill=white] (h) at (1.5,-.8) {};
		\node[dot, fill=black!50] (i) at (.5,-2) {};
		\node[dot, fill=white] (j) at (-.5,-2) {};
		
		\node[dot,fill=white] (b') at (-1.75,2.75) {};
		\node[dot,fill=white] (d') at (3.4,.125) {};
		\node[dot,fill=white] (i') at (1.25,-2.75) {};
		
		\draw (1) to (a) to (b) to (b') to (2);
		\draw (b) to (f) to (g) to (c) to (3);
		\draw (c) to node[invisible] (A) {} (d) to (d') to (4);
		\draw (d) to node[invisible] (B) {} (h) to node[invisible] (C) {} (i) to (i') to (5);
		\draw (i) to node[invisible] (D) {} (j) to (6);
		\draw (j) to (e) to (a);
		\draw (e) to (f);
		\draw (g) to (h);	
		
%	\node[red] at (150:3.25) {\footnotesize $126$};
%	\node[red] at (90:2.5) {\footnotesize $123$};
%	\node[red] at (30:3.25) {\footnotesize $234$};
%	\node[red] at (-30:3) {\footnotesize $345$};
%	\node[red] at (-90:3) {\footnotesize $145$};
%	\node[red] at (-150:3) {\footnotesize $156$};
%	\node[red] at (-1.6,.5) {\footnotesize $136$};
%	\node[red] at (0,-.65) {\footnotesize $135$};
%	\node[red] at (1.6,.5) {\footnotesize $235$};

	\node[red] at (150:3.25) {\footnotesize $345$};
	\node[red] at (90:2.5) {\footnotesize $456$};
	\node[red] at (30:3.25) {\footnotesize $156$};
	\node[red] at (-30:3) {\footnotesize $126$};
	\node[red] at (-90:3) {\footnotesize $124$};
	\node[red] at (-150:3) {\footnotesize $234$};
	\node[red] at (-1.6,.5) {\footnotesize $346$};
	\node[red] at (0,-.65) {\footnotesize $246$};
	\node[red] at (1.6,.5) {\footnotesize $256$};
\end{scope}
\end{tikzpicture}
\caption{Target-labeling of the faces.}
\label{fig: introtarget}
\end{subfigure}
\begin{subfigure}[b]{.3\textwidth}
\centering
\begin{tikzpicture}[baseline=(current bounding box.center)]
\begin{scope}[scale=.5]
		\path[use as  bounding box] (-4.5,-4.5) rectangle (4.5,4.5);
		\draw[fill=black!10] (0,0) circle (4);
		\node[invisible] (1) at (180:4) {};
		\node[invisible] (2) at (120:4) {};
		\node[invisible] (3) at (60:4) {};
		\node[invisible] (4) at (0:4) {};
		\node[invisible] (5) at (-60:4) {};
		\node[invisible] (6) at (-120:4) {};
		
%		\node[left] at (1) {\footnotesize $1$};
%		\node[above left] at (2) {\footnotesize $2$};
%		\node[above right] at (3) {\footnotesize $3$};
%		\node[right] at (4) {\footnotesize $4$};
%		\node[below right] at (5) {\footnotesize $5$};
%		\node[below left] at (6) {\footnotesize $6$};
		
		\node[dot, fill=white] (a) at (-2.75,.25) {};
		\node[dot, fill=black!50] (b) at (-1.5,2) {};
		\node[dot, fill=white] (c) at (1.5,2) {};
		\node[dot, fill=black!50] (d) at (2.75,.25) {};
		\node[dot, fill=black!50] (e) at (-1.5,-.8) {};
		\node[dot, fill=white] (f) at (-.45,.75) {};
		\node[dot, fill=black!50] (g) at (.45,.75) {};
		\node[dot, fill=white] (h) at (1.5,-.8) {};
		\node[dot, fill=black!50] (i) at (.5,-2) {};
		\node[dot, fill=white] (j) at (-.5,-2) {};
		
		\node[dot,fill=white] (b') at (-1.75,2.75) {};
		\node[dot,fill=white] (d') at (3.4,.125) {};
		\node[dot,fill=white] (i') at (1.25,-2.75) {};
		
		\draw (1) to (a) to (b) to (b') to (2);
		\draw (b) to (f) to (g) to (c) to (3);
		\draw (c) to node[invisible] (A) {} (d) to (d') to (4);
		\draw (d) to node[invisible] (B) {} (h) to node[invisible] (C) {} (i) to (i') to (5);
		\draw (i) to node[invisible] (D) {} (j) to (6);
		\draw (j) to (e) to (a);
		\draw (e) to (f);
		\draw (g) to (h);	
		
%	\node[red] at (150:3.25) {\footnotesize $345$};
%	\node[red] at (90:2.5) {\footnotesize $356$};
%	\node[red] at (30:3.25) {\footnotesize $156$};
%	\node[red] at (-30:3) {\footnotesize $126$};
%	\node[red] at (-90:3) {\footnotesize $123$};
%	\node[red] at (-150:3) {\footnotesize $234$};
%	\node[red] at (-1.6,.5) {\footnotesize $346$};
%	\node[red] at (0,-.65) {\footnotesize $236$};
%	\node[red] at (1.6,.5) {\footnotesize $256$};
	\node[red] at (150:3.25) {\footnotesize $126$};
	\node[red] at (90:2.5) {\footnotesize $236$};
	\node[red] at (30:3.25) {\footnotesize $234	$};
	\node[red] at (-30:3) {\footnotesize $345$};
	\node[red] at (-90:3) {\footnotesize $456$};
	\node[red] at (-150:3) {\footnotesize $156$};
	\node[red] at (-1.6,.5) {\footnotesize $136$};
	\node[red] at (0,-.65) {\footnotesize $356$};
	\node[red] at (1.6,.5) {\footnotesize $235$};
\end{scope}
\end{tikzpicture}
\caption{Source-labeling of the faces.}
\label{fig: introsource}
\end{subfigure}
\caption{Two ways to associate a $k$-element subset of $[n]$ to a face.}
\end{figure}
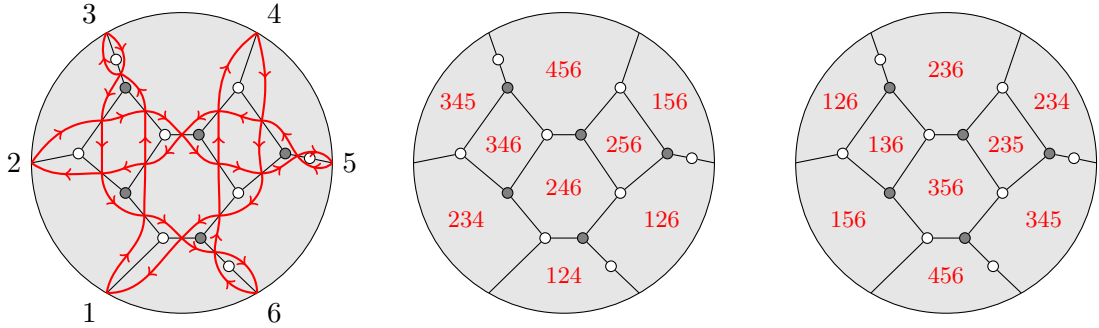

The strands do not self-intersect (except possibly at the boundary), so each one subdivides the disc into two components.  The orientation of a strand distinguishes these components as the `left side' and the `right side'.  One may check that each face of $G$ is on the left side of exactly $k$-many strands, where $k$ again denotes the number of white vertices minus the number of black vertices.

There are two natural ways to use a collection of $k$-many strands to determine a $k$-element subset of $[n]$: identify each strand either with the index of its \emph{source} vertex, or with the index of its \emph{target} vertex. 
% Previous papers on this subject \cite{???} have typically chosen one as a convention and ignored the other. 
In this paper, we will be forced to work with both conventions.
%\footnote{For elaboration on this point, see Remarks \ref{???}.} 
Given a face $f$ of $G$, define the following two $k$-element subsets of $[n]$ (e.g. Figures \ref{fig: introtarget} and \ref{fig: introsource}).
\[ \tI(f) := \{ i\in [n] \mid \text{$f$ is to the left of the strand ending at vertex $i$} \} \]
\[ \sI(f) := \{ i\in [n] \mid \text{$f$ is to the left of the strand starting at vertex $i$} \} \]
%Let $F$ denote the set of faces of $G$.
For any $k$-element subset $I$ of $[n]$, let $\Delta_I$ denote the Pl\"ucker coordinate on $\widetilde{Gr(k,n)}$ indexed by $I$.  Hence, each face $f$ in $G$ determines two Pl\"ucker coordinates, given by $\Delta_{\sI(f)}$ and $\Delta_{\tI(f)}$.

Letting $F$ denote the set of faces of $G$, this determines a pair of regular maps
\[ \tF:\tPio(\cM) \longrightarrow \CC^F\]
\[ \sF:\tPio(\cM) \longrightarrow \CC^F\]
where the coordinate corresponding to a face is the appropriate Pl\"ucker coordinate.

\subsection{Extremal matchings} In the next two sections, we introduce maps which will relate the domains and images of $\tDD$, $\sF$, and $\tF$.

For any face $f$ of a reduced graph, define two matchings $\vecM(f)$ and $\cevM(f)$ of $G$ such that 
\[ \partial \vecM(f) = \tI(f) \text{ and }  \partial \cevM(f) = \sI(f). \]
An edge $e$ of $G$ appears in $\vecM(f)$ if and only if the face $f$ is contained in the ``downstream wedge" bounded by the two half strands flowing out of $e$ and the edge of the disc (see Figure~\ref{fig: downstreamwedge}).
The edge $e$ appears in $\cevM(f)$ if $f$ is in the analogous upstream edge.
For proofs that $\vecM(f)$ and $\cevM(f)$ are matchings and have the stated boundaries, see Theorem~\ref{thm: minmatchprop}.

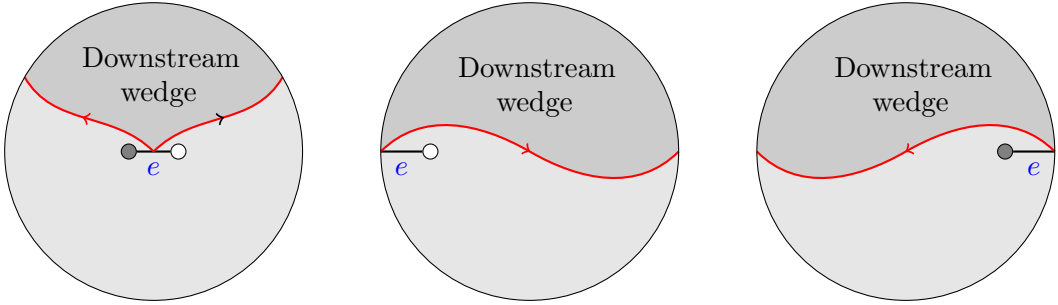
\begin{figure}[h!t]
\begin{tikzpicture}
\begin{scope}[scale=.66]
	\path[fill=black!10] (0,0) circle (3);
	\node[mutable,fill=black!50] (a) at (-.5,0) {};
	\node[mutable,fill=white] (b) at (.5,0) {};
	\draw[thick] (a) to node[below,blue] {$e$} (b);
	\path[fill=black!20] (150:3) to [out=-60,in=135] (0,0) to [out=45,in=240] (30:3) arc (30:150:3);
	\node[rounded corners] at (.1,1.5) {
		\begin{minipage}{.8in}\centering Downstream wedge\end{minipage}
	};
	\draw[antioriented] (150:3) to [out=-60,in=135] (0,0);
	\draw[oriented] (0,0) to [out=45,in=240] (30:3);
%	\draw[oriented] (-3,0) to [out=-45,in=210] (-1.5,0) to [out=30,in=135] (0,0);
%	\draw[red] (0,0) to [out=45,in=150] (1.5,0) to [out=330,in=225] (3,0);
	\draw (0,0) circle (3);
\end{scope}
\begin{scope}[xshift=5cm,scale=.66]
	\path[fill=black!10] (0,0) circle (3);
	\node[mutable,fill=white] (b) at (-2,0) {};
	\draw[thick] (-3,0) to node[below,blue] {$e$} (b);
	\path[fill=black!20] (3,0) to[out=225,in=-30] (0,0) to [out=150,in=45] (-3,0) arc (180:0:3);
	\node[rounded corners] at (.1,1.3) {
		\begin{minipage}{.8in}\centering Downstream wedge\end{minipage}
	};
	\draw[antioriented,out=225,in=-30] (3,0) to (0,0) to [out=150,in=45] (-3,0);
	\draw (0,0) circle (3);
\end{scope}
\begin{scope}[xshift=10cm,scale=.66]
	\path[fill=black!10] (0,0) circle (3);
	\node[mutable,fill=black!50] (a) at (2,0) {};
	\draw[thick] (a) to node[below,blue] {$e$} (3,0);
	\path[fill=black!20] (-3,0) to[out=-45,in=-150] (0,0) to [out=30,in=135] (3,0) arc (0:180:3);
	\node[rounded corners] at (.1,1.3) {
		\begin{minipage}{.8in}\centering Downstream wedge\end{minipage}
	};
	\draw[antioriented] (-3,0) to[out=-45,in=-150] (0,0) to [out=30,in=135] (3,0);
	\draw (0,0) circle (3);
\end{scope}
\end{tikzpicture}
\caption{The downstream wedge of an edge $e$}
\label{fig: downstreamwedge}
\end{figure}

\begin{myexample}
The matching given in Figure \ref{fig: intromatching} is the matching $\vecM(f)$, where $f$ is the interior hexagonal face. The boundary $356$ of $\vecM(f)$ coincides with the source-labeling of $f$, as shown in Figure \ref{fig: introsource}.
\end{myexample}

Let $\rM$ and $\lM$ be the monomial maps $\mathbb{G}_m^E\longrightarrow \mathbb{G}_m^F$ where, for each face $f$, the $f$-coordinate of $\rM(z)$ is $z^{-\vecM(f)}$ and the $f$-coordinate of $\lM(z)$ is $z^{-\cevM(f)}$.

\begin{PropInverseTorii}
For a reduced graph $G$, the maps $\rM$ and $\lM$ descend to well-defined isomorphisms
\[ \mathbb{G}_m^E/\mathbb{G}_m^{V-1} \stackrel{\sim}{\longrightarrow} \mathbb{G}_m^F\]
\end{PropInverseTorii}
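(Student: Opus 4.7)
The plan is to check descent, match dimensions, and then reduce to a lattice statement that I would verify via combinatorial analysis of matchings.

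\textbf{Descent.} A gauge element $(t_v)_{v \in V} \in \mathbb{G}_m^V$ rescales $z_e$ by $\prod_{v \in e} t_v$, so for any matching $M$ the monomial $z^M$ rescales by $\prod_{v \in V} t_v$ (each internal vertex is covered exactly once). The subgroup $\mathbb{G}_m^{V-1}$ is precisely the kernel of this product, so $z^{-\vecM(f)}$ and $z^{-\cevM(f)}$ are $\mathbb{G}_m^{V-1}$-invariant, and both $\rM$ and $\lM$ descend to $\mathbb{G}_m^E/\mathbb{G}_m^{V-1}$.

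\textbf{Dimension count.} Appending the boundary arcs of $\partial D$ to $G$ yields a connected planar graph on the sphere with one additional exterior face; Euler's formula then gives $|F| = |E| - |V| + 1 = \dim(\mathbb{G}_m^E/\mathbb{G}_m^{V-1})$. It therefore suffices to show that the induced pullback on character lattices
\[
\rM^{*}\colon \mathbb{Z}^F \longrightarrow L, \qquad f \longmapsto -\vecM(f),
\]
is an isomorphism of $\mathbb{Z}$-modules, where $L = X^{*}(\mathbb{G}_m^E/\mathbb{G}_m^{V-1}) = \{(b_e) \in \mathbb{Z}^E : \sum_{e \ni v} b_e \text{ is constant over } v \in V\}$. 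The image lands in $L$ precisely because every matching covers each internal vertex once.

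\textbf{Lattice isomorphism.} Fix a reference face $f_0$ and decompose $L = L_0 \oplus \mathbb{Z} \cdot \vecM(f_0)$, where $L_0 = \{(b_e) : \sum_{e \ni v} b_e = 0 \text{ for all } v\}$ is the ``cycle lattice'' of divergence-zero integer weightings. Modulo the obvious rank-$1$ piece, $\rM^{*}$ becomes the map $f \mapsto \vecM(f_0) - \vecM(f) \in L_0$ for $f \neq f_0$, and we must check this map is a $\mathbb{Z}$-linear isomorphism onto $L_0$. The plan is to exploit the downstream-wedge characterization in Theorem~\ref{thm: minmatchprop}: for faces $f, f'$ adjacent across a single strand segment, the signed difference $\vecM(f) - \vecM(f')$ should be a primitive element of $L_0$ supported on the edges lying between the two strands separating $f$ from $f'$. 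The argument for $\lM$ is symmetric, replacing downstream wedges by upstream wedges.

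The main obstacle is the last combinatorial claim: that these ``elementary'' face-crossing cycles are primitive and freely span $L_0$. Primitivity requires tracing how the downstream wedge changes as one moves from $f$ to an adjacent $f'$ and seeing that the new and removed matching edges form an alternating cycle along a strand. Free spanning requires the reducedness of $G$: reducedness is equivalent to the cycle lattice being generated by cycles of the form $\vecM(f) - \vecM(f')$ for adjacent faces with no extra relations, so one must relate the combinatorial definition of a reduced plabic graph (minimality of the number of faces) to this lattice-theoretic statement, presumably via an explicit basis built by ordering the faces along a spanning tree of the dual graph.
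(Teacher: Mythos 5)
Your framework is sound — the descent computation and the Euler-formula dimension count are exactly right, and reducing to an isomorphism of character lattices is the correct move — but you never prove the claim on which everything hinges: that the face-difference elements $\vecM(f_0) - \vecM(f)$ form a $\mathbb{Z}$-basis of the divergence-zero lattice $L_0$. You flag this yourself as ``the main obstacle,'' and what you offer for free spanning is a heuristic, not an argument: the assertion that ``reducedness is equivalent to the cycle lattice being generated by the elementary face-crossing cycles'' is precisely the hard part of the proof, not an available input, and it is not an obvious reformulation of the face-minimality definition of a reduced graph. Even granting a $\mathbb{Q}$-spanning argument via a spanning tree of the dual graph, you would still need an integrality (unimodularity) argument to promote a rational basis to a $\mathbb{Z}$-basis.

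The paper avoids this issue by a more mechanical route through Lemma~\ref{lemma: downmatrix}: it writes down two explicit integer block matrices — one built from the ``downstream'' incidence matrices $U_{E,F}$, $U_{E,V}$, the other from the boundary incidence matrices $\partial_{F,E}$, $\partial_{V,E}$ together with the face-degree vector $B_{F,1}$ — and shows by direct Euler-characteristic bookkeeping that they are mutually inverse. With the inverse in hand, the proof of Proposition~\ref{prop: isotori} simply verifies that every $\GG_m^{V-1}$-invariant character $z^m$ equals $\rM^*$ applied to an explicit character of $\GG_m^F$; surjectivity of the pullback plus equality of ranks then gives the isomorphism. This sidesteps any need to reason about unimodularity of a chosen generating set — the inverse matrix is handed to you — and has the added benefit of producing an explicit formula for $\lpartial = \rM^{-1}$ (Proposition~\ref{prop: partial}), which your approach would not immediately yield. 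If you want to salvage your route, proving your lattice claim would essentially amount to re-deriving the content of Lemma~\ref{lemma: downmatrix} by a different path.
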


\noindent We denote the inverses of $\rM$ and $\lM$ by $\lpartial$ and $\rpartial$, respectively.  Justification for this notation and an explicit formula for $\lpartial$ and $\rpartial$ are given in Section~\ref{sec extremal}.

\subsection{The twists of a positroid variety} \label{sec twist intro}

We now define a pair of mutually inverse automorphisms $\rt$ and $\lt$ of $\tPio(\cM)$, called the \emph{right twist} and \emph{left twist}, respectively.  
The definitions of the twists are elementary, and use none of the combinatorics or geometry we have built up so far. 

Let $A$ denote a $k\times n$ matrix of rank $k$. In this introduction, we will assume for simplicity that $A$ has no zero columns.   Let $A_i$ denote the $i$th column of $A$, with indices taken cyclically; that is, $A_{i+n}=A_i$.
%SIGN REMOVED %\footnote{In the main body of the paper, we will use the convention $A_{i+n} = (-1)^{k-1} A_i$; see section~\ref{sec sign trick}. This sign will not effect any of the statements in this section.}
The \newword{right twist} $\rt(A)$ of $A$ is the $k\times n$ matrix such that, for all $i$, the $i$th column $\rt(A)_i$ satisfies the relations
\[ \langle \rt(A)_i \mid A_i \rangle = 1, \text{ and}\]
\[ \langle \rt(A)_i \mid A_j \rangle =0 \text{ if $A_j$ is not in the span of $\{A_i,A_{i+1},\ldots, A_{j-2}, A_{j-1}\}$ } \]
Similarly, the \newword{left twist} of $A$ is the $k\times n$ matrix $\lt(A)$ defined on columns by the relations
\[ \langle \lt(A)_i \mid A_i \rangle = 1, \text{ and}\]
\[ \langle \lt(A)_i \mid A_j \rangle =0 \text{ if $A_j$ is not in the span of $\{A_{j+1}, A_{j+2},\ldots,  A_{i-1},A_{i} \}$ } \]
The reader is cautioned that these operations are only piecewise continuous on the space of matrices.

\begin{myexample} Each of the following matrices is the right twist of the matrix to its left, and the left twist of the matrix to its right.
\[\begin{tikzpicture}[baseline={([yshift=-3pt]current bounding box.center)}]
	\node (a) at (-5,0) {$\gmat{
1 & 0 & 1 & 0 & 1 \\
-1 & 1 & 0 & 0 & 0 \\
1 & -1 & 0 & 1 & 1
}$};
	\node (b) at (0,0) {$
\gmat{ 
1 & 0 & 1 & -1 & 0 \\
0 & 1 & 1 & 0 & 1 \\ 
0 & 0 & 0 & 1 & 1
}$};
	\node (c) at (5,0) {$\gmat{
1 & -1 & 0 & 0 & 0 \\
0 & 1 & 1 & -1 & 0 \\
1 & -1 & 0 & 1 & 1 
}$};
	\draw[|-angle 90,out=5,in=175] (a) to node[above] {$\rt$} (b);
	\draw[|-angle 90,out=5,in=175] (b) to node[above] {$\rt$} (c);
	\draw[|-angle 90,out=185,in=-5] (c) to node[below] {$\lt$} (b);
	\draw[|-angle 90,out=185,in=-5] (b) to node[below] {$\lt$} (a);
\end{tikzpicture} \qedhere\]
\end{myexample}

As the example suggests, the two twists are inverse to each other.
\begin{TheoremInverseTwists}
If $A$ is a $k\times n$ matrix of rank $k$, then $\rt(\lt(A))=\lt(\rt(A))=A$.
\end{TheoremInverseTwists}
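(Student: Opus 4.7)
The plan is to prove $\rt(\lt(A))=A$; the symmetric identity $\lt(\rt(A))=A$ then follows for free, since reversing the cyclic order of the columns of $A$ exchanges the forward and backward directions and hence interchanges $\rt$ with $\lt$.

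The first step is a uniqueness observation: the defining conditions for $\rt(A)_i$ pin down a unique vector in $\CC^k$, because the columns $A_j$ with $j$ forward-new from $i$ are exactly $k$ in number (since $A$ has rank $k$ and $A_i\neq 0$), form a basis of $\CC^k$, and the defining equations prescribe the inner product of $\rt(A)_i$ with each of these basis vectors; the analogous fact holds for $\lt$. Setting $B:=\lt(A)$, this reduces the theorem to checking that $A_i$ satisfies the defining equations for $\rt(B)_i$. The equation $\langle A_i, B_i\rangle=1$ is immediate from the definition of $B_i$, so the remaining task is the following key lemma: for each $l\neq i$ such that $B_l$ is forward-new from $B_i$ in $B$, we have $\langle A_i, B_l\rangle=0$.

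By the construction of $B_l$, the linear functional $\langle\,\cdot\,,B_l\rangle$ vanishes precisely on the hyperplane $U_l\subset\CC^k$ spanned by the backward-new columns of $A$ from position $l$ other than $A_l$ itself. Hence the key lemma is equivalent to showing $A_i\in U_l$ whenever $B_l$ is forward-new from $B_i$ in $B$. I would prove the contrapositive: if $A_i\notin U_l$, then $B_l\in\mathrm{span}\{B_i,B_{i+1},\ldots,B_{l-1}\}$. The base case $l=i+1$ is transparent, because $A_i\notin U_{i+1}$ forces $A_i$ to be a nonzero scalar multiple of $A_{i+1}$, and a direct check then shows that $U_i=U_{i+1}$, so $B_i$ and $B_{i+1}$ are scalar multiples of each other. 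For general $l$ I would induct on the cyclic distance $l-i$, using the equivalence of $A_i\notin U_l$ with the existence of a linear relation $A_l = cA_i + u$ where $c\neq 0$ and $u\in U_l$.

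The main obstacle is the inductive step: the backward-new columns from $m$ shift in a combinatorially intricate way as $m$ increments, so the argument must carefully combine the local ``parallel'' relations $B_{m+1}\in\mathrm{span}\{B_m\}$ (which hold exactly when $A_m\in\CC\cdot A_{m+1}$) with the global constraint $A_l = cA_i + u$ to assemble the coefficients $\gamma_m$ expressing $B_l$ as $\sum_{m=i}^{l-1}\gamma_m B_m$.
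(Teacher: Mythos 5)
Your setup is sound and closely parallels the paper's: after noting that $\rt$ and $\lt$ are each pinned down by duality to a basis, you reduce the identity $\rt(\lt(A))=A$ to the key fact that $\langle A_i,\ \lt(A)_l\rangle=0$ whenever $l$ lies in the $i$-th Grassmann necklace element $\vecI_i$ with $l\neq i$ (equivalently, $A_i\in U_l := \mathrm{span}\{A_j : j\in\cevI_l\setminus\{l\}\}$). This is exactly the lemma the paper proves. Your symmetry reduction to a single identity, and your base case $l=i+1$, are also correct.

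The gap is in the inductive step, which you flag as the ``main obstacle'' but do not resolve. The difficulty is not merely technical: inducting on the cyclic distance requires controlling how the reverse Grassmann necklace $\cevI_m$ and the hyperplanes $U_m$ evolve as $m$ runs from $i$ to $l$, and expressing $B_l$ as a linear combination $\sum_{m=i}^{l-1}\gamma_m B_m$ forces you to track coefficients across a chain of rank conditions that do not reduce cleanly to the inductive hypothesis. (There is also an unacknowledged subtlety in your formulation of the contrapositive: your conclusion $B_l\in\mathrm{span}\{B_i,\dots,B_{l-1}\}$ is a statement about the Grassmann necklace of $B=\lt(A)$, whereas the defining conditions of $\rt(B)_i$ should then be matched against the hypothesis that $l\in\vecI_i$; one needs $\lt$ to preserve the positroid — Proposition~\ref{prop: positwist} — to close this loop.) The induction is also unnecessary. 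The paper gives a one-step dimension count: since $b\in\vecI_a$ we have $A_b\notin\mathrm{span}\{A_a,\dots,A_{b-1}\}$, so if $c=\dim\mathrm{span}\{A_a,\dots,A_b\}$ then $\mathrm{span}\{A_a,\dots,A_{b-1}\}$ has dimension $c-1$. The reverse necklace $\cevI_b$ is built by greedy selection in the $\prec_{b+1}$-decreasing order, in which $\{a,\dots,b\}$ are the largest indices, so $\cevI_b$ meets $\{a,\dots,b\}$ in exactly $c$ indices including $b$ itself; thus $J:=(\cevI_b\setminus\{b\})\cap\{a,\dots,b-1\}$ has $c-1$ elements, is linearly independent (being part of a basis), and therefore spans $\mathrm{span}\{A_a,\dots,A_{b-1}\}$, which contains $A_a$. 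Since $\lt(A)_b\perp A_j$ for every $j\in J\subset\cevI_b\setminus\{b\}$, we get $\langle A_a,\lt(A)_b\rangle=0$ immediately, with no induction on distance. You should replace the induction with this direct argument.
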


The set of $k\times n$ matrices of rank $k$ naturally projects onto $\widetilde{Gr(k,n)}$ and $Gr(k,n)$, in the latter case sending a matrix to the span of its rows.  The twists descend to well-defined maps on these spaces as well (see Proposition~\ref{prop: equivariance}).  The twists become continuous when restricted to an individual positroid variety. More specifically:

\begin{CorollaryTwistQuotient}
For each positroid $\cM$, the twists $\rt$ and $\lt$ restrict to mutually inverse, regular automorphisms of $\tPio(\cM)$ and $\Pio(\cM)$.
\end{CorollaryTwistQuotient}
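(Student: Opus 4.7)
The argument splits into three essentially independent pieces: (i) that $\rt$ restricts to a regular map on $\tPio(\cM)$, (ii) that the image of this restriction lies again in $\tPio(\cM)$, and (iii) that $\rt$ and $\lt$ are mutually inverse on $\tPio(\cM)$. Point (iii) is immediate from Theorem~\ref{thm: inversetwists}, applied pointwise to representing matrices, once we know both twists are well-defined endomorphisms of $\tPio(\cM)$. The descent from $\tPio(\cM)$ to $\Pio(\cM)$ will follow from Proposition~\ref{prop: equivariance}, since $\Pio(\cM)$ is the quotient of $\tPio(\cM)$ by the free scaling action that the twists are equivariant for.

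\textbf{Regularity via the Grassmann necklace.} Let $(I_1,\dots,I_n)$ be the Grassmann necklace of $\cM$, where $I_i$ is the lex-minimum basis of $\cM$ in the cyclic order $i, i+1, i+2, \dots, i-1$. A defining property of the Grassmann necklace is that for any $A\in\tPio(\cM)$, the columns $\{A_j : j\in I_i\}$ form a basis of the column span of $A$, and that the index $j$ belongs to $I_i$ precisely when $A_j$ is not in the span of $\{A_i, A_{i+1}, \dots, A_{j-1}\}$. Hence the defining relations of $\rt(A)_i$ specialize to $\langle \rt(A)_i \mid A_j\rangle = \delta_{ij}$ for all $j\in I_i$, which uniquely determines $\rt(A)_i$ as the dual-basis vector to $A_i$ in this basis. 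Cramer's rule then expresses each coordinate of $\rt(A)_i$ as a ratio of minors of $A$ with common denominator $\Delta_{I_i}(A)$. Since $\Delta_{I_i}$ is nonvanishing on $\tPio(\cM)$, this exhibits $\rt$ as a regular map on $\tPio(\cM)$. An entirely parallel argument with the reverse cyclic order (and the corresponding reverse Grassmann necklace) handles $\lt$.

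\textbf{Preservation of the stratum.} We must show that $\rt(A)\in\tPio(\cM)$ whenever $A\in\tPio(\cM)$. The plan is to read off the matroid of $\rt(A)$ from the Cramer-rule formula of the previous step and verify that it has positroid envelope $\cM$. Concretely, one computes the Pl\"ucker coordinates $\Delta_I(\rt(A))$ explicitly in terms of those of $A$, showing that the vanishing pattern of the former matches that of the latter, at least on the Grassmann-necklace entries that cut out $\tPio(\cM)$. Equivalently, one can argue combinatorially that the Grassmann necklace of the matroid of $\rt(A)$ is a specific cyclic/reverse shift of that of $A$, and that iterating this shift via $\lt$ returns the original necklace, so that the envelope is unchanged.

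\textbf{Assembly and descent.} Having (i) and (ii) for both $\rt$ and $\lt$, the matrix identity $\rt\circ\lt = \lt\circ\rt = \mathrm{id}$ of Theorem~\ref{thm: inversetwists} descends, yielding mutually inverse regular automorphisms of $\tPio(\cM)$. Proposition~\ref{prop: equivariance} then transfers the result to the Grassmannian: the twists commute with the $\GG_m$-action rescaling the rows (or the columns, as appropriate), which is free on $\tPio(\cM)$ with quotient $\Pio(\cM)$, giving mutually inverse regular automorphisms of $\Pio(\cM)$ as well. The main obstacle in the plan is step (ii): the regularity in (i) is a streamlined Cramer's rule once the Grassmann necklace is in hand, but verifying that the image matroid still has envelope $\cM$ requires genuine combinatorial input about how cyclic bases of the column span transform under the twist.
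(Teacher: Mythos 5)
Your overall architecture matches the paper's: prove everything at the level of matrices in $\Mato(\cM)$, invoke Theorem~\ref{thm: inversetwists} for mutual inversion, and descend via Proposition~\ref{prop: equivariance} (quotient by $SL_k$ gives $\tPio(\cM)$, by $GL_k$ gives $\Pio(\cM)$). Your step (i) is essentially the paper's observation that the Grassmann necklace is constant on $\Mato(\cM)$, so the duality conditions reduce the twist to Cramer's rule with denominator $\Delta_{\vecI_a}(A)$; this is correct.

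However, there is a genuine gap in step (ii), which is precisely the substantial content of the corollary. You correctly identify that showing $\rt(A) \in \Mato(\cM)$ is the main obstacle, but neither of the two routes you sketch actually closes it. The first route --- ``one computes the Pl\"ucker coordinates $\Delta_I(\rt(A))$ explicitly'' and checks the vanishing pattern --- is the right idea and is what the paper does in Proposition~\ref{prop: positwist}, but it is not trivial: one needs the Cauchy--Binet-type identity of Lemma~\ref{lemma: matrixformula}, $\Delta_I(\rt(A))\Delta_J(A) = \det(\langle \rt(A)_{i_r}\mid A_{j_s}\rangle)$, applied with $J=\vecI_a$, and then the key combinatorial observation that when $\vecI_a \not\preceq_a J$ there is an upper-right block of zeros in that determinant, so $\Delta_J(\rt(A))=0$. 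Combined with $\Delta_{\vecI_a}(\rt(A))=1/\Delta_{\vecI_a}(A)\neq 0$ and Oh's characterization of $\Mato(\cM)$ by these two conditions, this yields the result; you do not supply any of this. The second route you offer as ``equivalent'' --- that the Grassmann necklace of the matroid of $\rt(A)$ is a ``cyclic/reverse shift'' of that of $A$, and iterating via $\lt$ undoes the shift --- is factually incorrect: the positroid envelope is determined by the Grassmann necklace (Proposition~\ref{prop: equivposi}), so for $\rt(A)$ to land in $\Mato(\cM)$ its Grassmann necklace must equal that of $A$, not be a nontrivial shift of it. If the necklace were genuinely shifted, $\rt$ would map $\Mato(\cM)$ to a different stratum, and the whole statement would fail. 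So neither branch of your step (ii) constitutes a proof, and the false shift heuristic would lead you astray if pursued.
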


%\begin{Remark}
%Connection with MS and BFZ twists.
%\end{Remark}

\subsection{The main theorem} \label{sec main theorem intro}

%
%The main result of this paper is that the compositions $\tF\circ \lt\circ \tDD$ and $\sF\circ \rt\circ \tDD$ have particularly simple forms.  We still need a few more definitions to describe these simple forms.
%
%In Section \ref{???}, we show that each face $f$ in a reduced graph $G$ determines a pair of matchings $\cevM(f)$ and $\vecM(f)$ of $G$, such that 
%\[ \partial \cevM(f) = \tI(f) \text{ and }  \partial \vecM(f) = \sI(f) \]
%Define a pair of maps $\rM,\lM:\mathbb{G}_m^E\longrightarrow \mathbb{G}_m^F$, such that the $f$-coordinate of $\lM(z)$ is $z^{-\cevM(f)}$ and the $f$-coordinate of $\rM(z)$ is $z^{-\vecM(f)}$.
%
%\begin{lemma}
%For a reduced graph $G$, the maps $\lM$ and $\rM$ descend to well-defined isomorphisms
%\[ \mathbb{G}_m^E/\mathbb{G}_m^{V-1} \stackrel{\sim}{\longrightarrow} \mathbb{G}_m^F\]
%\end{lemma}
%\noindent We denote the inverses of $\rM$ and $\lM$ by $\lpartial$ and $\rpartial$, respectively.  Justification for this notation and an explicit formula for $\lpartial$ and $\rpartial$ are given in Section \ref{???}.
%
We are now in a position to state the main theorem.

\begin{theoremMT}
Let $G$ be a reduced graph with positroid $\cM$. The following diagram commutes, where dashed arrows denote rational maps.
\[\begin{tikzpicture}
	\node (F1) at (-3,0) {$\mathbb{G}_m^F$};
	\node (E) at (0,0) {$\mathbb{G}_m^E/\mathbb{G}_m^{V-1}$};
	\node (F2) at (3,0) {$\mathbb{G}_m^F$};
	\node (P1) at (-3,-2) {$\tPio(\cM)$};
	\node (P2) at (0,-2) {$\tPio(\cM)$};
	\node (P3) at (3,-2) {$\tPio(\cM)$};
	\draw[-angle 90,out=15,in=170] (F1) to node[above] {$\rpartial$} (E);
	\draw[-angle 90,out=10,in=165] (E) to node[above] {$\rM$} (F2);
	\draw[-angle 90,out=195,in=-10] (F2) to node[below] {$\lpartial$} (E);
	\draw[-angle 90,out=190,in=-15] (E) to node[below] {$\lM$} (F1);
	\draw[dashed,-angle 90] (P1) to node[left] {$\tF$} (F1);
	\draw[-angle 90] (E) to node[left] {$\tDD$} (P2);
	\draw[dashed,-angle 90] (P3) to node[right] {$\sF$} (F2);
	\draw[-angle 90,out=15,in=165] (P1) to node[above] {$\rt$} (P2);
	\draw[-angle 90,out=15,in=165] (P2) to node[above] {$\rt$} (P3);
	\draw[-angle 90,out=195,in=-15] (P3) to node[below] {$\lt$} (P2);
	\draw[-angle 90,out=195,in=-15] (P2) to node[below] {$\lt$} (P1);

\end{tikzpicture}\]
More specifically, the diagram commutes as a diagram of rational maps, and any composition of maps beginning in the top row is regular.
\end{theoremMT}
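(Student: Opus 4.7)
The plan is to reduce commutativity of the full diagram to two explicit Laurent monomial identities. Since $\rM$ and $\lM$ are torus isomorphisms with inverses $\lpartial$ and $\rpartial$, and $\rt,\lt$ are mutually inverse automorphisms of $\tPio(\cM)$, every arrow in the diagram is determined by two basic compositions: the one running top-center-down-right-up and its mirror top-center-down-left-up. Taking the $f$-coordinate, the diagram commutes if and only if
\[ \Delta_{\sI(f)}\bigl(\rt(\tDD(z))\bigr) = z^{-\vecM(f)} \qquad\text{and}\qquad \Delta_{\tI(f)}\bigl(\lt(\tDD(z))\bigr) = z^{-\cevM(f)} \]
for every face $f$ of $G$. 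Reflecting $G$ across a diameter of the disc swaps $\rt \leftrightarrow \lt$, $\vecM \leftrightarrow \cevM$, and $\sI \leftrightarrow \tI$ (with boundary indexing reversed), so the two identities are equivalent and I would focus on the first.

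\textbf{Step 1.} I would use the defining bilinear relations $\langle \rt(A)_i, A_j\rangle \in \{0,1\}$ of the right twist to express each column $\rt(A)_i$ in terms of the columns of $A$. These are linear systems solvable by Cramer's rule, with denominators being specific cyclic-interval maximal minors of $A$. Assembling $k$ such columns and expanding by Cauchy--Binet then produces an a priori formula for $\Delta_{\sI(f)}(\rt(A))$ as a Laurent expression in the maximal minors of $A$.

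\textbf{Step 2.} I would substitute Postnikov's partition-function formula $\Delta_J(\tDD(z)) = \sum_{\partial M = J} z^M$ into Step~1's expression. The result is a priori a ratio of sums of edge-weight monomials; the theorem asserts that this ratio collapses to the single monomial $z^{-\vecM(f)}$. The collapse is most naturally proved in two stages: first, show that $\sF \circ \rt \circ \tDD$ is a well-defined regular map between tori of equal dimension, hence a morphism of tori and therefore a Laurent monomial in each coordinate; second, identify the exponent vector.

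\textbf{Main obstacle.} The delicate step will be identifying the exponent vector uniformly across all reduced graphs. Two complementary tools appear available, and I expect to use both. The first is an induction through square moves: any two reduced graphs for $\cM$ are connected by such moves, both sides of the identity should transform covariantly under them (combinatorial mutation on the $\vecM(f)$ side, cluster-like mutation on the Pl\"ucker side), and a tractable base case --- for instance the Le diagram, where Talaska's explicit inverse to $\tDD$ combined with the Marsh--Scott twist formula for the top positroid handles the computation directly --- launches the induction. The second is a leading-order analysis on the totally positive chart, where positivity and a generic one-parameter tropical limit isolate a single dominant matching inside each partition function, to be identified combinatorially with the extremal matching $\vecM(f)$. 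The essential novelty over prior work is precisely that this identification must hold for \emph{all} reduced graphs and \emph{all} positroids simultaneously, which is where the uniformity of the twist construction is doing the hardest work.
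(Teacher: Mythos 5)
Your reduction to the monomial identity $\Delta_{\sI(f)}(\rt(\tDD(z))) = z^{-\vecM(f)}$ is exactly the paper's Lemma~\ref{lemma: main}, and your observation that the left-hand square follows by reflecting the disc mirrors the paper's argument, so the overall skeleton is right. The move-invariance of this identity under urban renewal (your ``square moves'') is also in the paper, as Lemma~\ref{lemma: mutation invariance}, and the verification you would do via a three-term Pl\"ucker relation is precisely what that lemma does. So up to that point you have the right plan.

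The gap is in the proposed base case. You suggest handling Le diagrams directly by ``combining Talaska's explicit inverse to $\tDD$ with the Marsh--Scott twist formula,'' but neither cited tool actually gives what you need for general positroids: Marsh--Scott's twist formula is established only for the uniform positroid $\Pio(\binom{[n]}{k})$, and Talaska inverts $\tDD$ for arbitrary Le diagrams without ever invoking a twist, so neither produces the identity $\Delta_{\sI(f)}\circ\rt\circ\tDD = z^{-\vecM(f)}$ for a Le diagram of a non-uniform positroid. Since move-equivalence only connects reduced graphs \emph{within} a fixed positroid, you would need a base case \emph{per positroid}, and that reopens exactly the problem you set out to solve. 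The paper avoids this by inducting on a different parameter: it uses \emph{bridge decompositions} (Lemma~\ref{bridge decompositions exist}), which build any reduced graph from a graph with no internal faces by adding lollipops and bridges. The no-internal-face base case is trivial (Proposition~\ref{prop: lemmaboundary} plus Proposition~\ref{prop: boundaryunique}, since the relevant matching is unique), and adding a bridge is controlled by a clean orthogonality computation with the twist columns (Lemmas~\ref{lem in span} and~\ref{lemma add bridge}) rather than by the Cramer's rule expansion you propose. Your secondary tool, the tropical leading-order analysis, isn't used in the paper and would at best identify the dominant term, not prove the full collapse to a monomial; and your intermediate step of showing $\sF\circ\rt\circ\tDD$ is valued in $\GG_m^F$ (so that the Laurent-unit argument applies) already requires a nonvanishing statement that is essentially as hard as the identity itself.
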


The morphisms in this diagram either commute or anticommute with the $\mathbb{G}_m$ action on each variety, and so the diagram descends to a commutative diagram on the quotients.
\[\begin{tikzpicture}
	\node (F1) at (-3,0) {$\mathbb{G}_m^F/\mathbb{G}_m$};
	\node (E) at (0,0) {$\mathbb{G}_m^E/\mathbb{G}_m^{V}$};
	\node (F2) at (3,0) {$\mathbb{G}_m^F/\mathbb{G}_m$};
	\node (P1) at (-3,-2) {$\Pio(\cM)$};
	\node (P2) at (0,-2) {$\Pio(\cM)$};
	\node (P3) at (3,-2) {$\Pio(\cM)$};
	\draw[-angle 90,out=10,in=170] (F1) to node[above] {$\rpartial$} (E);
	\draw[-angle 90,out=10,in=170] (E) to node[above] {$\rM$} (F2);
	\draw[-angle 90,out=190,in=-10] (F2) to node[below] {$\lpartial$} (E);
	\draw[-angle 90,out=190,in=-10] (E) to node[below] {$\lM$} (F1);
	\draw[dashed,-angle 90] (P1) to node[left] {$\tF$} (F1);
	\draw[-angle 90] (E) to node[left] {$\DD$} (P2);
	\draw[dashed,-angle 90] (P3) to node[right] {$\sF$} (F2);
	\draw[-angle 90,out=15,in=165] (P1) to node[above] {$\rt$} (P2);
	\draw[-angle 90,out=15,in=165] (P2) to node[above] {$\rt$} (P3);
	\draw[-angle 90,out=195,in=-15] (P3) to node[below] {$\lt$} (P2);
	\draw[-angle 90,out=195,in=-15] (P2) to node[below] {$\lt$} (P1);

\end{tikzpicture}\]

As a corollary, we obtain a combinatorial formula for the Pl\"ucker coordinates of a twisted point as a Laurent polynomial in the Pl\"ucker coordinates of the original point (Proposition~\ref{dimer sum is twist}).

\begin{myexample}
Let us consider the theorem in the running example of Figure \ref{fig: intrograph}. The boundary measurement map $\mathbb{D}$ sends the edge weights in Figure \ref{fig: edgeweights} to the row-span of the matrix in \eqref{eq: matrix}. The right twist of this matrix is given below in \eqref{eq: twistedmatrix}.

\begin{equation}\label{eq: twistedmatrix}
\begin{bmatrix}
1 & \frac{dks+ejs}{eip} & \frac{bjs}{adp} & \frac{hrs}{cmq} & 0 &0  \\
0 & 1 & \frac{bi}{ad} & \frac{fhipr+ikq(hl+gm)}{cfjmq} & \frac{gikn}{fhjo} & 0  \\
0 & 0 & 0 & \frac{1}{bciklnst} & \frac{1}{bhiklots} & \frac{1}{bgiknrsu} 
\end{bmatrix}
\end{equation}

To determine the value of $\sF$ at the point in $\Pi^\circ(\cM)$ defined by this matrix, we compute the nine minors with columns given by  the source labels of faces in $G$ (cf. Figure \ref{fig: introsource}).

\[\begin{array}{ccc}
\Delta_{156}= \frac{1}{bfhjorsu}  & \Delta_{126} = \frac{1}{bgiknrsu} & \Delta_{236}= \frac{1}{aegipnru}  \\
\Delta_{234}= \frac{1}{aceilpnt} & \Delta_{345} = \frac{1}{acdfmopt} & \Delta_{456} = \frac{1}{bcfjmpqu} \\
\Delta_{136}= \frac{1}{adgknrsu} & \Delta_{356} = \frac{1}{adfhporu} & \Delta_{235} = \frac{1}{aehilpot}
\end{array}\]

We see that, for each face $f$ in $G$, the value of $\Delta_{\sI(f)}$ on the matrix in \eqref{eq: twistedmatrix} is the reciprocal of the product of the edge weights in the extremal matching $\vecM(f)$. This is equivalent to the equality $\rM=\sF\circ \rt \circ \DD$, and thus the commutativity of the right square in Theorem \ref{main theorem}.
\end{myexample}

\subsection{Outline of paper} \label{sec outline}

The previous introduction presented as much background material as we need to state our results; we now begin filling in the additional background we need to prove them.
In Section~\ref{sec positroid}, we present the variety of combinatorial and geometric tools we will need for working with positroids.
In Section~\ref{sec matchings}, we discuss combinatorics related to matchings of planar graphs.
In Section~\ref{sec strand}, we explain the results we will need from Postnikov's theory of alternating strand diagrams.

The next two sections discuss prerequisite results which are largely original to this paper.
Section~\ref{sec extremal} discusses the combinatorics of the extremal matchings $\vecM$ and $\cevM$. 
Section~\ref{sec twist} defines the twist maps and proves many lemmas about them.
With these sections, we conclude the presentation of background material and move to the proof of the main results.

In Section~\ref{sec main theorem}, we restate our main results and several corollary results. In Section~\ref{sec bridge}, we introduce \emph{bridge decompositions}, a technical tool for building reduced graphs out of smaller reduced graphs. Finally, in Section~\ref{sec main thm proof}, we complete the proof of Theorem \ref{main theorem}.

We conclude with two appendices. 
Appendix~\ref{sec nice twist} considers several cases and examples where the twist map takes a particularly elegant form, making connections with matrix factorizations and with various enumerative results in matching theory.
Appendix~\ref{sec Propp} discusses connections between our extremal matchings (Section~\ref{sec extremal}) and work of Propp and of Kenyon and Goncharov.

\subsection{Acknowledgments} The second author has been thinking about these issues for a long time and has discussed them with many people.  He particularly recalls helpful conversations with Sergey Fomin, Suho Oh, Alex Postnikov, Jim Propp, Jeanne Scott, Kelli Talaska, Dylan Thurston and Lauren Williams. %WHO ELSE? FUNDING?
Both authors are thankful to Rachel Karpman for comments on an earlier draft of this paper.

These results were accepted for presentation at FPSAC 2016, and Section~\ref{Introduction} is similar in both substance and language to the results in our FPSAC extended abstract. The new material in this paper is the proofs in the remaining sections.

\section{The many definitions of positroid and positroid variety} \label{sec positroid}

Given a $k\times n$ matrix of rank $k$, its \newword{column matroid} $\cM\subset \binom{[n]}{k}$ is the set of subsets $J\subset [n]$ indexing collections of columns which form a basis.
A \newword{positroid} is a matroid $\cM$ with a `totally non-negative' representation; that is, $\cM$ is the column matroid of a matrix whose maximal minors are non-negative real.
(See \cite{Pos}, \cite{Oh11}, \cite{KLS13}, \cite{ARW13} for other, equivalent, definitions.)
In contrast with the difficult general problem of characterizing representable matroids, positroids can be explicitly classified by several equivalent combinatorial objects, which we now recall. See~\cite[Section 3]{KLS13} for further discussion.

\subsection{Classification of positroids} \label{sec positroid combinatorics}

For each $a\in [n]$, let $\prec_a$ denote the linear ordering on $[n]$ given below:
\[ a\prec_a a+1 \prec_a ...\prec_a n \prec_a 1 \prec_a 2\prec_a ...\prec_a a-1 \]
Extend this to a partial ordering on $\binom{[n]}{k}$, where $B\preceq_a C$ means that
%\[ B=\{b_1\prec_a b_2\prec_a....\prec_a b_j\},\;\;\; C=\{c_1\prec_ac_2 \prec_a...\prec_a c_j\}\]
\[\forall i,\;\;\;  b_i\preceq_a c_i,  \text{ where } B=\{b_1\prec_a b_2\prec_a....\prec_a b_k\}\text{ and } C=\{c_1\prec_ac_2 \prec_a...\prec_a c_k\}\]

%Given a matroid $\cM$ on $[n]$, an independent set $B=\{b_1\prec_a b_2\prec_a....\prec_a b_j\}\in \cM$ is \newword{$a$-lex minimal} if every set of the form $C=\{c_1\prec_ac_2 \prec_a...\prec_a c_j\}$ with
%\[ c_1\preceq_a b_1, \;\;\; c_2 \preceq_a b_2 ,\;\; ....,\;\; c_{j-1}\preceq_a b_{j-1},\;\;\; c_j\preceq_a b_j \]
%is dependent unless $B=C$.  
For each $a\in [n]$, a matroid $\cM$ has a unique $\prec_a$-minimal element we denote by $\vecI_a$, the $a$-minimal basis.
The sequence $\veccI=(\vecI_1,\vecI_2,...,\vecI_n )$ of minimal bases of $\cM$ has the property that, for all $a\in [n]$, 
\begin{itemize}
	\item if $a\in \vecI_a$, then $(\vecI_a\setminus \{a\}) \subset \vecI_{a+1}$, and
	\item if $a\not\in \vecI_a$, then $\vecI_a=\vecI_{a+1}$.
\end{itemize}
See~\cite[Lemma 16.3]{Pos}. The index $a+1$ is taken modulo $n$.

An $[n]$-indexed collection $\veccI=\{\vecI_1,\vecI_2,...,\vecI_n\}\subset \binom{[n]}{k}$ satisfying this property is called a \newword{Grassman necklace}. %By definition, a Grassman necklace determines a permutation $\pi:[n]\rightarrow [n]$.%\footnote{A Grassman necklace $\veccI$ may have more information than its permutation $\pi$, since a fixed point of $\pi$ could denote either $a\in I_a$ and $a\in \vecI_{a+1}$ (a coloop) or $a\not\in \vecI_a$ (a loop).  \preword{Decorated permutations} \cite{Pos} and \preword{bounded affine permutations} \cite{KLS} are methods to incorporate this information into $\pi$.}
Given a Grassman necklace $\veccI$, there is a unique largest matroid $\cM$ whose set of $a$-minimal bases is $\veccI$.  The construction is direct: define $\cM$ to be those $k$-element sets $J\subset [n]$ for which $\vecI_a\preceq_a J$ for all $a$.
The resulting collection of sets is not just a matroid; it is a positroid by the following theorem.
\begin{theorem}\label{thm: PostGrass}\cite[Theorem 6]{Oh11}
For a Grassman necklace $\veccI$, let $\cM$ be the set of $k$-element subsets $J$ of $[n]$ for which $\vecI_a\preceq_a J$ for all $a$.  Then $\cM$ is a positroid.  Every positroid can be realized by some Grassman necklace in this way.
\end{theorem}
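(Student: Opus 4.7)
The plan is to split the theorem into two statements and verify each: (i) for every Grassman necklace $\veccI$, the set $\cM := \{J \in \binom{[n]}{k} : \vecI_a \preceq_a J \text{ for all } a\}$ is a positroid; and (ii) every positroid $\cN$ arises as the $\cM$ associated to its own necklace of $\prec_a$-minimal bases.

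For (i), the first order of business is to show $\cM$ is a matroid. For each fixed $a$, the set $\cS_a := \{J : \vecI_a \preceq_a J\}$ is a \emph{Schubert matroid}---concretely, the column matroid of a generic point in the Schubert cell labeled by $\vecI_a$ with respect to the flag adapted to $\prec_a$---so each $\cS_a$ is a matroid. An arbitrary intersection of matroids is not a matroid, but the compatibility in the definition of a Grassman necklace (either $\vecI_{a+1} = \vecI_a$, or $\vecI_{a+1}$ is obtained from $\vecI_a$ by swapping $a$ for a single new element) is exactly enough to force basis exchange on $\cM = \bigcap_a \cS_a$: given $I, J \in \cM$ and $i \in I \setminus J$, the necklace condition produces an exchange partner $j \in J \setminus I$ such that $(I \setminus i) \cup j$ satisfies $\vecI_a \preceq_a (I \setminus i) \cup j$ for all $a$. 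Next one must exhibit a totally nonnegative realization. The cleanest route is via Postnikov's bijection between Grassman necklaces of type $(k,n)$ and decorated permutations of $[n]$: the necklace $\veccI$ corresponds to a decorated permutation, which labels a Le-diagram; placing generic positive reals in the boxes and running Postnikov's Le-to-matrix recipe produces a $k \times n$ matrix with nonnegative maximal minors whose support matroid can be computed box-by-box and shown to be exactly $\cM$.

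For (ii), start with a positroid $\cN$ and form its necklace $\veccI = (\vecI_1, \ldots, \vecI_n)$ of $\prec_a$-minimal bases. The inclusion $\cN \subseteq \{J : \vecI_a \preceq_a J \text{ for all } a\}$ is immediate since each $\vecI_a$ is, by definition, the $\prec_a$-minimum of $\cN$; the Grassman necklace property for $\veccI$ follows from standard matroid shift lemmas applied to $\cN$. The substantive direction is the reverse inclusion. Fix any $J$ with $\vecI_a \preceq_a J$ for all $a$. Choose a totally nonnegative representative $A$ of $\cN$; one shows $\Delta_J(A) \neq 0$ by a Lindström--Gessel--Viennot type argument, routing paths in the planar network associated to $A$ (or, equivalently, to the Le-diagram built in (i)) so that the compatibility $\vecI_a \preceq_a J$ for every $a$ guarantees a nonvanishing summand with no cancellation.

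The main obstacle is the totally-nonnegative realization in (i): packaging a Grassman necklace into an explicit positive matrix requires invoking Postnikov's Le-diagram / plabic graph machinery (or an equivalent parametrization such as Marsh--Rietsch). Once this is in hand, the matroid axiom check and the reverse-inclusion argument in (ii) become routine bookkeeping on the orderings $\prec_a$.
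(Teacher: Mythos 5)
The paper does not prove this statement at all; it is quoted verbatim with the citation \cite[Theorem 6]{Oh11} in the theorem header, and the authors rely on Oh's published proof. So there is no in-paper argument to compare your proposal against, and the honest answer is that the paper simply outsources this result.

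As a stand-alone sketch, your outline tracks the standard route (necklace $\to$ decorated permutation $\to$ Le-diagram $\to$ totally nonnegative matrix), but two steps are asserted rather than established and are in fact where the real work lies. First, the claim in (i) that the necklace compatibility ``is exactly enough to force basis exchange'' on $\bigcap_a \cS_a$ is not a formal consequence of the necklace axiom; it is a nontrivial claim, and the cleaner approach (which Oh actually uses) is to build the Le-diagram directly from the necklace and read $\cM$ off it, so that $\cM$ is a matroid because it is a positroid, not the other way around. Second, in (ii) the reverse inclusion is the crux of the whole theorem: given $J$ with $\vecI_a \preceq_a J$ for all $a$, one must actually exhibit a vertex-disjoint path family with sink set $J$ in the planar network. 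The no-cancellation observation (total positivity) is correct, but it only helps once existence of such a family is proven, and that existence argument is precisely what you label ``routine bookkeeping.'' As written, the proposal identifies the right ingredients but leaves the load-bearing combinatorics unproven.
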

\noindent As a corollary, the map sending a positroid to its Grassman necklace of $a$-minimal bases is a bijection between the set of positroids and the set of Grassman necklaces.

Another consequence of the theorem is that  every matroid is contained in a unique minimal positroid. This positroid can be constructed by first finding the Grassman necklace $\cI$ of $a$-minimal bases, and then applying the construction in the theorem.

It will be convenient to also consider the dual notion corresponding to maximal bases.  For a matroid $\cM$, let $\cevI_a$ denote the unique $\prec_{a+1}$-maximal basis in $\cM$.  The collection of all maximal bases $\cevcI:= \{\cevI_1,\cevI_2,...,\cevI_n\}$ has the property that, for all $a\in [n]$, 
\begin{itemize}
	\item if $a\in \cevI_a$, then $(\cevI_a\setminus \{ a\}) \subset \vecI_{a-1}$, and
	\item if $a\not\in \cevI_a$, then $\cevI_a=\cevI_{a-1}$.
\end{itemize}
An $[n]$-indexed collection $\cevcI=\{\cevI_1,\cevI_2,...,\cevI_n\}\subset \binom{[n]}{k}$ satisfying this property is called a \newword{reverse Grassman necklace}.
By a symmetric analog of Theorem \ref{thm: PostGrass}, reverse Grassman necklaces are in bijection with positroids, and so they are also in bijection with Grassman necklaces.  

%\subsection{Bounded affine permutations}

Grassman necklaces are equivalent to certain permutations of $\ZZ$, which we now define.  A \newword{bounded affine permutation}\footnote{Bounded affine permutations are more evocatively called \newword{juggling patterns} in \cite{KLS13}.} of type $(k,n)$ is a bijection $\pi:\ZZ\rightarrow \ZZ$ such that:
\begin{itemize}
	\item for all $a\in \ZZ$, $\pi(a+n) = \pi(a)+n$,
	\item for all $a\in \ZZ$, $a\leq \pi(a) \leq a+n$, and
	\item $\frac{1}{n} \sum_{a=1}^n (\pi(a)-a) =k$.
\end{itemize}
A Grassman necklace $\veccI$ in $\binom{[n]}{k}$ defines the following bounded affine permutation $\pi$ of type $(k,n)$.
\begin{itemize}
	\item If $a \in \vecI_a$, then $a < \pi(a) \leq a+n$ and $\pi(a)$ is determined by the relation:
	\[ \vecI_{a+1} \equiv  (\vecI_a \setminus \{ a \}) \cup \{ \pi(a) \} \text{ (mod $n$)}\]
	\item If $a\not\in \vecI_a$, then $\pi(a)=a$.
\end{itemize}

\begin{prop}[{\cite[Corollary 3.13]{KLS13}}]
This construction defines a bijection from Grassman necklaces in $\binom{[n]}{k}$ to bounded affine permutations of type $(k,n)$.
\end{prop}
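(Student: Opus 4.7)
My plan is to verify that the given map from Grassman necklaces to bounded affine permutations is well-defined, construct an explicit inverse, and check these are mutually inverse. Given a Grassman necklace $\veccI$ and $a \in [n]$ with $a \in \vecI_a$, the necklace condition $(\vecI_a \setminus \{a\}) \subset \vecI_{a+1}$ together with $|\vecI_{a+1}| = k$ forces $\vecI_{a+1} = (\vecI_a \setminus \{a\}) \cup \{c_a\}$ for a unique $c_a \in [n] \setminus (\vecI_a \setminus \{a\})$. The half-open interval $(a, a+n]$ contains exactly one representative of each residue class modulo $n$, so there is a unique $\pi(a) \in (a, a+n]$ with $\pi(a) \equiv c_a \pmod{n}$; for $a \notin \vecI_a$ set $\pi(a) = a$, and extend to all of $\ZZ$ by imposing $\pi(a+n) = \pi(a) + n$. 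The bounds $a \leq \pi(a) \leq a+n$ hold by construction.

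To produce the inverse I would use the ``juggling pattern'' interpretation: given a bounded affine permutation $\pi$ of type $(k,n)$, define
\[ \vecI_a \;:=\; \{\, \pi(b) \bmod n \;:\; a-n \leq b < a,\ \pi(b) \geq a \,\}. \]
The bound $\pi(b) \leq b + n$ ensures this is a well-defined subset of $[n]$, and $|\vecI_a| = k$ follows from the injectivity of $\pi$ on the window $[a-n, a-1]$ together with the average condition $\tfrac{1}{n} \sum_{b=1}^n (\pi(b) - b) = k$. Checking the Grassman-necklace axioms amounts to comparing which throws are ``in flight'' at time $a$ versus time $a+1$: as the window of $b$-values shifts by one, we drop the index $b = a-n$ (whose throw, if any, has just landed) and gain $b = a$ (whose throw, if any, is just launched). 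Translating this yields exactly the two defining cases of a Grassman necklace, with $a$ removed (when $a \in \vecI_a$, equivalently $\pi(a) > a$) and $\pi(a) \bmod n$ added. Mutual inverseness then follows by direct unwinding: starting with $\veccI$, the throws encoded by $\pi$ are precisely the changes $(a, c_a)$ between consecutive $\vecI_a$'s, and starting with $\pi$, the window reconstruction recovers $c_a$ as $\pi(a) \bmod n$.

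The principal obstacle lies in the cyclic bookkeeping needed to reconcile the $[n]$-indexed nature of the necklace with the $\ZZ$-indexed affine permutation, particularly when throws ``wrap around'' (so $\pi(a) > n$ for some $a \in [n]$). One must consistently lift residues in $[n]$ to their unique representatives in intervals of the form $(a, a+n]$, and keep track of throw-heights in order to recover the type-$(k,n)$ conditions. No individual step here is deep, but the careful index-tracking is what occupies the bulk of the verification.
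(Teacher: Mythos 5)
The paper does not prove this proposition; it simply cites \cite[Corollary 3.13]{KLS13}, whose terminology (``juggling patterns'') is precisely the model you invoke. Your outline matches the standard argument in that reference: the forward direction is well-defined by the necklace axioms, and the inverse $\vecI_a := \{\pi(b) \bmod n : a-n \leq b < a,\ \pi(b) \geq a\}$ (balls in flight at time $a$) is correct.

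One small point worth making explicit, since it is where the cyclic bookkeeping could trip you up: the elements $\pi(b)$ you collect all lie in the half-open window $[a, a+n)$ (because $\pi(b) \geq a$ and $\pi(b) \leq b + n \leq a - 1 + n$), so their residues mod $n$ are automatically distinct; without this observation it is not obvious that $\vecI_a$ has exactly $k$ elements rather than fewer. For the cardinality itself, rather than invoking the average condition directly, it is cleanest to first verify (as your window-shift argument already shows) that $|\vecI_a|$ is independent of $a$, and then sum $|\vecI_a|$ over $a \in [n]$, interchange the order of summation, and recover $\sum_{b} (\pi(b)-b) = nk$. Everything else you describe — the three cases ($\pi(a)=a$, $a<\pi(a)<a+n$, $\pi(a)=a+n$) falling out of sliding the window, and the mutual inverseness — goes through as stated.
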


If $\veccI$ is the Grassman necklace of the column matroid of a matrix $A$, then $\pi$ can be constructed directly from $A$ by the following recipe, which we learned from Allen Knutson.
We leave the proof to the reader.
\begin{lemma} \label{lem Knutson pi rule}
With the above notation, $\pi(a)$ is the minimal $r \geq a$ for which 
\[A_a \in \text{span}(A_{a+1},A_{a+2},...,A_{r}).\]
\end{lemma}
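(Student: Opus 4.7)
The plan is to reduce the claim to the well-known greedy algorithm for minimum-weight bases of a matroid. Specifically, I would first observe that $\vecI_a$ admits the following greedy description: scan the cyclic order $a\prec_a a{+}1\prec_a\cdots\prec_a a{-}1$ and include index $j$ in $\vecI_a$ precisely when $A_j$ is not in the span of the previously included columns. This is the standard minimum-weight basis algorithm for the linear order $\prec_a$ on the column matroid of $A$, and the elementary matroid exchange argument gives the result. A direct consequence is that, for every $j$, the span of $\{A_i : i \in \vecI_a,\ i \preceq_a j\}$ equals $\text{span}(A_a, A_{a+1}, \ldots, A_j)$ (with cyclic indexing); call this subspace $T_a^{(j)}$, and define $T_{a+1}^{(j)}$ analogously using the order $\prec_{a+1}$. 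Note that $T_a^{(j)} = T_{a+1}^{(j)} + \CC \cdot A_a$ for every $j$.

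Let $r$ be the minimum as defined in the lemma. If $A_a = 0$ then $r=a$ (empty span) and $a \notin \vecI_a$, so the definition gives $\pi(a)=a=r$, as required. Otherwise $A_a\neq 0$, so $a\in \vecI_a$ and $r>a$; the goal becomes to show that $\vecI_{a+1}=(\vecI_a\setminus\{a\})\cup\{r\bmod n\}$, which is equivalent to $\pi(a)=r$ by the definition recalled in the excerpt.

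To establish this equality, I would compare the greedy runs for $\vecI_a$ and $\vecI_{a+1}$ step by step. For indices $j$ in the range $a{+}1\le j<r$, minimality of $r$ gives $A_a\notin T_{a+1}^{(j-1)}$. If some $A_j$ lay in $T_a^{(j-1)}=T_{a+1}^{(j-1)}+\CC A_a$ but not in $T_{a+1}^{(j-1)}$, then $A_a\in T_{a+1}^{(j)}$, violating the minimality of $r$. So in this range the two greedy runs make identical decisions. At the critical step $j=r$, minimality forces $A_r\notin T_{a+1}^{(r-1)}$, so $r$ is added to $\vecI_{a+1}$; conversely, writing $A_a=v+cA_r$ with $v\in T_{a+1}^{(r-1)}$ and $c\neq 0$ shows $A_r\in T_a^{(r-1)}$, so $r$ is not added to $\vecI_a$. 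For $j>r$ we have $T_a^{(j-1)}=T_{a+1}^{(j-1)}$, so the two runs coincide thereafter. Finally, $\vecI_{a+1}$'s last step considers $A_{a+n}=A_a$; since $A_a\in T_{a+1}^{(r)}\subseteq T_{a+1}^{(a+n-1)}$, index $a$ is not added (and the boundary case $r=a+n$, in which $r\bmod n=a$, is handled by the observation that $\vecI_{a+1}=\vecI_a$ there, consistent with the formula).

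There is no serious obstacle; the main care lies in handling the cyclic wrap-around and the degenerate cases $A_a=0$ and $r=a+n$ cleanly. Once the greedy characterization of the minimum basis is in hand, the rest is a careful bookkeeping comparison of two greedy runs differing by one initial vector.
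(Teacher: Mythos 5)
Your argument is correct and complete. The paper explicitly leaves the proof of this lemma to the reader, so there is no official proof to compare against; your route — characterizing $\vecI_a$ via the greedy minimum-weight basis algorithm for $\prec_a$, then comparing the two greedy runs step by step, with careful treatment of the cases $A_a = 0$ and $r = a+n$ — is a clean, self-contained way to fill that gap, and the bookkeeping checks out at the critical index $j=r$ and on either side of it.
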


We note some degenerate cases: $\pi(a)=a$ if and only if $A_a=0$; $\pi(a)=a+1$ if and only if $A_a$ and $A_{a+1}$ are parallel and not zero, $\pi(a)=a+n$ if and only if $A_a$ is not in $\text{span}(A_{a+1}, A_{a+2}, \ldots, A_{a+n-1})$.

\begin{remark}
The analogous construction for the reverse Grassmann necklace $\cevcI$ yields the inverse permutation $\pi^{-1}:\ZZ \rightarrow \ZZ$.
\end{remark}

If $b < a \leq \pi(a) < \pi(b)$, we say that \newword{$a$ $\pi$-implies $b$}, and write $a \impto_{\pi} b$.
Note that $\impto_{\pi}$ defines a poset structure on $[n]$. We define the \newword{length} of $\pi$, written $\ell(\pi)$ to be $\# \{ (a,b) : 1 \leq a \leq n,\ a \leq b \leq b+n,\ a \impto_{\pi} b \}$. This is the length of $\pi$ as an element of the affine symmetric group, as discussed in~\cite[Section 3.2]{KLS13}.
In~\cite[Section 5]{Pos}, $a \to \pi(a)$ and $b \to \pi(b)$ are called ``aligned".
See Lemma~\ref{label implication} for a justification of the notation $\impto$.

The following description of $\vecI$ using both $\pi$ and simple geometric properties of $A$ is often more convenient than computing $\vecI$ in terms of solely $\pi$ or $A$.

\begin{lemma} \label{lem imp Grass necklace}
Fix a bounded affine permutation $\pi$, and an integer $a$.
\begin{itemize}
	\item The set $\vecI_a$ is the disjoint union of $\{b \mid a \impto_{\pi} \pi^{-1}(b)\}$ and the $a$-minimal subset of \\ $(A_{a}, A_{a+1}, \ldots, A_{\pi(a)-1})$ that is a basis for $\mathrm{span}(A_{a}, A_{a+1}, \ldots, A_{\pi(a)-1})$.
	\item The set $\cevI_a$ is the disjoint union of $\{b \mid a \impto_{\pi} b\}$ and the $(a+1)$-maximal subset of \\ $(A_{\pi^{-1}(a)+1}, \ldots, A_{a-1},A_{a})$ that is a basis for $\mathrm{span}(A_{\pi^{-1}(a)+1}, \ldots, A_{a-1},A_{a})$.
\end{itemize}
\end{lemma}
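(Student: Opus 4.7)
The plan is to use the matroid greedy algorithm together with Lemma~\ref{lem Knutson pi rule} to characterize membership in $\vecI_a$ and $\cevI_a$ column-by-column, and then split the relevant window of $n$ consecutive integer indices at $\pi(a)$ (for the first bullet) or $\pi^{-1}(a)$ (for the second). For the first bullet, I will begin from the greedy rule: $c\in\vecI_a$ if and only if $A_c\notin\mathrm{span}(A_a,A_{a+1},\ldots,A_{c-1})$, where $c$ is taken as an integer in the window $[a,a+n-1]$. On the initial segment $c\in[a,\pi(a)-1]$ this is literally the criterion defining the $a$-minimal basis of the tuple $(A_a,\ldots,A_{\pi(a)-1})$, which supplies the second summand. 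For $c\geq\pi(a)$, Lemma~\ref{lem Knutson pi rule} gives $A_a\in\mathrm{span}(A_{a+1},\ldots,A_{\pi(a)})$, which lets me collapse $\mathrm{span}(A_a,\ldots,A_{c-1})=\mathrm{span}(A_{a+1},\ldots,A_{c-1})$ whenever $c>\pi(a)$; the boundary case $c=\pi(a)$ is excluded because $A_{\pi(a)}$ is already expressible in $A_a,\ldots,A_{\pi(a)-1}$.

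What remains for the first bullet is to show that for $c>\pi(a)$ one has $c\in\vecI_a$ if and only if $\pi^{-1}(c)<a$; since $c>\pi(a)$ is automatic once $\pi^{-1}(c)<a$, this inequality unwinds via the defining condition ``$\pi^{-1}(b)<a\leq\pi(a)<b$'' of $a\impto_\pi\pi^{-1}(b)$ into the first summand with $b=c$. The key lemma is again Lemma~\ref{lem Knutson pi rule}, now applied at $c':=\pi^{-1}(c)$: by definition $A_{c'}\in\mathrm{span}(A_{c'+1},\ldots,A_c)$ with a nonzero coefficient on $A_c$, and $A_{c'}\notin\mathrm{span}(A_{c'+1},\ldots,A_{c-1})$. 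Solving this relation for $A_c$ yields $A_c\in\mathrm{span}(A_{c'},\ldots,A_{c-1})$ while ruling out $A_c\in\mathrm{span}(A_{c'+1},\ldots,A_{c-1})$. When $c'<a$, the latter span contains $\mathrm{span}(A_{a+1},\ldots,A_{c-1})$, so $A_c$ escapes this smaller span and $c\in\vecI_a$; when $c'\geq a$ (necessarily $c'\geq a+1$, since $c'=a$ would force $c=\pi(a)$ and $c'=c$ is the degenerate case $A_c=0$), the former span sits inside $\mathrm{span}(A_{a+1},\ldots,A_{c-1})$, and $c\notin\vecI_a$. Disjointness of the two summands is automatic because their integer sub-windows $[a,\pi(a)-1]$ and $[\pi(a)+1,a+n-1]$ are disjoint.

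The second bullet will be proved by the mirror-image argument, using the greedy rule $c'\in\cevI_a$ if and only if $A_{c'}\notin\mathrm{span}(A_{c'+1},\ldots,A_a)$ for $c'\in[a-n+1,a]$, and splitting at $\pi^{-1}(a)$. On the segment $c'\in[\pi^{-1}(a)+1,a]$ this is exactly the $(a+1)$-maximal basis condition in the lemma; at $c'=\pi^{-1}(a)$ the column already lies in $\mathrm{span}(A_{c'+1},\ldots,A_a)$ by definition of $\pi$; and on $c'\in[a-n+1,\pi^{-1}(a)-1]$ the ``inverse'' form of Lemma~\ref{lem Knutson pi rule} gives $A_a\in\mathrm{span}(A_{\pi^{-1}(a)},\ldots,A_{a-1})$, which lets me collapse $\mathrm{span}(A_{c'+1},\ldots,A_a)=\mathrm{span}(A_{c'+1},\ldots,A_{a-1})$ and reduce the condition to $\pi(c')>a$, which is then recognized as the $\impto_\pi$ condition in the statement after unwinding.

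The main obstacle, in both bullets, will be the careful cyclic/integer bookkeeping needed to verify that the integer-indexed ``first summand'' projects injectively onto a subset of $[n]$ and sits disjointly from the ``second summand''; this is precisely where the strict inequalities $c'\geq a+1$ in the first bullet (and their analogues in the second) get used, to prevent collision with the excluded endpoint $\pi(a)$ (respectively $\pi^{-1}(a)$). Everything else is straightforward manipulation of the single relation supplied by Lemma~\ref{lem Knutson pi rule}.
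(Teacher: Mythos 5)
Your proposal is correct and follows essentially the same route as the paper's proof: both split the window $[a,a+n-1]$ at $\pi(a)$, identify $\vecI_a\cap[a,\pi(a)-1]$ as the $a$-minimal basis of the local tuple via the greedy characterization, and then apply Lemma~\ref{lem Knutson pi rule} at $c'=\pi^{-1}(c)$ to extract the two facts $A_c\in\mathrm{span}(A_{c'},\ldots,A_{c-1})$ and $A_c\notin\mathrm{span}(A_{c'+1},\ldots,A_{c-1})$ that drive the comparison with $a$; the paper merely packages the second half as two separate implications rather than your single iff. One small inaccuracy to fix: the parenthetical claim that ``$c>\pi(a)$ is automatic once $\pi^{-1}(c)<a$'' is false in general (one can have $\pi^{-1}(c)<a$ with $a\le c<\pi(a)$); what you actually need, and what is true, is simply that $\pi(a)<b$ is already part of the defining condition of $a\impto_\pi\pi^{-1}(b)$, so the first summand automatically lies in the regime $c>\pi(a)$ where your iff is being established.
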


\begin{remark}
If $\pi(a)=a$, then the latter sets are empty.
\end{remark}

\begin{proof}
We prove the first statement, the second is similar. 

Let $J$ be the $a$-minimal basis among $(A_{a}, A_{a+1}, \ldots, A_{\pi(a)-1})$. 
Since $\vecI_a$ is the $a$-minimal basis for $\CC^k$ among $(A_a, A_{a+1}, \ldots, A_{a+n-1})$, we have $J = \vecI_a \cap \{ a, a+1, \ldots, \pi(a)-1 \}$.
So it remains to show that $\vecI_a \cap \{ \pi(a), \pi(a)+1, \ldots, a+n-1 \} = \{ b \in [n] \mid a \impto_{\pi} \pi^{-1}(b) \}$.

Suppose that $a \impto_{\pi} \pi^{-1}(b)$, so $\pi^{-1}(b) < a \leq \pi(a) < b$. By the definition of $\pi(\pi^{-1}(b))=b$, the vector $A_{b}$ is not in the span of $\{ A_{\pi^{-1}(b)+1}, A_{\pi^{-1}(b)+2}, \ldots, A_{b-1} \}$. Restricting to the subset starting at $A_a$, we see that $A_{b}$ is not in the span of $\{ A_{a}, A_{a+1}, \ldots, A_{\pi(b)-1} \}$, and so $\pi(b) \in \vecI_a$.

Conversely, suppose that $b \in \vecI_a \cap \{ \pi(a), \pi(a)+1, \ldots, a+n-1 \}$. Then $A_{b}$ is not the span of $\{ A_a, A_{a+1}, \ldots, A_{b-1} \}$. On the other hand, $A_b$ is in the span of $\{ A_{\pi^{-1}(b)}, A_{\pi^{-1}(b)+1}, \ldots, A_{b-1} \}$ by Lemma \ref{lem Knutson pi rule}. We deduce that $\pi^{-1}(b)<a$ and thus $a \impto_\pi \pi^{-1}(b)$.
\end{proof}

\begin{cor} \label{cor imp indep} 
The sets $\{ A_{\pi(b)} \mid a \impto_{\pi} b \}$ and $\{ A_{b} \mid a \impto_{\pi} b \}$ are each linearly independent.
\end{cor}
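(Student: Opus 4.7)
The plan is to derive both statements as immediate consequences of Lemma \ref{lem imp Grass necklace}, using the fact that any Grassmann necklace element of a positroid $\cM$ is itself a basis of $\cM$ and therefore indexes a linearly independent collection of columns of $A$.

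For the first set, I would observe that Lemma \ref{lem imp Grass necklace} expresses $\vecI_a$ as a disjoint union, one piece of which is $\{b \mid a \impto_{\pi} \pi^{-1}(b)\}$. Reindexing via $b = \pi(c)$, this piece equals $\{\pi(c) \mid a \impto_{\pi} c\}$. Since $\vecI_a \in \cM$, the columns $\{A_b \mid b \in \vecI_a\}$ form a basis of $\CC^k$, and any subset of a basis is linearly independent. Restricting to the indicated piece of the disjoint union, we conclude that $\{A_{\pi(c)} \mid a \impto_{\pi} c\}$ is linearly independent.

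For the second set, I would argue symmetrically using the description of $\cevI_a$ in Lemma \ref{lem imp Grass necklace}, which expresses $\cevI_a$ as the disjoint union of $\{b \mid a \impto_{\pi} b\}$ and a basis of a span of a consecutive range of columns. Again $\cevI_a \in \cM$, so the columns indexed by $\cevI_a$ are linearly independent, and restricting to the first piece of the disjoint union gives the desired independence of $\{A_b \mid a \impto_{\pi} b\}$.

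There is no real obstacle here; the content of the corollary is already packaged in the preceding lemma. The only thing worth being careful about is the reindexing in the first statement (substituting $b = \pi(c)$ to convert the condition $a \impto_\pi \pi^{-1}(b)$ into $a \impto_\pi c$) and the observation that a subset of a basis of $\CC^k$ is automatically linearly independent, which needs no further justification.
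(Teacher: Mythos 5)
Your proposal is correct and takes essentially the same approach as the paper: the paper's one-line proof is ``We have just shown that they are contained in the bases $\vecI_a$ and $\cevI_a$, respectively,'' which is precisely your observation that each set is a piece of the disjoint union in Lemma~\ref{lem imp Grass necklace} and hence a subset of a basis.
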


\begin{proof}
We have just shown that they are contained in the bases $\vecI_a$ and $\cevI_a$, respectively.
\end{proof}

%In summary, a $k\times n$ matrix $A$ of rank $k$ determines several objects.
%\begin{itemize}
%	\item The \textbf{matroid} of sets of columns of $A$ which form a basis.
%	\item The unique minimal \textbf{positroid} which contains the column matroid of $A$.
%	\item The \textbf{Grassman necklace} $\veccI$ of minimal bases of $A$.
%	\item The \textbf{reverse Grassman necklace} $\cevcI$ of maximal bases of $A$.
%	\item The \textbf{permutation} $\pi:[n]\rightarrow [n]$.
%\end{itemize}
%Finally, we remark that the permutation $\pi$ of $A$ can be directly characterized\footnote{THE SUBSCRIPT NOTATION HASNT BEEN INTRODUCED YET.}
%\[ A_a \in \text{span} \{ A_{a+1},A_{a+2},...,A_{\pi(a)} \}\text{ and } A_a \not\in \text{span} \{ A_{a+1},A_{a+2},...,A_{\pi(a)-1} \} \]

In summary, a $k\times n$ matrix $A$ of rank $k$ determines the following equivalent combinatorial objects.

\begin{prop}\label{prop: equivposi}
Let $A$ be a $k\times n$ matrix of rank $k$.  Then each of the following objects associated to $A$ can be reconstructed from each other.
\begin{enumerate}
	\item The unique minimal positroid $\cM$ containing the column matroid of $A$.
	\item The Grassman necklace $\veccI=\{\vecI_1,\vecI_2,...,\vecI_n\}$, where $\vecI_a$ is the $a$-minimal basis of the columns of $A$.
	\item The reverse Grassman necklace $\cevcI=\{\cevI_1,\cevI_2,...,\cevI_n\}$, where $\cevI_a$ is the $a+1$-maximal basis of the columns of $A$.
	\item The bounded affine permutation $\pi:\ZZ\rightarrow \ZZ$ defined by
	\[ A_a \not\in \text{span} \{ A_{a+1},A_{a+2},...,A_{\pi(a)-1} \}\text{ and } A_a \in \text{span} \{ A_{a+1},A_{a+2},...,A_{\pi(a)} \}\ \]
\end{enumerate}
\end{prop}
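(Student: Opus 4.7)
The plan is to observe that this proposition is largely a consolidation of earlier results; the proof proceeds by assembling the bijections already established and verifying they are realized by the constructions from $A$.

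First, I would note that $A$ produces each of the four objects directly from its definition: the column matroid and its positroid envelope give (1); choosing the $\prec_a$-minimal (respectively $\prec_{a+1}$-maximal) basis of the column matroid for each $a$ gives (2) and (3); and Lemma \ref{lem Knutson pi rule} constructs a $\pi$ which satisfies the characterization in (4), since $\pi(a)$ is by that lemma the minimal $r \geq a$ with $A_a \in \operatorname{span}(A_{a+1}, \ldots, A_r)$, and this is exactly the condition recorded in (4). So all four objects arise canonically from $A$; it remains to see that any one of them determines the others.

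For $(1) \leftrightarrow (2)$: the map sending a positroid to its Grassman necklace is a bijection by Theorem \ref{thm: PostGrass}, with explicit inverse $\cM = \{ J : \vecI_a \preceq_a J \text{ for all } a\}$. The $a$-minimal basis of the column matroid of $A$ agrees with the $a$-minimal basis of its positroid envelope, since passing to the envelope is the maximal enlargement preserving the Grassman necklace (this is the content of Theorem \ref{thm: PostGrass} combined with the uniqueness of the minimal positroid). For $(2) \leftrightarrow (4)$: this is the bijection between Grassman necklaces in $\binom{[n]}{k}$ and bounded affine permutations of type $(k,n)$ cited from~\cite[Corollary 3.13]{KLS13}, with the recipe giving $\pi$ from $\veccI$ spelled out in the paragraphs preceding Lemma \ref{lem Knutson pi rule}, and the inverse recovered iteratively from $\vecI_a$ and $\pi$. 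For $(1) \leftrightarrow (3)$: this is the symmetric analog of Theorem \ref{thm: PostGrass} mentioned in the text (with $\prec_a$ replaced by the reverse order $\prec_{a+1}$-maximum, and minimum subsets of $\cM$ replaced by maxima). Composing these bijections pairwise yields all six mutual reconstructions.

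The only non-bookkeeping point is compatibility: one must check that the $\pi$ produced from $A$ by Lemma \ref{lem Knutson pi rule} is the same $\pi$ attached to the Grassman necklace $\veccI$ of $A$ by the KLS13 bijection. This is the main step, and it follows from reading off the recursive definition $\vecI_{a+1} = (\vecI_a \setminus \{a\}) \cup \{\pi(a)\}$ directly from the matrix: when $A_a \neq 0$, the column $A_a$ is removed from the minimal basis at index $a+1$ and replaced by the first subsequent column $A_{\pi(a)}$ not already in $\operatorname{span}(A_{a+1}, \ldots, A_{\pi(a)-1})$, which is exactly the definition of $\pi(a)$ in both (4) and Lemma \ref{lem Knutson pi rule}. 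The degenerate case $\pi(a)=a$ corresponds to $A_a = 0 \notin \vecI_a$, in which case $\vecI_{a+1} = \vecI_a$, consistent with the Grassman necklace axioms. Since each of the four objects is now shown to be canonically determined by any of the others, the proposition follows.
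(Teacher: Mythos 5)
The paper states this proposition without a proof; it is offered as a summary of the preceding constructions in Section~\ref{sec positroid combinatorics}. Your proposal assembles precisely the bijections the paper has just established — Theorem~\ref{thm: PostGrass}, its asserted symmetric analog for reverse Grassman necklaces, the correspondence cited from \cite[Corollary 3.13]{KLS13}, and Lemma~\ref{lem Knutson pi rule} — and then fills in the compatibility check (that the $\pi$ read off from $A$ column-by-column agrees with the $\pi$ obtained from the Grassman necklace of $A$) that the paper leaves implicit, so your approach matches the paper's intent and is correct.
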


\subsection{Several flavors of positroid variety} \label{pos vars}

For fixed $k\leq n$, 
\begin{itemize}
	\item let $\Mat(k,n)$ denote the variety of complex $k\times n$ matrices, 
	\item let $\Mato(k,n)$ denote the variety of complex $k\times n$ matrices of rank $k$,   
	\item let $Gr(k,n)$ denote the $(k,n)$-Grassmannian: the variety of $k$-planes in $\CC^n$, and
	\item let $\widetilde{Gr(k,n)}$ denote the affine cone over the Pl\"ucker embedding of $Gr(k,n)$. 
	%Equivalently, $\widetilde{Gr(k,n)}$ is the variety parametrizing principal linear $k$-forms on $\CC^n$.
\end{itemize}
The general linear group $GL_k$ acts freely on $\Mato(k,n)$ by left multiplication, %, and the algebraic torus $\mathbb{G}_m^n$ acts on $\Mato(k,n)$ by scaling the columns.\footnote{Equivalently, $\mathbb{G}_m^n$ acts as diagonal submatrices of $GL_n$, for the right action of $GL_n$ by right multiplication.}  
and there are standard isomorphisms
\[ 
\begin{array}{rcl}
SL_k\backslash \Mato(k,n) &\garrow{\sim}& \widetilde{Gr(k,n)} \setminus \{ 0 \} \\
 GL_k\backslash \Mato(k,n) &\garrow{\sim}& Gr(k,n) \\
 \end{array} \]
sending a matrix $A$ to the exterior product of the rows of $A$, and to the row-span of $A$, respectively.
 
For a positroid $\cM\subset \binom{[n]}{k}$, define the following locally closed subvariety of $\Mato(k,n)$.
 \[ \Mato(\cM) : =  \{ A\in \Mato(k,n) \mid \text{$\cM$ is the minimal positroid containing the column matroid of $A$} \} \]
 By Proposition \ref{prop: equivposi}, this could be equivalently defined as the set of matrices with a fixed Grassman necklace, reverse Grassman necklace, or bounded affine permutation.
 
These subvarieties fit into a decomposition of $\Mato(k,n)$. 
\[ \Mato(k,n) = \bigsqcup_{\text{positroids }\cM} \Mato(\cM)\]
The action of $GL_n$ preserves these subvarieties, and so we may consider their quotient varieties.
\[ \tPio(\cM) := SL_k\backslash \Mato(\cM) \subset \widetilde{Gr(k,n)} \]
\[ \Pio(\cM) := GL_k\backslash \Mato(\cM) \subset {Gr(k,n)} \]
The variety $\Pio(\cM)$ is called the \newword{open positroid variety} associated to the positroid $\cM$.
Again, these subvarieties fit into decompositions.
\[ \widetilde{Gr(k,n)} = \bigsqcup_{\text{positroids }\cM} \tPio(\cM),\;\;\;\;\;\; Gr(k,n) = \bigsqcup_{\text{positroids }\cM} \Pio(\cM) \]
\begin{remark}
These decompositions can also be defined as the common refinement of all cyclic permutations of the Schubert decompositions of $\widetilde{Gr(k,n)}$ and $Gr(k,n)$, by \cite[Lemma~5.3]{KLS13}.
\end{remark}

We write $\Pi(\cM)$, $\tPi(\cM)$ and $\Mat(\cM)$ for the closures of $\Pio(\cM)$, $\tPio(\cM)$ and $\Mato(\cM)$ in $Gr(k,n)$, $\widetilde{Gr(k,n)}$ and $\Mat(k,n)$ respectively. 
The reduced ideal of $\tPi(\cM)$ in $\widetilde{Gr(k,n)}$ is generated by the Pl\"ucker coordinates $\Delta_I$ for $I \not \in \cM$, by \cite[Theorem~5.15]{KLS13}.
Each of $\Pi(\cM)$, $\tPi(\cM)$ and $\Mat(\cM)$ has codimension $\ell(\pi)$ in $Gr(k,n)$, $\widetilde{Gr(k,n)}$ and $\Mat_{k \times n}$ respectively. 
See~\cite{KLS13} for this and many other many excellent properties of these varieties.

\section{Matchings of bipartite graphs in the disc} \label{sec matchings}

In this section, we define the \emph{boundary measurement map}, which uses the matchings on a bipartite graph $G$ embedded in a disc to define a map from an algebraic torus to $\tPi(\cM)$. %for some positroid $\cM$.

%For a fixed graph $G$, we use the matchings on $G$ to define a map (called the \emph{boundary measurement map}) from an algebraic torus to $\tPi(\cM)$, for some positroid $\cM$.

\subsection{Positroids from matchings} \label{sec matching to positroid}

Throughout this paper, $G$ will denote a bipartite graph embedded in the disc, with the following additional data.
\begin{itemize}
	\item A coloring of each internal vertex as either \emph{black} or \emph{white}, such that adjacent internal vertices do not have the same color.\footnote{Unlike the introduction, we do not assume that the internal vertices adjacent to the boundary are white.}
	\item An indexing of the boundary vertices $1,2,...,n$ in clockwise order.
\end{itemize}
Additionally, we make the following assumptions.
\begin{itemize}
\item Every boundary vertex has degree 1 and is adjacent to an internal vertex.
%\item No edge connects two boundary vertices.
%\item Every component of $G$ contains a boundary vertex.
\item There is at least one matching of $G$.
\end{itemize}
We let $\partial G$ and $V$ denote the sets of boundary and internal vertices of $G$, respectively.  The set of edges and faces of $G$ will be denoted $E$ and $F$, respectively.

A \newword{matching} of $G$ is a subset $M\subset E$ such that each internal vertex of $G$ is contained in a unique edge in $M$.  Given a matching $M$, define its \newword{boundary} $\partial M$ by
\begin{multline*}
\partial M = \{ i \in [n] \mid \text{vertex $i$ is contained in $M$ and $i$ is adjacent to a white internal vertex} \} \\  \cup  \{ i \in [n] \mid\text{vertex $i$ is not contained in $M$ and $i$ is adjacent to a black internal vertex} \}
\end{multline*}
Note that, if each boundary vertex is adjacent to a white vertex, then $\partial M \subset [n]$ indexes the boundary vertices contained in edges of $M$.  The boundary of each matching of $G$ has size
\begin{align*}
k :=  \#\text{(white vertices)} - \#\text{(black vertices)} + \#\text{(black vertices adjacent to the boundary)}
 \end{align*}
 
% Which $k$-element subsets of $[n]$ are the boundary of a matching on $G$?  This simple question has a remarkable answer.

Not every $k$-element subset of $[n]$ may be the boundary of some matching of $G$. The following theorem gives a remarkable characterization the possible boundaries of matchings of $G$.
 
 \begin{thm} \label{theorem is a positroid}
 For a graph $G$ as above, the set
 \[ \cM  :=\left\{ I \in \binom{[n]}{k} \mid \text{there exists a matching $M$ with } \partial M=I \right\} \]
 is a positroid.  Every positroid can be realized by some graph $G$.
 \end{thm}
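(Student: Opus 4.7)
My plan is to exhibit, for any such graph $G$, a totally nonnegative matrix whose column matroid equals $\cM$, and then construct graphs realizing every positroid.

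For the first claim, I would rely on Theorem~\ref{theorem obey Plucker relations} (stated earlier in the introduction): the partition functions $\{D_I(z)\}_{I \in \binom{[n]}{k}}$ satisfy the Pl\"ucker relations, so for any $z \in \CC^E$ the tuple $(D_I(z))_I$ lies on $\widetilde{Gr(k,n)}$. First, I would specialize to strictly positive real edge weights $z_e \in \RR_{>0}$. Because $D_I(z) = \sum_{M : \partial M = I} \prod_{e\in M} z_e$ is a sum of products of positive numbers, we have $D_I(z) > 0$ when $I \in \cM$ and $D_I(z) = 0$ otherwise. Since $G$ admits at least one matching, the tuple is nonzero, so it represents a genuine point of $\widetilde{Gr(k,n)} \setminus \{0\}$, and there exists a $k \times n$ matrix $A(z)$ with $\Delta_I(A(z)) = D_I(z)$ for all $I$. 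Next, all maximal minors of $A(z)$ are nonnegative by construction, so $A(z)$ is totally nonnegative. Its column matroid is
\[ \{I \in \tbinom{[n]}{k} \mid \Delta_I(A(z)) \neq 0\} = \{I \mid D_I(z) \neq 0\} = \cM. \]
By the definition of positroid recalled in Section~\ref{sec positroid background}, this shows $\cM$ is a positroid.

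For the second claim, I would exhibit, for each positroid, a graph realizing it. The cleanest organization is to work from the bijection of Proposition~\ref{prop: equivposi} between positroids and bounded affine permutations $\pi$, and to build a graph for each $\pi$ inductively. Concretely, I would invoke the bridge decomposition machinery promised in Section~\ref{sec bridge}, which constructs a reduced graph for $\pi$ by attaching ``bridges'' corresponding to a reduced word for $\pi$ to a trivial starting graph (this is also the content of the Le-diagram construction of Postnikov~\cite{Pos}). One then verifies that adding a bridge corresponds to the expected transposition at the level of $\pi$, and that the matching positroid of the trivial starting graph matches the target identity-like permutation. Since these two operations are compatible, the matching positroid of the resulting graph is $\cM$.

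I expect the main obstacle to lie in the ``every positroid is realized'' direction, rather than in the easy Pl\"ucker-positivity argument. The subtlety is that the definition of the matching positroid $\cM$ is purely set-theoretic, whereas the construction requires matching up this set with the specific bounded affine permutation (or Grassmann necklace) attached to $\cM$. I would handle this by computing the Grassmann necklace of the matching positroid for the graph obtained from a single bridge and then using induction on the length of a reduced word for $\pi$, with Lemma~\ref{lem Knutson pi rule} providing the key recursive relation between spans of columns and the values of $\pi$; this matches the recursive construction of bridge graphs. Both parts of the theorem then follow.
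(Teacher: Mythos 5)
Your proposal is correct, but it takes a genuinely different route from the paper's. The paper's proof is a citation-and-translation sketch: it quotes Postnikov's Theorems 4.11 and 4.12 (stated in the language of loop-erased walks through perfectly oriented graphs), then cites~\cite{Tal08} to pass from loop-erased walks to flows and~\cite{PSW09} to pass from flows to matchings. Your argument for the first half is cleaner and more self-contained: you invoke only the fact that the partition functions $D_I$ obey the Pl\"ucker relations (a result that is independent of Theorem~\ref{theorem is a positroid}, so there is no circularity), specialize to strictly positive real edge weights, observe that the resulting nonnegative nonzero Pl\"ucker vector lifts to a real totally nonnegative matrix $A(z)$ whose column matroid is exactly $\cM$, and conclude directly from the definition of positroid. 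The one unstated step is that a nonzero point of $\widetilde{Gr(k,n)}$ with all real Pl\"ucker coordinates is represented by a real matrix; this is true (the corresponding $k$-plane is conjugation-invariant, hence defined over $\RR$), but worth saying. For the second half, you propose to re-derive what the paper simply cites as \cite[Theorem 4.12]{Pos}: build a reduced graph for each bounded affine permutation $\pi$ via bridge decompositions and verify inductively that its matching positroid is the one attached to $\pi$. This is valid, and it is essentially the Le-diagram construction, but note that you cannot directly lean on Lemma~\ref{bridge} from Section~\ref{sec bridge} as written, since that section's notion of ``reduced graph'' already presupposes Theorem~\ref{theorem is a positroid}; you would have to prove the combinatorial effect of adding a bridge on the matching positroid independently, which is the ``main obstacle'' you correctly identify. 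Also, Lemma~\ref{lem Knutson pi rule} relates $\pi$ to column spans of a representing matrix rather than directly to matchings, so the inductive step would be cleanest if phrased in terms of the matrix $A(z)$ you built in the first half. Overall, your approach trades the paper's chain of citations for a more hands-on argument; the first half is arguably an improvement, the second half amounts to reproving Postnikov's existence result.
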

 
 \noindent The positroid $\cM$ will be called the \newword{positroid of $G$}.

\begin{proof}[Proof Sketch]
We need to translate between the language of matchings used in this paper and the language of loop erased walks through perfectly oriented graphs used in~\cite{Pos}.
In the language of loop erased walks, \cite[Theorem~4.11]{Pos} says that, for every perfectly oriented planar graph $H$, there is a positroid whose nonzero coordinates are targets of 
loop erased walks in $H$ and \cite[Theorem~4.12]{Pos} says that every positroid is the nonzero targets of the loop erased walks in some graph $H$.
\cite{Tal08} shows that targets of loop erased walks in $H$ are the same as targets of noncrossing paths through $H$.
\cite{PSW09} describes how to translate between flows in perfectly oriented graphs and matchings in bipartite graphs which have at least one matching.
\end{proof}

For the majority of the paper, we focus on those graphs which realize their positroid efficiently. A \newword{reduced graph} will be a graph $G$ satisfying the above assumptions, and such that
\begin{itemize}
	\item Every component of $G$ contains at least one boundary vertex.
	\item Every internal vertex of degree $1$ is adjacent to a boundary vertex.\footnote{Such a vertex is called a \emph{lollipop}; see Section \ref{sec bridge}.}
	\item The number of faces of $G$ is minimal among graphs with the same positroid.
\end{itemize}
An equivalent characterization of reduced graphs is given in Theorem \ref{thm strands work}.

\begin{remark}
Our version of `graphs' adds a bipartite assumption to Postnikov's \emph{plabic graphs}; this is necessary for matchings to work as desired. Our version of `reduced' combines Postnikov's \emph{leafless} and \emph{reduced} assumptions \cite[Definition 12.5]{Pos}, for simplicity.
%
%Definitions of `graphs' and `reduced graphs' have varied from reference to reference; in particular, Postnikov \cite{Pos} considers \emph{plabic graphs} in which adjacent vertices may have the same color. Our version of `graph' is the weakest for which both \emph{matchings} and \emph{strands} work as desired, and our version of `reduced graph' is necessary for Theorem \ref{thm strands work}.
\end{remark}

%\begin{lemma}\cite[Theorem ?.?]{???}
%A graph with positroid $\cM$ is reduced if and only if it has $\dim(\tPi(\cM)) = k(n-k)-\ell(\pi)+1$ faces, where $\pi$ is the bounded affine permutation associated to $\cM$.
%\end{lemma}
%\noindent 
 
 \subsection{The boundary measurement map}
The positroid of $G$ gives a rough characterization of what matchings occur in $G$, in terms of possible boundaries.  We can get a much finer description of the matchings in $G$ using \emph{partition functions}.
Let $\{z_e\}_{e\in E}$ be a set of variables indexed by the set of edges $E$.  Each matching $M\subset E$ defines a monomial 
\[z^M := \prod_{e\in M} z_e\]
For any $k$-element subset $I\subset [n]$, the \newword{partition function} $D_I$ is the sum of the monomials of matchings with boundary $I$.
\[ D_I : = \sum_{\substack{ \text{matchings }M \\ \text{with }\partial M = I}} z^M \]

Each partition function defines a polynomial map $\CC^E\rightarrow \CC$, and collectively, they define a polynomial map $\CC^E\rightarrow \CC^{\binom{[n]}{k}}$.  Letting $\mathbb{G}_m$ denote the non-zero complex numbers, we will be interested in the restriction to $\mathbb{G}_m^E\subset \CC^E$, denoted by
\[ \tDD : \mathbb{G}_m^E\longrightarrow \CC^{\binom{[n]}{k}} \]
Specifically, $\tDD$ sends a point $(z_e)_{e\in E}\in \mathbb{G}_m^E$ to 
\[\left( D_I(z_e)\right)_{I\in \binom{[n]}{k}} \in \CC^{\binom{[n]}{k}}\]
Generally, there are numerous relations among the polynomials $D_I$, and so this map is far from dense.  Its image is characterized by the following theorem.

\begin{theorem} \label{theorem obey Plucker relations}
For a graph $G$ with positroid $\cM$, the map $\tDD$ lands in $\tPi(\cM)$; that is,
\[ \tDD:\mathbb{G}_m^E \longrightarrow \tPi(\cM) \subset \CC^{\binom{[n]}{k}} \]
\end{theorem}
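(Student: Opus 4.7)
The plan is to split the containment $\tDD(\mathbb{G}_m^E) \subset \tPi(\cM)$ into two assertions. First, since the reduced ideal of $\tPi(\cM)$ inside $\widetilde{Gr(k,n)}$ is generated by the Plücker coordinates $\Delta_I$ for $I \notin \cM$ (cited earlier from \cite{KLS13}), it suffices to show (i) $\tDD(\mathbb{G}_m^E) \subset \widetilde{Gr(k,n)}$ and (ii) $D_I \equiv 0$ whenever $I \notin \cM$. Assertion (ii) is essentially tautological: by the definition of the positroid $\cM$ of $G$, if $I \notin \cM$ then there is no matching of $G$ with boundary $I$, so the sum defining $D_I$ is empty and $D_I$ vanishes identically on $\mathbb{G}_m^E$.

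For (i), I would reduce to the known fact that Postnikov's boundary measurement map lands in the Grassmannian. Concretely, given a bipartite graph $G$ admitting at least one matching, one can fix a reference matching $M_0$ with boundary $\partial M_0 = I_0$ and build a perfectly oriented planar graph $H$ by orienting every edge of $M_0$ from white to black and every other edge from black to white. Via the dictionary of Postnikov-Speyer-Williams~\cite{PSW09}, matchings $M$ of $G$ correspond (through the symmetric difference $M \triangle M_0$) to families of vertex-disjoint flows in $H$ from the sources indexed by $I_0 \setminus I$ to the sinks indexed by $I \setminus I_0$, and the weight $z^M$ translates (up to the fixed factor $z^{M_0}$) to the monomial weight of the flow. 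Postnikov's Theorem 5.1 (or its rephrasing in terms of flows via Talaska~\cite{Tal08}) then constructs an explicit $k \times n$ matrix $A(z)$ whose maximal minor $\Delta_I(A(z))$ equals, up to the overall factor $z^{M_0}$, the partition function of flows with the specified endpoints, i.e. $D_I(z)$. Since Plücker relations hold for the minors of any matrix, the point $\tDD(z) \in \CC^{\binom{[n]}{k}}$ coincides with $z^{M_0} \cdot (\Delta_I(A(z)))_I$ and therefore lies on $\widetilde{Gr(k,n)}$.

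The main obstacle is verifying the bijective/weight-preserving correspondence between matchings and flows carefully enough that the sign conventions in~\cite{Pos} and the overall rescaling by $z^{M_0}$ do not disturb the Plücker relations. One must check that the PSW translation is weight-compatible on the nose, that the rescaling factor is common to all $\Delta_I$ (so it rescales the whole point of $\widetilde{Gr(k,n)}$ without leaving it), and that changing the reference matching $M_0$ gives the same map $\tDD$. An attractive alternative that avoids invoking perfect orientations would be a direct bijective proof of the three-term Plücker relations by superimposing two matchings and swapping along the bounded path between two of the distinguished boundary points $a,b,c,d$; the planar, non-crossing nature of such paths is what makes the classical Lindström--Gessel--Viennot involution work and yields the desired signed cancellation. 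Either route reduces the theorem to statements already in the literature, and together with (ii) completes the proof.
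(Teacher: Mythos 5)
Your main argument takes essentially the same route as the paper: split the claim into (i) membership in $\widetilde{Gr(k,n)}$ and (ii) vanishing of $D_I$ for $I \notin \cM$, cite \cite[Theorem~5.15]{KLS13} for the description of the ideal of $\tPi(\cM)$, observe (ii) directly from the definition of the positroid of $G$, and reduce (i) to Postnikov's result via the Postnikov--Speyer--Williams translation from matchings to flows. The extra detail you supply about the role of a reference matching $M_0$ and the common rescaling factor is exactly the bookkeeping the paper delegates to the literature, and it is handled correctly.

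One word of caution about your ``attractive alternative'' at the end. Proving the three-term Pl\"ucker relations by a Kuo-condensation-style involution on pairs of superimposed matchings would \emph{not} suffice to establish (i). The reduced ideal of $\widetilde{Gr(k,n)}$ is not generated by the three-term Pl\"ucker relations once $\min(k,n-k) \geq 3$: for instance, the point in $\CC^{\binom{[6]}{3}}$ with $p_{123} = p_{456} = 1$ and all other $p_{ijk} = 0$ satisfies every three-term Pl\"ucker relation but does not lie on $\widetilde{Gr(3,6)}$. This is exactly the caveat the paper raises in Remark~\ref{Kuo credit}. So the alternative route would need to handle the entire set of Pl\"ucker relations, not just the three-term ones; as stated, it should be dropped or repaired. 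Your primary argument, however, is sound and does not depend on this aside.
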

%\begin{theorem}
%For a planar graph $G$ as above, there is a positroid $\cM$ such that the map $\tDD$ lands in $\tPio(\cM)$; that is,
%\[ \tDD:\mathbb{G}_m^E \longrightarrow \tPio(\cM) \subset \CC^{\binom{[n]}{k}} \]
%\end{theorem}
%\noindent The positroid $\cM$ appearing in the theorem will be called the \newword{positroid of $G$}.  %A direct construction of $\cM$ from $G$ appears in Section \ref{???}.

\begin{proof}[Proof sketch]
The image of $\tDD$ lands in $\widetilde{Gr(k,n)}$; see \cite[Corollary 5.6]{Pos} (in the language of loop erased walks) or \cite[Theorem 2.4]{Lam16}.

We must further check that $\tDD(\GG_m^E)$ lands in $\tPi(\cM)$. By \cite[Theorem 5.15]{KLS13}, $\tPi(\cM)$ is cut out of $\widetilde{Gr(k,n)}$ by the vanishing of the Pl\"ucker coordinates $p_I$ for $I \not \in \cM$. 
By definition of the positroid $\cM$ associated to $G$, if $I \not \in \cM$, then there are no matchings of $G$ with boundary $I$, so $D_I(z)=0$ for those $I$.
\end{proof}

\begin{remark} \label{Kuo credit}
It is difficult to say who deserves the credit for Theorems~\ref{theorem is a positroid} and~\ref{theorem obey Plucker relations}. 
As discussed in the proofs, Postnikov~\cite{Pos} proved these results in the language of loop erased walks and Talaska~\cite{Tal08} transformed them to the language of flows. Postnikov, Speyer and Williams~\cite{PSW09} were the first to transform flows to matchings but did not point out these particular consequences.
%Also, all of these papers work with real points, not complex points.

The fact that the matching partition functions obey three term Pl\"ucker relations was observed earlier by Kuo~\cite{Kuo04}; this fact is often referred to as Kuo condensation by connoisseurs of matchings. Kuo's result strongly suggests that the map $\tDD$ lands in $\widetilde{Gr(k,n)}$ but does not prove it, since the ideal of $\widetilde{Gr(k,n)}$ is not generated by the $3$-term Pl\"ucker relations. (For example, the point $p_{123}=p_{456}=1$, all other $p_{ijk}=0$ in $\CC^{\binom{[6]}{3}}$ obeys all three term Pl\"ucker relations but is not in $\widetilde{Gr(3,6)}$.) 
%In an e-mail to a private mailing list in September 1998, Propp pointed out that Kasteleyn's method made Kuo's identity equivalent to the Dodgson condensation identity~\cite{Dodg} (also known as the Desnanot-Jacobi identity). People used to thinking about Grassmannians know that the trick of turning $K$ into $\left( \mathrm{Id} \ K \right)$ turns the Dodgson identity into the three term Pl\"ucker relation.
The lecture notes of Thomas Lam~\cite{Lam16} may be the first place that these results appear explicitly in public.
See also~\cite{Spe15} for a short direct proof that the partition functions of matchings are the Pl\"ucker coordinates of a point on the Grassmannian.
\end{remark}

\subsection{Gauge transformations} \label{sec gauge}

Each internal vertex $v\in V$ determines an action of $\mathbb{G}_m$ on $\mathbb{G}_m^E$ by \newword{gauge transformation}.  If $v$ is an internal vertex of $G$ and $t$ a nonzero complex number, then send $(z_e)$ in $\GG_m^E$ to $(z_e')$ in $\GG_m^E$ by
\[ z'_e = \begin{cases} t z_e & v \in e \\ z_e & \mbox{otherwise} \end{cases}. \]
The gauge transformations combine to give an action of $\mathbb{G}_m^V$ on $\mathbb{G}_m^E$, called the \newword{gauge action}.

Let $\pi:\mathbb{G}_m^V\rightarrow \mathbb{G}_m$ be the map which sends $(t_v)_{v\in V}$ to the product of the coordinates $\prod_{v\in V} t_v$.  This is a group homomorphism, so its kernel is a subgroup of $\mathbb{G}_m^V$ which merits its own notation.\footnote{This notation is potentially misleading; there is no distinguished choice of isomorphism $\GG_m^{V-1}\simeq (\GG_m)^{|V|-1}$.}
\[ G_m^{V-1} := \mathrm{ker}(\pi:\mathbb{G}_m^V\rightarrow \mathbb{G}_m) \]
If $M$ is a matching of $G$, then the gauge action by $t=(t_v)_{v\in V}\in \GG_m^V$ acts on each of the previously defined functions as follows.
\[(t\cdot z)^M = \pi(t)z^M\hspace{.75cm} D_I(t\cdot z) = \pi(t)D_I(z) \hspace{.75cm} \tDD(t\cdot z) = \pi(t)\tDD(z) \]
Consequently, each of these functions is $\GG_m^{V-1}$-invariant, and so $\tDD$ descends to a map
\[ \tDD: \GG_m^E/\GG_m^{V-1} \longrightarrow \tPi(\cM) \]

The rational projection $\tPi(\cM)\dashedrightarrow \Pi(\cM)$ quotients by the action of simultaneously scaling of the Pl\"ucker coordinates. Hence, the composition $\GG_m^E\rightarrow \tPi(\cM)\dashedrightarrow \Pi(\cM)$ is invariant under the full gauge group $\GG_m^V$, and so this composition descends to a rational map, which we denote by $\DD$.
\[ \DD: \GG_m^E/\GG_m^{V} \;\tikz[baseline={([yshift=-3pt]current bounding box.center)}]{\draw[dashed,->] (0,0) to (.5,0);}\; \Pi(\cM) \]
% $z^M$ by 
%\[(t\cdot z)^M = \pi(t)z^M \]
%Hence, $D_I(t\cdot z) = \pi(t)D_I(z)$ for any $I\in \binom{[n]}{k}$.  As a consequence,
%\[ \tDD(t\cdot z) = \pi(t)\tDD(z) ,\;\;\; \DD(t\cdot z) = \DD(z) \]
%
%Morally, we see that the maps $\tDD$ and $\DD$ factor through the quotients
%\[
%\begin{array}{cr@{}lcc}
% \tDD: & \mathbb{G}_m^E/&\mathbb{G}_m^{V-1} & \longrightarrow & \tPi(\cM) \\
% \DD: & \mathbb{G}_m^E /&\mathbb{G}_m^V & \longrightarrow  & \Pi(\cM) \\
% \end{array}
%  \]
%%The gauge action on $\mathbb{G}_m^E$ descends to an action of $\mathbb{G}_m^V/\mathbb{G}_m^{V-1}\simeq \mathbb{G}_m$ on the quotient torus $\mathbb{G}_m^E/\mathbb{G}_m^{V-1}$.   
%More precisely, the first map is well defined, but the second may not be, as the point $0$ of $\widetilde{Gr(k,n)}$ may lie in the image of $\tDD$.
%In section~\ref{sec inversion},  we will show that, if $G$ is reduced, then this issue does not arise.
%%; that the image of $\tDD$ in fact lands in $\tPio(\cM)$, and that $\tDD$ is injective. 
%Until this issue is resolved, we will dodge it by stating our results in terms of $\tDD$, but the map $\DD$ provides much of our motivation.
%We denote by $\DD$ the composition of $\tDD$ with the rational quotient map from $\tPi(\cM)$ to $\Pi(\cM)$. 
Postnikov called $\DD$ the \emph{boundary measurement map} of $G$ and studied many of its properties, particularly its relation to total positivity. As an abuse of terminology, we refer to both $\DD$ and $\tDD$ as \newword{boundary measurement maps}.

\begin{remark}
In general, the map $\DD$ can be rational, as it is not defined on the $\tDD$-preimage of the origin in $\tPi(\cM)$. However, Proposition~\ref{in open cell} will imply that this preimage is empty whenever $G$ is reduced, and thus $\DD$ is defined on all of $\GG_m^E/\GG_m^V$. Until this issue is resolved, we will dodge it by stating our results in terms of $\tDD$, but the map $\DD$ provides much of our motivation.
\end{remark}

%The maps $\tDD$ and $\DD$ may still fail to be inclusions in general, unless the graph satisfies a `reduced' criterion (defined in the following section).

%(Example?)  However, it will be an inclusion if the graph $G$ is \preword{reduced}; equivalent definitions of reduced appear in Sections \ref{???} and \ref{???}.
%
%\begin{thm}
%If $G$ is reduced, then $\tDD: \mathbb{G}_m^E/\mathbb{G}_m^{V-1} \longrightarrow \tPio(\cM)$ and $\DD: \mathbb{G}_m^E /\mathbb{G}_m^V \longrightarrow \Pio(\cM)$ are open inclusions of dense tori.
%\end{thm}
%
%The first result of this form is due to Talaska, who proved Theorem \ref{???} for a particular class of reduced graphs called `$Le$-diagrams'.  Accordingly, for any graph $G$, we will call the image of $\tDD$ in $\tPio$ (or the image of $\DD$ in $\Pio$) the \newword{Talaska torus} associated to $G$.

\subsection{Transformations between planar graphs} \label{sec transformations}

There are several local manipulations of a planar graph $G$ which do not change the corresponding positroid $\cM$.
The study of such transformations was systematized by Postnikov~\cite{Pos} and we follow his terminology\footnote{Our transformations differ from Postnikov's slightly, because our graphs are required to be bipartite.}; see Ciucu~\cite{Ciu98} and Propp~\cite{Pro03} for earlier precedents.

  In each case, the transformation will produce a new graph written $G'$, with edge set written $E'$, etc.  Additionally, for each transformation, we define a map $\mu:\mathbb{G}_m^E/\mathbb{G}_m^{V-1}\rightarrow \mathbb{G}_m^{E'}/\mathbb{G}_m^{V'-1}$ such that $\tDD'\circ \mu=\tDD$.\footnote{For the last move (\emph{urban renewal}), the map $\mu$ will be rational, and therefore only defined on a dense subset.}  In each case, the map $\mu$ will be defined in terms of a map $\mu_e:\mathbb{G}_m^E\rightarrow \mathbb{G}_m^{E'}$.  Points in $\mathbb{G}_m^E$ are equivalent to assigning a non-zero complex number to each edge in $E$, so the map $\mu_e$ will be defined by manipulating these \newword{edge weights}.  

Postnikov describes two classes of transformations -- \newword{moves}, which do not change the dimension of the torus $\mathbb{G}_m^E/\mathbb{G}_m^{V-1}$, and \newword{reductions} which do. We will only need moves, the reductions may be found in \cite[Section 12]{Pos}. The inverse of each move is likewise considered a move.

%The first three transformations, called \newword{moves}, do not change the dimension of the torus $\mathbb{G}_m^E/\mathbb{G}_m^{V-1}$, and we explicitly allow their inverses as transformations.  Two graphs are \newword{move-equivalent} if they are related by a sequence of moves.

\begin{itemize}
	\item \emph{Contracting/expanding a vertex.} Any degree 2 internal vertex not adjacent to the boundary can be deleted, and the two adjacent vertices merged, as in Figure \ref{fig: equiv}.  This operation can also be reversed, by splitting an internal vertex into two vertices and inserting a degree 2 vertex of the opposite color between them (and giving each new edge a weight of $1$).
	
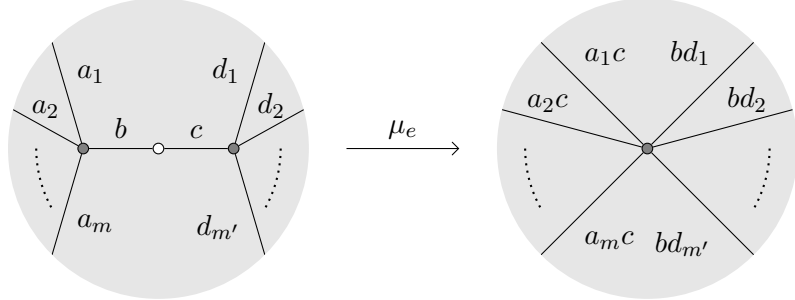
\begin{figure}[h!t]
\begin{tikzpicture}
\begin{scope}[scale=.5]
\begin{scope}
	\path[fill=black!10] (0,0) circle (4);
	\path[clip] (0,0) circle (4);
	\node[dot,fill=black!50] (a) at (-2,0) {};
	\node[dot,fill=white] (b) at (0,0) {};
	\node[dot,fill=black!50] (c) at (2,0) {};
	\draw (a) to node[above right] {$a_1$} (135:4);
	\draw (a) to node[above] {$a_2$} (165:4);
	\draw[dotted,thick] (180:3.25) arc (180:210:3.25);
	\draw (a) to node[below right] {$a_m$} (225:4);
	\draw (a) to node[above] {$b$} (b) to node[above] {$c$} (c);
	\draw (c) to node[above left] {$d_1$} (45:4);
	\draw (c) to node[above] {$d_2$} (15:4);
	\draw[dotted,thick] (0:3.25) arc (0:-30:3.25);
	\draw (c) to node[below left] {$d_{m'}$} (-45:4);
\end{scope}
\draw[-angle 90] (5,0) to node[above] {$\mu_e$} (8,0);
\begin{scope}[xshift=13cm]
	\path[fill=black!10] (0,0) circle (4);
	\path[clip] (0,0) circle (4);
	\node[dot,fill=black!50] (a) at (0,0) {};
	\draw (a) to (135:1.5) to node[above right] {$a_1c$} (135:4);
	\draw (a) to (165:1.5) to node[above] {$a_2c$} (165:4);
	\draw[dotted,thick] (180:3.25) arc (180:210:3.25);
	\draw (a) to (225:1.5) to node[below right] {$a_mc$} (225:4);
	\draw (a) to (45:1.5) to node[above left] {$bd_1$} (45:4);
	\draw (a) to (15:1.5) to node[above] {$bd_2$} (15:4);
	\draw[dotted,thick] (0:3.25) arc (0:-30:3.25);
	\draw (a) to (-45:1.5) to node[below left] {$bd_{m'}$} (-45:4);
\end{scope}
\end{scope}
\end{tikzpicture}
\caption{Contracting a degree 2 white vertex}
\label{fig: equiv}
\end{figure}

Here, the map $\mu:\mathbb{G}_m^E/\mathbb{G}_m^{V-1}\rightarrow \mathbb{G}_m^{E'}/\mathbb{G}_m^{V'-1}$ is induced by the map $\mu_e$ in Figure \ref{fig: equiv}.  The map $\mu$ is a regular isomorphism, and so the boundary measurement maps $\tDD$ and $\tDD'$ have the same image.  Notice that, by repeatedly expanding vertices of degree $\geq4$, we may always arrive at a graph with vertex degrees no more than $3$.

	\item \emph{Removing/adding a boundary-adjacent vertex.} Any degree 2 internal vertex adjacent to the boundary can removed, and the two adjacent edges can be made into one edge, as in Figure \ref{fig: boundaryvert}.  This operation can also be reversed, by adding a degree 2 vertex in the middle of a boundary-adjacent edge (and giving the new boundary-adjacent edge a weight of $1$).
	
\begin{figure}[h!t]
\begin{tikzpicture}
\begin{scope}[scale=.45]
\begin{scope}
	\path[clip] (0,0) circle (4);
	\draw[thick,fill=black!10] (-5,-5) rectangle (2,5);
	\node[dot,fill=black!50] (a) at (-2,0) {};
	\node[dot,fill=white] (b) at (0,0) {};
	\draw (a) to node[above right] {$a_1$} (135:4);
	\draw (a) to node[above] {$a_2$} (165:4);
	\draw[dotted,thick] (180:3.25) arc (180:210:3.25);
	\draw (a) to node[below right] {$a_m$} (225:4);
	\draw (a) to node[above] {$b$} (b) to node[above] {$c$} (2,0);
\end{scope}
\draw[-angle 90] (5,0) to node[above] {$\mu_e$} (8,0);
\begin{scope}[xshift=13cm]
	\path[clip] (0,0) circle (4);
	\draw[thick,fill=black!10] (-5,-5) rectangle (2,5);
	\node[dot,fill=black!50] (a) at (0,0) {};
	\draw (a) to (135:1) to node[above right] {$a_1c$} (135:4);
	\draw (a) to (165:1) to node[above] {$a_2c$} (165:4);
	\draw[dotted,thick] (180:3.25) arc (180:210:3.25);
	\draw (a) to (225:1) to node[below right] {$a_mc$} (225:4);
	\draw (a) to node[above] {$b$} (2,0);
\end{scope}
\end{scope}
\end{tikzpicture}
\caption{Removing a degree 2 white vertex adjacent to the boundary}
\label{fig: boundaryvert}
\end{figure}
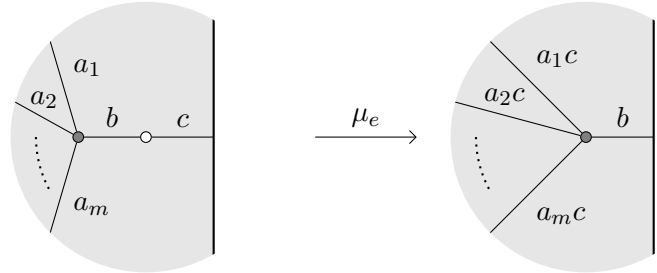

Here, the map $\mu:\mathbb{G}_m^E/\mathbb{G}_m^{V-1}\rightarrow \mathbb{G}_m^{E'}/\mathbb{G}_m^{V'-1}$ is induced by the map $\mu_e$ in Figure \ref{fig: boundaryvert}.  The map $\mu$ is a regular isomorphism, and so the boundary measurement maps $\tDD$ and $\tDD'$ have the same image.  Notice that, by adding white vertices between black vertices and the boundary as necessary, we may always arrive at a graph with only white vertices adjacent to the boundary.

	\item \emph{Urban renewal.} At an internal face of $G$ with four edges, the transformation in Figure \ref{fig: urban} is called \newword{urban renewal}.  
\begin{figure}[h!t]
\begin{tikzpicture}
\begin{scope}[scale=.65]
\begin{scope}
	\path[fill=black!10] (0,0) circle (4);
	\path[clip] (0,0) circle (4);
	\node[dot,fill=black!50] (a) at (-1.5,1.5) {};
	\node[dot,fill=white] (b) at (1.5,1.5) {};
	\node[dot,fill=white] (d) at (-1.5,-1.5) {};
	\node[dot,fill=black!50] (c) at (1.5,-1.5) {};
	\draw (a) to node[above right] {$a_1$} (135:4);
	\draw (b) to node[above left] {$a_2$} (45:4);
	\draw (d) to node[below right] {$a_4$} (225:4);
	\draw (c) to node[below left] {$a_3$} (-45:4);
	\draw (a) to node[above] {$b_1$} (b) to node[right] {$b_2$} (c) to node[below] {$b_3$} (d) to node[left] {$b_4$} (a);
\end{scope}
\draw[-angle 90] (5,0) to node[above] {$\mu_e$} (7,0);
\begin{scope}[xshift=12cm]
%	\node at (0,-5) {$p:=b_1b_3+b_2b_4$};
	\path[fill=black!10] (0,0) circle (4);
	\path[clip] (0,0) circle (4);
	\node[dot,fill=black!50] (a') at (-2,2) {};
	\node[dot,fill=white] (b') at (2,2) {};
	\node[dot,fill=white] (d') at (-2,-2) {};
	\node[dot,fill=black!50] (c') at (2,-2) {};
	\node[dot,fill=white] (a) at (-1,1) {};
	\node[dot,fill=black!50] (b) at (1,1) {};
	\node[dot,fill=black!50] (d) at (-1,-1) {};
	\node[dot,fill=white] (c) at (1,-1) {};
	\draw (a) to (a') to node[above right] {$a_1$} (135:4);
	\draw (b) to (b') to node[above left] {$a_2$} (45:4);
	\draw (d) to (d') to node[below right] {$a_4$} (225:4);
	\draw (c) to (c') to node[below left] {$a_3$} (-45:4);
	\draw (a) to node[above] {$\frac{b_3}{b_1b_3+b_2b_4}$} (b) to node[right] {$\frac{b_4}{b_1b_3+b_2b_4}$} (c) to node[below] {$\frac{b_1}{b_1b_3+b_2b_4}$} (d) to node[left] {$\frac{b_2}{b_1b_3+b_2b_4}$} (a);
%	\draw (a) to node[above right] {$1$} (a') to node[above right] {$a_1$} (135:4);
%	\draw (b) to node[above left] {$1$} (b') to node[above left] {$a_2$} (45:4);
%	\draw (d) to node[below right] {$1$} (d') to node[below right] {$a_4$} (225:4);
%	\draw (c) to node[below left] {$1$} (c') to node[below left] {$a_3$} (-45:4);
%	\draw (a) to node[above] {$\frac{b_3}{p}$} (b) to node[right] {$\frac{b_4}{p}$} (c) to node[below] {$\frac{b_1}{p}$} (d) to node[left] {$\frac{b_2}{p}$} (a);
\end{scope}
\end{scope}
\end{tikzpicture}
\caption{Urban renewal at a square face (unlabeled edges have weight $1$)}
\label{fig: urban}
\end{figure}
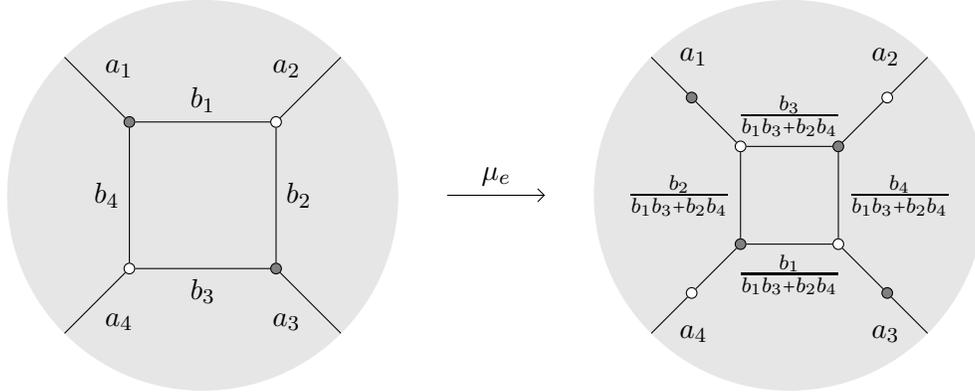

Here, the definition of $\mu$ is more subtle.  Let $\mu_e:\GG_m^E\dashedrightarrow \GG_m^{E'}$ be the rational map described in Figure \ref{fig: urban}.
It is easy to check that $\mu_e$ descends to a rational map $\overline{\mu}_e: \GG_m^E/\GG_m^V \longrightarrow \GG_m^{E'}/\GG_m^{V'}$. 
However, it does not descend to a rational map $\GG_m^E/\GG_m^{V-1} \longrightarrow \GG_m^{E'}/\GG_m^{V'-1}$. 
To fix this, choose an arbitrary vertex $v$ of $G'$ and let $\widehat{\mu}_{e,v}$ be the rational map $\GG_m^E\rightarrow \GG_m^{E'}$ which first applies $\mu_e$ and then acts at vertex $v$ by the gauge transformation by $b_1 b_3+b_2 b_4$. Then $\widehat{\mu}_{e,v}$ descends to a map on the quotient tori, from $\GG_m^E/\GG_m^{V-1}$ to $\GG_m^{E'}/\GG_m^{V'-1}$, and this quotient map is independent of the choice of $v$.
This quotient map is the map $\mu$. If we are only working with $\GG_m^E/\GG_m^V$, and hence with $Gr(k,n)$ rather than $\widetilde{Gr(k,n)}$, we may think in terms of the simpler map $\mu_e$.

%Letting $\mu_e:\mathbb{G}_m^E\rightarrow \mathbb{G}_m^{E'}$ be the map described in Figure \ref{fig: urban}, let $\mu:\mathbb{G}_m^E/\mathbb{G}_m^{V-1}\rightarrow\mathbb{G}_m^{E'}/\mathbb{G}_m^{V'-1}$ be the map which first applies $\mu_e$, and then acts by $b_1b_3+b_2b_4\in \mathbb{G}_m\simeq \mathbb{G}_m^{V'}/\mathbb{G}_m^{V'-1}$.  The map $\mu_e$ is only defined when $b_1b_3+b_2b_4\neq0$, so the map $\mu$ is only defined on a dense open set.
%
%The resulting graph $G'$ also has an internal face with four edges, and so urban renewal can be applied again.  It is straightforward to check that composing these two transformations and composing the corresponding maps $\mu$ gives the identity on $\mathbb{G}_m^E/\mathbb{G}_m^{V-1}$. Hence, urban renewal defines a birational equivalence $\mu:\mathbb{G}_m^{E}/\mathbb{G}_m^{V-1}\rightarrow \mathbb{G}_m^{E'}/\mathbb{G}_m^{V'-1}$.

\end{itemize}

\begin{thm}[{\cite[Theorem 12.7]{Pos}}]
Any two reduced graphs with the same positroid can be transformed into each other by the above moves.
\end{thm}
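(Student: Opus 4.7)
The plan is to prove the theorem by exploiting the strand diagram of a reduced graph. As reviewed in Section~\ref{sec strand}, any reduced graph $G$ determines a bounded affine permutation $\pi$, which by Proposition~\ref{prop: equivposi} is equivalent data to the positroid $\cM$ of $G$. The theorem then reduces to two claims: (a) each move leaves $\pi$ unchanged; and (b) any two reduced graphs with the same $\pi$ are move-equivalent.

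Claim (a) is essentially a local check, one move at a time. Contracting or expanding a degree-two internal vertex, or removing or adding a degree-two boundary-adjacent vertex, only routes strands straight through a small neighborhood and manifestly preserves $\pi$. Urban renewal at a square face is the only move that rearranges strands nontrivially, but a case analysis of how the four strand segments pass through the square face shows that the pairing of their endpoints on the square boundary is preserved, so $\pi$ is preserved globally.

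Claim (b) is the substantial content. I would prove it by induction on the length $\ell(\pi)$. Choose an index $i$ witnessing a descent of $\pi$, so that $\ell(s_i \pi) < \ell(\pi)$. The key lemma is that any reduced graph $G$ with permutation $\pi$ can be transformed by moves into one containing a distinguished \emph{bridge} edge $e$ joining the internal vertices attached to boundary positions $i$ and $i+1$, in such a way that removing $e$ and simplifying produces a reduced graph with permutation $s_i \pi$. Granted this, given two reduced graphs $G_1, G_2$ for $\pi$, bring each into a form exhibiting a bridge at position $i$, peel the bridges to obtain reduced graphs $G_1', G_2'$ for $s_i \pi$, apply the inductive hypothesis to connect $G_1'$ and $G_2'$ by moves, and then reattach the bridges to produce a sequence of moves from $G_1$ to $G_2$.

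The main obstacle is the bridge installation lemma: showing that the three moves are expressive enough to create a bridge at any prescribed descent position of $\pi$. Proving this demands a detailed local analysis of the strand diagram near boundary positions $i$ and $i+1$, combined with the characterization (Theorem~\ref{thm strands work}) of reduced graphs as those whose strand diagrams avoid certain forbidden configurations such as self-intersecting strands or parallel bigons. One would track the two strands emanating from positions $i$ and $i+1$, use square moves and contractions to progressively simplify any crossings or extraneous vertices between them, and use the absence of forbidden configurations to rule out pathological obstructions. Once bridge installation is established, (a) and (b) combine to give the theorem.
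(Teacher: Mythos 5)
The paper does not actually prove this theorem; it is stated as an imported result, cited to Postnikov's Theorem~12.7, and used as a black box in the proof of Lemma~\ref{lemma: main}. So there is no ``paper's own proof'' to compare against, only Postnikov's original argument and subsequent treatments in the literature.

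That said, your sketch follows a recognizable and plausible strategy: reduce to bounded affine permutations, verify move-invariance locally, and induct on length by peeling bridges. This is close in spirit to the bridge-decomposition machinery the paper itself develops in Section~\ref{sec bridge} and to arguments in the literature (e.g., Oh--Postnikov--Speyer), and is genuinely different from Postnikov's original route, which goes through showing that every reduced plabic graph can be brought to a canonical Le-diagram form. Your approach buys a cleaner inductive structure; Postnikov's buys an explicit normal form.

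The genuine gap is the ``bridge installation lemma.'' You state it, correctly identify it as the main obstacle, and then only gesture at a proof (``track the two strands\ldots use square moves and contractions to progressively simplify''). This is precisely where all the content lives: one must show that whenever $\pi(i) > \pi(i+1)$, the first crossing of the two strands emanating from $i$ and $i+1$ can be pushed by square moves to sit directly at the boundary, and the obstruction analysis (using the absence of self-intersections, monogons, and parallel bigons from Theorem~\ref{thm strands work}) is a nontrivial case-by-case argument, not a routine observation. As written, your proposal has not proved this.

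There is a second, smaller issue in the inductive step. After peeling bridges from $G_1$ and $G_2$ to get $G_1'$ and $G_2'$ for $s_i\pi$, you invoke induction to get a move sequence from $G_1'$ to $G_2'$ and then claim you can ``reattach the bridges.'' But that move sequence may involve moves in the boundary region near positions $i$ and $i+1$, and it is not automatic that such moves lift to moves of the bridged graphs. You need to argue either that the move sequence can be chosen to avoid that region, or that each move commutes (up to further moves) with the operation of attaching a bridge. This is true, but it is a lemma that needs a proof, not an assertion.
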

%We will generally work with reduced graphs in the rest of the paper; our experience is that allowing reduced graphs with leaves never requires significant changes to our results but often requires annoying complications of notation.
%
%Since every positroid corresponds to some reduced graph by Theorem \ref{thm: isposi}, we have a bijection between positroids and move-equivalence classes of reduced graphs.
%In Theorem \ref{???}, we will establish that the boundary measurement map associated to any reduced graph is a dense inclusion.  As a consequence, the variety $\tPio(\cM)$ contains a dense algebraic torus for each reduced graph $G$ corresponding to $\cM$.

We conclude the section with the following observation which will be of use in Sections~\ref{sec extremal} and~\ref{sec lattice}.

\begin{lem} \label{clean dual}
In a reduced graph, every face is a disc and no edge separates a face from itself, except edges connecting the boundary to a degree $1$ vertex.
\end{lem}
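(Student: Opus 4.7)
I would argue by contradiction, invoking the three reducedness axioms: every component of $G$ has a boundary vertex, every internal degree-$1$ vertex is adjacent to the boundary, and the face count is minimal among graphs with the same positroid.

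For the first claim, suppose some face $f$ has an ``inner'' boundary cycle $C$ lying strictly in the interior of the disc, separating $f$ from the boundary. At each vertex $v \in C$, the face $f$ occupies the sector at $v$ on the outer side of $C$, so every edge at $v$ not lying on $C$ must be in a sector on the inside of $C$. Hence no edge of $G$ crosses $C$, and the subgraph of $G$ enclosed by $C$ (together with $C$ itself) is a disjoint union of connected components of $G$ containing no boundary vertices—contradicting the component axiom.

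For the second claim, cap the disc by its boundary circle to form a planar graph $G^+$ on the sphere. By Euler's formula, an edge of $G$ separates a face of $G$ from itself if and only if it is a bridge of $G^+$. For a boundary edge $e = bv$: the bridge condition forces $v$ to be disconnected from the boundary circle upon removing $e$; by the component axiom, any other neighbor of $v$ would give a path back to the boundary in $G \setminus e$, so $v$ must have degree $1$ and $e$ is a lollipop.

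For an internal edge $e = uv$, let $G_v$ denote the subgraph of $G$ on the component of $v$ in $G \setminus e$; the bridge condition implies $G_v$ contains no boundary vertex. The key step is a parity argument: a matching $M$ of $G$ restricts to a perfect matching of either $G_v$ (if $e \notin M$) or $G_v \setminus \{v\}$ (if $e \in M$). Since $G_v$ is bipartite and removing the single vertex $v$ shifts the white/black balance by one, exactly one of these two scenarios can ever occur. If no matching uses $e$, the partition functions of $G$ factor as those of the complementary subgraph $G_u$ times the partition function of $G_v$, so $G_u$ realizes the same positroid with strictly fewer faces---unless $F(G_v) = 1$, in which case $G_v$ is a tree whose leaves other than $v$ are degree-$1$ internal vertices of $G$ not adjacent to the boundary, violating the second axiom. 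If every matching uses $e$, then $u$ has degree $\geq 2$ (else $u$ itself violates the degree-$1$ axiom) and any other edge $e_1$ at $u$ is never used, so removing $e_1$ preserves the positroid; this either immediately lowers the face count or, if $e_1$ is itself a bridge, recursively triggers the previous analysis. The main obstacle lies in this internal bridge case: the parity dichotomy is clean, but carefully verifying that a face-reducing move is always available—possibly after iterating through a chain of bridges—requires some care in the edge sub-cases.
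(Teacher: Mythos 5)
Your sketch follows the paper's overall strategy — reducedness axioms plus a black/white parity dichotomy on the piece that the separating edge cuts off — and the reformulation via bridges of the spherical completion $G^+$ is a clean way to package the "separates a face from itself" condition. But two of the steps don't actually go through as written. For a boundary edge $e=bv$ that is a bridge, you conclude from the component axiom that $v$ must have degree one. That inference is wrong: the component axiom constrains connected components of $G$, not of $G\setminus e$, and a neighbor $w\neq b$ of $v$ can easily have all of its paths to the boundary pass through $e$ (attach a quadrilateral to $v$ inside the disc and $v$ has degree three, $e$ is a bridge, and nothing in your argument fires). The correct move, which the paper makes, is to treat boundary and internal edges uniformly: let $H$ be the component of $G\setminus e$ missing the boundary and case-split on whether $H$ is a single vertex (exactly the allowed lollipop), a larger tree (a leaf of $H$ other than the $e$-endpoint is an internal degree-one vertex not adjacent to the boundary, contradiction), or not a tree (the parity argument you describe).

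The second issue is the one you flag yourself: in the "every matching uses $e$" branch, removing the edges at $u$ one at a time may repeatedly hit bridges, and the resulting recursion is not obviously well-founded — you would need a separate argument that it terminates with a genuine face-count drop. The paper avoids the recursion by doing the tree/non-tree split before the parity split. Once $H$ is known to be non-tree, it has at least one interior face, so deleting $H$, $e$, and all other edges at $u$ in a single batch strictly lowers the face count regardless of whether any of those extra edges are bridges. Reordering your cases this way (tree check first, parity second, rather than parity first with the tree case as a caveat inside one branch) makes the face reduction automatic and closes the gap.
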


\begin{proof}
First, if some face is not a disc, then there is a component of $G$ which is not connected to the boundary. This contradicts part of the definition of reducedness.

Now, suppose that edge $e$ separates some face $F$ from itself. In $G \setminus e$, the face $F$ becomes an annulus, so there is a component $H$ of $G \setminus e$ which is not connected to the boundary.
If $H$ is a single vertex, then by the definition of reducedness, $e$ connected $H$ to the boundary, and $e$ is one of the allowed exceptions. If $H$ is a tree with more than one vertex, then it has at least two leaves. One of those leaves must not be an endpoint of $e$, and that leaf is a leaf in $G$ which is not adjacent to the boundary; contradiction. 

Now, suppose that $H$ is not a tree. If $H$ contains as many black as white vertices, then no matching of $G$ uses $e$. In this case, $G$ has matchings with the same boundaries as $G \setminus (H \cup e)$ and the latter graph has fewer faces.
If the number of black and white vertices of $H$ differ by one, then every matching of $G$ uses $e$. Let $f_1$, $f_2$, \dots, $f_r$ be the other edges incident on the endpoint of $e$ not in $G$. Then $G$ has matchings with the same boundaries as $G \setminus (H \cup e \cup \bigcup f_i)$ and the latter graph has fewer faces.
If the number of black and white vertices of $H$ differ by more than one, then $G$ has no matchings at all. We have reached a contradiction in every case.
\end{proof}

\section{Strands and Postnikov diagrams} \label{sec strand}

\subsection{Postnikov's theory of strands}

A graph $G$ satisfying the assumptions of the previous section is equivalent to a collection of oriented \newword{strands} in the disc connecting the marked points on the boundary, satisfying certain restrictions on how they are allowed to cross.

The strands of a graph $G$ satisfying the assumptions of Section \ref{sec matching to positroid} are constructed as follows.  Each edge intersects two strands as in Figure \ref{fig: strands}: at a internal edge, the two strands cross transversely at the midpoint; at a boundary edge, the two strands terminate at the boundary vertex.
%For each edge between two internal vertices, two strands cross transversally at the midpoint of that edge, as in Figure \ref{fig: midpointstrand}.  For each edge adjacent to the boundary, one strand begins and one strand ends at the boundary vertex, as in Figure \ref{fig: boundarystrand}.  
These strands are connected to each to each other in the most natural way, so that each corner of each face is cut off by a segment of a strand; see the example in Figure~\ref{fig: introstrands}.  We consider strands up to \emph{ambient homotopy}: homotopies which don't change the intersections.

\begin{figure}[h!t]
\begin{tikzpicture}
\begin{scope}
\begin{scope}[scale=.35]
	\node at (270:5) {At an internal edge};
	\path[clip] (0,0) circle (4);
	\draw[thick,fill=black!10] (-5,-4.5) rectangle (5,5);
	\node[dot,fill=black!50] (a) at (0,2) {};
	\node[dot,fill=white] (b) at (0,-2) {};
	\draw (b) to (a);
	\draw (a) to (60:4);
	\draw (a) to (80:4);
	\draw[dotted,thick] (90:3.25) arc (90:108:3.25);
	\draw (a) to (120:4);
	\draw (b) to (240:4);
	\draw (b) to (260:4);
	\draw[dotted,thick] (270:3.25) arc (270:288:3.25);
	\draw (b) to (300:4);
	\draw[oriented,out=-60,in=135] (130:4) to (0,0);
	\draw[oriented,out=-45,in=120] (0,0) to (-50:4);
	\draw[oriented,out=60,in=225] (-130:4) to (0,0);
	\draw[oriented,out=45,in=240] (0,0) to (50:4);
\end{scope}
\begin{scope}[xshift=5cm,scale=.35]
	\node at (270:4.5) {\shortstack{At a boundary edge \\ adjacent to a black vertex}};
	\path[clip] (0,0) circle (4);
	\draw[thick,fill=black!10] (-5,-2) rectangle (5,5);
	\node[dot,fill=black!50] (a) at (0,1) {};
	\draw (0,-2) to (a);
	\draw (a) to (60:4);
	\draw (a) to (80:4);
	\draw[dotted,thick] (90:3.25) arc (90:108:3.25);
	\draw (a) to (120:4);
	\draw[oriented,out=-75,in=135] (130:4) to (0,-2);
	\draw[oriented,out=45,in=255] (0,-2) to (50:4);
\end{scope}
\begin{scope}[xshift=10cm,scale=.35]
	\node at (270:4.55) {\shortstack{At a boundary edge \\ adjacent to a white vertex}};
	\path[clip] (0,0) circle (4);
	\draw[thick,fill=black!10] (-5,-2) rectangle (5,5);
	\node[dot,fill=white] (a) at (0,1) {};
	\draw (0,-2) to (a);
	\draw (a) to (60:4);
	\draw (a) to (80:4);
	\draw[dotted,thick] (90:3.25) arc (90:108:3.25);
	\draw (a) to (120:4);
	\draw[antioriented,out=-75,in=135] (130:4) to (0,-2);
	\draw[antioriented,out=45,in=255] (0,-2) to (50:4);
\end{scope}
\end{scope}
\end{tikzpicture}
\caption{Strands in neighborhood of each type of edge}
\label{fig: strands}
\end{figure}
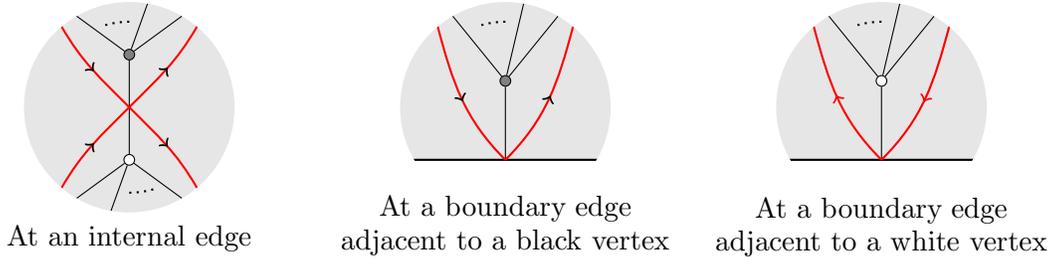

The resulting collection of oriented, immersed curves will have the following properties.
\begin{enumerate}
	\item Each strand either begins and ends at marked points on the boundary, or is a closed loop.
	\item Intersections between strands are `generic' with respect to homotopy; that is, all intersections are transverse crossings between two strands, and there are finitely many intersections.
	\item Following any given strand, the other strands alternately cross it from the left and from the right.
\end{enumerate}
Furthermore, if $G$ is reduced, the strands satisfy additional properties.
\begin{enumerate}
\setcounter{enumi}{3}
	\item No strand is a closed loop.
	\item No strand intersect itself, except a strand which begins and ends at the same marked point.
	\item If we consider any two strands $\gamma$ and $\delta$ and their finite list of intersection points, then they pass through their intersection points in opposite orders.
\end{enumerate}
Postnikov has demonstrated that these properties characterize strands of planar graphs.

\begin{thm}[{\cite[Corollary~14.2]{Pos}}] \label{thm strands work}
A collection of oriented, immersed curves in a marked disc which satisfies properties $(1)-(3)$ are the strands of a unique graph $G$ satisfying the assumptions of Section \ref{sec matching to positroid}.  The graph $G$ is reduced if and only if the strands satisfy properties $(4)-(6)$.
In this case, the strand starting at boundary vertex $a$ ends at boundary vertex $\pi(a) \bmod n$, where $\pi$ is the associated decorated permutation.\footnote{If $\pi(a) \equiv a \bmod n$ then the component containing boundary vertex $a$ will have a single internal vertex $v$; we will have $\pi(a)=a$ if $v$ is black and $\pi(a)=a+n$ if $v$ is white.}
\end{thm}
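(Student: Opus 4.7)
The plan is to reconstruct $G$ from its strand diagram by examining the connected components of the complement in the disc. Given a strand arrangement satisfying (1)--(3), one classifies each complementary region by the orientations of the strand segments on its boundary: property (3) implies that every region is of one of three types, according to whether the bounding strand segments are all oriented consistently (clockwise or counterclockwise around the region) or alternate. Declare the two monochromatic types to be the black and white internal vertices, and the alternating regions to be the faces of $G$. Each strand crossing then separates one black region from one white region and becomes an internal edge of $G$, while each strand endpoint on the boundary of the disc produces a boundary edge. A direct local check that the strands associated to this $G$ recover the given arrangement (comparing with Figure~\ref{fig: strands}) establishes both existence and uniqueness of $G$.

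Next, for the reducedness characterization, I would show that each violation of (4)--(6) gives rise to a local reduction move on $G$ that decreases the number of faces while preserving the positroid, and conversely that any such local reduction in $G$ manifests as a failure of one of (4)--(6). A closed loop (violation of (4)) bounds a region of $G$ disconnected from the boundary, which by Lemma~\ref{clean dual} (or its failure) can be excised without changing the positroid. A non-boundary strand self-intersection (violation of (5)) or a pair of strands passing through their crossings in the same cyclic order (violation of (6)) bounds a bigon in the strand arrangement, which translates into a configuration in $G$ removable by the moves of Section~\ref{sec transformations} --- either a contraction of a degree-$2$ vertex or an urban-renewal-type untwisting of two parallel edges. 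Conversely, if $G$ is reduced, no such local move is available, so the strand diagram must satisfy (4)--(6).

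Finally, for the identification of the endpoint of the strand from $a$ with $\pi(a) \bmod n$, my plan is to argue by induction on the number of edges of $G$, using the fact that both the topological strand permutation and the combinatorial permutation $\pi$ from Proposition~\ref{prop: equivposi} are preserved by the moves of Section~\ref{sec transformations}. For a base case one can take a particularly simple reduced graph (such as a Le-diagram or an iterated bridge decomposition as in Section~\ref{sec bridge}), on which the strand permutation is computed directly and matched against the decorated permutation via Lemma~\ref{lem Knutson pi rule}. The main obstacle is this last step: reconciling the purely topological definition of the strand permutation with the algebraic description of $\pi$ via the linear dependencies $A_a \in \mathrm{span}(A_{a+1},\ldots,A_{\pi(a)})$. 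One way to close the argument without a long induction is to use the boundary measurement map itself: by Theorem~\ref{theorem obey Plucker relations} one may take $A$ to be a matrix whose columns are read off from $\tDD$, and then trace the strand starting at $a$ through the matchings contributing to $\Delta_{\vecI_{a+1}}$ (whose support, by Lemma~\ref{lem imp Grass necklace}, is exactly the rule defining $\pi$). This direct dictionary between the strand leaving $a$ and the column-dependence rule is where the theorem acquires its content.
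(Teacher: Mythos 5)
The paper does not actually prove Theorem~\ref{thm strands work}; it is quoted from Postnikov (cited there as \cite[Corollary~14.2]{Pos}) without argument, so there is no in-paper proof to compare your proposal against. That said, your sketch is worth evaluating on its own merits, and it has a real gap in the reducedness characterization.

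Your first part (reconstructing $G$ from the strand arrangement by classifying complementary regions into two monochromatic types and an alternating type) is the standard construction and is essentially how Postnikov proceeds; no objection there. The trouble is the second part. What you have actually outlined is only one direction of the biconditional. Showing that any failure of $(4)$--$(6)$ gives a local move decreasing the number of faces proves: strands fail $(4)$--$(6)$ $\Rightarrow$ $G$ is not reduced. Your ``conversely'' clause (``if $G$ is reduced, no such local move is available, so the strands must satisfy $(4)$--$(6)$'') is merely the contrapositive of the same implication, not the other direction. The hard direction is: strands satisfy $(4)$--$(6)$ $\Rightarrow$ $G$ has the minimal number of faces among all graphs with its positroid. ``Reduced'' is a global minimality condition, not a local one, and a graph with no available local reduction move could in principle still fail to be globally minimal unless one proves a confluence/termination statement for the moves and reductions (this is the nontrivial content of Postnikov's Theorems~13.2--13.4). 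Your sketch silently identifies ``no local reduction is available'' with ``reduced,'' which is exactly the thing that needs to be proved.

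The third part (identifying the strand endpoint with $\pi(a)\bmod n$) is plausible as an induction via bridges and lollipops with a base case checked against Lemma~\ref{lem Knutson pi rule}, and that route is consistent with the machinery already in Section~\ref{sec bridge}; you correctly flag the dictionary between the topological permutation and the column-dependence rule as the crux, but you do not close it. Since the paper leans on Postnikov for this entire statement, what your write-up would actually need, to stand on its own, is either a proof of the confluence/termination result or an explicit reduction of both the reducedness claim and the $\pi(a)$ claim to quoted results in \cite{Pos}.
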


%\begin{proof}[Proof sketch]
%Postnikov's proof is constructive.  The collection of curves cuts the disc into `faces', and the orientation of the curve segments around each face is either clockwise, counter-clockwise or alternating.  Each clockwise face gets a white vertex and each counter-clockwise face gets a black vertex.  Each crossing determines an edge between the two adjacent non-alternating faces, and each boundary vertex has an edge to the vertex of the unique adjacent non-alternating face.
%\end{proof}

%Following J. Scott \cite{???}, we say a collection of oriented, immersed curves is a \newword{Postnikov diagram} if it satisfies properties $(1)-(6)$.  The theorem establishes a bijection between leafless reduced graphs and Postnikov diagrams.

\subsection{Face labels} \label{sec face labels}

Let $G$ be a reduced graph.  By Properties $(1)$, $(4)$, and $(5)$, each strand in the corresponding Postnikov diagram divides the disc into two connected components: the component to the \emph{left} of the strand, and the component to the \emph{right} of the strand.
At each face, we may consider the set of strands on which it is on the left side.\footnote{A strand may cut off a corner of a face of $G$; in this case, we ask which side the remainder of the face is on.}  

\begin{prop}
Each face of a reduced graph $G$ is to the left of $k$ strands, where $k$ is the rank of the positroid of $G$.
\end{prop}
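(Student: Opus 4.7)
Define $L(f)$ to be the number of strands having the face $f$ on their left. The plan is to show (Step 1) that $L$ is constant on faces of $G$, and (Step 2) that this constant equals $k$.

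For Step 1, I would argue $L(f_1) = L(f_2)$ whenever $f_1$ and $f_2$ share an edge $e$ of $G$. At the midpoint of $e$, exactly two strands cross transversely (Figure \ref{fig: strands}). Choosing representative points of $f_1$ and $f_2$ on opposite sides of $e$ near this midpoint, a direct computation using the local picture shows that of these two strands, one has $f_1$ on its left and $f_2$ on its right while the other reverses the roles; all remaining strands lie, in a sufficiently small neighborhood of the midpoint, on the same side of both $f_1$ and $f_2$. Hence $L(f_1) = L(f_2)$, and since any two faces of $G$ can be joined by a generic path in the disc crossing finitely many edges transversely, $L$ is constant on all faces; call this value $L$.

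For Step 2, let $f_i$ be the face of $G$ adjacent to the boundary arc from vertex $i$ to vertex $i+1 \pmod n$. By constancy, $\sum_{i=1}^n L(f_i) = nL$. On the other hand, swapping the order of summation,
\[
\sum_{i=1}^n L(f_i) = \sum_{s \text{ strand}} \#\{i : \text{arc } i \text{ lies on the left of } s\}.
\]
For a strand $s$ from $a$ to $\pi(a) \bmod n$, properties (1), (4), (5) of the strand system imply $s$ is a simple curve (arc or loop) dividing the disc into two pieces. Inspecting the orientation at a boundary edge (Figure \ref{fig: strands}, treating black- and white-adjacent vertices separately) identifies the left region of $s$ as the one containing the $\pi(a) - a$ arcs going clockwise from $a$ to $\pi(a)$. (The degenerate lollipop cases $\pi(a) \in \{a, a+n\}$ fit: the tiny loop at a black lollipop encloses no boundary arcs on its left, while a white lollipop's loop wraps around with all $n$ arcs on its left.) Summing yields $\sum_{i=1}^n L(f_i) = \sum_{a \in [n]} (\pi(a) - a) = nk$, by the defining property of a bounded affine permutation of type $(k, n)$. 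Equating the two expressions gives $L = k$.

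The main subtlety is the orientation claim in Step 2: one must verify that the side of $s$ containing the clockwise arcs from $a$ to $\pi(a)$ is indeed the ``left'' side. This reduces to a local check at the boundary vertex $a$ using Figure \ref{fig: strands}: the strand starting at $a$ leaves the boundary with the clockwise-adjacent arc on its left, and the analogous check at $\pi(a)$ confirms the orientation extends consistently along the whole strand.
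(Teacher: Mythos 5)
Your Step 1 (constancy of the left-count across faces of $G$, via the local picture at an edge) is exactly the argument the paper uses. For Step 2 the paper simply cites~\cite[Proposition~8.3.(1)]{OPS15} to get the count right on boundary faces, whereas you give a self-contained double-count: since a reduced strand from $a$ to $\pi(a) \bmod n$ is a simple arc that cuts off precisely the $\pi(a)-a$ boundary arcs lying clockwise from $a$ on its left (an orientation check from Figure~\ref{fig: strands}, with the lollipops $\pi(a)\in\{a,a+n\}$ giving the degenerate values $0$ and $n$), summing over boundary faces gives $nL = \sum_a (\pi(a)-a) = nk$ by the definition of a bounded affine permutation of type $(k,n)$. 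This is correct, and it has the advantage of eliminating the external reference; the cost is a careful verification of the orientation convention (which you flag, and which does check out: for both black- and white-adjacent boundary vertices the strand starting at $a$ has the clockwise arc $[a,a+1]$ on its left, and since the strand is an embedded arc meeting the boundary only at $a$ and $\pi(a)$, the check at $a$ alone already determines which component is the left one).
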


\begin{proof}
If $F_1$ and $F_2$ are adjacent faces of $G$ separated by an edge $e$, then there are two strands which pass through $e$, and the labels of $F_1$ and $F_2$ differ by deleting one of these strands and inserting the other. So all faces have labels of the same size. For the boundary faces, this is verified as a portion of~\cite[Proposition~8.3.(1)]{OPS15}.
\end{proof}

To each face $f$ of $G$, we would like to associate a $k$-element subset of $[n]$, and hence a Pl\"ucker coordinate on $\widetilde{Gr(k,n)}$.  The proposition gives two equally natural ways to do this.
\begin{itemize}
	\item \newword{Target-labeling.} Label each face by the targets of the strands it is left of.
	\[ \tI(f) := \{ a\in [n] \mid \text{$f$ is to the left of the strand ending at vertex $a$} \} \]
	\item \newword{Source-labeling.} Label each face by the sources of the strands it is left of.
	\[ \sI(f) := \{ a\in [n] \mid \text{$f$ is to the left of the strand beginning at vertex $a$} \} \]
\end{itemize}
We will use both of these conventions, and so we avoid choosing a preferred convention.
The $\bullet$ in our notation is meant to help the reader recall which notation refers to which convention.

These face labels generalize Grassman necklaces and  reverse Grassman necklaces as follows.
\begin{prop}\label{prop: boundarylabel}
Let $G$ be a reduced graph with positroid $\cM$, and let $\veccI$ and $\cevcI$ be the Grassman necklace and reverse Grassman necklace of $\cM$ (see Proposition \ref{prop: equivposi}).  If $f$ is the boundary face in $G$ between boundary vertices $i$ and $i+1$, then 
$\tI(f) = \vecI_{i+1},\text{ and } \sI(f) = \cevI_i$.
\end{prop}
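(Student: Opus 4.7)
I would prove the target-labeling identity $\tI(f)=\vecI_{i+1}$; the source-labeling identity $\sI(f)=\cevI_i$ then follows by the symmetric argument obtained by reversing all strand orientations (which swaps sources with targets and turns the Grassman necklace into the reverse Grassman necklace).

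The key geometric input is the following description of which strands have $f$ on their left. Let $\gamma$ be a strand with source $a$ and target $t$, so that $t=\pi(a)\bmod n$ by Theorem~\ref{thm strands work}. Since $G$ is reduced, $\gamma$ is a simple curve joining two distinct boundary marks, and therefore divides the disc into two regions. Inspecting Postnikov's local picture at a white boundary vertex (Figure~\ref{fig: strands}) shows that the region to the \emph{left} of $\gamma$ is the one containing the clockwise open arc from $a$ to $t$. Consequently, the boundary face $f$ between $i$ and $i+1$ lies on the left of $\gamma$ precisely when this open arc contains both of the positions $i$ and $i+1$, which, setting $a=\pi^{-1}(b)$ and $t=b$, gives
\[ b\in\tI(f)\quad\Longleftrightarrow\quad i+1\in(\pi^{-1}(b),\,b]\text{ cyclically}. \]

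I would then match this against Lemma~\ref{lem imp Grass necklace}. That lemma describes $\vecI_{i+1}$ as the disjoint union of $\{b\mid (i+1)\impto_\pi \pi^{-1}(b)\}$ with the $(i+1)$-minimal basis of the span of $(A_{i+1},\ldots,A_{\pi(i+1)-1})$. The first set records exactly those $b$ with $\pi^{-1}(b)<i+1\leq\pi(i+1)<b$ in the cyclic lift, i.e., those whose strand ``straddles'' the boundary face $f$; the second set accounts for the degenerate cases in which $\pi^{-1}(b)$ or $b$ coincides with one of $i,i+1$, or in which the strand is a loop at a lollipop vertex ($\pi(a)=a$ or $\pi(a)=a+n$). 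In each case the combinatorial condition for $b\in\vecI_{i+1}$ matches exactly the geometric condition for $b\in\tI(f)$.

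The main obstacle is the careful handling of conventions: fixing that the ``left'' of a strand corresponds to the clockwise arc from source to target (with respect to our clockwise labeling of boundary vertices), and tracking the boundary cases where $\pi^{-1}(b)\in\{i,i+1\}$, $b\in\{i,i+1\}$, or $\pi$ has a fixed point at the vertex under consideration. These require patience but are settled by the local diagrams of Figure~\ref{fig: strands} together with the strand-based description of $\pi$ given by Theorem~\ref{thm strands work} and Lemma~\ref{lem Knutson pi rule}.
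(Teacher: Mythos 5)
The paper's own proof is a single sentence, deferring to the strand description at the end of Theorem~\ref{thm strands work} as ``straightforward.'' Your plan of reducing to the target-labeling case, giving a geometric characterization of ``$f$ is to the left of the strand ending at $b$,'' and comparing with a combinatorial description of $\vecI_{i+1}$ is a reasonable way to make that explicit. Your geometric criterion ($b\in\tI(f)$ iff $i+1\in(\pi^{-1}(b),b]$ cyclically, taking the lift with $b-n\le \pi^{-1}(b)\le b$) is in fact the right one, and it does match the well-known combinatorial description of $\vecI_{i+1}$.

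The gap is in the step where you ``match this against Lemma~\ref{lem imp Grass necklace}.'' You describe the second part of that lemma's decomposition --- the $(i{+}1)$-minimal subset of $(A_{i+1},\ldots,A_{\pi(i+1)-1})$ that is a basis of its span --- as ``degenerate cases in which $\pi^{-1}(b)$ or $b$ coincides with $i,i+1$, or the strand is a lollipop.'' That is not what this set is. It is a set of size $\dim\mathrm{span}(A_{i+1},\ldots,A_{\pi(i+1)-1})$, which is generically a large portion of $\vecI_{i+1}$, and its membership is stated in terms of the matrix $A$ rather than in terms of $\pi$. Consequently the claimed ``exact match'' in each case is not actually verified; you would still have to show that $b$ belongs to that $(i{+}1)$-minimal basis if and only if $\pi^{-1}(b)<i+1\le b<\pi(i+1)$ in the cyclic lift. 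That equivalence is true, but it requires an argument: from Lemma~\ref{lem Knutson pi rule} one shows $A_b\notin\mathrm{span}(A_a,\ldots,A_{b-1})$ iff $a>\pi^{-1}(b)$ (one direction because $\mathrm{span}(A_a,\ldots,A_{b-1})\subseteq\mathrm{span}(A_{\pi^{-1}(b)+1},\ldots,A_{b-1})$ misses $A_b$; the other because in the relation $A_{\pi^{-1}(b)}\in\mathrm{span}(A_{\pi^{-1}(b)+1},\ldots,A_b)$ the coefficient of $A_b$ is nonzero by minimality of $\pi(\pi^{-1}(b))$). Once you have the clean combinatorial statement $b\in\vecI_a \Leftrightarrow a\in(\pi^{-1}(b),b]$, you can bypass Lemma~\ref{lem imp Grass necklace} entirely and the comparison with the geometric criterion is immediate. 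As written, though, the ``in each case the conditions match exactly'' sentence papers over the real content of the step.
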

\begin{proof}
Straightforward from the description of the starting and ending points of the strands at the end of Theorem~\ref{thm strands work}.
\end{proof}

\begin{remark}
The clash of notation in Proposition \ref{prop: boundarylabel} is unfortunate, but the notation $\sI$ and $\tI$ will work well with other notation that will come up more often than Grassman necklaces.
\end{remark}

We can now explain the motivation for the notation $a \impto_{\pi} b$. The following lemma was pointed out to us by Suho Oh.
\begin{lemma} \label{label implication}
Let $\pi$ be a decorated permutation and suppose that $a \impto_{\pi} b$. Let $G$ be a reduced graph for $\pi$ and let $f$ be a face of $G$. If $a \in \sI(f)$ then $b \in \sI(f)$.
\end{lemma}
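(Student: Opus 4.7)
My plan is a geometric argument on the strand diagram of $G$: I will show that the strand $\gamma_a$ from $a$ to $\pi(a)$ lies entirely in the open left region $L_b$ of the strand $\gamma_b$ from $b$ to $\pi(b)$, which forces the left region $L_a$ of $\gamma_a$ to be contained in $L_b$, and hence $\sI(f) \ni a \Rightarrow \sI(f) \ni b$.

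First I would dispense with lollipop cases. The hypothesis $a \impto_\pi b$ gives $b < a \leq \pi(a) < \pi(b) \leq b+n$, which immediately rules out $\pi(b)=b$ and $\pi(a)=a+n$. If $\pi(a)=a$ (black lollipop at $a$), then $a$ lies in no basis of the positroid and hence in no $\sI(f)$, making the implication vacuous. If $\pi(b)=b+n$ (white lollipop at $b$), then $b$ is a coloop of the positroid and belongs to every $\sI(f)$, so the conclusion is automatic. Thus I may assume $b < a < \pi(a) < \pi(b) < b+n$ with all four values distinct and neither strand a loop.

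Adopting the convention that the open left region $L_b$ of the oriented strand $\gamma_b$ is the component of the disk adjacent to the open clockwise boundary arc from $b$ to $\pi(b)$, both endpoints $a,\pi(a)$ of $\gamma_a$ lie on $\partial L_b$. Each transverse crossing of $\gamma_a$ with $\gamma_b$ toggles the side of $\gamma_b$, so the total number of such crossings is even. At this point I invoke the standard consequence of reducedness that any two distinct strands in a reduced plabic graph cross at most once; equivalently, Property~(6) together with reducedness forbids the bigon that two crossings would otherwise produce. (A direct sanity check: the interior of such a bigon would lie in $L_a\cap R_b$, would contain at least one face of $G$, and would therefore violate the very implication we are trying to prove.) Combined with the parity observation, $\gamma_a$ and $\gamma_b$ are disjoint, and $\gamma_a$ lies strictly inside $L_b$.

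Finally, the boundary $\partial L_a=\gamma_a\cup [a,\pi(a)]_{\mathrm{cw}}$ lies in $\overline{L_b}$, and $\gamma_b$ cannot enter $L_a$: to do so it would have to cross $\gamma_a$ (ruled out above) or pierce the disk boundary along $[a,\pi(a)]$ (impossible since $b,\pi(b)$ lie outside that arc and $\gamma_b$ is otherwise interior to the disk). By planar connectedness, the open region $L_a$ is contained in $L_b$, so every face $f\subset L_a$ also satisfies $f\subset L_b$, yielding $a\in\sI(f)\Rightarrow b\in\sI(f)$. The main obstacle I anticipate is the clean justification of the ``at most one crossing'' step, which I would extract from Postnikov's classification of reducible strand configurations rather than reprove from scratch.
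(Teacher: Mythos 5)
Your proof is correct and takes essentially the same route as the paper's: the paper simply cites Marsh--Scott (Lemma 3.1) for the fact that in a reduced plabic graph the strands $a\to\pi(a)$ and $b\to\pi(b)$ do not cross when $a\impto_\pi b$, and then concludes that the left region of one lies inside the left region of the other. You reconstruct that crossing-count statement yourself (parity from both endpoints of $\gamma_a$ lying on the same side of $\gamma_b$, plus the "at most one crossing" consequence of reducedness, which you correctly flag as needing an external citation), and you add the routine lollipop edge cases, which the paper leaves implicit in the cited lemma.
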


\begin{proof}
Because the graph $G$ is reduced, the strands $a \to \pi(a)$ and $b \to \pi(b)$ cannot cross (see, e.g. \cite[Lemma 3.1]{MS14}). Therefore, any face to the left of $a \to \pi(a)$ is also to the left of $b \to \pi(b)$.
\end{proof}

Each face $f$ determines two Pl\"ucker coordinates, $\Delta_{\tI(f)}$ and $\Delta_{\sI(f)}$, on $\widetilde{Gr(k,n)}$, which we restrict to $\tPio(\cM)$ (where $\cM$ is the positroid of $G$).  We combine these into two maps to $\CC^F$ as follows.
\[ \begin{tikzpicture}[baseline=-.25cm]
	\node (a) at (0,0) {$\tPio(\cM)$};
	\node (b) at (2,0) {$\CC^F$};
	\draw[->,out=20,in=150] (a) to node[above] {$\tF$} (b);
	\draw[->,out=-20,in=-150] (a) to node[below] {$\sF$} (b);
\end{tikzpicture} 
\hspace{2cm}
\begin{array}{c}
\tF(p) := \left(\Delta_{\tI(f)}(p)\right)_{f\in F} \\
\sF(p) := \left(\Delta_{\sI(f)}(p)\right)_{f\in F}
\end{array}
\]
We emphasize these maps are only defined for \emph{reduced} graphs $G$.

\begin{remark}
There is an expectation that the set of Pl\"ucker coordinates $\{\Delta_{\tI(f)}\}_{f\in F}$ should be a cluster for a cluster structure on the coordinate ring of $\tPio(\cM)$.\footnote{A complete description of the conjectural cluster structure may be found in \cite{MS14}. Leclerc~\cite{Lec14} has recently placed a cluster structure on $\tPio(\cM)$, which we expect to coincide with this one, but the details are not yet checked.}  Scott proved this was true for the `uniform positroid' $\cM=\binom{[n]}{k}$, which corresponds to the dense positroid variety in $\widetilde{Gr(k,n)}$ \cite{Sco06}.  One consequence of this expectation is that these Pl\"ucker coordinates should satisfy a `Laurent phenomenon'.  Geometrically, this means that restricting $\tF$ to a rational map
\[ \begin{tikzpicture}[baseline=-.25cm]
	\node (a) at (0,0) {$\tPio(\cM)$};
	\node (b) at (2,0) {$\mathbb{G}_m^F$};
	\draw[->,dashed] (a) to node[above] {$\tF$} (b);
\end{tikzpicture} \]
this map should be an isomorphism from its domain of definition to $\mathbb{G}_m^F$.  By symmetry, the same result should hold for the source-labeled map $\sF$.  Theorem \ref{main theorem} will confirm these expectations.
\end{remark}

\begin{remark} \label{source labeling cluster}
The two conventions (target-labelling and source-labelling) do not always give the same cluster structure!  However, examples suggest that each cluster variable in the target-labelling cluster structure is a monomial in the frozen variables times a cluster variable for the source-labeling cluster structure. Chris Fraser~\cite{Fra15} has recently developed a theory of maps of cluster algebras which take cluster variables to cluster variables times monomials in frozen variables.
\end{remark}

\section{Matchings associated to faces} \label{sec extremal}

The goal of this section is to construct an isomorphism $\mathbb{G}_m^E/\mathbb{G}_m^{V-1}\rightarrow \GG_m^F$, or, equivalently, a system of coordinates on $\mathbb{G}_m^E/\mathbb{G}_m^{V-1}$ parameterized by $F$.
It will be more natural to work on the level of character lattices of these tori, where is equivalent to giving an isomorphism of lattices
\[\ZZ^F\oplus \ZZ^V \garrow{\sim} \ZZ \oplus \ZZ^E  \]
In this section, we construct such a map and its inverse.  The most important part of this map is the restriction $\ZZ^F\rightarrow \ZZ^E$, which is defined by a special matching associated to each face. We explicitly construct these matchings using \emph{downstream wedges}, though each may be defined abstractly as the minimal matching with a given boundary under a partial order (Remark \ref{rem: minimalmatching} and Appendix \ref{sec Propp}).

\subsection{Downstream wedges and a pair of inverse matrices}

Let $e$ be an edge in a reduced graph $G$.  There are two strands in the corresponding Postnikov diagram which intersect $e$, and by Property $(6)$ of Postnikov diagrams, the `downstream' half of each of these strands do not intersect each other except at $e$.  Hence, they divide the disc into two components; the \newword{downstream wedge} of $e$ is the component which does not contain $e$ (see Figure \ref{fig: downstreamwedge}).

A face $f$ is \newword{downstream} from an edge $e$ if the face $f$ is contained in the downstream wedge of $e$, ignoring any corners of $f$ that are cut off.  We can use this to distinguish between the two faces adjacent to an edge.  
We say that $f$ is \newword{directly downstream} of $e$ if $f$ is downstream of $e$ and $e$ is in the boundary of $f$.

Given a face $f\in F$ and an edge $e\in E$, define
\[ U_{ef}:= \left\{\begin{array}{cc}
1 & \text{$f$ is downstream from $e$} \\
0 & \text{otherwise} \end{array}\right\} \]
\[ \partial_{fe}:= \left\{\begin{array}{cc}
1 & \text{$e$ is an internal edge in the boundary of $f$} \\
1 & \text{$e$ is an external edge, and $f$ is directly downstream from $e$} \\
0 & \text{otherwise} \end{array}\right\} \]
Let $U_{E,F}$ and $\partial_{F,E}$ be the matrices with the above entries.

Similarly, given an internal vertex $v\in V$ and an edge $e\in E$, define
\[ U_{ev}:= \left\{\begin{array}{cc}
1 & \text{$v$ is downstream from $e$} \\
0 & \text{otherwise} \end{array}\right\} \]
\[ \partial_{ve}:= \left\{\begin{array}{cc}
1 & \text{$v$ is in the boundary of $e$} \\
0 & \text{otherwise} \end{array}\right\} \]
Let $U_{E,V}$ and $\partial_{V,E}$ be the matrices with the above entries.

For each face $f$, let 
\[ B_f := \text{\# of edges $e$ such that $f$ is directly downstream from $e$}\]
So $B_f$ is $\tfrac{1}{2} \# \partial f$ rounded either up or down. In particular, if $f$ is internal then $\# \partial f$ is even and $B_f = \tfrac{1}{2} \# \partial f$.

%If $f$ has an even number of edges in the boundary (e.g. whenever $f$ is internal), then $B_f$ is always half this number.
%When $f$ is an internal face, $B_f$ is always half the total number of edges in the boundary of $f$; if $f$ is a boundary face then $B_f$ is this quantity either rounded up or down. \footnote{In particular, if boundary vertices are only adjacent to white vertices, $B_f$ is half the number of boundary edges of $f$.}  
Let $B_{F,1}$ denote the $|F|\times 1$-matrix with entries $\{B_f\}$.  Finally, for any finite sets $A$ and $B$, let $1_{A,B}$ denote the $|A|\times |B|$-matrix of ones.

\begin{lemma}\label{lemma: downmatrix}
The pair of matrices
\[ \gmat{ 1_{F,1}-B_{F,1} &-\partial_{F,E} \\
1_{V,1} & \partial_{V,E} } \ \textrm{and} \ \gmat{1_{1,F} & 1_{1,V}\\ -U_{E,F} & -U_{E,V}}\]
are mutually inverse.
\end{lemma}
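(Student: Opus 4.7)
The plan is to verify $M_2 M_1 = I$. Both matrices are square: applying Euler's formula to the disc with the CW decomposition given by $G$ and the boundary circle (with $|V|+n$ 0-cells, $|E|+n$ 1-cells including the $n$ boundary arcs, and $|F|$ 2-cells) gives $(|V|+n)-(|E|+n)+|F|=1$, so $|V|+|F| = |E|+1$. Thus $M_2 M_1$ and $M_1 M_2$ have sizes $(1+|E|)\times(1+|E|)$ and $(|V|+|F|)\times(|V|+|F|)$ respectively, and $M_2 M_1 = I$ will automatically imply $M_1 M_2 = I$.

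I compute $M_2 M_1$ block-by-block with rows and columns decomposed as $\{\ast\}\sqcup E$. The top-left scalar entry equals $|V|+|F|-\sum_f B_f$. Since each edge has exactly one directly-downstream face among its neighbors (for internal $e$ this is because $\partial W_e$ separates the two faces adjacent to $e$; for external $e$ it can be read off from Figure~\ref{fig: downstreamwedge}), we have $\sum_f B_f = |E|$, and Euler's formula makes this entry $1$. The top-right block $(M_2 M_1)_{\ast, e} = \sum_v \partial_{ve} - \sum_f \partial_{fe}$ vanishes for each $e$: both sums equal $2$ for internal $e$ and $1$ for external $e$. The bottom-left block's vanishing is an Euler-characteristic computation applied to $\overline{W_e}$, viewed as a topological sub-disc of $D$ whose cell structure is induced by $G$ together with the two bounding strand half-arcs.

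The main work is the bottom-right $|E|\times|E|$ block, whose $(e,e')$-entry $\sum_f U_{ef}\partial_{fe'} - \sum_v U_{ev}\partial_{ve'}$ should equal $\delta_{e,e'}$. The key geometric input is that $\partial W_e$ consists of two strand half-arcs which cross graph edges only transversely at edge midpoints, and by Property~(6) of Postnikov diagrams each such half-arc intersects any fixed $e'\neq e$ at most once. Consequently for $e'\neq e$ internal, exactly one of three cases holds: (i) $e'\subseteq \overline{W_e}$, so both endpoints and both adjacent faces of $e'$ lie in $W_e$, giving $2-2=0$; (ii) $e'$ lies entirely outside $W_e$, giving $0-0$; or (iii) $e'$ straddles $\partial W_e$ at its midpoint, so exactly one endpoint and one adjacent face of $e'$ lie in $W_e$, giving $1-1=0$. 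For the diagonal $e=e'$, both bounding strand halves of $W_e$ emanate from the midpoint of $e$, so neither endpoint of $e$ lies in $W_e$ but exactly one adjacent face (the directly-downstream one) does, giving $1-0=1$.

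The main obstacle is the case analysis for external edges and lollipops, where strands terminate at boundary vertices rather than crossing edge midpoints, so the geometry of $W_e$ requires slightly different treatment. By Lemma~\ref{clean dual}, the only edges bordering a single face from both sides are the lollipops, and the external-edge clause in the definition of $\partial_{fe}$ is tuned so that the same three-case template still yields the identities; checking this (plus the analogous Euler-type identity for the bottom-left block) is the bulk of the verification.
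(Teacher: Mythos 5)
Your proposal follows the same overall strategy as the paper's proof: verify $M_2M_1 = I$ block by block using the $\{\ast\}\sqcup E$ decomposition, with Euler's formula for the disc ensuring the matrices are square (so a one-sided inverse is two-sided). The top-left, top-right and bottom-left computations are identical in spirit and substance to the paper's, including the Euler-characteristic argument applied to $\overline{W_e}$ for the bottom-left.

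The one place where you add content is the bottom-right $|E|\times|E|$ block. The paper's proof is quite terse there: it simply asserts that for $e'\neq e$, ``the number of faces such that $U_{ef}\partial_{fe'}=1$ is equal to the number of vertices such that $U_{ev}\partial_{ve'}=1$,'' without justifying this count. Your three-case analysis based on how $e'$ sits relative to $\partial W_e$ --- entirely inside, entirely outside, or straddling transversally at its midpoint --- is a genuine sharpening of that claim, and the appeal to Property~(6) (no strand crosses another twice, so each bounding half-arc of $W_e$ hits $e'$ at most once) is exactly the geometric fact that makes the cases exhaustive. The paper doesn't actually supply this justification, so your version is more explicit about why the middle case gives $1-1$ and not something else. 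That said, you yourself flag that you have not carried out the case analysis for external edges and lollipops, and you describe it as ``the bulk of the verification.'' This is a real gap if the proof is to be self-contained: the external-edge clause of $\partial_{fe}$ is specifically designed so that the diagonal entry works out to $1$ when $e$ is external (the internal vertex of $e$ is outside $W_e$ but the directly-downstream face is inside, cf.\ Figure~\ref{fig: downstreamwedge}), and the off-diagonal external cases need a sentence each. It would be worth closing that loop, since the paper's own treatment also glosses over these cases and your observation via Lemma~\ref{clean dual} that lollipops are the only edges bordering a single face from both sides is exactly the right tool for doing so.
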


Once we have proved that these matrices are inverse, we will denote them by $\Monom$ and $\Monom^{-1}$ respectively.

\begin{remark}
The formulas defining the entries of $\Monom$ do not refer to strands, so this matrix makes sense for any bipartite graph embedded in a disc, reduced or not.
It it not hard to show that $\Monom$ is invertible and $\Monom^{-1}$ has integer entries whenever all components of $G$ are connected to the boundary of the disc. However, the entries of $\Monom^{-1}$ may not lie in $\{ -1,0,1 \}$ in this generality.
\end{remark}

\begin{proof}[Proof of Lemma \ref{lemma: downmatrix}]
The reduced graph $G$ gives a cellular decomposition of the disc (Lemma~\ref{clean dual}). It has $|E|+n$ edges (counting the $n$ boundary edges between boundary vertices) and $|V|+n$ vertices (counting the $|V|$ internal vertices and the $n$ boundary vertices).  Since the Euler characteristic of the disc is $1$,
%The Euler characteristic of the disc is $1$, and so\footnote{We are using the cellular decomposition of the disc defined by $G$. It has $|E|+n$ edges (counting the $n$ boundary edges between boundary vertices) and $|V|+n$ vertices (counting the $|V|$ internal vertices and the $n$ boundary vertices). Since $G$ is reduced, in particular, all of the components of $G$ are connected to the boundary, so the faces of this complex are contractible and the Euler relation applies.}
\[ |F| - (|E| +n) + (|V|+n) =1 \]
Hence, $|F|+|V|=|E|+1$, and so both matrices in the statement of the lemma are square.  If one of the matrices is the left inverse of the other, then it is also the right inverse.  We check that
\begin{equation}\label{eq: matprod}
\gmat{1_{1,F} & 1_{1,V}\\ -U_{E,F} & -U_{E,V}}
\gmat{ 1_{F,1}-B_{F,1} & -\partial_{F,E} \\ 1_{V,1} & \partial_{V,E} }
\end{equation}
is the identity on each block.  %It will follow immediately that the product in the opposite order is also the identity.

\emph{Upper left block.}  Since each edge has a unique face directly downstream, the sum $\sum_{f\in F} B_f$ counts each edge exactly once, so it is equal to $|E|$.

The upper left entry in the product \eqref{eq: matprod} is
\[ 1_{1,F}(1_{F,1}-B_{F,1})+1_{1,V}1_{V,1} = \sum_{f\in F} (1-B_f)+\sum_{v\in V} 1 = |F|-|E|+|V| =1 \]

\emph{Upper right block.}  If $e$ is an internal edge, then there are two faces $f$ with $\partial_{fe}=1$, and two vertices $v$ with $\partial_{ve}=1$.  If $e$ is a external edge, then there is one face $f$ with $\partial_{fe}=1$, and one vertex $v$ with $\partial_{ve}=1$.  In either case,
\[ -\sum_{f\in F}\partial_{fe} + \sum_{v\in V} \partial_{ve} =0 \]
and so the $1\times |E|$-matrix $1_{1,F}\partial_{F,E}-1_{1,V}\partial_{V,E}$ is zero.

\emph{Lower left block.}  Fix an edge $e\in E$, and consider the closure of the union of all the faces in the downstream wedge of $e$. % (see Figure \ref{fig: wedgecomplex}).  
This is homotopy equivalent to the downstream wedge itself; in particular it has Euler characteristic $1$.  
This closure has a cellular decomposition $\Delta$, given by the restriction of the graph $G$ and the boundary of the disc.  

The sum $\sum_{v\in V} U_{ev}$ counts the number of internal vertices of $G$ in the downstream wedge of $e$.  These vertices are all in $\Delta$, but this count misses two types of vertices.  Specifically,
\[ \text{(\# vertices in $\Delta$)} = A+B+\sum_{v\in V} U_{ev}\]
where $A$ is the number of internal vertices of $G$ which are contained in $\Delta$ but not in the downstream wedge of $e$, and $B$ is the number of boundary vertices of the disc contained in $\Delta$.  

Similarly, the sum $\sum_{f\in F} U_{ef}B_f$ counts edges in $G$ whose directly downstream face is in $\Delta$.  These are all in $\Delta$, but this count misses two kinds of edges in $\Delta$: edges in $G\cap \Delta$ whose directly downstream face is not in $\Delta$, and boundary edges of the disc which are contained in $\Delta$.  

There are $A$-many edges in $G\cap \Delta$ whose directly downstream face is not in $\Delta$.  To see this, observe that each vertex counted by $A$ is adjacent to two edges in $\Delta$; one of these edges has its directly downstream face in $\Delta$ and the other does not.  
Since there are $(B-1)$-many boundary edges contained in $\Delta$,
\[ \text{(\# edges in $\Delta$)} = A+(B-1) +\sum_{f\in F} U_{ef}B_f\]

It follows that 
\[ \sum_{v\in V}U_{ev}-\sum_{f\in F} U_{ef}B_f = \text{(\# vertices in $\Delta$)} - \text{(\# edges in $\Delta$)} -1\]
Since $\sum_{f\in F}U_{ef}$ is the number of faces in $\Delta$, we see that
\[ \sum_{f\in F} U_{ef}(1-B_f) +\sum_{v\in V} U_{ev} =  \sum_{f\in F} U_{ef}-\sum_{f\in F}U_{ef}B_f +\sum_{v\in V} U_{ev} = \chi(\Delta)-1=0\]
This holds for any edge, and so the $|E|\times 1$-matrix $U_{E,F}(1_{F,1}-B_{F,1})+U_{E,V}1_{V,1}$ is zero.

\emph{Lower right block.}
Fix an edge $e\in E$, and consider the sum 
\begin{equation}\label{eq: chamber1}
\sum_{f\in F} U_{ef}\partial_{fe'}-\sum_{v\in V} U_{ev}\partial_{ve'} 
\end{equation}
for all possible $e'$ in $E$.  When $e'=e$, the product $U_{ef}\partial_{fe}=1$ when $f$ is the face directly downstream from $e$, and all other terms are $0$, so Formula \eqref{eq: chamber1} evaluates to $1$.  For all other $e'$, the number of faces such that $U_{ef}\partial_{fe'}=1$ is equal to the number of vertices such that $U_{ev}\partial_{ve'}=1$; hence, Formula \eqref{eq: chamber1} evaluates to $0$.  As a consequence, the $|E|\times |E|$-matrix in the lower right of the product \eqref{eq: matprod} is
\[ U_{E,F}\partial_{F,E} - U_{E,V}\partial_{V,E} = Id_{E,E}\]
%\begin{itemize}
%	\item If $e'=e$, then $U_{ef}\partial_{fe}=1$ when $f$ is the face directly downstream from $e$, and all other terms are $0$, so Formula \eqref{eq: chamber1} evaluates to $1$.
%	\item If $e'$ is disjoint from the downstream wedge of $e$, then all the terms in Formula \eqref{eq: chamber1} are $0$.
%	\item If $e'$ is an internal edge contained in the downstream wedge of $e$, then $U_{ef}\partial_{fe'}=1$ for the two faces adjacent to $e$, $U_{ev}\partial_{ve'}=1$ for the two vertices adjacent to $e$, and all other terms are $0$.  Formula \eqref{eq: chamber1} evaluates to $0$.
%	\item If $e'$ is an internal edge which crosses the boundary of the downstream wedge of $e$, then one of the two faces adjacent to $e'$ is in the downstream wedge, and one of the two vertices adjacent to $e'$ is in the downstream wedge.  Formula \eqref{eq: chamber1} evaluates to $0$.
%	\item If $e'$ is an external edge whose boundary vertex is the endpoint of one of the
%	
%	If $e'$ is an external edge and it intersects the downstream wedge, then it it is completely contained 
%	
%	If $e'$ is an external edge contained in the downstream wedge of $e$, then $U_{ef}\partial_{fe'}=1$ for the face directly downstream to $e$, $U_{ev}\partial_{ve'}=1$ for the vertex adjacent to $e$, and all other terms are $0$.  Formula \eqref{eq: chamber1} evaluates to $0$.
%\end{itemize}
Hence, the product \eqref{eq: matprod} is the identity matrix.  %It follows that the product in the opposite order is also the identity.
\end{proof}

The proof of Lemma \ref{lemma: downmatrix} is delightfully efficient.  The content of the lemma is eight identities relating the block entries, but the proof only had to verify four of them.  The other four identities are free; they are encoded in the following equation.
\begin{equation}\label{eq: othermatprod}
\gmat{ 1_{F,1}-B_{F,1} & -\partial_{F,E} \\ 1_{V,1} & \partial_{V,E} }
\gmat{1_{1,F} & 1_{1,V}\\ -U_{E,F} & -U_{E,V}}
=\gmat{Id_{F,F} & 0 \\ 0 & Id_{V,V} }
\end{equation}
In the next sections, we will reap the benefits of these identities.

\subsection{Matchings from downstream wedges}

To any face $f$, we may associate the set of edges such that $f$ is in its downstream wedge.
\[ \vecM(f) := \{e\in E \mid \text{$f$ is in the downstream wedge of $e$}\}\]
As a mnemonic, the arrow points towards $f$, just as the strands are directed from edge $e$ in the general direction of face $f$.

Two of the four identities contained in Equation \eqref{eq: othermatprod} have essential consequences for $\vecM(f)$.
\begin{thm}\label{thm: minmatchprop}
For any face $f\in F$, the set $\vecM(f)$ is a matching of $G$ with boundary $\sI(f)$, the source-indexed face label of $f$.  There are $B_f$-many edges $e$ in $\vecM(f)$ such that $\partial_{fe}=1$, and for any other face $f'\in F$, there are $(B_{f'}-1)$-many edges in $\vecM(f)$ such that $\partial_{f'e}=1$.
\end{thm}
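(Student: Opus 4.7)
The two properties of $\vecM(f)$ that concern edges and faces/internal-vertices will follow immediately from the matrix identity in Lemma~\ref{lemma: downmatrix}; the only real work is computing $\partial\vecM(f)$, which requires strand-level analysis at the boundary.

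For the matching property, I would apply the identity \eqref{eq: othermatprod} block-by-block. The vanishing of the $(V,F)$-block is the assertion
\[\partial_{V,E}\,U_{E,F}=1_{V,F},\]
i.e.\ $\sum_e \partial_{ve} U_{ef}=1$ for every internal vertex $v$ and face $f$. Since $\partial_{ve}$ indicates whether $v$ is an endpoint of $e$ and $U_{ef}$ indicates whether $e\in\vecM(f)$, this sum counts the edges of $\vecM(f)$ incident to $v$, so the identity says exactly that $\vecM(f)$ covers each internal vertex once. The edge counts at faces come from the $(F,F)$-block. The diagonal entries give $(1-B_f)+\sum_e\partial_{fe}U_{ef}=1$, hence $\sum_e \partial_{fe}U_{ef}=B_f$, which is the count of $e\in\vecM(f)$ with $\partial_{fe}=1$. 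The off-diagonal entries (swapping the roles of $f$ and $f'$) give $\sum_e\partial_{f'e}U_{ef}=B_{f'}-1$, the count of $e\in\vecM(f)$ with $\partial_{f'e}=1$. Thus the matching and counting statements drop out of Lemma~\ref{lemma: downmatrix} with no additional work.

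The remaining claim is $\partial\vecM(f)=\sI(f)$. I would prove this locally at each boundary vertex. Fix a boundary vertex $i$ with adjacent boundary edge $e=(i,v)$. By the definition of $\partial M$ in Section~\ref{sec matching to positroid},
\[i\in\partial\vecM(f)\iff\big(v\text{ white and }e\in\vecM(f)\big)\text{ or }\big(v\text{ black and }e\notin\vecM(f)\big),\]
while $i\in\sI(f)$ iff $f$ lies to the left of the strand $\sigma_i$ starting at $i$. The plan is to reduce both conditions to a single geometric statement about $\sigma_i$. Two strands pass through $e$: the strand $\sigma_i$ starting at $i$ and the strand $\sigma'_i$ ending at $i$. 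Their downstream halves meet at $e$, and the downstream half of $\sigma'_i$ is a short arc from $e$ to $i$ along the boundary, while the downstream half of $\sigma_i$ traverses the whole disc from $e$ to $\pi(i)$. Consequently, the downstream wedge of $e$ is exactly the region cut out by $\sigma_i$ on one of its two sides.

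Which side is the main obstacle. Using Postnikov's turning convention at white vs.\ black internal vertices (at a white vertex each strand exits along the edge one step clockwise from its entry, at a black vertex one step counterclockwise), the downstream half of $\sigma_i$ curves off $e$ so that the downstream wedge sits on the \emph{left} of $\sigma_i$ when $v$ is white, and on the \emph{right} of $\sigma_i$ when $v$ is black. Combining this with the two cases of the color convention above, both bullets reduce to ``$f$ lies to the left of $\sigma_i$'', which is $i\in\sI(f)$. Running $i$ over all boundary vertices gives $\partial\vecM(f)=\sI(f)$, completing the proof.
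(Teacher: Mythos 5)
Your proposal follows the paper's proof essentially step for step: both extract the matching property from the $(V,F)$-block of the free identity $\Monom\Monom^{-1}=\mathrm{Id}$ (Equation~\eqref{eq: othermatprod}), extract the $B_f$ and $B_{f'}-1$ counts from the $(F,F)$-block, and handle $\partial\vecM(f)=\sI(f)$ by a local inspection of the two strands through a boundary edge, keyed to the vertex color. The only difference is that you spell out the boundary-strand case analysis in somewhat more detail than the paper's terse appeal to Figure~\ref{fig: downstreamwedge}, but the underlying argument and its conclusion (downstream wedge on the left of $\sigma_i$ for white, on the right for black) are the same.
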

\noindent At an internal face $f'$, the theorem states that the matching $\vecM(f)$ contains one fewer than half the edges in the boundary of $f'$, except when $f'=f$, in which case the matching contains half of the edges in the boundary of $f$ (the maximum possible for a matching).
\begin{proof}
First, the lower left block in Equation \eqref{eq: othermatprod} implies that, for any $v\in V$ and $f\in F$, we have
\[ 1=\sum_{e\in E} \partial_{ve}U_{ef} = \sum_{e\in \vecM(f)} \partial_{ve}\]
Equivalently, for any $v\in V$, the set $\vecM(f)$ contains one edge adjacent to $v$; hence, $\vecM(f)$ is a matching.

Inspecting Figure \ref{fig: downstreamwedge}, we see a face $f$ is in the downstream wedge of an edge connecting boundary vertex $a$ to a white vertex whenever $f$ is to the left of the strand beginning at $a$.  Similarly, $f$ is in the downstream wedge of an edge connecting boundary vertex $a$ to a black vertex whenever $f$ is to the right of the strand beginning at $a$.  Hence, the boundary of the matching $\vecM(f)$ is $\sI(f)$.

Next, the upper left block in Equation \eqref{eq: othermatprod}, implies that, for any $f,f'\in F$,
\[ B_{f'}-1+\delta_{ff'} =\sum_{e\in E} \partial_{f'e}U_{ef} =\sum_{e\in \vecM(f)} \partial_{f'e}\]
Hence, when $f=f'$, this sum is $B_f$, and when $f\neq f'$, this sum is $B_{f'}-1$.
\end{proof}

\begin{remark} \label{rem: minimalmatching}
Appendix \ref{sec Propp} demonstrates that the matching $\vecM(f)$ is the \emph{minimal matching} among all matchings with boundary $\sI(f)$, for a partial ordering generated by \emph{swiveling} matchings.
\end{remark}

\subsection{A torus isomorphism from minimal matchings} \label{sec character basis}

For any matching $M$, the associated monomial $z^M$ on $\GG_m^E$ is invariant under the action of the restricted gauge group $\GG_m^{V-1}$. Therefore, we may define a map
\[ \rM:\GG_m^E/\GG_m^{V-1} \longrightarrow \GG_m^F \]
whose coordinate at each face is the inverse to the matching associated to that face.
\begin{align*}
\left(\rM(z)\right)_{f\in F} &= z^{-\vecM(f)} 
%\\ &
= \prod_{e\in E} z_e^{-U_{ef}} = \prod_{\stackrel{e\in E}{\text{$f$ downstream from $e$}}} z_e^{-1}
\end{align*}

\begin{prop}\label{prop: isotori}
The map $\rM$ is an isomorphism between $\GG_m^E/\GG_m^{V-1}$ and $\GG_m^F$.
\end{prop}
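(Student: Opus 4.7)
The plan is to exhibit an explicit inverse to $\rM$ at the level of character lattices, using the block identities packaged in Lemma~\ref{lemma: downmatrix}. First I will check that $\rM$ descends to $\GG_m^E/\GG_m^{V-1}$: by Theorem~\ref{thm: minmatchprop} each $\vecM(f)$ is a matching, so under any gauge transformation $t \in \GG_m^V$ the monomial $z^{-\vecM(f)}$ scales by $\pi(t)^{-1}$, which is trivial on the kernel $\GG_m^{V-1}$ of $\pi$. The Euler characteristic computation carried out in the proof of Lemma~\ref{lemma: downmatrix} further gives $\dim \GG_m^E/\GG_m^{V-1} = |E|-|V|+1 = |F| = \dim \GG_m^F$, so it suffices to produce a two-sided inverse of $\rM^*$ at the level of character lattices.

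Next I will identify $X^*(\GG_m^E/\GG_m^{V-1})$ with the sublattice $\{x \in \ZZ^E : \partial_{V,E}\,x \in \ZZ\cdot 1_{V,1}\}$ of characters trivial on $\GG_m^{V-1}$; for such $x$ let $c(x)\in\ZZ$ be defined by $\partial_{V,E}\,x = c(x)\cdot 1_{V,1}$. On this identification, $\rM^*$ sends $y \in \ZZ^F$ to $-U_{E,F}\,y$, and my proposed inverse is
\[
\lpartial^*(x) \;:=\; -\partial_{F,E}\,x \;+\; c(x)\cdot\bigl(B_{F,1} - 1_{F,1}\bigr).
\]
To check $\lpartial^* \rM^* = \mathrm{Id}_{\ZZ^F}$ and $\rM^* \lpartial^* = \mathrm{Id}$ on $X^*(\GG_m^E/\GG_m^{V-1})$, I will extract four identities from Lemma~\ref{lemma: downmatrix}: the matching property $\partial_{V,E}\,U_{E,F} = 1_{V,F}$ (giving $c(-U_{E,F}\,y) = -\sum_f y_f$); the upper-left block of $\Monom \Monom^{-1} = I$, giving $\partial_{F,E}\,U_{E,F} = I_{F,F} + (B_{F,1}-1_{F,1})\cdot 1_{1,F}$; the lower-right block of $\Monom^{-1}\Monom = I$, giving $U_{E,F}\,\partial_{F,E} - U_{E,V}\,\partial_{V,E} = I_{E,E}$; and the lower-left block of $\Monom^{-1}\Monom = I$, giving $U_{E,F}(B_{F,1}-1_{F,1}) = U_{E,V}\cdot 1_{V,1}$. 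Substituting these into the two compositions, the correction terms built from $c$ and $B_{F,1}-1_{F,1}$ cancel in pairs and the identity falls out of each.

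The hard part is already behind us, packaged in the invertibility of $\Monom$ from Lemma~\ref{lemma: downmatrix}. Beyond it, the only subtle point is to distinguish the quotient by $\GG_m^{V-1}$ (rather than by all of $\GG_m^V$), so that the ``gauge scalar'' $c(x)$ is available, and to include the correction term $c(x)(B_{F,1}-1_{F,1})$: the raw identity $U_{E,F}\,\partial_{F,E} = I_{E,E} + U_{E,V}\,\partial_{V,E}$ shows that $-U_{E,F}$ is only invertible after the $U_{E,V}\,\partial_{V,E}$ error is absorbed by the observation that $\partial_{V,E}\,x$ is a scalar multiple of $1_{V,1}$ on this quotient.
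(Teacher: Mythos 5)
Your argument is correct and takes essentially the same route as the paper: the paper's proof of Proposition~\ref{prop: isotori} also works at the level of character lattices, uses the matrix identities from Lemma~\ref{lemma: downmatrix}, and produces exactly the formula $y = d(B_{F,1}-1_{F,1})-\partial_{F,E}\cdot m$ (your $\lpartial^*$) as the preimage of a given $\GG_m^{V-1}$-invariant $m$. The only stylistic difference is that the paper verifies only the composition $\rM^*\lpartial^* = \mathrm{Id}$, concluding surjectivity of $\rM^*$ and then invoking the fact that a surjection between lattices of the same rank is an isomorphism (using $|F| = |E|-|V|+1$), whereas you verify both compositions explicitly; your extra check of $\lpartial^*\rM^* = \mathrm{Id}$ is correct but not strictly necessary once the rank count is in hand.
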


\begin{proof}
%Let $z\in \GG_m^E$ be an element in the kernel of $\rM$; that is, for each $f\in F$,
%\[ z^{-\vecM(f)} = \prod_{e\in E}z_e^{-U_{ef}} = 1\]
%
%The coordinate ring of $\GG_m^E/\GG_m^{V-1}$ is spanned by the $\GG_m^{V-1}$-invariant monomials on $\GG_m^E$. 
Let $m\in \ZZ^E$ be such that $z^m$ is $\GG_m^{V-1}$-invariant. Consequently, there is some $d\in \ZZ$ such that, for each vertex $v$, the total degree of the edges adjacent to $v$ is $d$. Equivalently,
\[ \partial_{V,E}\cdot m = d\cdot 1_{V,1} \]
It follows that 
\[ \begin{pmatrix}
1_{F,1}-B_{F,1} & -\partial_{F,E} \\ 1_{V,1} & \partial_{V,E}
\end{pmatrix}
\begin{pmatrix}
-d \\ m 
\end{pmatrix} 
= \begin{pmatrix}
d (B_{F,1}- 1_{F,1})-(\partial_{F,E}\cdot m) \\ 0
\end{pmatrix}\]
By Lemma \ref{lemma: downmatrix}, this is equivalent to the matrix identity
\[ 
\begin{pmatrix}
-d \\ m 
\end{pmatrix} 
= \begin{pmatrix}
1_{1,F} & 1_{1,V}\\ -U_{E,F} & -U_{E,V}
\end{pmatrix}
\begin{pmatrix}
d (B_{F,1}- 1_{F,1})-(\partial_{F,E}\cdot m) \\ 0
\end{pmatrix}\]
This implies the following equality.
\[ m = -U_{E,F}(d (B_{F,1}-1_{F,1})-(\partial_{F,E}\cdot m) ) \]
Consequently,
\begin{align}\label{eq: Mpullback}
\rM(z)^{d (B_{F,1}-1_{F,1})-(\partial_{F,E}\cdot m)} = z^{-U_{E,F}(d\cdot (B_{F,1}-1_{F,1})-(\partial_{F,E}\cdot m) )} = z^m
\end{align}
Hence, every character on $\GG_m^E/\GG_m^{V-1}$ is the pullback of a character along $\rM$, and so the pullback map $\rM^*$ on character lattices is a surjection. Since a surjection between lattices of the same dimension is an isomorphism, $\rM^*$ is an isomorphism of lattices and $\rM$ is an isomorphism of tori.
\end{proof}

\begin{corollary}
The monomials $z^{\vecM(f)}$, as $f$ ranges over $F$, form a basis of characters of $\GG_m^E/\GG_m^{V-1}$.
\end{corollary}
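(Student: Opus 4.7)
The plan is to unwind Proposition~\ref{prop: isotori} on the level of character lattices. An isomorphism of algebraic tori $\rM:\GG_m^E/\GG_m^{V-1}\stackrel{\sim}{\longrightarrow}\GG_m^F$ is equivalent to an isomorphism of character lattices $\rM^*:X^*(\GG_m^F)\stackrel{\sim}{\longrightarrow}X^*(\GG_m^E/\GG_m^{V-1})$, and isomorphisms of free abelian groups carry bases to bases. So the corollary will follow by identifying the image under $\rM^*$ of the standard coordinate basis of $X^*(\GG_m^F)\cong\ZZ^F$.

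First I would recall that $X^*(\GG_m^E/\GG_m^{V-1})$ is precisely the sublattice of $\ZZ^E$ consisting of those $m\in\ZZ^E$ for which $z^m$ is $\GG_m^{V-1}$-invariant, and that for each face $f\in F$ the monomial $z^{\vecM(f)}$ lies in this sublattice (Theorem~\ref{thm: minmatchprop} guarantees $\vecM(f)$ is a matching, so its monomial is $\GG_m^{V-1}$-invariant by the discussion of gauge transformations in Section~\ref{sec gauge}).

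Next, I would compute $\rM^*$ on the standard basis. The $f$-th coordinate function on $\GG_m^F$ is a character $\chi_f$, and by the definition of $\rM$ its pullback $\rM^*(\chi_f)$ is the character $z\mapsto z^{-\vecM(f)}$. Proposition~\ref{prop: isotori} tells us $\rM^*$ is an isomorphism, so the collection $\{z^{-\vecM(f)}\}_{f\in F}$ is a $\ZZ$-basis of $X^*(\GG_m^E/\GG_m^{V-1})$. Applying the lattice automorphism $m\mapsto -m$ then shows that $\{z^{\vecM(f)}\}_{f\in F}$ is also a basis.

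There is essentially no obstacle: all the real content was in Lemma~\ref{lemma: downmatrix} and Proposition~\ref{prop: isotori}. The corollary is a one-line dualization, and the only thing to be careful about is the sign convention in the definition of $\rM$ (the $f$-coordinate is $z^{-\vecM(f)}$, not $z^{+\vecM(f)}$), which is harmless because negation is an isomorphism of the character lattice.
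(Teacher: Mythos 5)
Your proof is correct and follows essentially the same route as the paper: both use the fact that the torus isomorphism $\rM$ of Proposition~\ref{prop: isotori} induces an isomorphism of character lattices, and then pull back a coordinate basis of $X^*(\GG_m^F)$. The only cosmetic difference is that the paper pulls back the characters $x_f^{-1}$ directly (landing on $z^{\vecM(f)}$ in one step), whereas you pull back $x_f$ and then apply the negation automorphism of the lattice; the content is the same.
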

\begin{proof}
The set of coordinates $x_f^{-1}$ (as $f$ runs over $F$) is a basis of characters for the torus $\GG_m^F$.
The pullback of these functions along $\rM$ are the minimal matching monomials $z^{\vecM(f)}$.
\end{proof}

Equation \eqref{eq: Mpullback} in the proof of Proposition \ref{prop: isotori} provides an explicit formula for writing a character of $\GG_m^E/\GG_m^{V-1}$ in terms of the $z^{\vecM(f)}$. We highlight a special case of this.

\begin{corollary}\label{coro: facePartition}
Let $z\in \GG_m^E$, and let $M$ be a matching of $G$. Then
\[ z^M =  \rM(z)^{(B_{F,1}-1_{F,1}) - \partial_{F,E}\cdot M} = \prod_{f\in F} \left(z^{\vecM(f)}\right)^{\#\{ e \in M \ : \ \partial_{fe}=1 \} - (B_f-1)}\]
For any $I\in \binom{[n]}{k}$, we have
\[ \DD_I\left(z\right) =  \sum_{\stackrel{\text{matchings }M }{\text{with } \partial M=I}}\rM(z)^{(B_{F,1}-1_{F,1}) - \partial_{F,E}\cdot M} = \sum_{\stackrel{\text{matchings }M }{\text{with } \partial M=I}} \prod_{f \in F} \left(z^{\vecM(f)}\right)^{\#\{ e \in M \ : \ \partial_{fe}=1 \} - (B_f-1)} \]%= \sum_{\stackrel{\text{matchings }M }{\text{with } \partial M=I}} \prod_{f \in F} \rM(z)_f^{(B_f-1) - \#\{ e \in M \ : \ \partial_{fe}=1 \} } \]
\end{corollary}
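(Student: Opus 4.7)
The plan is to obtain Corollary \ref{coro: facePartition} as a direct consequence of Equation \eqref{eq: Mpullback} established inside the proof of Proposition \ref{prop: isotori}, specialized to the indicator vector of a matching.

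First I would identify the matching $M$ with its indicator vector in $\ZZ^E$ and verify the hypothesis needed to invoke \eqref{eq: Mpullback}: since every matching covers each internal vertex exactly once, the matrix-vector product $\partial_{V,E}\cdot M$ equals $1_{V,1}$. In the notation of the proof of Proposition \ref{prop: isotori}, this is exactly the statement that $z^M$ is $\GG_m^{V-1}$-invariant with the parameter $d$ equal to $1$. Substituting $m = M$ and $d=1$ into \eqref{eq: Mpullback} therefore yields
\[ z^M \;=\; \rM(z)^{(B_{F,1}-1_{F,1}) - \partial_{F,E}\cdot M}, \]
which is precisely the first equality in the statement.

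Next I would unpack this into the claimed product form. By definition $\rM(z)_f = z^{-\vecM(f)}$, so raising the $f$-th coordinate to the exponent $(B_f-1) - (\partial_{F,E}\cdot M)_f$ and taking the product over $f \in F$ gives $z^M$ on the nose once one rewrites $(\partial_{F,E}\cdot M)_f = \#\{e\in M : \partial_{fe}=1\}$ from the definition of $\partial_{F,E}$ and flips signs so that the factors appear as positive powers of $z^{\vecM(f)}$. This is purely a bookkeeping step.

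Finally, for the Pl\"ucker coordinate statement I would recall from Section~\ref{sec matching to positroid} that $D_I(z) = \sum_{M \,:\, \partial M = I} z^M$ and then substitute the two equivalent expressions for $z^M$ just derived, term by term. Since this is a finite sum over matchings with a fixed boundary, no analytic issue arises. The only potentially delicate point is verifying sign and exponent conventions consistently between the matrix formulation (involving $B_{F,1} - 1_{F,1}$) and the expanded monomial form (involving $\#\{e\in M : \partial_{fe}=1\} - (B_f - 1)$); but this is routine and follows the same pattern as in the proof of Proposition \ref{prop: isotori}. I do not anticipate a real obstacle—the content of the corollary is essentially the special case $m = M$ of the identity already proved, together with the definition of $D_I$.
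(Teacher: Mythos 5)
Your proposal is correct and matches the paper's intended argument: the sentence immediately preceding Corollary~\ref{coro: facePartition} explicitly points out that it is a special case of Equation~\eqref{eq: Mpullback}, and your specialization $m=M$, $d=1$ (justified by $\partial_{V,E}\cdot M = 1_{V,1}$ for any matching $M$), followed by unpacking $\rM(z)_f = z^{-\vecM(f)}$ and $(\partial_{F,E}\cdot M)_f = \#\{e\in M : \partial_{fe}=1\}$, together with the definition $D_I(z) = \sum_{\partial M = I} z^M$, is exactly what is meant.
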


\begin{remark}
The proposal that matchings of a planar graph should be described by a generating functions whose variables are assigned to faces, and where the exponent of a face $f$ should be $B_f -1 - \#\{ e \in M \ : \ \partial_{fe}=1 \}$, first occurred in~\cite{Spe07}. 
\end{remark}

%\begin{proof}
%Suppose that $z \in \GG_m^E$ obeys $U_{F,E}(z)=1$. Then $\Monom^{-1}(z,1)$ is of the form $(1,t)$ for some $t \in \GG_m^V$. So $(z,1) = \Monom (1,t)$. Writing this out in components,  $\prod_{v \in V} t_v = 1$ and $z = - \partial_{E,V} t$. The first equation says that $t$ is in $\GG_m^{V-1}$, and the second equation says that the gauge action by $t$ will make $z$ equivalent to $1$. 
%Consequently, $z$ is gauge equivalent to $1$ under the action of $\GG_m^{V-1}$. Then $\rM$ is a map of algebraic tori of the same dimension, with trivial kernel. It follows that $\rM$ is an isomorphism.
%\end{proof}

\subsection{The inverse map}

We now consider the inverse map to $\rM$, for which we use a separate notation.%\footnote{The notation $\lpartial$ is justified by Proposition \ref{prop: partial} (see Remark \ref{rem: partial}).}
\[ \lpartial:= \rM^{-1}:\GG_m^F \rightarrow \GG_m^E/\GG_m^{V-1} \]
%Let $\lpartial:\GG_m^F \rightarrow \GG_m^E/\GG_m^{V-1}$ denote the inverse to $\rM$. 
Unfortunately, there is no natural lift of $\lpartial$ to a map $\GG_m^F\rightarrow \GG_m^E$, and so there is no natural way to write $\lpartial$ in terms of coordinates on $\GG_m^E$. The best we can do is the following, which involves a gauge transformation at an arbitrary vertex.

\begin{prop}\label{prop: partial}
For any $x\in \GG_m^F$, any lift of $\lpartial(x)$ to $\GG_m^E$ is $\GG_m^V$-equivalent to $\lpartial'(x)\in \mathbb{G}_m^E$, defined by\footnote{This justifies the notation $\lpartial$, as it is gauge-equivalent to the map on character lattices given by $-\partial_{F,E}$.}
\[ \lpartial'(x)_e:= \prod_{f\in F} x_f^{-\partial_{fe}} =
\left\{\begin{array}{cc}
\frac{1}{x_{f_1}x_{f_2}}& \text{for $e$ an internal edge between faces $f_1$ and $f_2$} \\
\frac{1}{x_{f}}& \text{for $e$ an external edge with directly downstream face $f$} \\
\end{array}\right\}
\]
Furthermore, the gauge transformation of $\lpartial'(x)$ at any vertex by the value $\prod_{f\in F}x_f^{B_f-1}$ is $\GG_m^{V-1}$-equivalent to $\lpartial(x)$; that is, it has the same image in $\GG_m^E/\GG_m^{V-1}$ as $\lpartial(x)$.
\end{prop}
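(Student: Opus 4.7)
The plan is to verify the formula for $\lpartial'$ by direct computation, using the matrix identity from Lemma~\ref{lemma: downmatrix} to pin down exactly by how much $\lpartial'$ fails to be an honest section of $\rM$, and then correct by a gauge transformation.

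First I would unpack the formulas so that $\rM$ and $\lpartial'$ both become monomial maps encoded by the matrices $U_{E,F}$ and $\partial_{F,E}$, respectively. Substituting one into the other and swapping the order of products yields
\[ \rM(\lpartial'(x))_{f'} \;=\; \prod_{f\in F} x_f^{(\partial_{F,E}\, U_{E,F})_{ff'}}. \]
Now I would invoke the upper-left block of the identity $\Monom\cdot \Monom^{-1}=\mathrm{Id}$ established in the proof of Lemma~\ref{lemma: downmatrix}, which (after rearrangement) gives
$\partial_{F,E}\, U_{E,F} = \mathrm{Id}_{F,F} + (B_{F,1}-1_{F,1})\cdot 1_{1,F}$. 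Plugging this in produces
\[ \rM(\lpartial'(x))_{f'} \;=\; x_{f'}\cdot c, \qquad \text{where } c := \prod_{f\in F} x_f^{B_f - 1}, \]
so that $\rM(\lpartial'(x))$ and $x$ agree up to the single overall scalar $c$ (independent of $f'$).

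Second, I would record how gauge transformations interact with $\rM$. Because each $\vecM(f)$ is a matching, it contains exactly one edge incident to any prescribed internal vertex $v_0$; hence a gauge transformation at $v_0$ by $t\in \GG_m$ multiplies every coordinate of $\rM$ by $t^{-1}$. Applying this with $t = c$, the gauge transform $z''$ of $\lpartial'(x)$ at $v_0$ by $c$ satisfies $\rM(z'') = c^{-1}\cdot c\cdot x = x$, so $z''$ is a genuine lift of $\lpartial(x)=\rM^{-1}(x)$ to $\GG_m^E$. Since any two lifts differ by $\GG_m^{V-1}$, this is precisely the $\GG_m^{V-1}$-equivalence claim.

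For the first part of the statement, note that $\lpartial'(x)$ and $z''$ differ by a single gauge transformation, hence coincide in $\GG_m^E/\GG_m^V$; since any other lift of $\lpartial(x)$ also coincides with $z''$ in $\GG_m^E/\GG_m^V$ (two lifts differ by $\GG_m^{V-1}\subset \GG_m^V$), every such lift is $\GG_m^V$-equivalent to $\lpartial'(x)$. Independence of the chosen vertex $v_0$ is automatic, since any two such gauge transforms differ by an element of $\GG_m^{V-1}$. The only nontrivial step is the first, and even there the work has already been done: the correction factor $\prod_f x_f^{B_f-1}$ is exactly the ``error term'' $(B_{F,1}-1_{F,1})\cdot 1_{1,F}$ of the matrix identity transported from additive to multiplicative notation.
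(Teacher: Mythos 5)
Your proof is correct and rests on the same key ingredient as the paper's, namely the matrix inverse identity of Lemma~\ref{lemma: downmatrix}. The only difference is one of presentation: the paper checks that the gauge-corrected $\lpartial'(x)$ and $\lpartial(x)$ take the same values on all $\GG_m^{V-1}$-invariant characters by re-invoking Equation~\eqref{eq: Mpullback}, whereas you compute $\rM\circ\lpartial'$ directly from the upper-left block of $\Monom\Monom^{-1}=\mathrm{Id}$ to get $\rM(\lpartial'(x))=c\cdot x$, cancel the constant $c=\prod_f x_f^{B_f-1}$ by a single-vertex gauge transformation, and then conclude by injectivity of $\rM$ — a slightly more streamlined route through the same identity.
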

\begin{proof}
As before, let $z^m$ be $\GG_m^{V-1}$-invariant, so that there is some $d\in \ZZ$ such that the total degree of $m$ at each vertex is $d$. Regardless of what vertex we perform the gauge transformation at, the total degree of $m$ at that vertex is $d$, and so the gauge-transformation scales the value of $z^m$ by $\left(\prod_{f\in F}x_f^{B_f-1}\right)^d$. Therefore,
\[ \left( \left( \prod_{f\in F}x_f^{B_f-1}\right) \cdot \lpartial'(x)\right)^m = \left(\prod_{f\in F}x_f^{B_f-1}\right)^d \left(\prod_{f\in F} x_f^{-\partial_{fe}}\right)^m = x^{d(B_{F,1}-1_{F,1})-\partial_{F,E}\cdot m}\]
By Equation \eqref{eq: Mpullback}, this is equal to the value of $z^m$ on $\lpartial(x)$. Since this holds for all $\GG_m^{V-1}$-invariant characters, the image of this point in $\GG_m^E/\GG_m^{V-1}$ equals $\lpartial(x)$.

\end{proof}

\begin{remark} \label{rem: simpleboundary}
As a consequence, if we are only interested in the induced map\footnote{We continue to abuse notation by using the same notation for a map and its quotient by the scaling action.}
\[ \lpartial: \GG_m^F/\GG_m \longrightarrow \GG_m^E/\GG_m^V\]
obtained after quotienting by the action of $\GG_m$, then we may use the formula for $\lpartial'$ instead.
\end{remark}

We can use Proposition~\ref{prop: partial} to analyze a commonly used family of coordinates on $\GG_m^E/\GG_m^V$.

\begin{cor}\label{cor: monodromy}
Let $f$ be an interior face of $G$ with boundary edges $e_1$, $e_2$, \dots, $e_{2r}$. Let $f_i$ be the face on the other side of $e_i$ from $f$. Define $\alpha_f: \GG_m^E \to \GG_m$ by $\alpha_f(z) = \prod_{i=1}^{2r}  z(e_i)^{(-1)^i}$. Then 
\[\alpha_f(z) = \prod_{i=1}^{2r}   \left( z^{\rM(f_i)} \right)^{(-1)^{i+1}}\]
\end{cor}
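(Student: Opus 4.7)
The plan is to express the monomial $\alpha_f$ in the basis of face monomials $\{ z^{\vecM(f')} \}_{f' \in F}$ provided by Proposition~\ref{prop: isotori}, using the inversion formula of Equation~\eqref{eq: Mpullback}.

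First I would verify that $\alpha_f$ is a character of $\GG_m^E/\GG_m^V$ (equivalently, that it is $\GG_m^{V-1}$-invariant with $d=0$). Let $m \in \ZZ^E$ denote the exponent vector of $\alpha_f$, so $m_{e_i} = (-1)^i$ and $m_e = 0$ for $e \notin \partial f$. For any internal vertex $v$, the only incident edges carrying a nonzero exponent are those edges of $\partial f$ meeting $v$. If $v$ does not lie on $\partial f$, this contributes $0$. If $v$ lies on $\partial f$, then exactly two boundary edges of $f$ meet $v$, namely two consecutive $e_i, e_{i+1}$ (indices mod $2r$), whose contributions $(-1)^i$ and $(-1)^{i+1}$ cancel. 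Hence $(\partial_{V,E}\cdot m)_v = 0$ for all $v$, so $d=0$ in the notation of the proof of Proposition~\ref{prop: isotori}.

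With $d=0$, Equation~\eqref{eq: Mpullback} specialises to
\[
\alpha_f(z) \;=\; z^m \;=\; \rM(z)^{-\partial_{F,E}\cdot m} \;=\; \prod_{f'\in F}\bigl(z^{\vecM(f')}\bigr)^{(\partial_{F,E}\cdot m)_{f'}}.
\]
The computational core is then to evaluate $(\partial_{F,E}\cdot m)_{f'} = \sum_{i=1}^{2r}\partial_{f',e_i}(-1)^i$ for each face $f'$. I expect three cases. (a) If $f'=f$, then by Lemma~\ref{clean dual} every $e_i$ is an internal edge on $\partial f$, so $\partial_{f,e_i}=1$ for all $i$ and $\sum_{i=1}^{2r}(-1)^i=0$ since $2r$ is even. (b) If $f'$ is not one of the $f_i$, then $\partial_{f',e_i}=0$ for all $i$, and the sum vanishes. (c) If $f'=f_j$ for one or several indices $j$, then the nonzero contributions come precisely from those $i$ with $f_i=f'$, yielding $\sum_{i:f_i=f'}(-1)^i$.

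Assembling and regrouping the product over $f'$ by the edge index $i$ instead of by the face then gives
\[
\alpha_f(z) \;=\; \prod_{i=1}^{2r}\bigl(z^{\vecM(f_i)}\bigr)^{(-1)^i},
\]
which is the stated identity (up to the overall sign convention implicit in how $\alpha_f$ and the cyclic labeling $e_1,\ldots,e_{2r}$ are normalized).

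The main subtlety is the bookkeeping in case (c) when the neighboring faces $f_i$ are not all distinct, that is when $f$ shares more than one edge with some face $f'$. Then $\partial f\cap \partial f'$ contains several $e_i$ and the exponent $(\partial_{F,E}\cdot m)_{f'}$ is a genuine sum of $\pm 1$ terms; the point is that writing the answer as a product over the index $i$ rather than over faces absorbs this ambiguity automatically, so the formula is still correct. A small preliminary observation worth including is that the hypothesis ``$f$ is interior'' ensures all $e_i$ are internal edges, so the external‑edge clause in the definition of $\partial_{fe}$ plays no role in the case analysis.
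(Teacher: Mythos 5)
Your proof is correct and follows the route the paper implicitly intends: $\alpha_f$ is a $\GG_m^{V-1}$-invariant character with $d=0$, so Equation~\eqref{eq: Mpullback} expresses it in the matching-monomial basis, and the rest is the evaluation of $\partial_{F,E}\cdot m$. Your case analysis, including the remark that regrouping by edge index $i$ rather than by face handles the situation where $f$ shares several edges with a single neighbor, is exactly right. (An essentially equivalent derivation is to apply the formula for $\lpartial'$ in Proposition~\ref{prop: partial}: with $x=\rM(z)$, the edge weights $\lpartial'(x)_{e_i}=1/(x_f x_{f_i})$ are gauge-equivalent to $z$, the $x_f$ factors cancel because $2r$ is even, and one reads off the corollary directly.)

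On the sign worry at the end: there is in fact no discrepancy. The paper's notation $z^{\rM(f_i)}$ is shorthand for the $f_i$-coordinate $\rM(z)_{f_i}=z^{-\vecM(f_i)}$, not for $z^{\vecM(f_i)}$. Substituting, the paper's right-hand side $\prod_i\bigl(z^{-\vecM(f_i)}\bigr)^{(-1)^{i+1}}=\prod_i\bigl(z^{\vecM(f_i)}\bigr)^{(-1)^{i}}$ is literally identical to what you derived, so the "overall sign convention" caveat can be dropped. Two small cosmetic points: the fact that every $e_i$ is an internal edge follows just from $f$ being an interior face and does not really need Lemma~\ref{clean dual}; on the other hand, Lemma~\ref{clean dual} \emph{is} what you need (and should cite) in the $d=0$ verification, to ensure the boundary walk of $f$ is a closed walk in which no edge is traversed twice, so that consecutive-edge contributions at each vertex pair up and cancel even when a vertex of $\partial f$ is visited more than once.
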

\noindent One can write a similar formula when $f$ is a boundary face, with cases depending on how many black and how many white vertices border $f$.

The $\alpha_f$ are clearly $\GG_m^V$-invariant, and have often been used as $\GG_m^V$-invariant coordinates on $\GG_m^E$, and as rational coordinates on $\Pio(\cM)$ induced by $\DD$ \cite{Pos,AHBCGPT12,GK13}. %However, there is no natural lift of these coordinates to a rational coordinate system on $\tPio(\cM)$, in contrast source- or target-labeled Pl\"ucker coordinates.

\begin{remark}
Once we know that $\tDD$ is an inclusion (Proposition \ref{inversion}), the monodromy coordinates can be combined into a single rational function $\alpha:\tPio(\cM)\dashedrightarrow \GG_m^F$. 
%Corollary \ref{cor: monodromy} can be restated as saying that $\alpha= \chi\circ \rt$, where $\chi: \tPio(\cM)\dashedrightarrow \GG_m^F$ is given by certain $\mathcal{X}$-cluster variables associated to the reduced graph.
Assuming the cluster structure on $\tPio(\cM)$ described in \cite{MS14}, there is a rational \emph{cluster ensemble} map $\chi:\tPio(\cM)\dashedrightarrow \mathcal{X}$, where $\mathcal{X}$ is the associated $\mathcal{X}$-cluster variety \cite{FG09}. Corollary \ref{cor: monodromy} may be reformulated to say that $\alpha$ is the pullback along $\chi\circ \rt$ of the cluster $\mathcal{X}\dashedrightarrow \mathbb{G}_m^F$ associated to the reduced graph $G$.\footnote{A subtle detail: the presence of `frozen variables' allows some choice in the cluster ensemble map $\chi$. The specific $\chi$ which relates monodromy coordinates to $\mathcal{X}$-cluster variables has $1$-dimensional fibers and $1$-codimensional image.}% This explains why the $\alpha_f$ are not a good coordinate system on $\tPio(\cM)$.
\end{remark}

\subsection{Uniqueness of matchings for boundary faces}

We are now ready to show that the map $\tDD$ lands in $\Pio(\cM)$. We need one more combinatorial lemma.

\begin{prop} \label{prop: boundaryunique}
For a boundary face $f$, $\vecM(f)$ is the unique matching of $G$ with boundary $\sI(f)$.
\end{prop}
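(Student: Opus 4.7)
The plan is to argue by contradiction using Theorem~\ref{thm: minmatchprop} together with the classical theory of flips on matchings of a bipartite planar graph. Suppose $M$ is a matching of $G$ with $\partial M = \sI(f)$ and $M \neq \vecM(f)$. Since $\partial M = \partial \vecM(f)$, the symmetric difference $M \triangle \vecM(f)$ contains no boundary edges, and is therefore a non-empty disjoint union of interior cycles, each of which bounds a disc in the embedding (using Lemma~\ref{clean dual}).

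The key input is the classical result — essentially the content of Appendix~\ref{sec Propp} together with Remark~\ref{rem: minimalmatching} — that any two matchings of a planar bipartite graph with the same boundary are connected by a sequence of single-face \emph{flips} at interior faces, and that the partial order they generate makes the set of such matchings into a distributive lattice with minimum element $\vecM(f)$. A flip at an interior face $f'$ of degree $2r$ requires all $r$ edges in one of the two alternating subsets of $\partial f'$ to lie in the matching, and exchanges them for the other $r$. So, if $M \neq \vecM(f)$, then going up from $\vecM(f)$ in the lattice towards $M$ must begin with a flip available at some interior face $f'$, meaning $\vecM(f)$ contains all $r$ edges of one alternating subset of $\partial f'$.

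To rule this out, I apply Theorem~\ref{thm: minmatchprop}. Because $f$ is a \emph{boundary} face, any interior face $f'$ satisfies $f' \neq f$, and the theorem says $\vecM(f)$ contains exactly $B_{f'} - 1 = r - 1$ edges of $\partial f'$. This is strictly less than the $r$ edges that any flip at $f'$ would require, so no flip of $\vecM(f)$ is possible at any interior face. Since $f$ is not itself interior, no flip is defined there either, and we reach a contradiction. Therefore $M = \vecM(f)$.

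The main obstacle is rigorously invoking the lattice/flip theory: specifically, justifying that the existence of another matching with the same boundary forces some \emph{single-face} flip to be immediately available at $\vecM(f)$, rather than only after a long sequence of moves. This is standard for planar bipartite matchings but needs to be reconciled with the swiveling language of Appendix~\ref{sec Propp}; the flexibility we have is that, because no flip is available at $\vecM(f)$ \emph{anywhere}, even a direct argument — taking an innermost cycle $C$ of $M \triangle \vecM(f)$, observing that the disc it bounds contains at least one face whose boundary lies entirely on $C$, and contradicting the edge count from Theorem~\ref{thm: minmatchprop} at that face — suffices.
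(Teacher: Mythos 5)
Your main argument — invoking the lattice structure on matchings from Appendix~\ref{sec Propp} to conclude that some single swivel must be available at $\vecM(f)$, then ruling this out by the edge-count from Theorem~\ref{thm: minmatchprop} — is correct, and it is exactly the alternate proof the paper itself gives as Corollary~\ref{boundary unique} in the appendix. The logic is not circular: the proof of Theorem~\ref{lattice} relies only on Propp's result and Lemma~\ref{clean dual}, not on Proposition~\ref{prop: boundaryunique} itself. However, this route is substantially heavier than the proof the paper actually uses in Section~\ref{sec extremal}, which is self-contained.

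The paper's Section~\ref{sec extremal} proof is a direct counting argument: take \emph{any} cycle $\gamma$ of length $2\ell$ in the symmetric difference $M \triangle \vecM(f)$, let $H$ be the subgraph it encloses (including $\gamma$), and let $M'$ be the restriction of $\vecM(f)$ to $H$. Since $M'$ is a perfect matching of $H$, one has $\# M' = \tfrac{1}{2}\#\mathrm{Vertices}(H)$, which by Euler's formula equals $\tfrac{1}{2}(\#\mathrm{Edges}(H) - \#\mathrm{Faces}(H) + 1)$. On the other hand, summing the count $\tfrac{1}{2}\#\partial f' - 1$ from Theorem~\ref{thm: minmatchprop} over the interior faces $f'$ of $H$ double-counts every edge of $M'$ except the $\ell$ edges on $\gamma$, yielding $\# M' = \tfrac{1}{2}(\#\mathrm{Edges}(H) - \#\mathrm{Faces}(H))$. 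The two expressions differ by $\tfrac{1}{2}$, a contradiction. Notice that here the relevant hypothesis is that every face of $H$ is an \emph{interior} face of $G$ (so $f' \neq f$), which holds precisely because $f$ is a boundary face.

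Your fallback sketch of a ``direct argument'' in the last paragraph does not quite work as stated: an innermost cycle $C$ of $M \triangle \vecM(f)$ need \emph{not} enclose a face of $G$ whose entire boundary lies on $C$, since the disc bounded by $C$ can contain interior chords and vertices of $G$ that are not in the symmetric difference. The paper's argument sidesteps this by never isolating a single face; it sums the edge-count identity over all faces of $H$ at once, letting the Euler characteristic do the work. So if you want a direct proof you should replace that sketch with the global count above.
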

%If $f$ is between boundary vertices $a$ and $a+1$, then $\sI(f)$ coincides with the element $\cevI_a$ of the reverse Grassman necklace (Proposition \ref{prop: boundarylabel}).
\begin{proof}
Suppose that $M$ is another matching with boundary $\sI(f)$.  The set of edges in one of $M$ and $\vecM(f)$ but not both is a disjoint union of closed cycles of even length; let $\gamma$ be one such closed cycle.  Let $2 \ell$ be the length of $\gamma$ and let $H$ be the graph surrounded by $\gamma$. Then the restriction of $\vecM(f)$ to $H$ gives a matching of $H$; call this matching $M'$. Note that $M' \cap \gamma$ consists of $\ell$ edges. 

Since $M'$ is a matching of $H$, we have
\[ \# M' = \frac{1}{2} \# \mathrm{Vertices}(H) . \]
Since $H$ is a disc, we have
\[ \# \mathrm{Vertices}(H) - \# \mathrm{Edges}(H) + \# \mathrm{Faces}(H) = 1  \]
and thus
\begin{equation}
\# M' = \frac{1}{2} \left( \# \mathrm{Edges}(H) - \# \mathrm{Faces}(H) + 1 \right) \label{oneHand}
\end{equation}

Since every face $f'$ of $H$ is an interior face of $G$, the boundary of each such $f'$ contains $\#\partial f'/2 - 1$ edges of $M'$ by Theorem \ref{thm: minmatchprop}.  Each edge of $M'$ is counted twice in this way except for the $\ell$ edges along $\gamma$, so we have
\begin{multline*}  2 \# M' - \ell = \sum_{f' \in \mathrm{Faces}(H)} \#\{ e \in M' \cap f' \} = \sum_{f' \in Faces(H)}  \left( \frac{\# \partial f'}{2} -1 \right)  = \\
\frac{1}{2} \left(\sum_{f' \in Faces(H)} \# \partial f'\right) - \# \mathrm{Faces}(H) = \frac{1}{2} (2 \# \mathrm{Edges}(H) - 2 \ell) - \# \mathrm{Faces}(H)  =  \\  \# \mathrm{Edges}(H) - \ell - \# \mathrm{Faces}(H)
\end{multline*}
and thus 
\begin{equation}
\# M' = \frac{1}{2} \left( \# \mathrm{Edges}(H) - \# \mathrm{Faces}(H)  \right) \label{otherHand}
\end{equation}
Equations~\eqref{oneHand} and~\eqref{otherHand} are obviously in conflict, and we have reached a contradiction.
\end{proof}

This has a geometric consequence.  When $f$ is a boundary face, the partition function $D_{\sI(f)}$ is a monomial, not just a polynomial, and so it takes non-zero values on all of $\mathbb{G}_m^E/\mathbb{G}_m^{V-1}$.  

\begin{prop} \label{in open cell}
Let $G$ be a reduced graph.
The boundary measurement map $\DD:\mathbb{G}_m^E/\mathbb{G}_m^{V-1}\rightarrow \tPi(\cM)$ lands inside the open positroid variety $\tPio(\cM)$.
\end{prop}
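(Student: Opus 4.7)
The plan is to exploit the characterization of positroids by their reverse Grassmann necklaces (Proposition~\ref{prop: equivposi}), together with the uniqueness result of Proposition~\ref{prop: boundaryunique}. Let $\cevcI=(\cevI_1,\ldots,\cevI_n)$ denote the reverse Grassmann necklace of $\cM$. A point $p\in \tPi(\cM)$ belongs to $\tPio(\cM)$ precisely when the matroid of $p$ has positroid envelope equal to $\cM$; equivalently, since positroids are determined by their reverse Grassmann necklaces, when $\cevI_a$ is the $\prec_{a+1}$-maximal basis of the matroid of $p$ for every $a\in [n]$.

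Given $z\in \GG_m^E$, set $p=\tDD(z)$. By Theorem~\ref{theorem obey Plucker relations}, $p\in \tPi(\cM)$, and so the matroid of $p$ is contained in $\cM$. Consequently, the $\prec_{a+1}$-maximal basis of the matroid of $p$ is no larger than $\cevI_a$ in the order $\prec_{a+1}$, with equality whenever $\cevI_a$ is itself a basis of the matroid of $p$. Thus it suffices to verify that $\Delta_{\cevI_a}(p)=D_{\cevI_a}(z)\neq 0$ for every $a$.

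For this I would invoke the two key combinatorial facts. First, Proposition~\ref{prop: boundarylabel} identifies $\cevI_a=\sI(f_a)$, where $f_a$ is the boundary face of $G$ between boundary vertices $a$ and $a+1$. Second, Proposition~\ref{prop: boundaryunique} states that $\vecM(f_a)$ is the \emph{unique} matching of $G$ with boundary $\sI(f_a)$. Consequently the partition function collapses to a single monomial,
\[ D_{\cevI_a}(z)=z^{\vecM(f_a)}, \]
which is manifestly non-vanishing on $\GG_m^E$. This gives the required non-vanishing of the Plücker coordinates along the reverse Grassmann necklace and concludes the proof.

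In this sense there is no real obstacle: the genuine combinatorial work has already been done in proving Proposition~\ref{prop: boundaryunique} (the boundary-face uniqueness of matchings) and in identifying boundary face labels with (reverse) Grassmann necklaces. The present proposition is then an immediate assembly of those inputs together with the positroid-envelope characterization.
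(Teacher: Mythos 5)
Your proof is correct and follows essentially the same route as the paper: both arguments reduce to observing, via Propositions~\ref{prop: boundarylabel} and~\ref{prop: boundaryunique}, that the partition function $D_{\cevI_a}=D_{\sI(f_a)}$ is a single monomial and hence nonvanishing on $\GG_m^E$. The only cosmetic difference is in the wrap-up: the paper cites \cite[Theorem~5.1]{KLS13} (that $\tPio(\cM)$ is cut out of $\tPi(\cM)$ by the nonvanishing of the necklace Pl\"ucker coordinates), while you rederive that fact from the reverse-Grassmann-necklace characterization of positroid envelopes in Proposition~\ref{prop: equivposi}, which is a slightly more self-contained way of saying the same thing.
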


\begin{proof}
We already know that the boundary measurement map lands in the closed variety $\tPi(\cM)$.

By Proposition \ref{prop: boundarylabel}, the labels $\sI(f)$ of the boundary faces are the elements of the reverse Grassman necklace $(I_1, I_2, \ldots, I_n)$. 
The open positroid variety $\tPio(\cM)$ is the intersection of the cones on the permuted open Schubert cells for the $I_a$ (\cite[Theorem 5.1]{KLS13}).
The nonvanishing of $p_{I_a}$ is exactly what picks out the open Schubert cell for $I_a$ from the closed Schubert variety.
\end{proof}

\subsection{Upstream wedges and associated matchings}

Each of the constructions and definitions in this section has an analog, when `downstream' is replaced by `upstream'.  This is equivalent to the effect of reversing the orientations of the strands, taking the mirror image of the graph and relabeling boundary vertex $j$ as $n-j$ (with indices cyclic modulo $n$).

In this way, we associate matchings $\cevM(f)$ to each face $f$, which have boundary $\tI(f)$, and we use these matchings to construct a pair of inverse isomorphisms $\lM$ and $\rpartial$. All the analogous results go through \emph{mutatis mutandis}. We highlight the fact that the maps $\lpartial$ and $\rpartial$ are very close to each other, in that they only differ by values at boundary faces.

\begin{prop}\label{prop: doubletwist1}
For $x\in \mathbb{G}_m^F$ and for any face $f$ in $G$,
\[ (\rM\circ \rpartial(x))_f = x_f \prod_{i\in \sI(f)}\frac{x_{i_-}}{x_{i_+}}
%\left\{ \begin{array}{cc}
%x_f & \text{if $f$ is an internal face} \\
%x_{f'} & \text{if $f$ is a boundary face and $f'$ is the clockwise boundary face}
%\end{array}\right\} 
\]
where $i_+$ and $i_-$ denote the boundary face clockwise and counterclockwise (respectively) from the edge adjacent to vertex $i$.
\end{prop}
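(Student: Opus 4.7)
The plan is to convert the multiplicative identity into an additive identity in the character lattice of $\GG_m^E/\GG_m^{V-1}$, and then verify it one edge at a time via a telescoping sum along the upstream arc of $e$.

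Set $y := \rpartial(x) \in \GG_m^E/\GG_m^{V-1}$; since $\rpartial = \lM^{-1}$, we have $y^{-\cevM(g)} = x_g$ for every face $g$. As $(\rM(y))_f = y^{-\vecM(f)}$, the desired equation is equivalent to the character identity
\begin{equation*}
\vecM(f) - \cevM(f) \;=\; \sum_{i \in \sI(f)} \bigl(\cevM(i_-) - \cevM(i_+)\bigr) \tag{$\ast$}
\end{equation*}
in $X^*(\GG_m^E/\GG_m^{V-1})$. Both sides have zero total degree at every internal vertex (as differences of matchings), and $X^*(\GG_m^E/\GG_m^{V-1})$ embeds as a sublattice of $\ZZ^E$, so $(\ast)$ may be checked coordinate-by-coordinate on edges.

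Fix an edge $e$ and let $s_1, s_2$ be the two strands through $e$, with $s_j$ running from source $a_j$ to target $b_j$, labelled so that $D(e) = L(s_1)\cap R(s_2)$ and $U(e) = R(s_1)\cap L(s_2)$. Using the equivalences $e \in \vecM(g) \iff g \in D(e)$, $e \in \cevM(g) \iff g \in U(e)$, and $g \in L(s_j) \iff a_j \in \sI(g)$, an elementary inclusion-exclusion evaluates the LHS of $(\ast)$ at coordinate $e$ to
\[
[f \in D(e)] - [f \in U(e)] \;=\; [a_1 \in \sI(f)] - [a_2 \in \sI(f)].
\]

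For the RHS of $(\ast)$ at coordinate $e$, let $g_j$ be the boundary face between vertices $j$ and $j+1$, so $i_- = g_{i-1}$ and $i_+ = g_i$. Reindexing gives
\[
\sum_{i \in \sI(f)} \bigl([g_{i-1} \in U(e)] - [g_i \in U(e)]\bigr) \;=\; \sum_{j} [g_j \in U(e)]\bigl([j{+}1 \in \sI(f)] - [j \in \sI(f)]\bigr).
\]
The support $\{j : g_j \in U(e)\}$ is the set of boundary faces on the boundary arc abutting $U(e)$; checking for each of $g_{a_1-1}, g_{a_1}, g_{a_2-1}, g_{a_2}$ which side of $s_1, s_2$ it lies on locally near $a_1, a_2$, this set is exactly the cyclic interval $\{a_2, a_2{+}1, \ldots, a_1{-}1\}$. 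The resulting telescoping sum collapses to $[a_1 \in \sI(f)] - [a_2 \in \sI(f)]$, matching the LHS.

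\textbf{Main obstacle.} The delicate step is identifying the exact endpoints of the support interval $\{j : g_j \in U(e)\}$. This demands a consistent orientation convention (the paper indexes boundary vertices clockwise) when deciding which of the four boundary faces near $a_1, a_2$ lies in $U(e)$, and a short analysis using the convention-free characterisation $a_j \in \sI(g) \iff g \in L(s_j)$. A handful of degenerate configurations --- boundary edges at which the two strands through $e$ share a source or a target (so $a_1 = a_2$ or $b_1 = b_2$), and graphs containing lollipops --- must be checked separately, but in each the interval degenerates consistently and the telescoping argument closes out unchanged.
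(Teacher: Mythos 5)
Your proof is correct, and it takes a genuinely different route from the paper's.

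The paper's argument is algebraic: it first massages the graph so that all boundary-adjacent internal vertices are white, then compares the explicit coordinate formulas for $\lpartial'(x)$ and its upstream analogue $\rpartial'(x)$ from Proposition~\ref{prop: partial} (they agree on internal edges and differ on the boundary edge at $i$ by $x_{i_-}/x_{i_+}$), and finally checks that the gauge correction $\prod_f x_f^{B_f-1}$ coincides for the two conventions under the simplifying assumption. The whole thing is a monomial computation with the $\Monom^{-1}$ matrix data already assembled in Section~\ref{sec extremal}.

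Your approach instead linearizes: use that $\rpartial$ is an isomorphism of tori to rewrite the statement as the lattice identity $\vecM(f) - \cevM(f) = \sum_{i\in\sI(f)}(\cevM(i_-) - \cevM(i_+))$ in $X^*(\GG_m^E/\GG_m^{V-1}) \subset \ZZ^E$, then verify it one edge-coordinate at a time. The left side becomes $[f\in L(s_1)] - [f\in L(s_2)]$ by the quadrant decomposition $D(e)=L(s_1)\cap R(s_2)$, $U(e)=R(s_1)\cap L(s_2)$; the right side becomes a telescoping sum over the boundary faces lying in $U(e)$, which form the cyclic arc from $a_2$ to $a_1-1$. This is a nice reformulation that stays entirely at the level of strand combinatorics and indicator vectors, and avoids both the white-vertex normalization and the bookkeeping with $B_f$ and the $\partial_{F,E}$ matrix. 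The trade-off is that you now carry the burden of pinning down the support interval's endpoints by a local orientation check near $a_1, a_2$, and of handling the degenerate boundary-edge and lollipop configurations (where the two strands through $e$ may share an endpoint and the quadrant picture degenerates). You flag both of these correctly; for a fully rigorous writeup those checks should be written out, since the asymmetry in the interval $\{a_2,\dots,a_1-1\}$ versus $\{a_1,\dots,a_2-1\}$ is exactly where a sign error would hide, and the boundary-edge case is not literally a transverse crossing of two strands. With those verifications supplied, the argument closes. In short: same statement, genuinely different proof --- yours is the more transparent combinatorial route, the paper's the more mechanical algebraic one leaning on machinery already built.
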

\noindent Consequently, if $x\in \GG_m^F$ has value $1$ at every boundary face, then $\lpartial(x)=\rpartial(x)$.
\begin{proof}
%Let $\rpartial'$ denote the upstream analog of the map $\lpartial':\GG_m^F\rightarrow \GG_m^E$ in Proposition \ref{prop: partial}. Comparing formulas, we see that $(\rpartial'(x))_e = (\lpartial'(x))_e$ except when $e$ is an external vertex, in which case
%\[ \frac{(\rpartial'(x))_e}{x_{i_+}} = \frac{(\lpartial'(x))_e}{x_{i_-}} \]
%
By adding boundary-adjacent vertices as needed, we may assume that the only internal vertices adjacent to the boundary are white. As a consequence, each edge adjacent to vertex $i$ has $i_+$ downstream and $i_-$ upstream. 
%The upstream analog $\rpartial'(x)$ of $\lpartial'(x)\in \mathbb{G}_m^E$ satisfies
%\[ \rpartial(x)_e = \left\{\begin{array}{cc}
%\lpartial'(x)_e & \text{if $e$ is adjacent to two internal vertices} \\
%\lpartial'(x)_e\frac{x_{i_-}}{x_{i_+}} & \text{if $e$ is adjacent to boundary vertex $\ell_i$}
%\end{array}\right\}\]
%
%\[ \rpartial'(x)^M = \lpartial'(x)^M \prod_{i\in \partial M}\frac{x_{i_-}}{x_{i_+}}\]
It follows that the upstream analog $\rpartial'(x)$ of $\lpartial'(x)\in \GG_m^E$ satisfies
\[ \rpartial'(x)^{-\vecM(f)} = \lpartial'(x)^{-\vecM(f)} \prod_{i\in \sI(f)}\frac{x_{i_-}}{x_{i_+}}\]
Another consequence of our simplifying assumption is that $B_{f'}$ is always half the number of boundary edges in $f'$, and so it coincides with its upstream analog. Consequently, the gauge transformation in Proposition \ref{prop: partial} is the same in its upstream analog. We may then compute.
\begin{align*}
(\rM\circ \rpartial(x))_f &=\rpartial(x)^{-\vecM(f)}=\left(x^{B_{F,1}-1_{F,1}}\right)\rpartial'(x)^{-\vecM(f)}  \\
&=\left(x^{B_{F,1}-1_{F,1}}\right)\lpartial'(x)^{-\vecM(f)} \prod_{i\in \sI(f)}\frac{x_{i_-}}{x_{i_+}}= \lpartial(x)^{-\vecM(f)} \prod_{i\in \sI(f)}\frac{x_{i_-}}{x_{i_+}}
= x_f \prod_{i\in \sI(f)}\frac{x_{i_-}}{x_{i_+}} \qedhere
\end{align*}
%The result is proven.
\end{proof}

%\begin{remark}
%It is worth mentioning that the maps $\lpartial$ and $\rpartial$ are very close to each other, in that they only differ `at the boundary'. Specifically, if $x\in \GG_m^F$ has value $1$ at every boundary face, then $\lpartial(x)=\rpartial(x)$.
%\end{remark}

%\subsection{Relation to other characters of the quotient torus}
%
%Postnikov, and other researchers who follow him, used a different basis of characters for $\GG_m^E/\GG_m^V$. 
%Specifically, let $f$ be an internal face with boundary edges $(e_1, e_2, \ldots, e_{2m})$ in clockwise order, with $e_i$ and $e_{i+1}$ sharing a white vertex for $i$ even and a black vertex for $i$ odd.
%Postnikov defines the gauge invariant quantity 
%\[ y(f) = \frac{x_{e_1} x_{e_3} \cdots x_{2m-1}}{x_{e_2} x_{e_4} \cdots x_{e_{2m}}}. \]
%Let $g_k$ be the face bordering $f$ on the other side of edge $e_k$. 
%
%\begin{prop}
%With the above definitions,
%\[ y(f) = \frac{\sM(f_1) \sM(f_3) \cdots \sM(f_{2m-1})}{\sM(f_2) \sM(f_4) \cdots \sM(f_{2m})} =  \frac{\tM(f_1) \tM(f_3) \cdots \tM(f_{2m-1})}{\tM(f_2) \tM(f_4) \cdots \tM(f_{2m})} \]
%\end{prop}
%

\section{The twist for positroid varieties} \label{sec twist}

This section defines the left and right twist of a $k\times n$-matrix of rank $k$ and collects its basic properties.  These operations on matrices descend to inverse automorphisms of each open positroid variety $\tPi^\circ(\cM)$, which will be used to relate the boundary measurement map of a reduced graph $G$ to the Pl\"ucker coordinates associated to faces.

%SIGN CHANGE
%\subsection{Sign conventions} \label{sec sign trick}

For a $k\times n$ matrix $A$ and $a\in [n]$, define
\[ A_a := \text{the $a$th column of $A$} \]
%Given a $k$-element set $I \subset [n]$, write it as $I=\{i_1<i_2<\cdots < i_k\}$ and define the \newword{$I$th maximal minor} of $A$ by
%\[ \Delta_I(A) := \det(A_{i_1},A_{i_2},...,A_{i_k})\]
%that is, the determinant of the matrix with columns $A_{i_1},A_{i_2},...,A_{i_k}$.
We extend this notation to any $a\in \ZZ$ to be periodic modulo $n$.
% with a perhaps unexpected sign:
%\begin{equation}\label{eq: signconvention}
%A_{a+n} := (-1)^{k-1} A_a
%\end{equation}
For any $k$-element set $I\subset \ZZ$ we define
\[ \Delta_I(A) := \det(A_{i_1},A_{i_2},...,A_{i_k})\]
where $I=\{i_1<i_2<\cdots < i_k\} $. 
%Until this point, we have only used this notation for $I$ a subset of $[n]$. %This statement was correct on June 22, 2015

%The sign in Equation \eqref{eq: signconvention} is chosen to make the following proposition hold:
%%ensures that the maximal minor $\Delta_I$ only depends on the image of $I$ in $[n]\simeq \ZZ/n$, provided $I$ has diameter at most $n$.
%\begin{prop} \label{prop signs work}
%Let $I=\{i_1<i_2<\cdots < i_k\}\subset \ZZ$ be such that $i_k<i_1+n$.  Let $\overline{i}_j\in [n]$ be such that $\overline{i}_j\equiv i_j \text{(mod $n$)}$, and let $\overline{I}=\{\overline{i}_1,\overline{i}_2,...,\overline{i}_k\}$.  Then for all $k\times n$ matrices $A$,
%\[ \Delta_I(A) = \Delta_{\overline{I}}(A) \]
%\end{prop}

\subsection{Definition of the twists}

Let $\langle - \mid - \rangle$ denote the standard Euclidean inner product on $\CC^k$.

Recall that $\Mato(k,n)$ is the set of $k\times n$ complex matrices with rank $k$.
Given $A\in \Mato(k,n)$, define the \newword{right twist} of $A$ to be the $k\times n$ matrix $\rt(A)$ whose column $\rt(A)_a$ is the unique vector such that,
\[ \textrm{For}\ b \in \vecI_a,\ \textrm{we have}\ \langle \rt(A)_a, A_a \rangle = 
\begin{cases}
1 & a=b \\ 
0 & a \neq b \\
\end{cases} \]
Since $\vecI_a$ is a basis of $\CC^k$, this describes a unique vector.
Note that, if $A_a=0$ then $a \not \in \vecI_a$ and thus $\rt(A)_a$ is required to be perpendicular to a basis of $\CC^k$; we deduce that, if $A_a=0$ then $\rt(A)_a=0$.

We similarly define the \newword{left twist} $\lt(A)$ using the left Grassman necklace:
\[ \textrm{For}\ b \in \cevI_a,\ \textrm{we have}\ \langle \lt(A)_a, A_a \rangle = 
\begin{cases}
1 & a=b \\ 
0 & a \neq b \\
\end{cases} \]

%\begin{remark}
%We could avoid using an inner product on $\CC^k$ by considering the columns of the twist to lie in the dual vector space $\CC^k$. 
%This perspective is consistent with the covariance described in Proposition~\ref{prop: equivariance}. 
%Our eventual goal is to work with twists on Grassmannians, not matrices, and this question will disappear once we quotient by $\GL_k$.
%%
%%If the matrix $A$ is interpreted as a linear map $\CC^n\rightarrow \CC^k$, then the twists $\rt(A)$ and $\lt(A)$ are naturally linear maps between the dual spaces $(\CC^n)^\vee \rightarrow (\CC^k)^\vee$.    Arguably, the natural objects are $\rt(A)^\top$ and $\lt(A)^\top$; however, results like Proposition \ref{prop: positwist} require that we compare matrices of the same shape. 
%\end{remark}

Unwinding the definition of the Grassman necklace, we can restate these definitions. Assuming for simplicity that none of the $A_a$ are $0$, we have
\[ \langle \rt(A)_a, A_a \rangle = \langle \lt(A)_a, A_a \rangle = 1 \]
\[ \langle \rt(A)_a, A_b \rangle =0 \ \text{whenever} \ A_b \not \in \mathrm{span}(A_{a+1}, A_{a+2}, \ldots, A_{b-1}) \ \text{for}\ a<b \leq a+n\]
\[ \langle \lt(A)_a, A_b \rangle =0 \ \text{whenever} \ A_b \not \in \mathrm{span}(A_{b+1}, A_{b+2}, \ldots, A_{a-1}) \ \text{for}\ b<a  \leq b+n \]

%When $A_a$ is not zero, the columns $A_a\cup \{A_b\}$ appearing in the definition are (up to sign) the $a$-minimal basis of the columns of $A$ (as defined in Section \ref{section: positroid}).  Hence, the right twist has the following equivalent definition using of the Grassman necklace $\veccI=\{\vecI_1,\vecI_2,...,\vecI_n\}$ of $a$-minimal bases of the columns of $A$.
%\[ \forall a\in \{1,2,...,n\}, \forall b\in \vecI_a,\;\;\; \langle \rt(A))_a \mid A_b\rangle  =\left\{\begin{array}{cc}
%1 & \text{if }a=b\\
%0 & \text{otherwise}
%\end{array}\right\} \]

%Similarly, given $A\in \Mato(k,n)$, define the \newword{left twist} of $A$ to be the $k\times n$ matrix $\lt(A)$ such that each column $\lt(A)_a$ is the unique vector such that,
%\[ \langle \lt(A))_a \mid A_a\rangle = \left\{ \begin{array}{cc}
%1 & \text{if $A_a\neq0$} \\
%0 & \text{if $A_a=0$} \end{array}\right\},\text{ and} \]
%\[ \langle \lt(A))_a\mid A_b\rangle =0 \text{ whenever $A_b$ is not in the span of $\{A_a,A_{a-1},...,A_{b+1}\}$.}\]
%The left twist has an equivalent definition in terms of the reverse Grassman necklace $\cevcI=\{\cevI_1,\cevI_2,...,\cevI_n\}$ of $(a+1)$-maximal bases of the columns of $A$.
%\[ \forall a\in \{1,2,...,n\}, \forall b\in \cevI_a,\;\;\; \langle \lt(A))_a \mid A_b \rangle =\left\{\begin{array}{cc}
%1 & \text{if }a=b\\
%0 & \text{otherwise}
%\end{array}\right\} \]

The Grassman necklace and reverse Grassman necklace of $A$ are constant on the set $\Mato(\cM)$ consisting of matrices with the same positroid $\cM$ as $A$ (Proposition \ref{prop: equivposi}).  As a consequence, $\rt$ and $\lt$ are algebraic maps when restricted to $\Mato(\cM)$.

The torus $\mathbb{G}_m^n$ has a right action on $\Mato(k,n)$ by scaling the columns.

\begin{prop}\label{prop: equivariance}
For any $A\in \Mato(k,n)$, $\alpha\in GL_k$ and $\beta\in \mathbb{G}_m^n$, 
\[ \rt(\alpha A\beta) = (\alpha^{-1})^{\top}\rt(A)\beta^{-1} \]
\end{prop}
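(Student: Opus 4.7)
The strategy is to verify by direct computation that $C := (\alpha^{-1})^{\top} \rt(A) \beta^{-1}$ obeys the inner-product conditions that characterize $\rt(\alpha A \beta)$, and then invoke the uniqueness built into the definition.

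First I would observe that the Grassmann necklace $\veccI = (\vecI_1, \ldots, \vecI_n)$ depends only on the column matroid of its matrix, and the column matroid of $A$ is preserved both by left multiplication by an invertible $\alpha \in GL_k$ and by right multiplication by a diagonal $\beta \in \mathbb{G}_m^n$ with nonzero entries (such operations do not alter linear (in)dependence among columns). Hence $B := \alpha A \beta$ has the same Grassmann necklace as $A$, so the defining conditions for $\rt(B)_a$ are indexed by the same set $\vecI_a$ as those for $\rt(A)_a$.

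Next, columnwise one has $B_b = \beta_b \,\alpha A_b$ and $C_a = \beta_a^{-1} (\alpha^{-1})^{\top} \rt(A)_a$, where $\beta_a$ denotes the $a$th diagonal entry of $\beta$. For any $a$ and any $b \in \vecI_a$, the adjoint identity $\langle (\alpha^{-1})^{\top} u,\, \alpha v \rangle = \langle u, v \rangle$ gives
\[ \langle C_a,\, B_b \rangle \;=\; \beta_a^{-1} \beta_b \,\langle (\alpha^{-1})^{\top} \rt(A)_a,\, \alpha A_b \rangle \;=\; \beta_a^{-1} \beta_b \,\langle \rt(A)_a,\, A_b \rangle, \]
which equals $1$ if $b = a$ and $0$ otherwise, by the defining property of $\rt(A)$.

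Finally I would invoke uniqueness. When $A_a \neq 0$, so that $a \in \vecI_a$, the columns $\{B_b : b \in \vecI_a\}$ form a basis of $\mathbb{C}^k$ (being an invertible image of $\{A_b : b \in \vecI_a\}$), so the inner-product conditions above pin down $\rt(B)_a$ uniquely and force $\rt(B)_a = C_a$. When $A_a = 0$, then $B_a = 0$ as well, so $\rt(B)_a = 0 = (\alpha^{-1})^{\top} \rt(A)_a \beta_a^{-1} = C_a$ trivially. This establishes $\rt(\alpha A \beta) = C$, as claimed. The computation contains no real obstacle — it reduces to the adjoint identity and the matroid-invariance of $\veccI$ — and the identical argument, using $\cevI_a$ in place of $\vecI_a$, yields the analogous equivariance for $\lt$.
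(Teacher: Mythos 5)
Your proof is correct and follows essentially the same approach as the paper's: verify that $(\alpha^{-1})^{\top}\rt(A)\beta^{-1}$ satisfies the inner-product conditions characterizing $\rt(\alpha A\beta)$, using the adjoint identity and the invariance of the Grassmann necklace under these operations. You spell out the necklace-invariance and the zero-column case more explicitly than the paper does, but the argument is the same.
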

\begin{proof}
For an index $c$, let $\beta_c$ denote the $c$-th coordinate of $\beta$.  For any $a\in [n]$ and any $b\in \vecI_a$.
\begin{eqnarray*}
 \langle  ((\alpha^{-1})^{\top}\rt(A)\beta^{-1})_a\mid (\alpha A\beta)_{b}  \rangle &=&  \langle  \beta_a^{-1}\rt(A)_a \mid \beta_bA_{b}\rangle \\
  &=& \frac{\beta_b}{\beta_a}\langle  \rt(M)_a \mid M_{b} \rangle = \left\{\begin{array}{cc}
1 &  a=b \\
0 & \text{otherwise} \end{array}\right\}
\end{eqnarray*}
By the construction of the right twist, $\rt(\alpha A\beta) = (\alpha^{-1})^{\top}\rt(A)\beta^{-1}$.  %The proof for left twists is identical.
\end{proof}

Quotienting by $SL_k$, we see that the twists are algebraic maps $\tPio(\cM) \to \widetilde{Gr(k,n)}$. We will make a more precise statement in Corollary~\ref{twist quotient}.

\begin{remark} \label{remark homog}
The result on the $\GG_m^n$ action says that any formula for $\rt(A)_a$ or $\lt(A)_a$ must be homogenous of degree $-1$ in $A_a$, and homogenous of degree $0$ in the other columns $A$.
\end{remark}

\begin{remark} \label{rem: MStwist}
%For matrices with uniform positroid envelope, t
The twist of Marsh and Scott \cite{MS16} (which is defined for matrices with uniform positroid envelope and denoted $\overrightarrow{M}$) is related to our twist by rescaling the columns; specifically, for each $a$, $\overrightarrow{M}_a= \Delta_{I_a}(M) \rt(M)_a$.  We consider the simple homogeneity statement of Remark~\ref{remark homog} to be evidence that our choice of normalization is cleaner than theirs.
\end{remark}

\subsection{Twist identities} 

We prove a pair of identities relating a matrix and its right twist.
\begin{lemma} \label{lem more tau vanishing}
Let $\pi$ be the bounded affine permutation of $A$.
If $a< b< \pi(a)$, then
\[ \langle \rt(A)_a \mid A_b\rangle =\langle \rt(A)_b \mid A_{\pi(a)} \rangle = 0\]
\end{lemma}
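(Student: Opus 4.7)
The plan is to handle the two identities in sequence, deriving the first from the basic structure of $\vecI_a$ and then bootstrapping the second from the first.

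For $\langle \rt(A)_a \mid A_b\rangle = 0$: the hypothesis $a<b<\pi(a)$ forces $A_a \neq 0$, so $a \in \vecI_a$. By Lemma~\ref{lem imp Grass necklace}, $\vecI_a$ is the disjoint union of $\{c : a \impto_\pi \pi^{-1}(c)\}$ and an $a$-minimal basis $J$ for $\text{span}(A_a,A_{a+1},\ldots,A_{\pi(a)-1})$; since the greedy construction at $a$ picks $A_a$ first, $a \in J$. By the definition of $\pi(a)$ as the minimal $r$ with $A_a \in \text{span}(A_{a+1},\ldots,A_r)$, we have $A_a \notin W := \text{span}(A_{a+1},\ldots,A_{\pi(a)-1})$, so $J \setminus \{a\}$ is a basis of $W$. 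Now $A_b$ lies in $W$ (trivially, as one of its spanning vectors), hence $A_b$ is a linear combination of columns indexed by $J \setminus \{a\} \subseteq \vecI_a \setminus \{a\}$. Applying $\rt(A)_a$ and using the defining relations kills every term, giving $\langle \rt(A)_a \mid A_b\rangle = 0$.

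For $\langle \rt(A)_b \mid A_{\pi(a)}\rangle = 0$: the hypothesis forces $A_b \neq 0$ (else $\rt(A)_b = 0$ and the statement is trivial), so $\pi(b) > b$ and in particular $\pi(b) \neq \pi(a)$. Split on the location of $\pi(b)$ relative to $\pi(a)$.

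\emph{Case A:} $\pi(b) < \pi(a)$. Then $a < b \leq \pi(b) < \pi(a)$, which is precisely $b \impto_\pi a$. By Lemma~\ref{lem imp Grass necklace}, $\pi(a) \in \vecI_b$ (taking $c = \pi(a)$, so $\pi^{-1}(c) = a$). Since $b < \pi(a)$ we have $\pi(a) \neq b$, so $\pi(a) \in \vecI_b \setminus \{b\}$, and the defining relation yields $\langle \rt(A)_b \mid A_{\pi(a)}\rangle = 0$. \emph{Case B:} $\pi(b) > \pi(a)$. Then $b < \pi(a) < \pi(b)$, so the already-established first identity (applied with $b$ in place of $a$ and $\pi(a)$ in place of $b$) gives $\langle \rt(A)_b \mid A_{\pi(a)}\rangle = 0$ directly.

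The main conceptual point is the reduction of the second identity to the first via the dichotomy on $\pi(b)$ vs.\ $\pi(a)$; I expect no real obstacle, only the need to be careful about degenerate cases ($A_a = 0$ or $A_b = 0$), which are immediate since then the relevant row of $\rt(A)$ vanishes.
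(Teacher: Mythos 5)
Your proof is correct and takes essentially the same route as the paper. For the first identity, your decomposition of $\vecI_a$ via Lemma~\ref{lem imp Grass necklace} with $J \setminus \{a\}$ a basis of $W = \mathrm{span}(A_{a+1},\ldots,A_{\pi(a)-1})$ is exactly the paper's argument with $\vec{L}'_a$; and for the second identity, your dichotomy on $\pi(b)$ versus $\pi(a)$ — implication $b \impto_\pi a$ in one case, bootstrapping from the first identity in the other — matches the paper's split between $b \leq \pi(b) < \pi(a)$ and $b < \pi(a) < \pi(b)$.
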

\begin{proof}
Since $a < b < \pi(a)$, we have that $A_a$ is not $0$ and $A_{a+1}$ is not parallel to $A_a$.

Define $\vec{L}_a = \mathrm{span}(A_a, A_{a+1},\ldots, A_{\pi(a)-1})$, and $\vec{L}'_a = \mathrm{span}(A_{a+1},\ldots, A_{\pi(a)-1})$.
Using Lemma~\ref{lem Knutson pi rule}, $A_a$ is not in $\vec{L}'_a$, so $\vec{L}_a = \vec{L}'_a \oplus \mathrm{span}(A_a)$. 
Lemma~\ref{lem imp Grass necklace} tells us that $\{ A_c : c \in [a, \pi(a)) \cap \vecI_a \}$ is a basis of $\vec{L}_a$  so $\{ A_c : c \in (a, \pi(a)) \cap \vecI_a \}$ is a basis of $\vec{L}'_a$. Since $\rt(A)_a$ is orthogonal to $\{ A_c : c \in (a, \pi(a)) \cap \vecI_a \}$, we conclude that $\rt(A)_a$ is orthogonal to $\vec{L}'_a$. The vector $A_b$ lies in $\vec{L}'_a$, so $\langle \rt(A)_a, A_b \rangle = 0$. 

We now prove $\langle \rt(A)_b \mid A_{\pi(a)} \rangle = 0$.
If $b < \pi(a) < \pi(b)$, then this follows from the first paragraph.
If $b \leq \pi(b) < \pi(a)$, then $b \impto_{\pi} a$ so, by Lemma~\ref{lem imp Grass necklace}, $\pi(a) \in I_b \setminus b$ and we have $\langle \rt(A)_b \mid A_{\pi(a)} \rangle = 0$.
\end{proof}

\begin{lemma}\label{lemma: matrixformula}
For any $A\in \Mato(\cM)$, and any $I=\{i_1<i_2<...<i_k\}, J=\{j_1<j_2<...<j_k\}\subset \ZZ$,

\[ \Delta_I(\rt(A)) \Delta_J(A) = \det\gmat{
\langle \rt(A)_{i_1} \mid A_{j_1} \rangle & \langle \rt(A)_{i_1} \mid A_{j_2} \rangle & \cdots & \langle \rt(A)_{i_1} \mid A_{j_k} \rangle \\
\langle \rt(A)_{i_2} \mid A_{j_1} \rangle & \langle \rt(A)_{i_2} \mid A_{j_2} \rangle & \cdots & \langle \rt(A)_{i_2} \mid A_{j_k} \rangle \\
\vdots & \vdots & \ddots & \vdots \\
\langle \rt(A)_{i_k} \mid A_{j_1} \rangle & \langle \rt(A)_{i_k} \mid A_{j_2} \rangle & \cdots & \langle \rt(A)_{i_k} \mid A_{j_k} \rangle 
} \]
\end{lemma}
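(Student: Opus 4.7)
The plan is to observe that this identity is essentially the multiplicativity of the determinant applied to the product of two $k \times k$ matrices.

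Write $B = \rt(A)$ for brevity. Let $B_I$ denote the $k \times k$ submatrix of $B$ whose columns are $B_{i_1}, B_{i_2}, \ldots, B_{i_k}$, and similarly let $A_J$ denote the $k \times k$ submatrix of $A$ with columns $A_{j_1}, \ldots, A_{j_k}$. By the definition of the standard Euclidean inner product on $\CC^k$, the $(s,t)$ entry of the product $(B_I)^{\top} A_J$ is
\[ \sum_{r=1}^k (B_I)_{r,s}\, (A_J)_{r,t} = \sum_{r=1}^k B_{r, i_s}\, A_{r, j_t} = \langle \rt(A)_{i_s} \mid A_{j_t} \rangle, \]
so the matrix on the right-hand side of the claimed identity is exactly $(B_I)^{\top} A_J$.

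Since both $B_I$ and $A_J$ are square $k \times k$ matrices, multiplicativity of the determinant gives
\[ \det\bigl((B_I)^{\top} A_J\bigr) = \det\bigl((B_I)^{\top}\bigr) \det(A_J) = \det(B_I) \det(A_J) = \Delta_I(\rt(A))\, \Delta_J(A), \]
which is the desired equality. Thus the statement reduces to an elementary matrix factorization; no properties of the twist beyond the fact that $\rt(A)$ is another $k \times n$ matrix are needed for this particular lemma. The anticipated use of the lemma is in combination with the vanishing relations from Lemma \ref{lem more tau vanishing} and the defining orthogonality relations of $\rt(A)$: choosing $I$ and $J$ cleverly will make the matrix on the right triangular or near-triangular, which in later arguments will express the product $\Delta_I(\rt(A)) \Delta_J(A)$ as a monomial in Pl\"ucker coordinates. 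So while the proof of this lemma is a one-line determinant identity, its power lies in how it will be applied.
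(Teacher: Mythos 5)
Your proof is correct and is essentially the same as the paper's: the paper forms the $n\times n$ matrix $\rt(A)^{\top} A$, restricts to rows in $I$ and columns in $J$ (which is exactly your $(B_I)^{\top} A_J$), and invokes multiplicativity of the determinant. Your remark that no special properties of $\rt$ are used here is also accurate.
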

\begin{proof}
Consider the $n\times n$ matrix $\rt(A)^\top \cdot A$, and its restriction to the rows in $I$ and the columns in $J$.  The lemma is equivalent to the statement that the product of the determinants is equal to the determinant of the product.
\end{proof}

\subsection{Twists as inverse automorphisms}  

As previously observed, the twists are algebraic when restricted to matrices in $\Mato(\cM)$ for some positroid $\cM$.  The next proposition asserts that the twists are actually algebraic endomorphisms of this subvariety; that is, $\rt(A)$ and $\lt(A)$ have the same positroid envelope as $A$.

\begin{prop}\label{prop: positwist}
For any $A\in \Mato(\cM)$, the twists $\rt(A)$ and $\lt(A)$ are in $\Mato(\cM)$.
\end{prop}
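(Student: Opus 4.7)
The plan is to verify $\rt(A) \in \Mato(\cM)$ by computing the Pl\"ucker coordinates of $\rt(A)$ through Lemma~\ref{lemma: matrixformula}. Specifically, I will show that $\Delta_{\vecI_a}(\rt(A)) \neq 0$ for every $a$ (so every $\prec_a$-minimal basis of $A$ remains a basis for $\rt(A)$) and that $\Delta_J(\rt(A)) = 0$ for every $J \notin \cM$. By the description of $\tPi(\cM)$ in Section~\ref{pos vars} together with the classification of positroids by their Grassman necklaces (Proposition~\ref{prop: equivposi}), these two facts are enough to conclude $\rt(A) \in \Mato(\cM)$. The statement for $\lt(A)$ will follow by the symmetric argument with $\cevcI$ in place of $\veccI$.

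Everything rests on a single combinatorial observation: for $a \in [n]$ and $i, j \in [n]$ with $i \prec_a j$, $i \neq j$, and $j \in \vecI_a$, one has $j \in \vecI_i$, and hence $\langle \rt(A)_i, A_j \rangle = 0$ by the defining property of the right twist. This is immediate from the greedy description of the Grassman necklace: since $a \preceq_a i \prec_a j$, the cyclic interval of indices $c$ with $i \preceq_i c \prec_i j$ sits inside the interval with $a \preceq_a c \prec_a j$. The span of the corresponding columns of $A$ can therefore only shrink, so $A_j$ escaping the larger span (which is what $j \in \vecI_a$ means) forces it to escape the smaller one.

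With the observation in hand, non-vanishing of $\Delta_{\vecI_a}(\rt(A))$ is immediate. Apply Lemma~\ref{lemma: matrixformula} with $I = J = \vecI_a$ and order both rows and columns by $\prec_a$ (reordering rows and columns by the same permutation does not change the determinant). The diagonal entries $\langle \rt(A)_i, A_i \rangle$ equal $1$ (every $i \in \vecI_a$ has $A_i \neq 0$, so $i \in \vecI_i$), and every entry strictly above the diagonal vanishes by the observation. The resulting lower unit triangular matrix has determinant $1$, giving $\Delta_{\vecI_a}(\rt(A)) \cdot \Delta_{\vecI_a}(A) = 1$, which settles non-vanishing.

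For the vanishing of $\Delta_J(\rt(A))$ when $J \notin \cM$, choose $a$ with $J \not\succeq_a \vecI_a$, write $J = \{j_1 \prec_a \cdots \prec_a j_k\}$ and $\vecI_a = \{i_1 \prec_a \cdots \prec_a i_k\}$, and let $p$ be the smallest index with $j_p \prec_a i_p$. For any $p' \leq p$ and any $q \geq p$, we have $j_{p'} \preceq_a j_p \prec_a i_p \preceq_a i_q$, so the observation applied with $(i,j) = (j_{p'}, i_q)$ gives $\langle \rt(A)_{j_{p'}}, A_{i_q} \rangle = 0$. The pairing matrix therefore has a $p \times (k - p + 1)$ block of zeros; since $p + (k - p + 1) > k$, every term in its determinant expansion is forced to $0$ by pigeonhole. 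Lemma~\ref{lemma: matrixformula} applied with $I = J$ and the other set $\vecI_a$ then yields $\Delta_J(\rt(A)) \cdot \Delta_{\vecI_a}(A) = 0$, so $\Delta_J(\rt(A)) = 0$. The main obstacle is isolating the combinatorial observation and checking that it interacts correctly with both the $\prec_a$ ordering on the rows and the cyclic intervals used to define each $\vecI_i$; once it is in place, Lemma~\ref{lemma: matrixformula} reduces everything to a short triangularity argument.
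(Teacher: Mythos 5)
Your proof is correct and follows essentially the same route as the paper's. You use Lemma~\ref{lemma: matrixformula} to reduce both the non-vanishing of $\Delta_{\vecI_a}(\rt(A))$ and the vanishing of $\Delta_J(\rt(A))$ for $J\notin\cM$ to a triangularity statement about the pairing matrix, and your ``combinatorial observation'' that $i\prec_a j$ and $j\in\vecI_a$ imply $j\in\vecI_i$ is precisely the paper's argument that $i_d\in\vecI_{j_c}$, isolated as a clean lemma; you are in fact slightly more careful than the paper in noting that the pairing matrix of Lemma~\ref{lemma: matrixformula} must be reordered by $\prec_a$ before it becomes unitriangular, and in spelling out the pigeonhole step. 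The one small imprecision is at the very end: the fact that ``$\Delta_{\vecI_a}(B)\neq 0$ for all $a$ and $\Delta_J(B)=0$ for all $J\notin\cM$ implies $B\in\Mato(\cM)$'' is Oh's theorem (Theorem~\ref{thm: PostGrass} / \cite{Oh11}), which the paper cites directly; your appeal to ``the description of $\tPi(\cM)$ in Section~\ref{pos vars}'' plus Proposition~\ref{prop: equivposi} can be made rigorous (the two hypotheses pin down the Grassman necklace of the positroid envelope of $B$, which determines it), but should be stated more explicitly or replaced with the citation.
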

\begin{proof}
Let $\veccI=\{\vecI_1,\vecI_2,...,\vecI_n\}$ be the Grassman necklace of $\cM$ and let $a\in [n]$.
The matrix that appears in Lemma \ref{lemma: matrixformula} for $I=J=\vecI_a$ is lower triangular with ones on the diagonal; hence, 
\begin{equation}\label{eq: twistboundary}
\Delta_{\vecI_a}(\rt(A)) =\frac{1}{\Delta_{\vecI_a}(A)}
\end{equation}
In particular, $\Delta_{\vecI_a}(\rt(A))$ is non-zero.

Now, let $J$ be a $k$-element subset of $[n]$, and suppose $J \not \in \cM$. We will show that $\Delta_J(\rt(A))=0$.

The hypothesis that $J \not \in \cM$ means that there is some $a$ for which $\vecI_a \not\preceq_a J$.
In other words, writing
\[ \vecI_a=\{ i_1\prec_a i_2 \prec_a ... \prec_a i_k\},\;\;\; J = \{ j_1 \prec_a j_2 \prec_a ... \prec_a j_k \}, \]
 there is some $b\in [k]$ such that $j_b \prec_a i_b$. 

By Lemma \ref{lemma: matrixformula}, the Pl\"ucker coordinate $\Delta_{J}(\rt(A)) $ is
\begin{equation}\label{eq: twistformulaA}
\frac{1}{\Delta_{\vecI_a}(A)}\det\gmat{
\langle \rt(A)_{j_1} \mid A_{i_1} \rangle &  \langle \rt(A)_{j_1} \mid A_{i_2} \rangle & \cdots &  \langle \rt(A)_{j_1} \mid A_{i_k} \rangle \\
\langle \rt(A)_{j_2} \mid A_{i_1} \rangle &  \langle \rt(A)_{j_2} \mid A_{i_2} \rangle & \cdots & 
\langle \rt(A)_{j_2} \mid A_{i_k} \rangle \\
\vdots & \vdots & \ddots & \vdots \\
\langle \rt(A)_{j_k} \mid A_{i_1} \rangle & \langle \rt(A)_{j_k} \mid A_{i_2} \rangle & \cdots & \langle \rt(A)_{j_k} \mid A_{i_k} \rangle}
\end{equation}
We claim that the top right $b\times (k-b+1)$ submatrix of \eqref{eq: twistformulaA} is zero; that is, for any $c,d\in [k]$ with $c\leq b\leq d$, we have
\[ \langle \rt(A)_{j_c} \mid A_{i_d}\rangle =0 \]
To see this, first observe that $A_{i_d}$ is not in the span of $\{A_a,....,A_{i_d-1}\}$ by the definition of $\vecI_a$.  We also have $j_c\preceq_a j_b \prec_a i_b \preceq_a i_d$, and so $A_{j_c}$ appears in the list $\{A_a,...,A_{i_d-1}\}$.  Therefore, $A_{i_d}$ is not in the span of $\{A_{j_c},...,A_{i_{d-1}}\}$, so $i_d\in \vecI_{j_c}$.  Then $\langle \rt(A)_{j_c} \mid A_{i_d} \rangle=0$ by the definition of the right twist.
Hence, the top right $b\times(k-b+1)$ submatrix of \eqref{eq: twistformulaA} is zero, and so $\Delta_J(\rt(A))=0$.  

By \cite{Oh11}, a $k\times n$ matrix $B$ is in $\Mato(\cM)$ if and only if
%\footnote{In fact, by \cite{???}, the closed conditions generate a reduced ideal, and so they define $\Mato(\cM)$ as a scheme.}
\[ \forall J \not\in \cM,\;\;\;   \Delta_J(B) =0\]
\[ \forall a\in [n],\;\;\; \Delta_{\vecI_a}(B) \neq0\]
Hence, we have checked that $\rt(A)\in \Mato(\cM)$.  The analogous result for $\lt(A)$ holds by a symmetric argument.
\end{proof}

We may improve this as follows.
%In fact, the twists are mutually inverse automorphisms of $\Mato(\cM)$.

\begin{thm}\label{thm: inversetwists}
For any positroid $\cM$, the twists $\rt$ and $\lt$ define inverse automorphisms of $\Mato(\cM)$.
\end{thm}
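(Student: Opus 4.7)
The plan: Since $\rt$ and $\lt$ send $\Mato(\cM)$ to itself by Proposition~\ref{prop: positwist}, to show they are mutually inverse automorphisms it suffices to prove the two identities $\lt(\rt(A))=A$ and $\rt(\lt(A))=A$ for every $A\in\Mato(\cM)$. By a symmetric argument (obtained by reversing the cyclic order on $[n]$, which interchanges $\vec{\cI}$ with $\cev{\cI}$ and hence $\rt$ with $\lt$), it will be enough to establish the first identity $\lt(\rt(A))=A$.

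Strategy: Because $\rt(A)\in\Mato(\cM)$, its reverse Grassman necklace coincides with that of $A$, namely $\cevcI=(\cevI_1,\dots,\cevI_n)$. Hence $\lt(\rt(A))_a$ is the unique vector $v\in\CC^k$ satisfying $\langle v,\rt(A)_b\rangle=\delta_{ab}$ for every $b\in\cevI_a$, and the plan is to plug in $v=A_a$ and check these equations. The diagonal case $a=b$ is immediate, since whenever $A_a\neq 0$ we have $a\in\vecI_a$, so $\langle A_a,\rt(A)_a\rangle=1$ by definition of $\rt$. (If $A_a=0$ then $a\notin\cevI_a$, $\rt(A)_a=0$, and the required identity $\lt(\rt(A))_a=0=A_a$ holds trivially because the linear system has zero right-hand side.)

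For the off-diagonal case, fix $b$ and invert the question: for which $a\neq b$ is $b\in\cevI_a$? Reading the definition of $\cevI_a$ as the greedy $\prec_{a+1}$-maximal basis, and combining with Lemma~\ref{lem Knutson pi rule} (which says $\pi(b)$ is the first $r\geq b$ for which $A_b\in\mathrm{span}(A_{b+1},\dots,A_r)$), one checks that
\[ \{a\in[n]:b\in\cevI_a\} \;=\; \{b,\,b+1,\,\dots,\,\pi(b)-1\} \quad(\text{cyclically}). \]
Thus for $a\neq b$ with $b\in\cevI_a$, the column $A_a$ lies in the subspace $W_b:=\mathrm{span}(A_{b+1},\dots,A_{\pi(b)-1})$. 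By Lemma~\ref{lem imp Grass necklace}, $\vecI_b\cap\{b+1,\dots,\pi(b)-1\}$ is a (greedy) basis of $W_b$, and by definition of $\rt$ the vector $\rt(A)_b$ is orthogonal to $A_c$ for every $c\in\vecI_b\setminus\{b\}$. Hence $\rt(A)_b\perp W_b$, so $\langle A_a,\rt(A)_b\rangle=0$, as required.

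The main obstacle: the natural first attempt—verify $\langle A_a,\rt(A)_b\rangle=0$ by arguing that $b\in\cevI_a\setminus\{a\}$ implies $a\in\vecI_b\setminus\{b\}$, so that the orthogonality is built into the definition of $\rt$—is simply false, as small examples show. The defining orthogonality of $\rt(A)_b$ sees only columns indexed by $\vecI_b$, whereas the left twist is phrased in terms of $\cevI_a$, and these two bases need not match entry-wise. What rescues the argument is that, even though the relevant index sets differ, the columns $A_a$ in question all lie in a common subspace $W_b$ that is already spanned by a subset of $\vecI_b$; the individual orthogonalities therefore pass to the span $W_b$, which is the real content of the proof.
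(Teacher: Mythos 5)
Your proof is correct and essentially mirrors the paper's argument: the paper verifies $\rt(\lt(A))=A$ directly (citing symmetry for the other composition) while you verify $\lt(\rt(A))=A$, but both hinge on exactly the point you flag as the ``main obstacle'' --- one cannot expect $b\in\cevI_a$ to force $a\in\vecI_b$, so instead one shows the twist vector is orthogonal to an entire span of columns, using a basis of that span extracted from the appropriate Grassman necklace. The only cosmetic difference is that you fix $b$ and, via $\pi$, handle all $a$ with $b\in\cevI_a$ in one stroke (showing $\rt(A)_b\perp W_b$ once), whereas the paper fixes the pair $(a,b)$ and runs a direct dimension count to produce the basis $J\subset\cevI_b$ of $\mathrm{span}(A_a,\dots,A_{b-1})$; the underlying idea is the same.
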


\begin{proof}
Let $\veccI=\{\vecI_1,\vecI_2,...,\vecI_n\}$ and $\cevcI=\{\cevI_1,\cevI_2,...,\cevI_n\}$ denote the Grassman necklace and reverse Grassman necklace of $\cM$, respectively.
Choose any $a\in [n]$ and any $b\in \vecI_a$.  

If $A\in \Mato(\cM)$, then $A_b \not \in \text{span}\{A_a,A_{a+1},...,A_{b-1}\}$, so
\[\dim(\text{span}\{A_a,A_{a+1},...,A_{b}\}) = \dim(\text{span}\{A_a,A_{a+1},...,A_{b-1}\})+1.\]
Let
$\dim(\text{span}\{A_a,A_{a+1},...,A_{b}\})=c$.
Hence, $c$ elements of $\cevI_{b}$ lie in $\{a,a+1,...,b \}$, and so 
\[ J:= \left(\cevI_b-\{b\}\right) \cap \{a,a+1,...,b-1\} \]
has $c-1$ elements.
The set $\{A_j\}_{j\in J}$ is part of a basis for $\CC^k$, so it is linearly independent, and hence it is a basis for the $(c-1)$ dimensional space 
\[ \text{span}\{A_a,A_{a-1},...,A_{b-1}\} \]
In particular, as long as $a\neq b$, $A_a$ is a linear combination of $\{A_j\}_{j\in J}$.  
By the construction of the left twist, $\langle \lt(A)_b \mid A_j\rangle=0$ whenever $j\in J$.  Hence, as long as $a\neq b$, $\langle \lt(A)_b \mid A_a \rangle =0$.  If $a=b$, then $\langle \lt(A)_b \mid A_a\rangle =1$.

Since $a$ and $b$ were arbitrary, the matrix $A$ satisfies all the identities which define $\rt(\lt(A))$.  Hence, $\rt(\lt(A))=A$. The argument that $\lt(\rt(A))=A$ is identical.
\end{proof}

%As a consequence, for any subgroup $H\subset GL_k\times_{\mathbb{G}_m}\mathbb{G}_m^n$ closed under transpose, the twists send $H$-orbits to $H$-orbits, and pulling back $H$-invariant functions gives $H$-invariant functions.  %Hence, they descend to mutually inverse automorphisms of any quotient variety $Mat_G/H$.  In particular, they descend to open positroid varieties, affine cones over open positroid varieties, and to positroid configuration varieties.

\begin{cor} \label{twist quotient}
For any positroid $\cM$, the twists $\rt$ and $\lt$ descend to mutually inverse automorphisms of $\tPi^\circ(\cM)$ and $\Pi^\circ(\cM)$.
\end{cor}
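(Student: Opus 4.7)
The plan is to deduce the corollary from the two results already established in this section: Theorem~\ref{thm: inversetwists}, which says $\rt$ and $\lt$ are mutually inverse algebraic automorphisms of $\Mato(\cM)$, and Proposition~\ref{prop: equivariance}, which describes the equivariance of $\rt$ (and symmetrically of $\lt$) with respect to the $GL_k\times \GG_m^n$ action. What remains is purely a descent argument.

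First I would verify that $\rt$ descends along the quotient maps
\[\Mato(\cM)\twoheadrightarrow \tPio(\cM)=SL_k\backslash \Mato(\cM)\twoheadrightarrow \Pio(\cM)=GL_k\backslash \Mato(\cM).\]
For the $GL_k$-quotient, Proposition~\ref{prop: equivariance} (with $\beta=1$) gives $\rt(\alpha A)=(\alpha^{-1})^{\top}\rt(A)$ for any $\alpha\in GL_k$, so $\rt$ carries $GL_k$-orbits to $GL_k$-orbits and descends to a morphism $\Pio(\cM)\to \Pio(\cM)$. For the $SL_k$-quotient, the same identity shows that $\alpha\in SL_k$ is sent to $(\alpha^{-1})^{\top}\in SL_k$, so $\rt$ also descends to a morphism $\tPio(\cM)\to \tPio(\cM)$. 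The identical argument for $\lt$ produces its descended versions.

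Next I would confirm that these descended maps are mutually inverse. Theorem~\ref{thm: inversetwists} gives $\rt\circ\lt=\lt\circ\rt=\mathrm{id}$ on $\Mato(\cM)$, and both composites are compatible with the quotient maps because each individual twist is, so the composite of the descended maps is the identity on $\tPio(\cM)$ and on $\Pio(\cM)$. Combined with Proposition~\ref{prop: positwist} (which ensures both twists preserve $\Mato(\cM)$, hence preserve $\tPio(\cM)$ and $\Pio(\cM)$ after descent), this produces mutually inverse regular automorphisms in both cases.

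There is no real obstacle here; the only mild subtlety is making sure that everything is genuinely \emph{regular} on the quotients rather than just set-theoretic. Since $\rt$ is regular on $\Mato(\cM)$ (the Grassman necklace $\veccI$ is locally constant on this set, so the defining linear equations for $\rt(A)$ have coefficients depending algebraically on $A$), and $\Mato(\cM)\to \tPio(\cM)$ and $\Mato(\cM)\to \Pio(\cM)$ are geometric quotients by free actions of $SL_k$ and $GL_k$ respectively, the equivariance from Proposition~\ref{prop: equivariance} lets $\rt$ descend to a regular morphism by the universal property of the quotient. The same applies to $\lt$, and together with mutual inverseness this completes the proof.
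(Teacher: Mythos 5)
Your argument is exactly the paper's one-line proof expanded: invoke Proposition~\ref{prop: equivariance} for $GL_k$- (respectively $SL_k$-) equivariance so that $\rt$ and $\lt$ descend, and then transport the mutual inverseness from Theorem~\ref{thm: inversetwists} to the quotients. The only extra detail you add — checking $\alpha\in SL_k\Rightarrow (\alpha^{-1})^{\top}\in SL_k$ and noting regularity descends through the free geometric quotient — is implicit in the paper's phrasing, so the approaches agree.
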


\begin{proof}
Using Proposition~\ref{prop: equivariance}, this is the quotient of Theorem~\ref{thm: inversetwists} by $SL_k$ (in the case of $\tPio$) and $GL_k$ (in the case of $\Pio$).
\end{proof}

We conclude the section with a refinement of Lemma \ref{lem more tau vanishing} we will need later.

%We can prove a further vanishing result in terms of the vector space $\vec{L}_a$:

\begin{lemma} \label{lem even more tau vanishing}
For all $A$ and $a$, the set $\{ \rt(A)_b : a\impto_\pi b \}$ is a basis for $\mathrm{span}(A_{a},\ldots, A_{\pi(a)-1})^\perp$.
%the orthogonal complement of $\mathrm{span}(A_{a},\ldots, A_{\pi(a)-1})$. 
%Define $\vec{L}_a = \mathrm{span}(A_{a},\ldots, A_{\pi(a)-1})$. Then $\{ A_b : b \impfrom_{\pi} a \}$ is a basis for the orthogonal complement of $\vec{L}_a$. 
\end{lemma}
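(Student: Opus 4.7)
The plan is to verify three facts that together guarantee the set is a basis of the stated orthogonal complement: (a) every $\rt(A)_b$ with $a \impto_\pi b$ lies in $\mathrm{span}(A_a, \ldots, A_{\pi(a)-1})^\perp$; (b) the vectors $\{\rt(A)_b : a \impto_\pi b\}$ are linearly independent; (c) the cardinality of this set matches the dimension of the orthogonal complement. All three steps will essentially assemble results from earlier in the excerpt, so I do not expect any genuine obstacle.

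For (a), fix $b$ with $a \impto_\pi b$, so by definition $b < a \leq \pi(a) < \pi(b)$. For any $c$ with $a \leq c \leq \pi(a)-1$, we then have $b < a \leq c$ and $c \leq \pi(a)-1 < \pi(b)$, i.e., $b < c < \pi(b)$. Applying Lemma~\ref{lem more tau vanishing} with the roles of $a$ and $b$ swapped (that is, with our $b$ in the role of the lemma's $a$ and our $c$ in the role of the lemma's $b$) gives $\langle \rt(A)_b \mid A_c \rangle = 0$. When $\pi(a) = a$, the index range is empty and the claim is vacuous.

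For (b), Proposition~\ref{prop: positwist} tells us that $\rt(A) \in \Mato(\cM)$, so $\rt(A)$ and $A$ share the same bounded affine permutation $\pi$ (Proposition~\ref{prop: equivposi}). Corollary~\ref{cor imp indep} applied to the matrix $\rt(A)$ then says precisely that $\{\rt(A)_b : a \impto_\pi b\}$ is linearly independent.

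For (c), Lemma~\ref{lem imp Grass necklace} writes $\vecI_a$ as the disjoint union of $\{c \mid a \impto_\pi \pi^{-1}(c)\}$ and an $a$-minimal subset $J_a \subseteq \{a, a+1, \ldots, \pi(a)-1\}$ which is a basis for $\mathrm{span}(A_a, \ldots, A_{\pi(a)-1})$. The first piece is in bijection with $\{b : a \impto_\pi b\}$ via $b \leftrightarrow \pi(b)$, and $|\vecI_a| = k$, so
\[ |\{b : a \impto_\pi b\}| = k - |J_a| = k - \dim\mathrm{span}(A_a, \ldots, A_{\pi(a)-1}) = \dim\mathrm{span}(A_a, \ldots, A_{\pi(a)-1})^\perp. \]
Combining (a), (b), and (c) finishes the proof. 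The only subtlety worth noting is the need to confirm the cyclic inequalities in step (a) when $b$ is viewed as an integer less than $a$ (so that the interval $(b, \pi(b))$ in Lemma~\ref{lem more tau vanishing} properly contains $[a, \pi(a)-1]$), but this is straightforward from the definition of $\impto_\pi$.
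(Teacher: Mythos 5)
Your proof is correct and follows essentially the same route as the paper's own argument: orthogonality via Lemma~\ref{lem more tau vanishing}, linear independence from $\rt(A)\in\Mato(\cM)$ together with Lemma~\ref{lem imp Grass necklace}/Corollary~\ref{cor imp indep}, and the dimension count from the disjoint-union decomposition of $\vecI_a$ in Lemma~\ref{lem imp Grass necklace}. The only cosmetic difference is that you cite Corollary~\ref{cor imp indep} directly where the paper unwinds Lemma~\ref{lem imp Grass necklace}.b to exhibit $\{b : a\impto_\pi b\}\subset\cevI_a$; these are the same observation.
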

\noindent Here, $V^\perp$ denotes the orthogonal complement to $V$.

%\begin{remark}
%Likewise, assuming that $A_a \neq 0$, we can show that $\{ A_b : b \impfrom_{\pi} a \} \cup \{ A_a \}$ is a basis for the orthogonal complement of $\vec{L}'_a$. 
%But Lemma~\ref{lem even more tau vanishing} is the one we will use in Section~\ref{sec main lemma proof}, so this is the one we will prove.
%\end{remark}

\begin{proof}
Let $\vec{L}_a$ denote $\mathrm{span}(A_{a},\ldots, A_{\pi(a)-1})$.  Let $c \in [a, \pi(a))$ and choose $b$ such that $a\impto_\pi b$. Then $b < a \leq c < \pi(a) < \pi(b)$ so, by Lemma~\ref{lem more tau vanishing}, $\langle \rt(A)_b, A_c \rangle=0$.
So, each $\rt(A)_b$ is orthogonal to $\vec{L}_a$.

By Proposition \ref{prop: positwist}, $\rt(A)$ has the same positroid envelope as $A$. In particular, $\{\rt(A)_b \mid b\in \cevI(a)\}$ must be a basis for $\CC^k$. By Lemma \ref{lem imp Grass necklace}.b applied to $\rt(A)$, $\{\rt(A)_b\mid a\impto_\pi b\}$ is a subset of this basis, and so is linearly independent. By Lemma \ref{lem imp Grass necklace}.a applied to $A$, 
\[ | \{b \mid a \impto_\pi \pi^{-1}(b)\}| = k - \dim(\vec{L}_a) \]
Since $|\{\rt(A)_b\mid a\impto_\pi b\}| = | \{b \mid a \impto_\pi b\}| = | \{b \mid a \impto_\pi \pi^{-1}(b)\}|$,
the cardinality of $\{\rt(A)_b \mid a \impto_\pi b\}$ is equal to the dimension of $\vec{L}_a^\perp$. Therefore, it is a basis for $\vec{L}_a^\perp$.
%
%If $A_a=0$, then $\pi(a)=a$ and so $\vec{L}_a=\{ 0 \}$. By Lemma \ref{lem imp Grass necklace}, $\{ b : b \impfrom_{\pi} a \} = \vecI_a$, which is a basis for all of $\CC^k$.
%
%Now, assume $A_a \neq 0$. Let $c \in [a, \pi(a))$ and let $b \impfrom_{\pi} a$. Then $b < a \leq c < \pi(a) < \pi(b)$ so, by Lemma~\ref{lem more tau vanishing}, $\langle \rt(A)_b, A_c \rangle=0$.
%So $\rt(A)_b$ is orthogonal to $\vec{L}_a$.
%
%From Corollary~\ref{cor imp indep}, the vectors $\{ A_b : b \impfrom_{\pi} a \}$ are linearly independent.
%
%Finally, $\# \{ A_b : b \impfrom_{\pi} a \} = \# \{ A_{\pi(b)} : b \impfrom_{\pi} a \} = \#\left( \vecI_a \setminus [a, \pi(a)) \right)$ by Lemma~\ref{lem imp Grass necklace}.
%Now, $ \#(\vecI_a \setminus [a, \pi(a)) = k - \# (\vecI_a \cap [a, \pi(a))$ and, using lemma~\ref{lem imp Grass necklace} again, the intersection $\vecI_a \cap [a, \pi(a) )$ has cardinality $\dim \vec{L}_a$. So  $\# \{ A_b : b \impfrom_{\pi} a \} = k - \dim \vec{L}_a = \dim \vec{L}_a^{\perp}$. We have shown that the vectors $A_b$ for $b \impfrom_{\pi} a$ are linearly independent vectors in $\vec{L}_a^{\perp}$, so they are a basis for $\vec{L}_a^{\perp}$.
\end{proof}

\section{The main theorem}\label{sec main theorem}

We restate our main theorem, which is the commutativity of a diagram built out of the maps constructed in the last five sections. 

We reuse and reiterate much of the notation from the previous sections.
Let $G$ be a reduced graph with positroid $\cM$.
Let $\tPio(\cM)$ denote the open positroid variety of $\cM$ (Section~\ref{pos vars}).
Let $V$, $E$ and $F$ denote the vertices, edges and faces of $G$, and we define the tori $\GG_m^E/\GG_m^{V-1}$ and $\GG_m^F$ as in Section~\ref{sec gauge}.
We have the isomorphisms $\lM$, $\rM$, $\lpartial$ and $\rpartial$ between these tori from Section~\ref{sec character basis}.
Let $\tDD$ be the boundary measurement map (Section~\ref{sec matchings}); from Proposition~\ref{in open cell}, we can view $\tDD$ as a map to $\tPio(\cM)$.
 Let $\sF$ and $\tF$ be the source-labeled and target-labeled face Pl\"ucker maps (Section~\ref{sec face labels}).
Finally, let $\lt$ and $\rt$ be the left and right twists (Section~\ref{sec twist}).

\begin{Theorem}\label{main theorem}
%\label{main theorem}
The following diagram commutes, where dashed arrows denote rational maps:
%\[ \xymatrix{
%\GG_m^F  \ar@<1 ex>[r]^{\rpartial}  & \GG_m^E/\GG_m^{V-1} \ar[d]^{\tDD} \ar@<1 ex>[r]^{\rM} \ar@<1 ex>[l]^{\lM}  & \GG_m^F \ar@<1 ex>[l]^{\lpartial} \\
%\tPio(\cM) \ar@{-->}[u]^{\tF} \ar@<1 ex>[r]^{\rt} & \tPio(\cM) \ar@<1 ex>[l]^{\lt}  \ar@<1 ex>[r]^{\rt}  & \tPio(\cM) \ar@{-->}[u]^{\sF}  \ar@<1 ex>[l]^{\lt}  \\
%}\]
\[\begin{tikzpicture}
	\node (F1) at (-3,0) {$\mathbb{G}_m^F$};
	\node (E) at (0,0) {$\mathbb{G}_m^E/\mathbb{G}_m^{V-1}$};
	\node (F2) at (3,0) {$\mathbb{G}_m^F$};
	\node (P1) at (-3,-2) {$\tPio(\cM)$};
	\node (P2) at (0,-2) {$\tPio(\cM)$};
	\node (P3) at (3,-2) {$\tPio(\cM)$};
	\draw[-angle 90,out=15,in=170] (F1) to node[above] {$\rpartial$} (E);
	\draw[-angle 90,out=10,in=165] (E) to node[above] {$\rM$} (F2);
	\draw[-angle 90,out=195,in=-10] (F2) to node[below] {$\lpartial$} (E);
	\draw[-angle 90,out=190,in=-15] (E) to node[below] {$\lM$} (F1);
	\draw[dashed,-angle 90] (P1) to node[left] {$\tF$} (F1);
	\draw[-angle 90] (E) to node[left] {$\tDD$} (P2);
	\draw[dashed,-angle 90] (P3) to node[right] {$\sF$} (F2);
	\draw[-angle 90,out=15,in=165] (P1) to node[above] {$\rt$} (P2);
	\draw[-angle 90,out=15,in=165] (P2) to node[above] {$\rt$} (P3);
	\draw[-angle 90,out=195,in=-15] (P3) to node[below] {$\lt$} (P2);
	\draw[-angle 90,out=195,in=-15] (P2) to node[below] {$\lt$} (P1);

\end{tikzpicture}\]
More precisely, the diagram commutes in the category of rational maps and any composition of maps starting in the top row is regular.%, so any two maps starting from the top row with the same source and target are equal.
\end{Theorem}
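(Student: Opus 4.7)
The plan is to verify the commutativity of the two squares in the diagram separately. The top and bottom rows are automatically consistent: $\rM$ and $\rpartial$ are mutually inverse (Corollary \ref{prop: isotori}), as are $\lM$ and $\lpartial$, and $\rt$ and $\lt$ (Theorem \ref{thm: inversetwists}). Reversing the orientation of all strands (equivalently, reflecting $G$ across a diameter and relabeling boundary vertex $i$ as $n-i$) exchanges $\rt \leftrightarrow \lt$, $\sI \leftrightarrow \tI$, $\vecM \leftrightarrow \cevM$, $\sF \leftrightarrow \tF$, and $\rM \leftrightarrow \lM$, so it suffices to establish the right square
\[ \sF \circ \rt \circ \tDD = \rM, \]
which coordinatewise reads
\[ \Delta_{\sI(f)}(\rt(\tDD(z))) = z^{-\vecM(f)} \qquad \text{for every face } f \text{ and every } z\in \GG_m^E. \]

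I would first dispense with boundary faces. For a boundary face $f$ between vertices $i$ and $i+1$, Proposition \ref{prop: boundarylabel} gives $\sI(f)=\cevI_i$, and Proposition \ref{prop: boundaryunique} says $\vecM(f)$ is the unique matching of $G$ with that boundary, so $\Delta_{\sI(f)}(\tDD(z))=z^{\vecM(f)}$ is already a monomial. The desired identity reduces to the reverse-Grassmann analog of Equation \eqref{eq: twistboundary},
\[ \Delta_{\cevI_i}(\rt(A))\cdot \Delta_{\cevI_i}(A)=1, \]
which I would prove by applying Lemma \ref{lemma: matrixformula} with $I=J=\cevI_i$ and showing, using Lemma \ref{lem more tau vanishing} together with the characterization of $\cevI_i$ in Lemma \ref{lem imp Grass necklace}, that the inner-product matrix $(\langle\rt(A)_p,A_q\rangle)_{p,q\in\cevI_i}$ is triangular with unit diagonal when its rows and columns are ordered by $\prec_{i+1}$.

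For interior faces, my plan is to induct on the combinatorial complexity of $G$ using the bridge decomposition promised in Section \ref{sec bridge}. Such a decomposition should present a reduced graph $G$ as obtained from a simpler reduced graph $G'$ (corresponding to a shorter bounded affine permutation) by adjoining a local "bridge," and should be accompanied by a matrix-level operation — essentially a column rescaling plus a shear between two cyclically adjacent columns — that transports $\tDD_{G'}$ to $\tDD_{G}$. The inductive step then requires two compatibility statements: first, that each extremal matching $\vecM(f)$ in $G$ restricts to an extremal matching in $G'$ up to explicit multiplication by bridge edges (so that $z^{-\vecM(f)}$ transforms predictably under the bridge); and second, that the right twist $\rt$ intertwines the bridge operation on $A$ with an explicit dual bridge operation on $\rt(A)$, so that the Plücker coordinates $\Delta_{\sI(f)}(\rt(A))$ transform identically. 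The base case — reduced graphs made entirely of lollipops, where the positroid variety is a point — is immediate.

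The main obstacle I expect is the second compatibility statement. The twist is defined globally through orthogonality conditions against all columns of $A$, while a bridge affects only two adjacent columns, so one must carefully check which orthogonality relations at $\rt(A)$ are preserved and how those at the bridge columns are corrected. Concretely, I expect that the key technical lemma will assert that adding a bridge between columns $a$ and $a+1$ of $A$ corresponds to a compensating shear between the twist columns $\rt(A)_a$ and $\rt(A)_{a+1}$, with the shear parameter being exactly the one needed so that $\Delta_{\sI(f)}(\rt(A))$ picks up (or loses) the appropriate bridge-edge weight. Once this lemma is established, the inductive step collapses to bookkeeping and the boundary-face base handled above finishes the proof.
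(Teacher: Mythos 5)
Your high-level strategy is the same as the paper's: reduce to the right square by the mirror symmetry, prove the coordinatewise identity $\Delta_{\sI(f)}(\rt(\tDD(z))) = z^{-\vecM(f)}$, settle boundary faces first via Propositions~\ref{prop: boundarylabel} and~\ref{prop: boundaryunique} and the analog of Equation~\eqref{eq: twistboundary}, and handle interior faces by inducting via bridge decompositions. Two points, however, deviate from what actually works.

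First, your expected form of the key technical lemma is not right. You predict ``a compensating shear between the twist columns $\rt(A)_a$ and $\rt(A)_{a+1}$.'' In fact, adding a bridge between $a$ and $a+1$ perturbs $\rt(A)$ at an entire range of columns, not just at $a$ and $a+1$: every $c$ in a certain interval with $\pi(c)$ landing between $a$ and the bridge can change. The correct statement (Lemma~\ref{lem in span}) is that for every column $c$, the difference $\bridge{\tau}_c - \tau_c$ lies in the span of $\{\tau_p : c \impto_{\pi} p\}$. The determinant is then preserved because of Lemma~\ref{label implication}: any source-label $\sI(f)$ is closed downward under $\impto_\pi$, so when $I=\sI(f)$ is ordered compatibly with $\impto_\pi$, the change of basis from $\{\tau_c\}_{c\in I}$ to $\{\bridge{\tau}_c\}_{c\in I}$ is unitriangular. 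A single local shear would not account for this, and without Lemma~\ref{label implication} the determinant argument collapses. The hard part of the proof is exactly locating and exploiting this triangularity, not a local correction at the bridge.

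Second, your induction is incomplete as stated. The bridge/lollipop decomposition (Lemma~\ref{bridge decompositions exist}) only produces \emph{some} reduced graph for each bounded affine permutation; it does not express an \emph{arbitrary} reduced graph as a bridge addition. To cover all reduced graphs you must additionally prove that the identity $\Delta_{\sI(f)}(\rt(\tDD(z))) = z^{-\vecM(f)}$ is preserved by the moves of Section~\ref{sec transformations} — in particular urban renewal, which is nontrivial because it changes the central face label to a non-Pl\"ucker combination and requires a three-term Pl\"ucker relation to propagate the identity. This move-invariance step (Lemma~\ref{lemma: mutation invariance} in the paper) is an essential ingredient your plan omits.

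Finally, a smaller point: once the coordinatewise identity is established (i.e., compositions starting from the top row are regular and commute), you still need a short argument — a dimension count showing $\tDD$ is an open immersion, followed by a cancellation — to upgrade this to full commutativity of the diagram of rational maps, including compositions starting in the bottom row.
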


\noindent We will defer the details of the proof until Section \ref{sec main thm proof}, and instead spend the rest of this section exploring the theorem. We begin with several remarks on the diagram itself.

\begin{remark}
Each of the spaces in the diagram has a natural $\GG_m$-action which commutes or anti-commutes with each of the maps, and so there is a quotient commutative diagram.
\[\begin{tikzpicture}
	\node (F1) at (-3,0) {$\mathbb{G}_m^F/\mathbb{G}_m$};
	\node (E) at (0,0) {$\mathbb{G}_m^E/\mathbb{G}_m^{V}$};
	\node (F2) at (3,0) {$\mathbb{G}_m^F/\mathbb{G}_m$};
	\node (P1) at (-3,-2) {$\Pio(\cM)$};
	\node (P2) at (0,-2) {$\Pio(\cM)$};
	\node (P3) at (3,-2) {$\Pio(\cM)$};
	\draw[-angle 90,out=10,in=170] (F1) to node[above] {$\rpartial$} (E);
	\draw[-angle 90,out=10,in=170] (E) to node[above] {$\rM$} (F2);
	\draw[-angle 90,out=190,in=-10] (F2) to node[below] {$\lpartial$} (E);
	\draw[-angle 90,out=190,in=-10] (E) to node[below] {$\lM$} (F1);
	\draw[dashed,-angle 90] (P1) to node[left] {$\tF$} (F1);
	\draw[-angle 90] (E) to node[left] {$\DD$} (P2);
	\draw[dashed,-angle 90] (P3) to node[right] {$\sF$} (F2);
	\draw[-angle 90,out=15,in=165] (P1) to node[above] {$\rt$} (P2);
	\draw[-angle 90,out=15,in=165] (P2) to node[above] {$\rt$} (P3);
	\draw[-angle 90,out=195,in=-15] (P3) to node[below] {$\lt$} (P2);
	\draw[-angle 90,out=195,in=-15] (P2) to node[below] {$\lt$} (P1);
\end{tikzpicture}\]
The bottom row of the diagram now takes place in the Grassmannian itself, and the maps $\lpartial$ and $\rpartial$ have a much simpler form (Proposition \ref{prop: partial} and Remark \ref{rem: simpleboundary}).
\end{remark}

\begin{remark}
The top row depends on the graph $G$, but the bottom row does not. If $G$ and $G'$ are two reduced graphs related by a move (Section~\ref{sec transformations}), then the birational map $\mu$ defined in that section gives a birational isomorphism between the center elements of the corresponding top rows, which commutes with the other maps in the diagrams.
\end{remark}

\begin{remark}
The right action of $\GG_m^n$ described in Proposition~\ref{prop: equivariance} can be extended to actions on the tori in the top row, coming from monomial maps from $\GG_m^n$ to $\GG_m^E$ and $\GG_m^F$. The vertical maps in the diagram commute with this $\GG_m^n$ action, and the horizontal maps anti-commute.
\end{remark}

\begin{remark}
We collect our justifications and mnemonics for our notation. 
Maps with rightward arrows always travel to the right in the diagram, or (in the case of the vertical map $\sF$) are in the right-hand edge.
The twists $\rt(A)_a$ and $\lt(A)_a$ depend on the columns of $A$ to the right and left of $A_a$, respectively.
In $\rM(f)$ and $\lM(f)$, the direction of the arrow recalls whether the strands points towards or away from face $f$. 
The maps $\lpartial$ and $\rpartial$ are inverse to $\rM$ and $\lM$. 
Finally, the bullet in the notation for $\sF$ and $\tF$ indicates whether we are using source or target labeled strands.
\end{remark}

\subsection{Inverting the boundary measurement map} \label{sec inversion}

Theorem~\ref{main theorem} implies that the boundary measurement maps are inclusions.
\begin{prop} \label{inversion}
For a reduced graph, the maps $\tDD : \GG_m^E/\GG_m^{V-1} \longrightarrow \tPio(\cM)$ and $\DD: \GG_m^E/\GG_m^V\longrightarrow \Pio(\cM)$ are open immersions. 
\end{prop}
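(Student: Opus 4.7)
My strategy is to extract from Theorem~\ref{main theorem} a regular one-sided inverse for $\tDD$ on an open subset of $\tPio(\cM)$, and then finish with a dimension count.

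Commutativity of the right-hand square of the diagram in Theorem~\ref{main theorem} gives $\sF\circ\rt\circ\tDD=\rM$. Composing on the left with $\lpartial=\rM^{-1}$ (from Proposition~\ref{prop: isotori}) yields the identity
\[ \lpartial\circ\sF\circ\rt\circ\tDD \;=\; \mathrm{id}_{\GG_m^E/\GG_m^{V-1}}. \]
In particular $\tDD$ is injective. Let $U\subseteq \tPio(\cM)$ be the open subset on which $\sF$ is regular (i.e.\ where $\Delta_{\sI(f)}\neq 0$ for every face $f$), and set $U'=\rt^{-1}(U)$, which is open because $\rt$ is a regular automorphism of $\tPio(\cM)$ by Corollary~\ref{twist quotient}. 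Since $\rM$ lands in the algebraic torus $\GG_m^F$, the image of $\tDD$ is contained in $U'$, so I obtain a regular morphism $\tDD:\GG_m^E/\GG_m^{V-1}\to U'$ together with a regular retraction $r:=\lpartial\circ\sF\circ\rt|_{U'}$ satisfying $r\circ\tDD=\mathrm{id}$.

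Any section of a morphism of separated varieties is automatically a closed immersion, so $\tDD:\GG_m^E/\GG_m^{V-1}\to U'$ is a closed immersion. The Euler relation for a reduced disc graph gives $|E|-|V|+1=|F|$, so the source torus has dimension $|F|$, which coincides with $\dim\tPio(\cM)=k(n-k)+1-\ell(\pi)$ for a reduced graph (a standard result of Postnikov). Hence $\dim U'=\dim \GG_m^E/\GG_m^{V-1}$. A closed immersion between irreducible varieties of the same dimension must be an isomorphism, so $\tDD$ identifies its source with the open subvariety $U'\subseteq \tPio(\cM)$; this exhibits $\tDD$ as an open immersion.

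For $\DD$, I would apply the same argument to the quotient diagram displayed immediately after Theorem~\ref{main theorem}, in which $\sF\circ\rt\circ\DD=\rM$ holds as an identity of $\GG_m$-quotients; the verbatim section/dimension argument then shows $\DD$ is an open immersion. The main substance is concentrated in Theorem~\ref{main theorem} itself, which is why the present proposition needs no further combinatorics: once the commuting diagram is available, the result reduces to the formal observation that a section with open-dense image, in matching dimension, is an open immersion.
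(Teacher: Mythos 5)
Your proof is correct and is essentially the same argument the paper gives: Theorem~\ref{main theorem} provides the regular retraction $\lpartial\circ\sF\circ\rt$ on a neighborhood of the image, and a dimension count ($\dim\GG_m^E/\GG_m^{V-1}=|F|=\dim\tPio(\cM)$) upgrades this to an open immersion. You fill in the justification (section of a separated morphism $\Rightarrow$ closed immersion, plus equal dimensions $\Rightarrow$ isomorphism onto $U'$) with more care than the paper's terse ``has a regular right inverse, hence open inclusion,'' and you use the correct formula $\dim\tPio(\cM)=k(n-k)+1-\ell(\pi)$, whereas the paper's proof of Theorem~\ref{main theorem} drops the ``$+1$.''
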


\begin{proof}
The inverse rational map is $\lpartial \circ \sF \circ \rt$. Since this rational map is defined on the image of $\tDD$, the map $\tDD$ is an open immersion. The result for $\DD$ is identical.\end{proof}

We describe the inverse map $\lpartial \circ \sF \circ \rt$ in words: Given a point in the positroid variety, twist it, compute the Pl\"ucker coordinates given by the face labels, and then weight an edge by the reciprocal of the product of the adjacent faces, with the correction involving the gauge action described in Proposition~\ref{prop: partial}. If we only want an inverse map from $\Pio(\cM)$ to $\GG_m^E/\GG_m^V$, then the gauge correction can be omitted. 
%Explicit examples of this inversion process are given in Appendix \ref{sec nice twist}.

\begin{remark}
This generalizes the main result of Talaska~\cite{Tal11}, who proves invertibility of the boundary map for Le-diagrams.
Talaska's description of the inverse does not involve the twist, but expresses the coordinates of $\GG_m^E/\GG_m^{V-1}$ directly as ratios of Pl\"ucker variables.
%Such a twist-free description is not possible for a general (leafless, reduced) plabic graph; see Example~\ref{sec nice twist}.
\end{remark}

\subsection{The Laurent phenomenon}

From Theorem~\ref{main theorem}, we see that the domain of definition of $\sF$ is the image of $\rt \circ \tDD$. 
Since $\tDD$ is injective (Proposition~\ref{inversion}) and $\rt$ is an isomorphism, this shows that the domain of definition of $\sF$ is a torus.
So any function on $\tPio(\cM)$ will restrict to a regular function on this torus, and hence to a Laurent polynomial in a basis of characters of this torus.
Theorem~\ref{main theorem} says that the Pl\"ucker coordinates $\Delta_{\sI(f)}$ are such a character basis for this torus, which proves the following.

\begin{prop} \label{Laurent}
A function in the coordinate ring of $\tPio(\cM)$ may be written as a Laurent polynomial in the functions $\{ \Delta_{\sI(f)} \}_{f \in F}$.%, and also in the functions $\{ \Delta_{\tI(f)} \}_{f \in F}$.
\end{prop}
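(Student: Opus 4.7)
The plan is to execute the argument the authors sketch in the paragraph preceding the statement, making the steps explicit. The key input from Theorem~\ref{main theorem} is the identity $\sF \circ \rt \circ \tDD = \rM$, combined with Corollary~\ref{prop: isotori} which says $\rM : \GG_m^E/\GG_m^{V-1} \to \GG_m^F$ is an isomorphism of tori. Using Proposition~\ref{inversion} ($\tDD$ is an open immersion) and Corollary~\ref{twist quotient} ($\rt$ is an automorphism of $\tPio(\cM)$), it follows that $U := (\rt \circ \tDD)(\GG_m^E/\GG_m^{V-1})$ is an open subvariety of $\tPio(\cM)$ that happens to be a split torus, and that $\sF$ restricts to an isomorphism $\sF|_U : U \xrightarrow{\sim} \GG_m^F$. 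In particular, the functions $\{ \Delta_{\sI(f)}|_U \}_{f \in F}$ form a basis of characters of $U$, since they pull back from the coordinate functions on $\GG_m^F$.

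Now take any $\phi \in \CC[\tPio(\cM)]$. The restriction $\phi|_U$ is a regular function on a split torus, hence a Laurent polynomial $P$ in the character basis $\{ \Delta_{\sI(f)}|_U \}$. Evaluating the same Laurent polynomial $P$ on the (globally regular) Pl\"ucker functions $\Delta_{\sI(f)}$ defines a rational function $\widetilde{\phi}$ on $\tPio(\cM)$ that agrees with $\phi$ on the dense open subvariety $U$. Because $\tPio(\cM)$ is irreducible (a standard property of positroid varieties, see \cite[Theorem~5.9]{KLS13}), the restriction map $\CC(\tPio(\cM)) \hookrightarrow \CC(U)$ is injective, so $\phi = \widetilde{\phi}$ as rational functions, and the Laurent expression for $\phi|_U$ therefore expresses $\phi$ globally.

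There is no serious obstacle: the proposition is a formal consequence of Theorem~\ref{main theorem} together with the elementary fact that regular functions on a split torus are Laurent polynomials in the characters. The only point worth flagging is the appeal to irreducibility of $\tPio(\cM)$, which is what allows one to promote an identity of functions on the dense open $U$ to an identity on $\tPio(\cM)$ itself.
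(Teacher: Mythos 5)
Your argument is correct and follows essentially the same route as the paper's own justification (which appears as the paragraph immediately preceding the statement rather than in a labeled proof). The one thing you add is the explicit appeal to irreducibility of $\tPio(\cM)$ to promote the identity $\phi = P(\Delta_{\sI(f)})$ from the dense open torus $U$ to all of $\tPio(\cM)$; the paper leaves this step implicit, and your version is the cleaner one.
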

\noindent
By a similar argument, such a function may also be written as a Laurent polynomial in $\{ \Delta_{\tI(f)} \}_{f \in F}$.

\begin{remark}
This verifies part of the `Laurent phenomenon' that would follow from the conjectural cluster structure on $\tPio(\cM)$; specifically, the Laurent phenomenon for those clusters represented by a reduced graph.
\end{remark}

\subsection{Laurent formulas for twisted Pl\"uckers}

The main theorem also provides explicit Laurent polynomials for certain functions on $\tPio(\cM)$; specifically, the twisted Pl\"ucker coordinates.

%For any $J$ in $\binom{[n]}{k}$, the function $\Delta_J \circ \lt$ is an algebraic function on $\tPio(\cM)$.
%Let $G$ be a leafless reduced plabic graph. 
%From Proposition~\ref{Laurent},  this function is a Laurent polynomial in $\{ \Delta_{\sI(f)} \}_{f \in F}$.
%
%In fact, we give an explicit formula for this Laurent polynomial.

\begin{prop} \label{dimer sum is twist}
For any $J\in \binom{[n]}{k}$, we have
\[ \Delta_J \circ \lt =  \sum_{\substack{ \text{matchings }M \\ \text{with }\partial M = J}}  \prod_{f \in F} \Delta_{ \sI(f) }^{(B_f-1)-\#\{ e \in M \ : \ \partial_{fe}=1 \}} \]
\end{prop}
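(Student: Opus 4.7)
The plan is to reduce the identity to a monomial computation by evaluating both sides on the dense open subset of $\tPio(\cM)$ parametrized by the boundary measurement map, and then invoking the main theorem.

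First I would pick $z \in \GG_m^E/\GG_m^{V-1}$ and set $p := \rt(\tDD(z)) \in \tPio(\cM)$. Since $\lt$ and $\rt$ are mutually inverse (Theorem~\ref{thm: inversetwists} and Corollary~\ref{twist quotient}), we have $\lt(p) = \tDD(z)$, and so by the definition of the boundary measurement map,
\[ \Delta_J(\lt(p)) \;=\; \Delta_J(\tDD(z)) \;=\; D_J(z) \;=\; \sum_{\partial M = J} z^M. \]

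Next I would use the right-hand square of Theorem~\ref{main theorem}, which says $\sF \circ \rt \circ \tDD = \rM$. Coordinate by coordinate, this reads $\Delta_{\sI(f)}(p) = z^{-\vecM(f)}$ for each face $f \in F$. Substituting this relation into the monomial formula from Corollary~\ref{coro: facePartition}, each matching monomial becomes
\[ z^M \;=\; \prod_{f \in F} \bigl(z^{\vecM(f)}\bigr)^{\#\{e \in M \,:\, \partial_{fe}=1\} - (B_f-1)} \;=\; \prod_{f \in F} \Delta_{\sI(f)}(p)^{(B_f-1)-\#\{e \in M \,:\, \partial_{fe}=1\}}. \]
Summing over matchings $M$ with $\partial M = J$ yields the claimed identity at the point $p$, hence on the entire subset $\rt(\tDD(\GG_m^E/\GG_m^{V-1})) \subseteq \tPio(\cM)$.

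Finally, I would observe that this subset is open and dense in $\tPio(\cM)$: the map $\tDD$ is an open immersion by Proposition~\ref{inversion}, and $\rt$ is an automorphism of $\tPio(\cM)$ by Corollary~\ref{twist quotient}. On this open torus all $\Delta_{\sI(f)}$ are nonzero, so the RHS is well defined there; both sides are rational functions on $\tPio(\cM)$, and the LHS is regular, so equality on a dense open subset promotes to an identity of regular functions (indeed a Laurent polynomial in the $\Delta_{\sI(f)}$, consistent with Proposition~\ref{Laurent}). There is no real obstacle here: once the main theorem is in place, the proposition is a direct bookkeeping consequence of the commutativity of the right square, the inversion $\lt = \rt^{-1}$, and the face-factorization of matching monomials in Corollary~\ref{coro: facePartition}.
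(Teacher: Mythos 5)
Your proof is correct and follows essentially the same route as the paper: both deduce the identity from the right-hand square of Theorem~\ref{main theorem} combined with the face-factorization of matching monomials in Corollary~\ref{coro: facePartition}. The only difference is presentational—you evaluate at a parametrized point $p = \rt(\tDD(z))$ and then argue by density, whereas the paper writes $\lt = \tDD \circ \lpartial \circ \sF$ directly as a composition of (rational) maps and precomposes the resulting formula with $\sF$.
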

\noindent That is, each Laurent polynomial expressing a twisted Pl\"ucker coordinate in terms of the $\{\Delta_{\sI(f)}\}_{f\in F}$ is a partition function of matchings with fixed boundary.

\begin{proof}
We use the right-hand square in Theorem~\ref{main theorem}.  Applying $\Delta_J \circ \lt$ is equal to applying $\tDD \circ \lpartial \circ \sF$ and projecting on the $J$-th coordinate. 
The $J$-th coordinate of $\tDD$ is the partition function $\DD_J$, the sum of matching monomials over matchings with boundary $J$. Rewriting Corollary \ref{coro: facePartition}, for all $x\in \GG_m^F$, we have
\[\DD_J\left(\lpartial(x) \right) 
=  \sum_{\stackrel{\text{matchings }M }{\text{with } \partial M=J}}\prod_{f\in F} x_f^{(B_f-1) - \#\{ e \in M \ : \ \partial_{fe}=1 \}} \]
Precomposing both sides with $\sF$ completes the proof.
%defined to be a sum over matchings with boundary $J$. The formula above is the result of plugging in the computation from Proposition~\ref{facePartionFunction}.
%\[\DD_J\left(\lpartial(x) \right) 
%=  \sum_{\stackrel{\text{matchings }M }{\text{with } \partial M=J}}x^{(B_{F,1}-1_{F,1}) - \partial_{F,E}\cdot M} \]
\end{proof}

%\begin{remark}
%Since $B_f\geq  \#\{ e \in M \ : \ \partial_{fe}=1 \}$, the exponents on the right-hand side of the formula never have exponents greater than $1$.
%\end{remark}

\begin{remark}\label{rem: righttwist}
There is a similar formula for the Pl\"ucker coordinate of a right twist, using the left-hand square in Theorem \ref{main theorem}, which is even a sum over the same set of matchings.
\[ \Delta_J \circ \rt =  \sum_{\substack{ \text{matchings }M \\ \text{with }\partial M = J}}  \prod_{f \in F} \Delta_{ \tI(f) }^{(\widetilde{B}_f-1)-\#\{ e \in M \ : \ \widetilde{\partial}_{fe}=1 \}} \]
However, the reader is cautioned that $\widetilde{B}_f$ and $\widetilde{\partial}_{fe}$ here are the analogs of $B_f$ and $\partial_{fe}$ in which `downstream' has been replaced by `upstream'.
\end{remark}

\begin{remark}
Theorem \ref{main theorem} does not directly give a combinatorial description of the Laurent polynomials of the (untwisted) Pl\"ucker coordinates. 
%In Appendix~\ref{sec nice twist}, we will discuss cases where we can give explicit Laurent formula for $\Delta_J$.
%As we will see, these cases arise when Pl\"ucker coordinates and twisted Pl\"ucker coordinates are related by some monomial transformation.
\end{remark}

\subsection{The double twist} \label{double twist}

Theorem~\ref{main theorem} has interesting consequences for $\rt^2$, as we will now explain. 

%Finally, we observe that there is a deceptively close relationship between the double twist of a Pl\"ucker coordinate and permuting the indices by $\pi$.

\begin{prop} \label{prop: doubletwist}
Consider a positroid $\cM$ with permutation $\pi$ and Grassman necklace $\vecI_1,\vecI_2,...,\vecI_n$. Let $A \in \Mato(\cM)$. For any $I$ which occurs as the source-label of a face some reduced graph for $\cM$, we have
%\[ \Delta_{I} (\rt^2(A)) = \Delta_{\pi(I)}(A) \prod_{i \in \pi(I)} \frac{\Delta_{\vecI_{i+1}}(A)}{\Delta_{\vecI_i}(A)} \]
\[ \Delta_{I} (\rt^2(A)) = \Delta_{\pi(I)}(A) \prod_{i \in I} \frac{\Delta_{\vecI_{i}}(A)}{\Delta_{\vecI_{i+1}}(A)} \]
\end{prop}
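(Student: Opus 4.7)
The plan is to deduce the formula directly from Theorem \ref{main theorem} combined with Proposition \ref{prop: doubletwist1} and Proposition \ref{prop: boundarylabel}. Pick a reduced graph $G$ for $\cM$ and a face $f$ with $\sI(f) = I$ (which exists by hypothesis). The two commuting identities from the main theorem read
\[
\sF \circ \rt \circ \tDD = \rM \qquad \text{and} \qquad \tF \circ \lt \circ \tDD = \lM.
\]
Rearranging the second birationally as $\tDD \circ \rpartial = \rt \circ \tF^{-1}$ (using $\rpartial = \lM^{-1}$) and substituting into the first yields
\[
\sF \circ \rt^2 \circ \tF^{-1} = \sF \circ \rt \circ (\tDD \circ \rpartial) = \rM \circ \rpartial,
\]
so $\sF \circ \rt^2 = \rM \circ \rpartial \circ \tF$ as rational maps on $\tPio(\cM)$. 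In words: the right twist squared, read in source-labeled Pl\"ucker coordinates, is the ``double twist'' $\rM \circ \rpartial$ of Proposition \ref{prop: doubletwist1} applied to the target-labeled Pl\"ucker coordinates.

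Taking the $f$-coordinate of both sides and applying Proposition \ref{prop: doubletwist1} gives, for generic $A$,
\[
\Delta_{\sI(f)}(\rt^2(A)) = \Delta_{\tI(f)}(A) \prod_{i \in \sI(f)} \frac{\Delta_{\tI(i_-)}(A)}{\Delta_{\tI(i_+)}(A)}.
\]
The remaining task is to translate the face labels into the intrinsic positroid data of the statement. Since each strand is oriented from source $a$ to target $\pi(a)$, a face lies to the left of the strand starting at $a$ exactly when it lies to the left of the strand ending at $\pi(a)$, giving $\tI(f) = \pi(\sI(f)) = \pi(I)$. By Proposition \ref{prop: boundarylabel}, the boundary face between vertices $j$ and $j+1$ carries target-label $\vecI_{j+1}$; since $i_-$ sits between vertices $i-1$ and $i$ and $i_+$ sits between $i$ and $i+1$, this forces $\tI(i_-) = \vecI_i$ and $\tI(i_+) = \vecI_{i+1}$. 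Substituting these identifications reproduces the claimed formula.

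The identity just obtained holds on the dense open subset where the rational maps $\sF$, $\tF$ are defined, but extends to all of $\tPio(\cM)$ by regularity: the left side is a Pl\"ucker coordinate, and the right side is regular on $\tPio(\cM)$ because each $\Delta_{\vecI_i}$ is nowhere-vanishing there (the $\vecI_i$ being in the Grassman necklace of $\cM$, cf.\ Section~\ref{pos vars}). The main obstacle is essentially notational bookkeeping---matching conventions for $\sI$, $\tI$, $i_\pm$, and the Grassman necklace---together with the careful passage between rational and regular identities; once Theorem \ref{main theorem} and Proposition \ref{prop: doubletwist1} are available, there is no further computational content.
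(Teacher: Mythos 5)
Your proof is correct and follows essentially the same route as the paper: derive $\sF \circ \rt^2 = \rM \circ \rpartial \circ \tF$ from Theorem~\ref{main theorem}, apply Proposition~\ref{prop: doubletwist1}, and translate face labels via Proposition~\ref{prop: boundarylabel} and $\tI(f) = \pi(\sI(f))$. Your closing paragraph on extending the identity from a dense open set to all of $\tPio(\cM)$ by regularity is a welcome addition that the paper leaves implicit.
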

\noindent An analogous result for $\Delta_I\circ \lt^2$ holds when $I$ is the target-label of a face in a reduced graph for $\cM$.

%THIS PROOF NEEDS TO BE PROOFREAD AND FIXED TO PROVE SOMETHING TRUE

\begin{proof}
Fix a reduced graph $G$ with positroid $\cM$ and a face $f$ such that $I=\sI(f)$. Then
\begin{align*}
 \Delta_{\sI(f)}( \rt^2(A)) &= \left(\sF( \rt^2(A)) \right)_f \stackrel{\text{Thm. \ref{main theorem}}}{=} \left(\rM\circ \rpartial\circ \tF(A) \right)_f 
\stackrel{\text{Prop. \ref{prop: doubletwist1}}}{=} \left(\sF(A)\right)_f \prod_{i\in I} \frac{\left(\sF(A)\right)_{i_-}}{\left(\sF(A)\right)_{i_+}}\\
& = \Delta_{\tI(f)}(A) \prod_{i\in I} \frac{\Delta_{\tI(i_-)}(A)}{\Delta_{\tI(i_+)}(A)}  \stackrel{\text{Prop. \ref{prop: boundarylabel}}}{=} \Delta_{\tI(f)}(A) \prod_{i\in I} \frac{\Delta_{\vecI_{i}}(A)}{\Delta_{\vecI_{i+1}}(A)} 
\end{align*}
Since $\tI(f) = \pi(\sI(f))$, the result is proven. 
\end{proof}

We can give a geometric interpretation to Proposition~\ref{prop: doubletwist}. We define a map $\mu : \Mato(\cM) \to \Mato(k,n)$ as follows:
\[ \mu(A)_i = A_{\pi(i)} \frac{\Delta_{\vecI_{i}}(A)}{\Delta_{\vecI_{i+1}}(A)} (-1)^{\# \{j : i \impto_{\pi} j \} + (k-1) \delta( n \in [i, \pi(i)) ) } \]
here $\delta( n \in [i, \pi(i)) )$ is $1$ if $n \in [i, \pi(i))$ and $0$ otherwise.
It is easy to see that $\mu$ descends to a map $\Pio(\cM) \to Gr(k,n)$.

\begin{prop} \label{prop: geometricdoubletwist}
Let $A \in \Mato(\cM)$.  If $I$ is a source-label of a face for some reduced graph for $\cM$, then
\[ \Delta_I( \rt^2(A)) = \Delta_I ( \mu (A))\]
as functions $\Mato(\cM) \to \CC$ (or $\tPio(\cM) \to \CC$).
\end{prop}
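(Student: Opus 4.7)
The proposition follows from Proposition~\ref{prop: doubletwist} by a sign calculation. By that proposition,
\[\Delta_I(\rt^2(A)) = \Delta_{\pi(I)}(A) \prod_{i \in I} \frac{\Delta_{\vecI_i}(A)}{\Delta_{\vecI_{i+1}}(A)},\]
where $\pi(I)$ is viewed as a subset of $[n]$ by reducing modulo $n$. Expanding $\Delta_I(\mu(A)) = \det(\mu(A)_{i_1}, \ldots, \mu(A)_{i_k})$ by multilinearity and using $A_a = A_{a \bmod n}$ gives
\[\Delta_I(\mu(A)) = (-1)^{\sum_{i \in I} \epsilon_i} \cdot \operatorname{sgn}(\tau) \cdot \Delta_{\pi(I)}(A) \cdot \prod_{i \in I} \frac{\Delta_{\vecI_i}(A)}{\Delta_{\vecI_{i+1}}(A)},\]
where $\epsilon_i = \#\{j \in \ZZ : i \impto_\pi j\} + (k-1)\delta(\pi(i) > n)$ and $\tau \in S_k$ is the permutation that sorts $(\pi(i_1) \bmod n, \ldots, \pi(i_k) \bmod n)$ into increasing order. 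The proposition is therefore equivalent to the combinatorial sign identity
\[\operatorname{sgn}(\tau) = (-1)^{\sum_{i \in I} \epsilon_i},\]
which depends only on $\pi$ and the source-label $I$, not on $A$.

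To verify this identity, I would partition $I = A_I \sqcup B_I$ according to whether $\pi(i) \leq n$ or $\pi(i) > n$; note that $|B_I|$ records the total $\delta$-contribution to $\sum_i \epsilon_i$. A case analysis on pairs $(j_1, j_2)$ with $j_1 < j_2$ shows: AA- and BB-inversions are exactly those with $i_{j_2} \impto_\pi i_{j_1}$; AB-pairs are inversions exactly when $i_{j_1} \not\impto_\pi (i_{j_2} - n)$; and BA-pairs are never inversions, although they always satisfy $i_{j_2} \impto_\pi i_{j_1}$. Writing $|P| = \#\{j_1 < j_2 : i_{j_2} \impto_\pi i_{j_1}\}$ and $I^*_{AB} = \#\{\text{AB-pairs with } i_{j_1} \impto_\pi (i_{j_2} - n)\}$, the bookkeeping combines these to
\[\operatorname{inv}(\sigma) \equiv |P| + (k-1)|B_I| + I^*_{AB} \pmod 2.\]

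For the matching count of $\sum_{i \in I} \gamma_i$, where $\gamma_i = \#\{j \in \ZZ : i \impto_\pi j\}$, split each $\gamma_i$ into contributions from $j \in [1,n]$ and $j \leq 0$. The former equals $|P|$ in total by Lemma~\ref{label implication} (source-labels are $\impto_\pi$-closed within $[n]$). For the latter, substitute $j = j' - n$ with $j' \in [n]$ and use the periodicity $i \impto_\pi (j' - n) \iff (i + n) \impto_\pi j'$; assuming $j' \in I$ whenever $(i+n) \impto_\pi j'$ with $i \in I$, this total equals $I^*_{AB}$. Combining, $\sum_{i \in I} \epsilon_i \equiv |P| + I^*_{AB} + (k-1)|B_I| \equiv \operatorname{inv}(\sigma) \pmod 2$, establishing the sign identity.

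The main obstacle is the closure property required for the $j \leq 0$ count: source-labels $I$ must be closed under the \emph{shifted} relation $(i+n) \impto_\pi j'$ for $i \in I$, $j' \in [n]$. This is an affine extension of Lemma~\ref{label implication} and should follow from the same non-crossing-of-strands argument applied to the strands starting at $i$ and $j'$ in a reduced graph realizing $I$ as a source-label, but the geometric configuration is different from that of a standard $\impto_\pi$-relation in $[n] \times [n]$, so the argument must be set up with care.
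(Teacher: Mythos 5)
Your proposal follows the paper's overall strategy — reduce Proposition~\ref{prop: geometricdoubletwist} to a sign identity via Proposition~\ref{prop: doubletwist} — but the single case analysis you run is finer than the paper's two-step factorization, and, importantly, it interprets $\#\{j : i\impto_\pi j\}$ in the definition of $\mu$ as a count over all $j\in\ZZ$. This is not pedantry: the paper's proof identifies $\#\{j : i\impto_\pi j\}$ with $\#\{j\in[n] : i\impto_\pi j\}$, and asserts that reducing $\pi(I)$ modulo $n$ contributes a sign of exactly $(-1)^{(k-1)\#\{i\in I : \pi(i)>n\}}$; neither claim holds as stated. Both fail in the paper's own example in Section~\ref{double twist}: with $\pi = (2,4,5,7)$, $n=4$, $k=2$, $I = \{1,4\}$, one has $1\impto_\pi 0$ (since $\pi(0)=3>2=\pi(1)$), so $\#\{j\in\ZZ : 1\impto_\pi j\}=1$ while $\#\{j\in[n]:1\impto_\pi j\}=0$, and $\pi(I)=\{2,7\}$ reduces to the already sorted $(2,3)$, whereas the paper's rule predicts a sign of $(-1)^{k-1}=-1$. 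The two discrepancies are each equal to $I^*_{AB}$ in your notation and cancel, so the paper's conclusion is correct but its stated verification is not; your bookkeeping is exactly what is required to make the cancellation explicit. Your mod-$2$ identity $\operatorname{inv}(\sigma)\equiv |P|+(k-1)|B_I|+I^*_{AB}$ is correct (using $|A_I||B_I|\equiv(k-1)|B_I|\bmod 2$), and compared against $\sum_i\epsilon_i = |P|+I^*_{AB}+(k-1)|B_I|$ it gives the needed parity statement.

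The one step you flag as missing — that $j'\in I$ whenever $i\in I$ and $(i+n)\impto_\pi j'$ for $j'\in[n]$ — is a genuine gap in your writeup, but you are worrying more than necessary. The relation $(i+n)\impto_\pi j'$ says the interval $[i+n,\pi(i)+n]$ is properly nested inside $[j',\pi(j')]$ in $\ZZ$. A strand of $G$ depends only on its source modulo $n$, so the strand through $i$ is the strand through $i+n$, and its boundary arc is the reduction of $[i+n,\pi(i)+n]$ just as well as of $[i,\pi(i)]$. Hence the nesting is precisely the hypothesis used in the proof of Lemma~\ref{label implication}: by reducedness the two strands do not cross, and any face to the left of the $i$-strand is to the left of the $j'$-strand. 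In other words, Lemma~\ref{label implication} holds with $a\in[n]$ and $b\in\ZZ$ arbitrary, interpreting $b\in\sI(f)$ as $(b\bmod n)\in\sI(f)$, and its existing proof already establishes this. Stating that extension explicitly is all you need to close the argument.
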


\begin{proof}
We first check the result up to sign.
\[ \Delta_I(\mu(A)) = \det (\mu(A)_i)_{i \in I} = \pm \det \left(A_{\pi(i)} \frac{\Delta_{\vecI_{i}}(A)}{\Delta_{\vecI_{i+1}}(A)} \right)_{i \in I} =  \pm \Delta_{\pi(I)}(A) \prod_{i \in I}\frac{\Delta_{\vecI_{i}}(A)}{\Delta_{\vecI_{i+1}}(A)} =\pm  \Delta_I(\rt^2(A)) . \]

We now think about the signs. We emphasize that we consider $I$ specifically as a subset of $[n]$, and not some other lift modulo $n$.

There are two places where are signs are introduced. First, $\det(\mu(A)_i)_{i \in I}$ is ordered according to the linear order on $I$. When we reorder to the linear order on $\pi(I)$, we introduce the sign $(-1)^{\# \{ (j,i) \in I^2 : \ j<i,\ \pi(i) < \pi(j) \}} = (-1)^{\# \{ (j,i) \in I^2 : i \impto_{\pi} j \} }$. 
Since $I$ is a source-label, by Lemma~\ref{label implication}, if $i \in I$ and $i \impto_{\pi} j$ then $j \in I$. 
So the exponent can be rewritten as $\sum_{i \in I} \# \{ j \in [n] : i \impto_{\pi} j \}$. This is precisely the contribution from the $(-1)^{\# \{j : i \impto_{\pi} j \}}$ factor in the deifnition of $\mu$.

The second sign is introduced when we change from using the linear order on $\pi(I)$ to the linear order on $\pi(I)$ reduced modulo $n$ to lie in $[n]$. 
For each $i \in I$ obeying $i \leq n < \pi(i)$, this reordering introduces a sign of $(-1)^{k-1}$. This is the contribution from the $(-1)^{(k-1) \delta( n \in [i, \pi(i)) ) }$ factor.
\end{proof}

\begin{myexample}
Let 
\[ A = \begin{bmatrix}
p & q & 0 & -s \\
0 & 0 & r & t \\
\end{bmatrix} .\]
So 
\[ \begin{array}{rclrclrcl}
\Delta_{12}(A) &=& 0 &
\Delta_{13}(A) &=& pr &
\Delta_{14}(A) &=& pt \\
\Delta_{23}(A) &=& qr &
\Delta_{24}(A) &=& qt &
\Delta_{34}(A) &=& rs \\
\end{array} \]
The decorated permutation of $\pi$ is $\pi(1)=2$, $\pi(2)=4$, $\pi(3)=5$, $\pi(4) = 7$. The matroid $\cM$ is $\{ 13, 14, 23, 24, 34 \}$. 

Then
\[ \rt (A) = \begin{bmatrix}
p^{-1} & q^{-1} & \frac{t}{rs} & 0 \\
0 & 0 & r^{-1}  & t^{-1} \\
\end{bmatrix} \qquad 
 \rt^2 (A) = \begin{bmatrix}
p & q & \frac{rs}{t} &  0 \\
-\frac{pt}{s} & -\frac{qt}{s} & 0 & t \\
\end{bmatrix} .\]
Meanwhile,
\[ \mu(A) = \begin{bmatrix}
 \frac{p}{q} q & - \frac{q}{s} (-s)  & \frac{rs}{pt} p & 0 \\
0 & -\frac{q}{s} t  & 0 & \frac{t}{r} r  \\
\end{bmatrix} = \begin{bmatrix}
p & q & \frac{rs}{t}  & 0 \\
0 & -\frac{qt}{s}  & 0 & t  \\
\end{bmatrix}
. \]
There is a unique reduced graph for this permutation, with source-labels $14$, $23$, $24$, $34$. We see that $\Delta_I(\rt^2 (A)) = \Delta_I(\mu (A))$ for these $I$, but that $\Delta_I(\rt^2 (A))  \neq \Delta_I(\mu (A))$ for $I=12$ or $13$.
\end{myexample}

%SCRATCH WORK
%\begin{myexample}
%\begin{equation}\label{eq: twistedmatrix}
%\begin{bmatrix}
%1 & \frac{dks+ejs}{eip} & \frac{bjs}{adp} & \frac{hrs}{cmq} & 0 &0  \\
%0 & 1 & \frac{bi}{ad} & \frac{fhipr+ikq(hl+gm)}{cfjmq} & \frac{gikn}{fhjo} & 0  \\
%0 & 0 & 0 & \frac{1}{bciklnst} & \frac{1}{bhiklots} & \frac{1}{bgiknrsu} 
%\end{bmatrix}
%\end{equation}
%\begin{equation}\label{eq: twistedmatrix}
%\begin{bmatrix}
%1 & ? & ? & \frac{cmq}{hrs} & 0 &0  \\
%-\frac{dks+ejs}{eip} & ? & ? & 0 & \frac{fhjo}{gikn} & 0  \\
%\frac{dks+ejs}{eip}\frac{fhipr+ikq(hl+gm)}{cfjmq} -  \frac{hrs}{cmq} & ? & ? & 0 & 0 & bgiknrsu
%\end{bmatrix}
%\end{equation}
%\end{myexample}

\section{Bridge decompositions} \label{sec bridge}

In order to prove Lemma~\ref{lemma: main}, we need one more tool known as \emph{bridge decompositions}. Bridge decompositions were introduced in~\cite{AHBCGPT12}; we will use~\cite{Lam16} as our reference for their properties. Essentially, adding bridges and adding lollipops are two ways to make a more complex reduced graph from a simpler one.

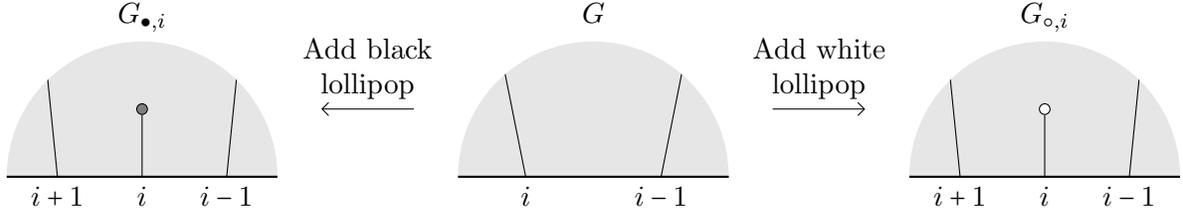
\begin{figure}[h!t]
\begin{tikzpicture}
%\begin{scope}[scale=.45]
	\node (2) at (0,-1) {
        \begin{tikzpicture}[scale=.45]
                	\path[use as bounding box] (-5,-1) rectangle (5,5);
		\node[above] at (0,4.25) {$G$};
                	\path[clip] (0,0) circle (4);
                	\draw[thick,fill=black!10] (-5,0) rectangle (5,5);
                	\node[coordinate] (c) at (-2,0) {};
                	\node[coordinate] (b) at (0,0) {};
                	\node[coordinate] (a) at (2,0) {};
                	\node[below] at (a) {$i-1$};
                	\node[below] at (c) {$i$};
                	\draw (c) to (-3,5);
                	\draw (a) to (3,5);
        \end{tikzpicture}};
	\node (1) at (-6,-1) {
        \begin{tikzpicture}[scale=.45]
                	\path[use as bounding box] (-5,-1) rectangle (5,5);
		\node[above] at (0,4) {$G_{\bullet,i}$};
                	\path[clip] (0,0) circle (4);
                	\draw[thick,fill=black!10] (-5,0) rectangle (5,5);
                	\node[coordinate] (c) at (-2.5,0) {};
                	\node[coordinate] (b) at (0,0) {};
                	\node[coordinate] (a) at (2.5,0) {};
                	\node[below] at (a) {$i-1$};
		\node[below] at (b) {$i$};
               	\node[below] at (c) {$i+1$};
              	\draw (c) to (-3,5);
                	\draw (a) to (3,5);
		\node[dot,fill=black!50] (d) at (0,2) {};
		\draw (b) to (d);
        \end{tikzpicture}};
	\node (3) at (6,-1) {
        \begin{tikzpicture}[scale=.45]
                	\path[use as bounding box] (-5,-1) rectangle (5,5);
		\node[above] at (0,4) {$G_{\circ,i}$};
                	\path[clip] (0,0) circle (4);
                	\draw[thick,fill=black!10] (-5,0) rectangle (5,5);
                	\node[coordinate] (c) at (-2.5,0) {};
                	\node[coordinate] (b) at (0,0) {};
                	\node[coordinate] (a) at (2.5,0) {};
                	\node[below] at (a) {$i-1$};
		\node[below] at (b) {$i$};
               	\node[below] at (c) {$i+1$};
                	\draw (c) to (-3,5);
                	\draw (a) to (3,5);
		\node[dot,fill=white] (d) at (0,2) {};
		\draw (b) to (d);
        \end{tikzpicture}};
        \draw[-angle 90] (2) to node[above,align=center] {Add black\\lollipop} (1);
        \draw[-angle 90] (2) to node[above,align=center] {Add white\\lollipop} (3);
\end{tikzpicture}
\caption{Adding lollipops}
\label{fig: lollipop}
\end{figure}

%PROBLEM: Lollipop discussion should all be rewritten to work  with $\ZZ$, not $[n]$.

Let $G$ be a reduced graph with $n-1$ boundary vertices and bounded affine permutation $\pi$. Figure \ref{fig: lollipop} shows two new graphs $G_{\bullet,i}$ and $G_{\circ,i}$ on $n$ vertices; we say that they are the result of adding a \newword{black lollipop} or \newword{white lollipop} to $G$ in position $i$. 
%Write $\sigma: [n-1] \to [n]$ for the order preserving map that misses $i$.
Write $\sigma: \ZZ \to \ZZ$ for the order preserving injection whose image is $\{ j \in \ZZ : j \not \equiv i \bmod n \}$, with $\sigma(i+1)=i+1$. 
The following lemma is an immediate computation:

\begin{lemma} \label{lollipop}
The graphs $G_{\bullet, i}$ and $G_{\circ, i}$ are reduced. Writing $\pi_{\bullet,i}$ and $\pi_{\circ, i}$ for the corresponding bounded permutations. For $j \in \ZZ$, we have
\[ \pi_{\bullet,i}(j) = \begin{cases} i & j = i \\ \sigma(\pi(\sigma^{-1}(j))) & j \neq i \\ \end{cases} \qquad 
 \pi_{\circ,i}(j) = \begin{cases} i+n & j = i \\ \sigma(\pi(\sigma^{-1}(j))) & j \neq i \\ \end{cases} \]
Let $x$ be a point of $\widetilde{Gr(k,n-1)}$ parametrized by $G$ and let $x_{\bullet,i}$ and $x_{\circ, i}$ be the corresponding points of $\widetilde{Gr(k,n)}$ and  $\widetilde{Gr(k+1,n)}$. Then we have the equalities of Pl\"ucker coordinates
\[ \Delta_J(x_{\bullet,i}) = \begin{cases} \Delta_{\sigma^{-1}(J)}(x) & i \not \in J \\ 0 & i \in J \end{cases} \qquad
 \Delta_J(x_{\circ,i}) = \begin{cases} t \Delta_{\sigma^{-1}(J \setminus \{ i \} )}(x) & i  \in J \\ 0 & i \not \in J \end{cases} \]

\end{lemma}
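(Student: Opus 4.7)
The plan is to verify each of the three assertions (reducedness, permutation formula, Pl\"ucker formula) by directly examining the local modification to $G$, leaning on Theorem~\ref{thm strands work} for reducedness and the permutation, and on Theorem~\ref{theorem obey Plucker relations} for the Pl\"ucker coordinates.

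First I would describe the strand diagram of $G_{\bullet,i}$ (resp.\ $G_{\circ,i}$) explicitly. The new graph differs from $G$ only by the insertion of a single edge incident to a degree-one internal vertex and a new boundary marked point. Inspecting Figure~\ref{fig: strands} at a boundary edge adjacent to a black (resp.\ white) vertex, the two strands crossing the lollipop edge form a small arc from the boundary point $i$ back to itself, while the strands of $G$ are carried through $\sigma$ to strands of the new graph that avoid the new marked point. All six axioms (1)-(6) of Theorem~\ref{thm strands work} hold: the only new strand is a loop from $i$ to $i$, which does not cross anything else and does not self-intersect; the other strands retain all their properties under the relabeling. Hence both $G_{\bullet,i}$ and $G_{\circ,i}$ are reduced. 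Reading off the source-target pairs from these strands gives the permutation formulas: for $j \neq i$, any strand of $G$ from $a$ to $\pi(a)$ becomes a strand from $\sigma(a)$ to $\sigma(\pi(a))$, so $\pi_{\bullet,i}(\sigma(a))=\pi_{\circ,i}(\sigma(a))=\sigma(\pi(a))$; at the new point $i$, the black lollipop gives a fixed point with $\pi_{\bullet,i}(i)=i$ and the white lollipop gives $\pi_{\circ,i}(i)=i+n$ (by the convention stated in the footnote of Theorem~\ref{thm strands work}).

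For the Pl\"ucker coordinates, observe that the new internal vertex has degree $1$ and so the lollipop edge $e$ must be used in every matching of $G_{\bullet,i}$ or $G_{\circ,i}$. This gives a bijection between matchings $M$ of the new graph and matchings $M'$ of $G$ given by $M \mapsto M \setminus \{e\}$. Using the boundary convention from Section~\ref{sec matching to positroid}, in the black case the covered black vertex forces $i \notin \partial M$, and one checks $\partial M = \sigma(\partial M')$; in the white case the covered white vertex forces $i \in \partial M$, and $\partial M = \sigma(\partial M') \cup \{i\}$. Comparing partition functions coordinate-by-coordinate gives $D_J^{G_{\bullet,i}} = 0$ if $i \in J$, and $D_J^{G_{\bullet,i}}$ equals the $\sigma^{-1}(J)$ partition function of $G$ (with the lollipop edge weight as a uniform scalar, normalized to $1$ as implicit in the identification $x \mapsto x_{\bullet,i}$). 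The white case is analogous, with the lollipop edge weight appearing as the factor $t$. By Theorem~\ref{theorem obey Plucker relations} these partition functions are the Pl\"ucker coordinates, which finishes the proof.

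The whole argument is essentially a careful bookkeeping exercise; the only substantive input is the description of the strands at a lollipop edge and the forced appearance of the lollipop edge in every matching. The main place to be careful is the indexing convention for $\sigma$ and the handling of the $(k,n)$ versus $(k+1,n)$ ambient rank, since adding a white lollipop increases $k$ by one (the new boundary index is always present in $\partial M$), while adding a black lollipop leaves $k$ unchanged.
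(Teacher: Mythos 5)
The paper states this lemma without proof, introducing it only as ``an immediate computation''; your write-up supplies exactly the kind of local verification the authors are gesturing at. Your three-part strategy --- check properties (4)--(6) of Theorem~\ref{thm strands work} for the new strand diagram, read the permutation off the strands (using the footnote convention for the new boundary vertex $i$), and track matchings via the forced lollipop edge at the degree-one internal vertex --- is correct and is the natural route. The bookkeeping on boundaries (covered $i$ adjacent to a black vertex gives $i \notin \partial M$, covered $i$ adjacent to a white vertex gives $i \in \partial M$) and on $k$ (the $+1$ correction for boundary-adjacent black vertices keeps $k$ fixed in the $\bullet$ case, while $k$ increases by one in the $\circ$ case) is handled correctly. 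The one place worth flagging is the asymmetric appearance of the edge weight $t$: both lollipop edges are forced into every matching, so logically the black formula should also carry a $t$ factor unless one normalizes the lollipop weight to $1$. Your parenthetical remark recognizes this and resolves it by interpreting ``the corresponding point $x_{\bullet,i}$'' as fixing the lollipop weight to $1$; that is the most charitable reading of the statement and matches the way the lemma is later applied in the induction. No gap here, just a small ambiguity in the paper's own phrasing that you navigated sensibly.
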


Adding a lollipop is a trivial way to change a reduced graph; adding a \newword{bridge} is a less trivial way.
Let $G$ be a reduced graph with bounded affine permutation $\pi$. Define $s_i G$ and $G s_i$ to be the graphs shown in Figure \ref{fig: bridges}; we say that $s_i G$ is $G$ with a \newword{left bridge} added between $i$ and $i+1$ and $G s_i$ has a \newword{right bridge} added.
(If we have introduced an edge between two vertices of the same color, we contract it.)

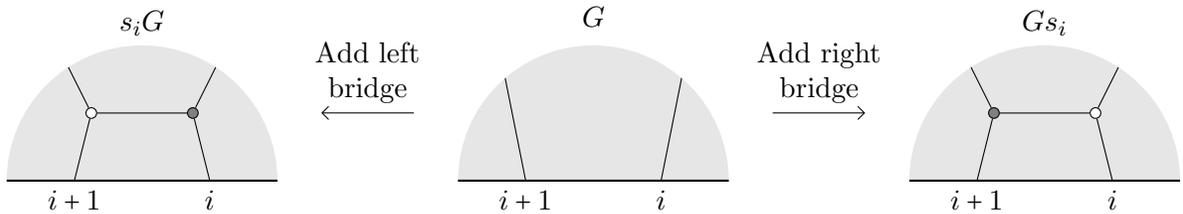
\begin{figure}[h!t]
\begin{tikzpicture}
%\begin{scope}[scale=.45]
	\node (2) at (0,-1) {
        \begin{tikzpicture}[scale=.45]
                	\path[use as bounding box] (-5,-1) rectangle (5,5);
		\node[above] at (0,4.25) {$G$};
                	\path[clip] (0,0) circle (4);
                	\draw[thick,fill=black!10] (-5,0) rectangle (5,5);
                	\node[coordinate] (c) at (-2,0) {};
                	\node[coordinate] (b) at (0,0) {};
                	\node[coordinate] (a) at (2,0) {};
                	\node[below] at (a) {$i$};
                	\node[below] at (c) {$i+1$};
                	\draw (c) to (-3,5);
                	\draw (a) to (3,5);
        \end{tikzpicture}};
	\node (1) at (-6,-1) {
        \begin{tikzpicture}[scale=.45]
                	\path[use as bounding box] (-5,-1) rectangle (5,5);
		\node[above] at (0,4) {$s_iG$};
                	\path[clip] (0,0) circle (4);
                	\draw[thick,fill=black!10] (-5,0) rectangle (5,5);
                	\node[coordinate] (c) at (-2,0) {};
                	\node[coordinate] (b) at (0,0) {};
                	\node[coordinate] (a) at (2,0) {};
                	\node[below] at (a) {$i$};
                	\node[below] at (c) {$i+1$};
		\node[dot,fill=white] (c') at (-1.5,2) {};
		\node[dot,fill=black!50] (a') at (1.5,2) {};
                	\draw (c) to (c') to (-3,5);
                	\draw (a) to (a') to  (3,5);
		\draw (a') to (c');
        \end{tikzpicture}};
	\node (3) at (6,-1) {
        \begin{tikzpicture}[scale=.45]
                	\path[use as bounding box] (-5,-1) rectangle (5,5);
		\node[above] at (0,4) {$Gs_i$};
                	\path[clip] (0,0) circle (4);
                	\draw[thick,fill=black!10] (-5,0) rectangle (5,5);
                	\node[coordinate] (c) at (-2,0) {};
                	\node[coordinate] (b) at (0,0) {};
                	\node[coordinate] (a) at (2,0) {};
                	\node[below] at (a) {$i$};
                	\node[below] at (c) {$i+1$};
		\node[dot,fill=black!50] (c') at (-1.5,2) {};
		\node[dot,fill=white] (a') at (1.5,2) {};
                	\draw (c) to (c') to (-3,5);
                	\draw (a) to (a') to  (3,5);
		\draw (a') to (c');
        \end{tikzpicture}};
        \draw[-angle 90] (2) to node[above,align=center] {Add left\\bridge} (1);
        \draw[-angle 90] (2) to node[above,align=center] {Add right\\bridge} (3);
\end{tikzpicture}
\caption{Adding bridges}
\label{fig: bridges}
\end{figure}

Let $s_i$ be the following permutation of $\ZZ$:
\[ s_i(j) = \begin{cases} j+1 & j \equiv i \bmod n \\ j-1 & j \equiv i+1 \bmod n \\ j & \mbox{otherwise} \end{cases}. \]
We now summarize the key properties of adding a bridge.

\begin{lemma} \label{bridge}
If $\pi(i) > \pi(i+1)$, then $G s_i$ is a reduced graph with bounded affine permutation $\pi \circ s_i$. If $\pi^{-1}(i) > \pi^{-1}(i+1)$, then $s_i G$ is a reduced graph with bounded affine permutation $s_i \circ \pi$.

Let $x$ be a point of $\widetilde{Gr(k,n)}$ parametrized by $G$ and let $y$ and $z$ be the points of $\widetilde{Gr(k,n)}$ and  $\widetilde{Gr(k,n)}$ corresponding to adding left and right bridges as shown. Then we have the equalities of Pl\"ucker coordinates
\[ \Delta_J(y) = \begin{cases}
\Delta_J(x) + t \Delta_{J \setminus \{ i \} \cup \{ i+1 \}}(x) & i \in J,\ i+1 \not \in J \\ \Delta_J(x) & \mbox{otherwise} \\ \end{cases} \]
\[  \Delta_J(z) = \begin{cases}
\Delta_J(x) + t \Delta_{J \setminus \{ i+1  \} \cup \{ i \}}(x) & i+1 \in J,\ i \not \in J \\ \Delta_J(x) & \mbox{otherwise} \\ \end{cases}. \]
\end{lemma}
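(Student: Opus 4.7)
The claim splits into two independent pieces: a combinatorial-topological statement about strand diagrams (reducedness and the new permutation), and an algebraic statement about Pl\"ucker coordinates. I will address both, focusing on the right bridge $Gs_i$; the left bridge $s_iG$ is entirely symmetric (it can be deduced from the right-bridge case by reflection of the disc, exchanging the roles of sources and targets and hence replacing $\pi$ by $\pi^{-1}$).

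\textbf{Part 1: Reducedness and the permutation.} My plan is to apply Postnikov's characterization of reduced graphs by strand diagrams (Theorem~\ref{thm strands work}). Locally near boundary vertices $i$ and $i+1$, inserting the bridge amounts to introducing a single new crossing between the two strands that pass through the bridge region. This crossing has the effect of swapping their endpoints, so the strand starting at $i$ in $Gs_i$ ends where the strand from $i+1$ ended in $G$, and vice versa; all other strands are unaffected. Tracing targets gives the new decorated permutation $\pi \circ s_i$. For reducedness I must check properties~(4)--(6) of Section~\ref{sec strand}: no new closed loop is created, no new self-intersection arises, and crucially no two strands now cross each other twice. Properties~(4) and~(5) are immediate since the only new intersection is the bridge crossing itself. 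For~(6), the only potential violation involves the two strands being swapped: they could cross in $G$ and then cross again at the bridge. But the hypothesis $\pi(i) > \pi(i+1)$ is precisely the statement that in the reduced diagram $G$ these two strands do \emph{not} cross (by a standard fact about strand diagrams of reduced graphs; see e.g.\ \cite{Pos}), so adding the bridge produces exactly one crossing between them in $Gs_i$. Hence $Gs_i$ is reduced with bounded affine permutation $\pi \circ s_i$.

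\textbf{Part 2: The Pl\"ucker coordinate formula.} My plan is to realize the addition of a right bridge as a specific column operation on any matrix representative. Concretely, I will show that one may choose edge weights on $Gs_i$ (with the bridge carrying weight $t$ and the remaining new edges carrying weight $1$, absorbing the original boundary-edge weights into the nearby interior edges using the contraction/expansion move of Section~\ref{sec transformations}) so that if $A$ is a matrix representative of $x = \tDD(G,\text{weights})$, then
\[
z \;=\; \tDD\bigl(Gs_i,\text{weights}\bigr) \;=\; A \cdot E_i(t),
\]
where $E_i(t)$ is the elementary matrix which adds $t\cdot A_i$ to column $A_{i+1}$ (and fixes all other columns). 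Granting this, the Pl\"ucker formula is immediate from multilinearity of the determinant: if $i+1\in J$ and $i\notin J$, expanding $\Delta_J(A\cdot E_i(t))$ in the column indexed by $i+1$ yields $\Delta_J(A) + t\,\Delta_{(J\setminus\{i+1\})\cup\{i\}}(A)$, and in all other cases $\Delta_J$ is unchanged.

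\textbf{Verifying the matrix identity.} To produce the identity $z = A\cdot E_i(t)$, the cleanest route is to reason combinatorially on matchings. A matching $M$ of $Gs_i$ either uses the bridge edge $b$ or does not. If $b\notin M$, then $M$ is simply a matching of $G$ (with the same boundary), contributing $z^M$ to $D_J(z)$ exactly as to $D_J(x)$. If $b\in M$, then $b$ covers the two vertices at the ends of the bridge, and removing $b$ leaves a partial matching of $G$ which, after a short case analysis on the colors of the affected vertices (possibly after contracting bicolored edges introduced by the bridge, as noted in the lemma's parenthetical), corresponds bijectively to a matching of $G$ whose boundary has $i$ and $i+1$ swapped relative to $\partial M$. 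The bridge contributes the factor $t$, and the remaining weights are preserved. Summing, one obtains
\[
D_J(z) \;=\; D_J(x) \;+\; t\, D_{(J\setminus\{i+1\})\cup\{i\}}(x)
\]
exactly when $i+1\in J$ and $i\notin J$, and $D_J(z) = D_J(x)$ otherwise. This is the statement of the lemma, and it is equivalent to $z = A\cdot E_i(t)$ via the Pl\"ucker embedding.

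\textbf{Main obstacle.} The topological part (Part~1) is routine once one believes the standard dictionary between reduced graphs and strand diagrams. The real work is in Part~2: carefully matching up the cases of $b\in M$ versus $b\notin M$ with the correct swap of $i$ and $i+1$ in the boundary set, being vigilant about the color conventions in the definition of $\partial M$ and the contraction step. This bookkeeping is the only subtle point, but it is elementary once organized case by case.
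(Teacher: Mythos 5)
The paper does not actually prove this lemma; it states it as known background, citing \cite{AHBCGPT12} and \cite{Lam16} for the theory of bridge decompositions. So there is no "paper's own proof" to compare against. That said, your two-part approach is sound, and it overlaps substantially with material the paper does prove elsewhere: your Part~2 is essentially the same matching bijection that the paper works out in detail in Proposition~\ref{prop: bridge} (Section~\ref{subsec bridge}) for the left bridge, where a bridge-using matching of $\bridge{G}$ is paired with a matching of $G$ by swapping the bridge edge for the two external edges at $b$ and $b+1$, contributing the factor $z_2/(z_1 z_3)$. Your gauge-fixing step (normalize the four new non-bridge edges and read off the bridge weight $t$) is a clean way to reduce to a column operation $A \mapsto A\cdot E_i(t)$, after which the Pl\"ucker formula is immediate from multilinearity.

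Two small points to tighten. First, in Part~1, the claim that $\pi(i) > \pi(i+1)$ is exactly the condition that the strands starting at $i$ and $i+1$ do not cross deserves a sentence of justification: with $i < i+1 \leq \pi(i+1) < \pi(i) \leq i+n$, the oriented chords $i \to \pi(i)$ and $i+1 \to \pi(i+1)$ are nested, and in a reduced diagram nested strands cannot cross (this is the same fact the paper uses in Lemma~\ref{label implication}); adding the bridge thus creates exactly one crossing between them, so properties (4)--(6) of Section~\ref{sec strand} persist. Second, in Part~2 the bijection between bridge-using matchings of $Gs_i$ and matchings of $G$ with the $\{i, i+1\}$-swapped boundary requires a short but genuine check of the coloring conventions in the definition of $\partial M$, since the boundary vertex might switch from being white-adjacent to black-adjacent (or vice versa) under the subdivision, and this interacts with the contraction of any same-color edges. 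You flag this, but given that the entire content of the Pl\"ucker formula rests on the direction of the swap, it is worth writing out the half-page case analysis rather than deferring it. The paper does precisely this in the proof of Proposition~\ref{prop: bridge}, and you could simply mirror that computation for the right bridge.
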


The key point is that, by combining lollipops and bridges, we can build a reduced graph for any bounded affine permutation.

\begin{lemma} \label{bridge decompositions exist}
Let $\rho$ be a bounded affine permutation of type $(k,n)$, for $n>1$. Let $f$ be the number of faces in any reduced graph for $\rho$. Then (at least) one of the following holds:
\begin{enumerate}
\item There is some $i$ with $\rho(i) = i$. In this case, we can obtain a reduced graph for $\rho$ by adding a black lollipop to some reduced graph on $n-1$ vertices.
\item There is some $i$ with $\rho(i) = i+n$. In this case, we can obtain a reduced graph for $\rho$ by adding a white lollipop to some reduced graph on $n-1$ vertices.
\item There is some $i$ with $\rho(i) < \rho(i+1)$ and $s_i \rho$ a bounded affine permutation.  In this case, we can obtain a reduced graph for $\rho$ by adding a left bridge to some reduced graph for $s_i \circ \rho$, which will have $f-1$ faces.
\item There is some $i$ with $\rho^{-1}(i) < \rho^{-1}(i+1)$ and $\rho s_i$ a bounded affine permutation.  In this case, we can obtain a reduced graph for $\rho$ by adding a right bridge to some reduced graph for $\rho \circ s_i$, which will have $f-1$ faces.
\end{enumerate}
\end{lemma}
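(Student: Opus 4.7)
The plan is to establish the combinatorial dichotomy first and then invoke the already-proved structural lemmas.

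First I would dispose of cases (1) and (2). If $\rho$ has a short fixed point $\rho(i)=i$ or a long fixed point $\rho(i)=i+n$, then an application of Lemma \ref{lollipop} to a reduced graph on $n-1$ boundary vertices for the ``contracted'' permutation (which exists by induction on $n$) yields a reduced graph for $\rho$. So I may henceforth assume $i<\rho(i)<i+n$ for every $i$.

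The heart of the proof is a cyclic monotonicity observation. The sequence $\rho(1),\rho(2),\ldots,\rho(n),\rho(n+1)=\rho(1)+n$ cannot be weakly decreasing, since otherwise chaining the inequalities yields $\rho(1)\ge \rho(1)+n$, impossible for $n\ge 1$. Hence some $i$ satisfies $\rho(i)<\rho(i+1)$. An identical cyclic argument applied to the sequence $\rho^{-1}(1),\ldots,\rho^{-1}(n+1)=\rho^{-1}(1)+n$ (which is periodic since $\rho^{-1}(j+n)=\rho^{-1}(j)+n$) produces some $i$ with $\rho^{-1}(i)<\rho^{-1}(i+1)$.

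Next I would verify that $s_i\rho$ is again a bounded affine permutation of type $(k,n)$ for the index $i$ found in case (3). It agrees with $\rho$ except at the two positions $j_1:=\rho^{-1}(i)$ and $j_2:=\rho^{-1}(i+1)$, where its values become $i+1$ and $i$ respectively. Bijectivity is immediate, and the type is preserved since $s_i$ merely swaps two values differing by one. The only potentially violated boundedness conditions are $j_1+n\ge i+1$ and $j_2\le i$: the former fails exactly when $j_1=i-n$, i.e.\ $\rho(i)=i+n$, which is excluded by the failure of (2); the latter fails exactly when $j_2=i+1$, i.e.\ $\rho(i+1)=i+1$, excluded by the failure of (1). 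The verification for $\rho s_i$ in case (4) is analogous, inspecting the values at positions $i$ and $i+1$ directly.

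Finally, the graph-theoretic conclusion is immediate from Lemma \ref{bridge}: by induction on the number of faces, a reduced graph $G$ for $s_i\rho$ (respectively $\rho s_i$) exists, and the bridge addition $s_iG$ (respectively $Gs_i$) produces a reduced graph whose permutation is $\rho$. The $f\to f-1$ face count is an Euler-characteristic calculation: the bridge construction introduces two internal vertices, three edges, and one new face, and any subsequent color-contraction preserves $V-E+F$, so exactly one face is added. The main obstacle is bookkeeping: verifying that the hypotheses $\pi^{-1}(i)>\pi^{-1}(i+1)$ and $\pi(i)>\pi(i+1)$ in Lemma \ref{bridge}, when specialized to $\pi=s_i\rho$ or $\pi=\rho s_i$ via the identity $(s_i\rho)^{-1}=\rho^{-1}s_i$, translate precisely into the conditions of cases (3) and (4) on $\rho$ itself.
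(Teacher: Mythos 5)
Your proof parallels the paper's: dispose of the lollipop cases first, use cyclic monotonicity of the bounded affine permutation to produce a suitable index $i$, then invoke Lemma~\ref{bridge}. Your explicit verification that $s_i\rho$ remains a bounded affine permutation (tracking the two altered positions $j_1 = \rho^{-1}(i)$ and $j_2 = \rho^{-1}(i+1)$ and matching each potential boundedness failure to the already-excluded lollipop cases) is more careful than the paper's one-line assertion, and the Euler-characteristic count showing that adding a bridge increases the number of faces by exactly one is correct.

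However, the ``bookkeeping'' you defer at the end is not a formality, and if you carry it out you will find that your claim that the hypotheses ``translate precisely'' into cases (3) and (4) as stated does not hold. To form a left bridge $s_iG$ from a reduced graph $G$ for $\pi := s_i\rho$, Lemma~\ref{bridge} requires $\pi^{-1}(i) > \pi^{-1}(i+1)$; since $\pi^{-1} = \rho^{-1}s_i$ sends $i \mapsto \rho^{-1}(i+1)$ and $i+1 \mapsto \rho^{-1}(i)$, this condition is $\rho^{-1}(i) < \rho^{-1}(i+1)$ --- which is the condition the lemma statement attaches to case (4), not case (3). Dually, $\rho(i) < \rho(i+1)$ is exactly what licenses a right bridge to $\rho s_i$. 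So the two conditions are cross-wired relative to the stated bridge types. For a concrete instance, take $\rho = [3,5,4,6]$ ($n=4$, $k=2$) and $i=4$: then $\rho(4)=6 < 7=\rho(5)$, and $s_4\rho = [3,4,5,6]$ is the uniform permutation, but $(s_4\rho)^{-1}(4) = 2 < 3 = (s_4\rho)^{-1}(5)$, so the left-bridge hypothesis of Lemma~\ref{bridge} fails; indeed $s_4\rho$ has \emph{more} faces than $\rho$, so no left bridge from it can produce a reduced graph for $\rho$. The lemma's conclusion survives, because your cyclic argument supplies an $i$ satisfying each of the two monotonicity conditions; but you should actually carry out the translation, recognize that it attaches $\rho^{-1}(i)<\rho^{-1}(i+1)$ to the left bridge and $\rho(i)<\rho(i+1)$ to the right bridge, and either swap the case labels or flag the mismatch with the statement as written.
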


\begin{remark} In fact, as the reader will see from the proof, at least one of $(1)$, $(2)$, $(3)$ always holds, and at least one of $(1)$, $(2)$, $(4)$ always holds.
%We describe both results, to continue our tendency to present all of our results in a mirror symmetric fashion.
\end{remark}

\begin{proof}
If $\rho(i) = i$, then define the bounded affine permutation $\pi$ of type $(k,n-1)$ so that $\pi_{\bullet, i} = \rho$. Taking any reduced graph for $\pi$; adding a black lollipop gives a reduced graph for $\rho$.
Similarly, if $\rho(i) = i+n$, then we can define $\pi$ so that $\pi_{\circ, i} = \rho$; we can obtain a reduced graph for $\rho$ by adding a white lollipop to a reduced graph for $\pi$.

So we may assume that $i < \rho(i) < i+n$ for all $i$. This means that $s_i \circ \rho$ and $\rho \circ s_i$ will still be bounded affine permutations.
We know that $\rho(n) = \rho(0) + n > \rho(0)$. Therefore, for some $i$ between $0$ and $n-1$, we must have $\rho(i+1) > \rho(i)$ and case~$(3)$ applies for this $i$. 
For similar reasons, case~$(4)$ applies for some $i$.
\end{proof}

\section{Proof of the main theorem} \label{sec main thm proof}

Over the next several sections, we will prove Theorem \ref{main theorem}. The majority of the proof will consist of proving the following lemma.%, which is finally proven at the end of this section.

\begin{lemma}\label{lemma: main}
Let $G$ be a reduced graph, and let $z\in \mathbb{G}_m^E$.  For any face $f\in F$, 
\[ \Delta_{\sI(f)}(\rt(\widetilde{\mathbb{D}}(z))) = \frac{1}{z^{\vecM(f)}} \]
\end{lemma}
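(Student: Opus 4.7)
The plan is to induct on the complexity of the reduced graph $G$, using the bridge decompositions of Section~\ref{sec bridge} to reduce to smaller cases. Concretely, I would induct on the pair $(n, \#F)$ in lexicographic order, where $n$ is the number of boundary vertices and $\#F$ is the number of faces. Applying Lemma~\ref{bridge decompositions exist}, at least one of the following holds: $G$ is obtained from a reduced graph $G'$ on $n-1$ boundary vertices by adding a black or white lollipop, or $G$ is obtained from a reduced graph $G'$ on $n$ boundary vertices with one fewer face by adding a left or right bridge. In either case, both sides of the identity in the smaller graph are assumed to satisfy the formula, and the task is to check compatibility of the transformations.

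The base case is when $n=1$ (so $G$ is a single lollipop), where both sides of the identity reduce to $1$ (or $0$) directly from the definitions of $\rt$, $\tDD$, $\sI(f)$, and $\vecM(f)$ on a graph with no internal faces. For the lollipop inductive step, I would use Lemma~\ref{lollipop}: adding a black or white lollipop at position $i$ alters Pl\"ucker coordinates by inserting a forced $0$ in the index (or by scaling), and on the combinatorial side it alters each $\sI(f)$ and each $\vecM(f)$ by the same reindexing $\sigma$. Writing out how $\rt$ acts on the matrix $\tDD_{\bullet,i}(z)$ (resp.\ $\tDD_{\circ,i}(z)$), using Proposition~\ref{prop: equivariance} and the description of $\pi_{\bullet,i}$ and $\pi_{\circ,i}$, shows that columns corresponding to $\sigma([n-1])$ match those of the unlollipopped twist, and the lollipop column is easily handled by the definition of $\rt$ on zero columns.

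The bridge step is where the real work lies. For definiteness, suppose $G = G' s_i$ is obtained by adding a right bridge. By Lemma~\ref{bridge}, the boundary measurement matrix $A = \tDD(z)$ is obtained from $A' = \tDD'(z')$ (where $z'$ is the restriction of $z$ to the edges of $G'$) by a column operation $A_{i+1} = A'_{i+1} + t A'_i$ for the appropriate bridge weight $t$. One then needs to analyze three things: how the source labels $\sI(f)$ relate between $G$ and $G'$ (old faces retain their labels, while the new square face introduced by the bridge has a label differing from an adjacent face by swapping $i$ and $i+1$); how $\vecM(f)$ decomposes along the bridge (the downstream wedges of the bridge edges are the entire half-discs on each side, giving controlled exponents of the bridge weight); and, crucially, how the right twist transforms. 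For this last ingredient, I would use the characterization of $\rt(A)$ in terms of the Grassmann necklace $\vecI_a$, together with Lemma~\ref{lem even more tau vanishing} and Lemma~\ref{lemma: matrixformula}, to show that $\rt(A)_a$ differs from $\rt(A')_a$ only for $a \in \{i,i+1\}$ and by an explicit correction involving $t$.

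The main obstacle will be this last step: verifying that the specific column changes in $\rt(A)$ under adding a bridge conspire, in the computation of $\Delta_{\sI(f)}(\rt(A))$, to reproduce exactly the extra factor of the bridge weight $t$ that appears in $1/z^{\vecM(f)}$ precisely when $\vecM(f)$ uses the bridge edge, and to give no correction otherwise. Once this compatibility is established for the bridge, together with the analogous (easier) statement for the new face created by the bridge, which I would handle as a separate direct calculation using Theorem~\ref{thm: minmatchprop}, the induction closes and Lemma~\ref{lemma: main} follows.
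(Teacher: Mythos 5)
Your proposal shares the paper's overall architecture: induct on the size of the graph using bridge decompositions and lollipops, with base case being graphs with no internal faces. But there are two genuine gaps.

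\textbf{Missing move-invariance.} You read Lemma~\ref{bridge decompositions exist} as saying that your specific reduced graph $G$ is obtained from a smaller reduced graph by adding a lollipop or bridge. That is not what the lemma says. It only guarantees the existence of \emph{some} reduced graph with the same bounded affine permutation $\rho$ that can be built this way. In general there are many non-move-equivalent-\emph{looking} reduced graphs for the same positroid, and the one you are handed may exhibit no lollipop or bridge at its boundary. Your induction on $(n,\#F)$ would therefore establish the formula only for the particular reduced graph produced by the decomposition, not the arbitrary $G$. To close the loop you must also show that the statement of Lemma~\ref{lemma: main} is invariant under the moves of Section~\ref{sec transformations}---contraction/expansion of degree-two vertices, boundary-adjacent vertex removal, and urban renewal---and then invoke Postnikov's theorem that any two reduced graphs for $\pi$ are move-equivalent. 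The paper does this in Lemma~\ref{lemma: mutation invariance}; note that the urban renewal case is itself nontrivial, requiring a three-term Pl\"ucker relation applied to the labels of the square face. Your proposal omits this component entirely.

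\textbf{Incorrect premise in the bridge step.} You plan to show that ``$\rt(A)_a$ differs from $\rt(A')_a$ only for $a \in \{i,i+1\}$.'' This is false. In the paper's analysis (Lemma~\ref{lem in span}, especially Case 4b), the difference $\bridge{\tau}_a - \tau_a$ can be nonzero for all $a$ in a range $(d,c]$, where $d=\pi^{-1}(b+1)$ and $c=\pi^{-1}(b)$. The actual argument does not isolate which columns change, but rather controls the changes: $\bridge{\tau}_a - \tau_a$ lies in the span of $\{\tau_b : b \impfrom_\pi a\}$. Combined with Lemma~\ref{label implication}---that source labels $\sI(f)$ are down-closed under $\impto_\pi$---this shows the change-of-basis matrix between $\{\tau_a : a \in \sI(f)\}$ and $\{\bridge{\tau}_a : a\in \sI(f)\}$ is triangular with ones on the diagonal, so the determinant is unchanged. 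You do not cite Lemma~\ref{label implication}, and without it the triangularity that makes the bridge step work is unavailable. You would likely have noticed the obstruction while attempting a direct column-by-column comparison, but the strategy as stated does not reach the key insight.

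One smaller point: you choose a right bridge ``for definiteness,'' but the paper deliberately uses a left bridge because the resulting column operation ($A_b \mapsto A_b + tA_{b+1}$) aligns with the directionality of the right twist $\rt$, which is defined via the (forward) Grassmann necklace $\vecI_a$. The remark after Lemma~\ref{bridge decompositions exist} ensures left bridges always suffice, so this is a fixable choice, but the two are not interchangeable without adapting the twist to the opposite handedness.
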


\subsection{At a boundary face}

At a boundary face, Lemma \ref{lemma: main} follows directly from prior results.

\begin{prop}\label{prop: lemmaboundary}
If $f$ is a boundary face in $G$, then Lemma \ref{lemma: main} holds.
\end{prop}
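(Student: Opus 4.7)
The plan is to reduce the boundary-face case of Lemma~\ref{lemma: main} to the single algebraic identity
\[ \Delta_{\cevI_a}\!\bigl(\rt(A)\bigr)\cdot\Delta_{\cevI_a}(A)=1\qquad\bigl(A\in \Mato(\cM),\; a\in[n]\bigr), \]
which is the reverse-Grassmann-necklace analog of equation~\eqref{eq: twistboundary}. If $f$ is the boundary face between boundary vertices $i$ and $i+1$, Proposition~\ref{prop: boundarylabel} identifies $\sI(f)=\cevI_i$, and Proposition~\ref{prop: boundaryunique} tells me that $\vecM(f)$ is the \emph{unique} matching of $G$ with that boundary. Consequently the partition function $D_{\sI(f)}(z)$ collapses to a single monomial and $\Delta_{\sI(f)}(\tDD(z))=z^{\vecM(f)}$. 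Writing $A=\tDD(z)$, the target equality $\Delta_{\sI(f)}(\rt(\tDD(z)))=1/z^{\vecM(f)}$ is then precisely the displayed identity at $a=i$, so everything reduces to proving that identity.

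For the identity itself, I would exploit that $\rt$ and $\lt$ are mutually inverse automorphisms of $\tPio(\cM)$ (Theorem~\ref{thm: inversetwists}). First I would establish the companion identity $\Delta_{\cevI_a}(\lt(B))\,\Delta_{\cevI_a}(B)=1$, which is exactly the ``analogous result for $\lt(A)$'' invoked in the proof of Proposition~\ref{prop: positwist}: apply Lemma~\ref{lemma: matrixformula} with $\lt$ in place of $\rt$ and $I=J=\cevI_a$, so that the product becomes $\det\!\bigl(\langle\lt(A)_{i_c}\mid A_{i_d}\rangle\bigr)$, and then verify that this matrix is triangular with $1$'s on the diagonal. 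The greedy $\prec_{a+1}$-maximal characterization of $\cevI_a$ here plays the same role that the greedy $\prec_a$-minimal characterization of $\vecI_a$ plays in the proof of \eqref{eq: twistboundary}. With the companion in hand, substituting $B=\rt(A)$ and using $\lt\circ\rt=\id$ converts it into the required identity $\Delta_{\cevI_a}(A)\,\Delta_{\cevI_a}(\rt(A))=1$, and combining with the first paragraph finishes the proof.

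The main obstacle is executing the triangularity check for the $\lt$/$\cevI_a$ matrix with due care for cyclic orderings. The forward case in Proposition~\ref{prop: positwist} is dispatched in a single sentence, but the dual requires, for each off-diagonal pair $i_c,i_d\in\cevI_a$ on the relevant side of the diagonal, showing that $i_d\in\cevI_{i_c}$, so that the defining orthogonality of $\lt$ gives $\langle\lt(A)_{i_c}\mid A_{i_d}\rangle=0$. On one side of the diagonal this drops out of the greedy $\prec_{a+1}$-maximality of $\cevI_a$; on the other side one may be forced to invoke the extended orthogonality $\lt(A)_b\perp\mathrm{span}(A_{\pi^{-1}(b)+1},\ldots,A_{b-1})$---the $\lt$-analog of Lemma~\ref{lem more tau vanishing}---so that the vanishing comes from linear dependencies inside that span rather than from $\cevI_{i_c}$ directly. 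Once this triangularity (or, equivalently, the determinant-one property) is in place, all remaining steps are formal manipulations of prior results.
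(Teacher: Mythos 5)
Your proposal reproduces the paper's proof essentially verbatim: identify $\sI(f)$ with an element of the reverse Grassmann necklace via Proposition~\ref{prop: boundarylabel}, use Proposition~\ref{prop: boundaryunique} to collapse the partition function to the single monomial $z^{\vecM(f)}$, and then invoke $\lt\circ\rt=\mathrm{id}$ together with the left-twist analog of Equation~\eqref{eq: twistboundary} to flip it into the desired reciprocal. The only stylistic difference is that you anticipate some subtlety in the triangularity check for the $\lt$/$\cevI_a$ matrix, but ordering $\cevI_a$ by $\prec_{a+1}$ makes every required off-diagonal entry vanish directly from the greedy $\prec_{a+1}$-maximality (mirroring the $\rt$/$\vecI_a$ case in Proposition~\ref{prop: positwist}), so the stronger Lemma~\ref{lem more tau vanishing}-type input is not needed.
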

\begin{proof}
Let $f$ be the boundary face between boundary vertices $a-1$ and $a$.  By Proposition \ref{prop: boundarylabel}, $\sI(f)=\cevI_a$.  By Proposition \ref{prop: boundaryunique}, $\vecM(f)$ is the unique matching with boundary $\cevI_a$, and so
\[ \Delta_{\cevI_a}(\tDD(z)) = z^{\vecM(f)} \]
By Theorem \ref{thm: inversetwists} and the analog of Equation \eqref{eq: twistboundary} for left twists,
\[ \Delta_{\cevI_a}(\tDD(z)) = \Delta_{\cevI_a}(\lt(\rt(\tDD(z)))) = \frac{1}{\Delta_{\cevI_a}(\rt(\tDD(z)))} \]
%\[ \Delta_{\cevI_a}(\rt(\tDD(z))) = \frac{1}{\Delta_{\cevI_a}(\tDD(z))} \]
Combining the two equalities proves the proposition.
\end{proof}

This establishes Lemma \ref{lemma: main} for reduced graphs without internal faces.  This will be the base case of our inductive argument. 
%We could take our base case to be the still more vacuous case of reduced graphs which consist entirely of lollipops, but we will also need Proposition~\ref{prop: lemmaboundary} in Section~\ref{subsec bridge}.
%I both agree and disagree that they are not interesting. Agree: The positroid varieties are toric varieties. Disagree: These are precisely the series-parallel matroids, which is definitely an important class of matroids.

\subsection{Move-equivalence}

\begin{lemma} \label{lemma: mutation invariance}
If Lemma \ref{lemma: main} holds for a reduced graph $G$, then it also holds for any reduced graph obtained from $G$ by:
\begin{enumerate}
	\item contracting or expanding a degree two vertex,
	\item removing or adding a boundary-adjacent degree two vertex,
	\item removing or adding a lollipop, or
	\item urban renewal.
\end{enumerate}
\end{lemma}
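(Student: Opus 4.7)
The plan is to use the identity $\tDD' \circ \mu = \tDD$ from Section~\ref{sec transformations} to transfer the hypothesis from $G$ to $G'$. Applying $\Delta_I \circ \rt$ to both sides, whenever a face $f'$ of $G'$ has the same label $\sI(f') = \sI(f)$ as some face $f$ of $G$, the hypothesis gives
\[ \Delta_{\sI(f')}\bigl(\rt(\tDD'(\mu(z)))\bigr) \;=\; \Delta_{\sI(f)}\bigl(\rt(\tDD(z))\bigr) \;=\; \frac{1}{z^{\vecM(f)}}, \]
so Lemma~\ref{lemma: main} for $G'$ at $f'$ reduces to the purely combinatorial identity $z^{\vecM(f)} = \mu(z)^{\vecM'(f')}$ in $\GG_m^E/\GG_m^{V-1}$. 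I view this as the main engine of the proof: for every face of $G'$ whose label already appears in $G$, the geometric content is free, and only an identity of monomials in edge weights has to be checked.

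First I would dispose of moves~(1) and~(2). Strands pass transparently through degree-$2$ vertices, so the strand diagram---and hence every face label $\sI$---is preserved under the natural bijection of faces. The monomial maps $\mu_e$ of Figures~\ref{fig: equiv} and~\ref{fig: boundaryvert} visibly identify downstream wedges on the two sides (a strand never enters the interior of the contracted/expanded vertex), yielding the combinatorial identity by direct inspection. For move~(3) (lollipop), the new lollipop edge has weight $1$ and is forced into every matching, so the identity reduces to that for the smaller graph $G$, with the labels of faces neighbouring the lollipop computed from Lemma~\ref{lollipop}. In move~(4) (urban renewal), every face label of $G'$ except that of the new central face $f'_\ast$ coincides with a face label of $G$---this is the ``one cluster variable changes'' character of urban renewal---so the sector cases and all faces outside the renewed square follow from the same pattern, once one checks that the rational map $\mu$ of Figure~\ref{fig: urban} realises the edge-weight identity after the gauge correction needed to make $\mu$ descend to $\GG_m^{E'}/\GG_m^{V'-1}$.

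The hard part is the central face $f'_\ast$ of urban renewal, whose label $\sI(f'_\ast)$ is not a face label of $G$. The plan here is to use a three-term ``square move'' exchange relation. On $\tPi(\cM)$ the Pl\"ucker-type exchange relates $\Delta_{\sI(f'_\ast)}$ to $\Delta_{\sI(f_\ast)}$ (where $f_\ast$ is the central face of $G$) and to the four Pl\"uckers attached to the adjacent sector faces. On the matching-monomial side, the analogous identity expressing $1/\mu(z)^{\vecM'(f'_\ast)}$ in terms of $1/z^{\vecM(f_\ast)}$ and the four sector monomials is precisely the transformation law involving the factor $b_1 b_3 + b_2 b_4$ in Figure~\ref{fig: urban}. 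Applying $\rt \circ \tDD$ to the Pl\"ucker relation and using the already-verified cases at $f_\ast$ and the four sectors, both sides must match term by term, forcing the equality at $f'_\ast$ and closing the induction.
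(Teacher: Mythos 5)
Your overall strategy is the one the paper uses: transfer the hypothesis across the move via $\tDD' \circ \mu = \tDD$, observe that for every face of $G'$ whose source label already occurs in $G$ the geometric input is free and only a monomial identity $z^{\vecM(f)} = \mu(z)^{\vecM'(f')}$ has to be checked, handle moves (1)--(3) by direct inspection, and isolate the central face of urban renewal as the one genuinely new case, which you then attack with the three-term Pl\"ucker relation $\Delta_{Sbd}\Delta_{Sac} = \Delta_{Sab}\Delta_{Scd} + \Delta_{Sbc}\Delta_{Sad}$. All of that matches the paper's proof.

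The one place where your argument has a gap is the very end, at the central face. You write that after substituting the already-verified values into the Pl\"ucker relation, ``both sides must match term by term, forcing the equality at $f'_\ast$,'' and you assert that the matching-monomial analogue is ``precisely the transformation law involving the factor $b_1 b_3 + b_2 b_4$.'' Neither of these is automatic, and neither is really term-by-term. After substitution you have a specific rational expression in $z$ on the right-hand side, namely
\[
z^{\vecM(f_\ast)}\left( \frac{1}{z^{\vecM(f_1)} z^{\vecM(f_3)}} + \frac{1}{z^{\vecM(f_2)} z^{\vecM(f_4)}} \right),
\]
and you must show this equals $1/\mu(z)^{\vecM'(f'_\ast)} = 1/\bigl((b_1 b_3 + b_2 b_4)(w')^{\vecM(s')}\bigr)$. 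This is a genuine combinatorial identity whose proof requires tracking which edge weights cancel between the five minimal matchings $\vecM(f_\ast), \vecM(f_1),\ldots,\vecM(f_4)$ and $\vecM'(f'_\ast)$. The paper does this by observing that all edge weights off the renewed square and off the four relevant half-strands appear to the same multiplicity in numerator and denominator, and that the four remaining edge weights $b_1,\ldots,b_4$ combine to give $1/(b_1b_3+b_2b_4)$. That cancellation argument is the real content of the urban-renewal case; it is not a formal consequence of the shapes of the two identities, and your proof should carry it out rather than asserting that the two sides ``must'' match.
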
 

\begin{proof}
We first consider the first three cases, which are easy. Let $G$ be the graph without the degree two vertex/lollipop in question and let $G'$ be the modified graph.
Then there is a straightforward bijection between matchings of $G$ and of $G'$.  This matching preserves the values of the boundary measurement map and takes the minimal matching of $G$ to the minimal matching of $G'$. 

We now consider the case of urban renewal. We will use the notations from Figure~\ref{fig: urban}. We denote the central square face by $s$. We write $G$ for the graph before mutation (left side of the figure) and $G'$ for the mutated graph (right side of the figure). We will use primed variables $V'$, $E'$, $F'$ for the sets of faces of $G'$. 
For a face $g \in F$, we write $g'$ for the corresponding face of $G'$, by the obvious bijection. 

Let $z$ be an element of $\GG_m^{E}/\GG_m^{V-1}$.
Let $w$ be a lift of $z$ to $\GG_m^E$. Let $w'$ be the element of $\GG_m^E$ given by the formulas on the right side of Figure~\ref{fig: urban}. 
Let $w''$ be the result of applying a gauge transformation by $b_1 b_3 + b_2 b_4$ to $w'$ at some vertex of $G$ and let $z''$ be the image of $w''$ in $\GG_m^{E'}/\GG_m^{V'-1}$.
For any   matching $M$ of $G$, we have
\[
(z'')^{M} = (w'')^M =  (b_1 b_3+b_2 b_4) (w')^M
\]

Set $q=\rt(\tDD(z))$. We know that $\tDD(z) = \tDD(z'')$ (Urban Renewal preserves boundary measurements) and $\rt$ is a well defined map, so we also have $q = \rt(\tDD(z''))$.

So our goal is to establish that 
\[ \Delta_{\sI(f)}(q) = \frac{1}{z^{\vecM(f)}} \ \forall_{f \in F} \quad \mathrm{implies} \quad \Delta_{\sI(f')}(q) = \frac{1}{(z'')^{\vecM(f')}} \ \forall_{f' \in F'} .\]

We split into two cases:

\textbf{Case 1:  $f' \neq s'$.} From Theorem~\ref{thm: minmatchprop}, the square $s$ must have one edge in the minimal matching $\vecM(f)$. Without loss of generality, let it be the edge $b_1$.
Looking at how strands change under urban renewal, we see that $\vecM(f')$ is the unique matching which agrees with $\vecM(f)$ at every edge which is in both $G$ and $G'$. (\emph{I.e.} all but the four edges in the left hand side of Figure~\ref{fig: urban} and the eight edges on the right hand side of Figure~\ref{fig: urban}.)
Let $u$ be the product of the weights on all edges that $\vecM(f)$ and $\vecM(f')$ have in common. Then $w^{\vecM(f)} = b_1 u$ and $(w')^{\vecM(f')}= \frac{b_1}{b_1 b_3 + b_2 b_4} u$ so $(w'')^{\vecM(f')} = b_1 u$. 

In this case, $\sI(f) = \sI(f')$; denote this common value by $I$. We are assuming we know $\Delta_I(q) = \frac{1}{z^{\vecM(f)}}$. We deduce
\[ \Delta_{\sI(f')}(q) = \Delta_I(q) = \frac{1}{z^{\vecM(f)}}= \frac{1}{w^{\vecM(f)}} = \frac{1}{b_1 u} = \frac{1}{(w'')^{\vecM(f')} } = \frac{1}{(z'')^{\vecM(f')}} \]
as desired.

\textbf{Case 2: $f'=s'$.} Let $f_1$, $f_2$, $f_3$ and $f_4$ be the faces of $G$ adjacent to $s$, with $f_i \cap s$ the edge weighted $b_i$. There is a $k-2$ element set $S$ and indices $(a,b,c,d)$ such that $\sI(s)$, $\sI(f_1)$, $\sI(f_2)$, $\sI(f_3)$, $\sI(f_4)$ and $\sI(s')$ are $Sac$, $Sab$, $Sbc$, $Scd$, $Sad$ and $Sbd$ respectively.

We have the Pl\"ucker relation
\[ \Delta_{\sI(s')}(q) = \Delta_{Sbd}(q) = \frac{\Delta_{Sab}(q) \Delta_{Scd}(q) + \Delta_{Sbc}(q) \Delta_{Sad}(q)}{\Delta_{Sac}(q)}. \]
Since all the terms on the right hand side label faces of $G$, our assumption to know Lemma~\ref{lemma: main} for $G$ gives 
\[ \Delta_{\sI(s')}(q) =  z^{\vecM(s)} \left( \frac{1}{z^{\vecM(f_1)} z^{\vecM(f_3)}} + \frac{1}{z^{\vecM(f_2)} z^{\vecM(f_4)}} \right) =    \frac{w^{\vecM(s)} }{w^{\vecM(f_1)} w^{\vecM(f_3)} } +\frac{w^{\vecM(s)} }{w^{\vecM(f_2)} w^{\vecM(f_4)}}  . \]
We want to show this equals
\[ \frac{1}{(z'')^{\vecM(s')}} =  \frac{1}{(w'')^{\vecM(s')}} = \frac{1}{(b_1 b_3 + b_2 b_4)\cdot  (w')^{\vecM(s')}}. \]
In other words, we want to show
\begin{equation}
  \frac{w^{\vecM(s)} (w')^{\vecM(s')} }{w^{\vecM(f_1)} w^{\vecM(f_3)}} +  \frac{w^{\vecM(s)} (w')^{\vecM(s')} }{w^{\vecM(f_2)} w^{\vecM(f_4)} } = \frac{1}{b_1 b_3 + b_2 b_4}  \label{Prove This}
  \end{equation}

  Let $\gamma_a$, $\gamma_b$, $\gamma_c$ and $\gamma_d$ be the halves of strands $a$, $b$, $c$ and $d$ running towards $b_1$ and $b_3$.
    Consider an edge $e$ of $G$, other then the ones labeled $b_1$, $b_2$, $b_3$, $b_4$. If $e$ does not lie on any of $\gamma_a$, $\gamma_b$, $\gamma_c$, $\gamma_d$
, then the weight $w_e$ occurs in either all the matching monomials of~(\ref{Prove This}), or none of them, and thus cancels out. 
If $e$ lies on one of these strands, then $w_e$ occurs once in each numerator and once in each denominator, so it cancels again. 
%We don't need to think about the edges of $G'$ which have weight $1$, since they only contribute factors of $1$ anyway. 
So the only terms that don't cancel from the left hand side of~(\ref{Prove This}) are the terms coming from the four edges of $s$. Adding them up, the left hand side of~(\ref{Prove This}) is
\[ \frac{(b_2 b_4) \cdot \frac{b_1 b_3}{(b_1 b_3 + b_2 b_4)^2}}{b_1 \cdot b_3} +  \frac{(b_1 b_3) \cdot \frac{b_2 b_4}{(b_1 b_3 + b_2 b_4)^2}}{b_2 \cdot b_4} = \frac{1}{b_1 b_3 + b_2 b_4}, \]
as desired.
\end{proof}

\subsection{Adding a left bridge} \label{subsec bridge}

\newcommand\bridge[1]{\widehat{#1}}

\begin{figure}[h!t]
\begin{tikzpicture}
\begin{scope}
	\begin{scope}\clip (0,0) circle (2);
	\draw[fill=black!10] (-3,-1) rectangle (3,3);
	\node[mutable,fill=white] (b) at (-1,.5) {};
	\node[mutable,fill=black!50] (c) at (1,.5) {};
	\draw (-1.5,-1) to node[right,blue] {$z_1$} (b) to node[below,blue] {$z_2$} (c) to node[left,blue] {$z_3$} (1.5,-1);
	\draw (-2,.5) to (b) to (-1.5,2);
	\draw (2,.5) to (c) to (1.5,2);
	\draw[antioriented,out=135,in=-45] (1.5,-1) to (0,.5);
	\draw[antioriented,out=135,in=300] (0,.5) to (-1.25,2);
	\draw[antioriented,out=45,in=225] (-1.5,-1) to (0,.5);
	\draw[antioriented,out=45,in=240] (0,.5) to (1.25,2);	
	\end{scope}
	\node[below] at (1.5,-1) {$b$};
	\node[below] at (-1.5,-1) {$b+1$};
\end{scope}
\end{tikzpicture}
\caption{A left bridge between $b$ and $b+1$.}
\label{fig: leftbridge}
\end{figure}
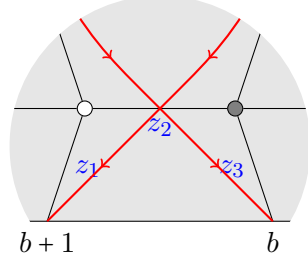

Let $\bridge{G}$ be a reduced graph with a left bridge between $b$ and $b+1$, and let $G$ be the graph of $\bridge{G}$ with the left bridge removed.   The number $n$ of boundary vertices and the cardinality $k$ of the boundary of any matching are the same for both $\bridge{G}$ and $G$.   
Any set of non-zero edge weights $\bridge{z}$ on $\bridge{G}$ restricts to a set of non-zero edge weights $z$ on $G$.

 Let $z_1$, $z_2$, $z_3$ be the weights on the edges in Figure \ref{fig: leftbridge}.
The image of the boundary measurement map on $z$ and $\bridge{z}$ are related as follows.

\begin{prop}\label{prop: bridge}
If a matrix $A$ represents $\tDD(z)$, then the matrix $\bridge{A}$ with
\[ \bridge{A}_a := \left\{\begin{array}{cc}
A_a & \text{if }a\neq b \\
A_b+\frac{z_2}{z_1z_3}A_{b+1} &\text{if }a=b\\
\end{array}\right\} \]
represents $\tDD(\bridge{z})$.
\end{prop}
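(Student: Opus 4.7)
The plan is to partition the matchings of $\bridge{G}$ according to whether they use the bridge edge $e_2$, set up an explicit bijection between each class and a family of matchings of $G$, and track how weights and boundary sets transform. First I would reduce to the case where, in $G$, the vertex adjacent to boundary $b+1$ is already the white vertex $w$ of the bridge and the vertex adjacent to $b$ is already the black vertex $\bar{w}$; if this is not the case in the bridge picture, the boundary-adjacent vertex expansion move from Section~\ref{sec transformations} (applied to the two boundary edges at $b$ and $b+1$) yields a gauge-equivalent graph where it is, without changing $\tDD$. After this reduction, $\bridge{G}$ differs from $G$ only by the single extra edge $e_2$ of weight $z_2$ joining $w$ and $\bar{w}$.

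Matchings of $\bridge{G}$ then split cleanly into two types: \emph{Type 1} matchings that do not contain $e_2$, which coincide with matchings of $G$ under the identity map (preserving both $z^{\bridge{M}}$ and $\partial \bridge{M}$); and \emph{Type 2} matchings $\bridge{M} = \{e_2\}\sqcup \bridge{M}'$, where $\bridge{M}'$ covers every internal vertex other than $w,\bar{w}$ using no edge adjacent to them. The key observation is that $\bridge{M}' \mapsto \bridge{M}' \cup \{e_1,e_3\}$ is a bijection between such $\bridge{M}'$ and $G$-matchings $M$ containing both $e_1$ and $e_3$. Along this bijection the weights satisfy $z^{\bridge{M}} = z_2\, z^{\bridge{M}'}$ and $z^M = z_1 z_3\, z^{\bridge{M}'}$, so $z^{\bridge{M}} = \tfrac{z_2}{z_1 z_3}\, z^M$. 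For boundaries, the color convention (white $w$, black $\bar{w}$) means that for Type 2 we have $b+1 \notin \partial \bridge{M}$ and $b \in \partial\bridge{M}$, while for the corresponding $M$ (containing $e_1,e_3$) we have $b+1 \in \partial M$ and $b \notin \partial M$; all other indices agree, so $\partial\bridge{M} = (\partial M \setminus \{b+1\})\cup\{b\}$.

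Summing these contributions over matchings with a fixed boundary $I$, and observing that the Type 2 contribution is automatically zero unless $b \in I$ and $b+1 \notin I$ (and that $e_1, e_3 \in M$ is then forced by the boundary condition $\partial M = (I\setminus\{b\})\cup\{b+1\}$), yields
\[
D_I(\bridge{z}) \;=\; D_I(z) \;+\; \frac{z_2}{z_1 z_3}\,\mathbb{1}\!\left[b \in I,\ b+1\notin I\right]\, D_{(I\setminus\{b\})\cup\{b+1\}}(z).
\]
On the matrix side, expanding $\Delta_I(\bridge{A})$ by multilinearity in column $b$ using $\bridge{A}_b = A_b + \tfrac{z_2}{z_1 z_3}A_{b+1}$ gives exactly the same formula: the correction term vanishes when $b\notin I$ or when $b+1\in I$ (duplicated column), and otherwise equals $\tfrac{z_2}{z_1 z_3}\Delta_{(I\setminus\{b\})\cup\{b+1\}}(A)$. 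Since $\Delta_I(A)=D_I(z)$ by hypothesis, the two expressions agree for every $I$, proving that $\bridge{A}$ represents $\tDD(\bridge{z})$.

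The most delicate point will be the WLOG reduction: the boundary expansion move rescales some edge weights (cf.\ Figure~\ref{fig: boundaryvert}), so one must check that the $z_1,z_3$ appearing in the statement really correspond to the weights of $e_1$ and $e_3$ after reduction, and not to some product that would shift the coefficient $z_2/(z_1 z_3)$. An alternative route, avoiding the reduction, is to run the Type 1/Type 2 dichotomy directly on the general bridge in Figure~\ref{fig: leftbridge} (where $w$ may be a freshly introduced white vertex of degree three, and similarly for $\bar{w}$); the enumeration of matchings of $\bridge{G}$ vs.\ $G$ produces a slightly larger case analysis but the same final weight ratio, because the extra weight-$1$ edges $w\!-\!u_{b+1}$ and $\bar{w}\!-\!u_b$ contribute trivially.
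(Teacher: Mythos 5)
Your proof is correct and is essentially the paper's proof: partition the matchings of $\bridge{G}$ by whether they use the bridge edge, track weights (giving the ratio $z_2/(z_1 z_3)$) and boundaries ($\partial\bridge{M} = (\partial M \setminus \{b+1\}) \cup \{b\}$), and compare against the multilinear expansion of $\Delta_I(\bridge{A})$ in column $b$. The preliminary ``WLOG reduction'' you flag as the delicate point is a non-issue: by definition $G$ is $\bridge{G}$ with only the bridge edge deleted, so the white vertex $w$, black vertex $\bar{w}$, and boundary edges $e_1, e_3$ with their weights $z_1, z_3$ are automatically present in $G$ (Figure~\ref{fig: leftbridge} already depicts the general case), and the ``alternative route'' you describe at the end is the only route and is exactly what the paper does.
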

\begin{proof}
A matching $M$ of $\bridge{G}$ which doesn't contain the bridge restricts to a matching of $G$, and all matchings of $G$ occur this way.  The associated monomials in weights coincide: $\bridge{z}^M=z^M$.

A matching $M$ of $\bridge{G}$ which contains the bridge cannot also contain the external edges at vertices $b$ and $b+1$.  Hence, there is a matching $M'$ of $G$ which is the restriction of $M$ together with the external edges at vertices $b$ and $b+1$.  We have
\[ \partial M' = (\partial M \setminus\{b\})\cup \{b+1\}\]
and every matching of $G$ whose boundary contains $b+1$ but not $b$ occurs this way.  The associated monomials are related by
\[ \bridge{z}^M = \frac{z_2}{z_1z_3}z^{M'}\]

Hence, $D_I(\bridge{z})=D_I(z)$ for all $I$ which either contain $b+1$ or don't contain $b$.  
For any $(k-1)$ element set $J\subset [n]$ disjoint from $b$ and $b+1$, 
\[ D_{J\cup\{b\}}(\bridge{z}) = D_{J\cup\{b\}}(z) + \frac{z_2}{z_1z_3}D_{J\cup \{b+1\} }(z) \]
It follows that the maximal minors of $\bridge{A}$ coincide with the partition functions of $\bridge{G}$ on $\bridge{z}$.
\end{proof}

\begin{lemma} \label{lem in span}
Let $A$ and $\bridge{A}$ be as in Proposition \ref{prop: bridge}, and let $\pi$ be the bounded affine permutation of $A$.  Let $\tau_a$ denote the columns of $\rt(A)$ and let $\bridge{\tau}_a$ denote the columns of $\rt(\bridge{A})$. 

With the above notations, $\bridge{\tau}_a-\tau_a$ is in the span of $\{ \tau_b : b \impfrom_{\pi} a \}$.
\end{lemma}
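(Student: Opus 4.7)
The plan is to recast the claim via Lemma~\ref{lem even more tau vanishing}, which identifies $\mathrm{span}\{\tau_c : c \impfrom_\pi a\}$ with $V_a^\perp$, where $V_a := \mathrm{span}(A_a, A_{a+1}, \ldots, A_{\pi(a)-1})$. Combined with the identity $\langle \tau_a, A_j\rangle = \delta_{a,j}$ for $j \in [a, \pi(a))$ (which follows from Lemma~\ref{lem more tau vanishing} and the defining relations for $\tau_a$), I would observe that the claim is equivalent to showing
\[ \langle \bridge{\tau}_a, A_j \rangle = \delta_{a,j} \quad\text{for all } j \in [a, \pi(a)). \]

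Next I would set up the bridge combinatorics. Setting $p := \pi^{-1}(b)$ and $q := \pi^{-1}(b+1)$, the left-bridge hypothesis forces $p > q$, and $\bridge{\pi} = s_b \circ \pi$ agrees with $\pi$ except that $\bridge{\pi}(p) = b+1$ and $\bridge{\pi}(q) = b$. Also $\bridge{A}_c = A_c$ for $c \neq b$, while $A_b = \bridge{A}_b - \lambda A_{b+1}$ with $\lambda := z_2/(z_1 z_3)$. For $j \in [a, \pi(a))\setminus\{b\}$, a short case check (treating $a \notin \{p,q\}$, $a = p$, and $a = q$ separately) confirms $j \in [a, \bridge{\pi}(a))$, so $\bridge{A}_j = A_j$ and Lemma~\ref{lem more tau vanishing} applied to $\bridge{A}$ delivers $\langle \bridge{\tau}_a, A_j\rangle = \delta_{a,j}$.

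The remaining inner product is $\langle \bridge{\tau}_a, A_b\rangle$ when $b \in [a, \pi(a))$. Writing $\langle \bridge{\tau}_a, A_b\rangle = \langle \bridge{\tau}_a, \bridge{A}_b\rangle - \lambda \langle \bridge{\tau}_a, A_{b+1}\rangle$, the cases $a = b$ and $a \notin \{b, p, q\}$ (noting $a = p$ would force $\pi(a) = b$, contradicting $b \in [a, \pi(a))$) reduce immediately via Lemma~\ref{lem more tau vanishing}. The essential case is $a = q$, where I would abandon the preceding expansion and instead use the relation $\pi(p) = b$: minimality of $\pi(p)$ implies $A_p \in \mathrm{span}(A_{p+1}, \ldots, A_b)$ with nonzero $A_b$-coefficient, so
\[ A_b \in \mathrm{span}(A_p, A_{p+1}, \ldots, A_{b-1}). \]
Because $p > q$, every index $j \in [p, b-1]$ lies in $(q, b) = (q, \bridge{\pi}(q))$, so Lemma~\ref{lem more tau vanishing} applied to $\bridge{A}$ gives $\langle \bridge{\tau}_q, \bridge{A}_j\rangle = 0$; together with $\bridge{A}_j = A_j$ for these $j$, this forces $\langle \bridge{\tau}_q, A_b\rangle = 0$.

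The main obstacle will be recognizing that the left-bridge hypothesis $p > q$ is exactly what places the relation $\pi(p) = b$ in the useful range: the columns $A_p, A_{p+1}, \ldots, A_{b-1}$ then all sit strictly between $q$ and $\bridge{\pi}(q) = b$, making them term-by-term annihilated by $\bridge{\tau}_q$. Without this geometric observation, a direct algebraic attack via the two expansions of $A_q$ supplied by $\pi(q) = b+1$ and $\bridge{\pi}(q) = b$ yields a linear system that degenerates precisely in the configurations where information about $\langle \bridge{\tau}_q, A_b\rangle$ would have to come from elsewhere.
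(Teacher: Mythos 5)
Your proof is correct, and it takes a genuinely different path from the paper's, even though both ultimately lean on Lemma~\ref{lem more tau vanishing} and Lemma~\ref{lem even more tau vanishing}. The paper's proof splits into five cases based on the position of $a$ relative to $p=\pi^{-1}(b)$, $q=\pi^{-1}(b+1)$, $b$, and $b+1$; in all but one it shows the stronger equality $\tau_a=\bridge{\tau}_a$ directly (by tracking how the Grassmann necklaces $\vecI_a$ and $\bridge{\vecI}_a$ coincide or swap $b$ for $b+1$), and only in the final case (its ``4b'') does it invoke Lemma~\ref{lem even more tau vanishing} to place the difference in the right span. You instead use Lemma~\ref{lem even more tau vanishing} at the outset to reformulate the whole claim as the equalities $\langle \bridge{\tau}_a, A_j\rangle = \delta_{a,j}$ for $j\in[a,\pi(a))$, which collapses all the necklace bookkeeping into a routine check that $j\in[a,\bridge{\pi}(a))$ for $j\neq b$. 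The payoff is that all the remaining difficulty concentrates in one place ($a=q$, $j=b$), and there you introduce an ingredient the paper does not use: the linear dependence $A_b\in\mathrm{span}(A_p,\ldots,A_{b-1})$ extracted from the minimality in $\pi(p)=b$, combined with the fact that $[p,b-1]\subset(q,\bridge{\pi}(q))$ because $p>q$. This is a cleaner and more uniform argument than the paper's; what the paper's version buys in exchange is the sharper information that $\tau_a=\bridge{\tau}_a$ outside the $(d,c]$ range, which is not needed for the lemma itself but can be informative. One small caveat worth noting if you write this up formally: in the $a=b$, $j=b$ subcase, your reduction via Lemma~\ref{lem more tau vanishing} needs $\pi(b)>b+1$; this holds because $\pi(b)=b+1$ would force $q=b$, contradicting $q<p\le b$ (and $\pi(b)=b$ would make $A_b=0$, which is excluded).
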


We write $\vecI_a$ and $\bridge{\vecI}_a$ for the Grassmann necklace of $\pi$ and $s_b \circ \pi$.

\begin{proof}
Set $c = \pi^{-1}(b)$ and $d = \pi^{-1}(b+1)$. 
Since we are assuming that $\ell(s_b \circ \pi) = \ell(\pi) +1$, we have $d < c \leq b < b+1$. 
We first identify a number of cases where $\bridge{\tau}_a = \tau_a$. 

\textbf{Case 1:} $a \in [b+1, d+n]$. In this case, $\vecI_a = \bridge{\vecI}_a$, and this set does not contain $b$. So $\tau_a$ and $\bridge{\tau}_a$ are defined by duality to the same basis, and $\tau_a = \bridge{\tau}_a$.

\textbf{Case 2:} $a \in (c, b)$. In this case, $\vecI_a = \bridge{\vecI}_a$. We have $b$ and $b+1 \in \vecI_a \setminus \{ a \}$. Although $A_b \neq \bridge{A}_b$, we have $\mathrm{span}(A_b, A_{b+1}) = \mathrm{span}(\bridge{A}_b, \bridge{A}_{b+1})$ so $\tau_a$ and $\bridge{\tau}_a$ are defined to be orthogonal to the same $k-1$ plane. This shows that $\tau_a$ and $\bridge{\tau}_a$ are proportional, and they both have dot product $1$ with $A_a = \bridge{A}_a$. 

\textbf{Case 3:} $a = b$. In this case, $\vecI_b = \bridge{\vecI}_b$. For $c \in \vecI_b \setminus \{ b \}$, we have $A_c = \bridge{A}_c$ so, as in case 2, $\tau_b$ and $\bridge{\tau}_b$ are defined to be orthogonal to the same $k-1$ plane, and are hence proportional. To see that the proportionality constant is the same, note that we have $1=\langle \tau_b, A_b \rangle $ and $1 = \langle \bridge{\tau}_b, A_b + \frac{z_2}{z_1 z_3} A_{b+1} \rangle$. But $b+1 \in \vecI_b$, so $\langle \bridge{\tau}_b, A_{b+1} \rangle = 0$ and we see that  $1=\langle \bridge{\tau}_b, A_b \rangle$, establishing $\tau_b = \bridge{\tau}_b$. 

In short, we have so far established $\tau_a = \bridge{\tau}_a$ for $a \in (c, d+n]$. Therefore, from now on, we are in

\textbf{Case 4:} $a \in (d, c]$. 

In this case, we have $\vecI_a = S \cup \{ b+1 \}$ and $\bridge{\vecI}_a = S \cup \{ b \}$ for some $k-1$ element subset $S$ of $[n] \setminus \{ b, b+1 \}$. We know that $\pi(a) \in [a, a+n]$ and, as $a \neq d$, we have $\pi(a) \neq b+1$. We break into two further cases:

\textbf{Case 4a:} $\pi(a) \in (b+1, a+n]$. In this case we claim that, one more time, we have $\tau_a = \bridge{\tau}_a$. We check that $\tau_a$ obeys the defining properties of $\bridge{\tau}_a$. We have $\langle \tau_a, \bridge{A}_a \rangle = \langle \tau_a, A_a \rangle =1$. Also, for $s \in S \setminus \{ a \}$, we have $\langle \tau_a, \bridge{A}_s \rangle = \langle \tau_a, A_s \rangle =0$. It remains to check that $\langle \tau_a, \bridge{A}_{b} \rangle = 0$.  Our assumption on $\pi(a)$ implies that $b$ and $b+1 \in (a, \pi(a))$ so, by Lemma~\ref{lem more tau vanishing}, we have $\langle \tau_a, A_b \rangle = \langle \tau_a, A_{b+1} \rangle =0$. Thus, $\langle \tau_a, \bridge{A}_{b} \rangle =\langle \tau_a, A_{b} \rangle + \frac{z_2}{z_1 z_3} \langle \tau_a, A_{b+1} \rangle = 0$ as desired.

Finally, we reach the sole case where $\tau_a \neq \bridge{\tau}_a$:

\textbf{Case 4b:} $\pi(a) \in [a, b]$. Define $\mu:= \bridge{\tau}_a - \tau_a$. The defining properties of $\bridge{\tau}_a$ and $\tau_a$ give $\langle \mu, A_s \rangle =0$ for $s \in S$. In particular, since $b$, $b+1 \not \in [a, \pi(a))$, we have $\langle \mu, A_s \rangle =0$ for $s \in \vecI_a \cap [a, \pi(a))$. But, by Lemma~\ref{lem even more tau vanishing}, this means that $\mu$ is in the span of $\{ \tau_b : b \impfrom_{\pi} a \}$, which is the desired conclusion.
\end{proof}

We can now establish the bridge case of the inductive step for our proof of Lemma \ref{lemma: main}.

\begin{lemma} \label{lemma add bridge}
If Lemma \ref{lemma: main} holds at each face in $G$, then it holds for each face in $\bridge{G}$.
\end{lemma}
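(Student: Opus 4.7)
Plan: The proof has two layers: combinatorial (comparing face labels and minimal matchings in $\bridge{G}$ with those in $G$) and algebraic (propagating the twist comparison from Lemma~\ref{lem in span} through the determinants $\Delta_{\sI(\bridge{f})}(\rt(\bridge{A}))$).

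I would first establish the combinatorial correspondence. There is a natural bijection between the faces of $\bridge{G}$ and the faces of $G$, augmented by one new face $f_{\mathrm{new}}$ (the small bigon adjacent to the boundary between $b$ and $b+1$). Since $\bridge{\pi}=s_b\circ\pi$ acts on targets and the sources of strands are still $1,\dots,n$, the source label $\sI(\bridge{f})$ of a face $\bridge{f}$ corresponding to a face $f$ of $G$ either equals $\sI(f)$ or differs from it by replacing $b$ by $b{+}1$, depending on whether $\bridge{f}$ lies above or below the bridge relative to the two strands that cross at the bridge. A parallel analysis expresses $\vecM(\bridge{f})$ in terms of $\vecM(f)$ and the bridge edges $z_1,z_2,z_3$: either $\vecM(\bridge{f})=\vecM(f)$, or $\vecM(\bridge{f})$ is obtained from $\vecM(f)$ by adding the three bridge edges and removing the boundary edges at $b$ and $b{+}1$. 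The new face $f_{\mathrm{new}}$ has minimal matching $\vecM(f_{\mathrm{new}})$ consisting of a boundary edge and (essentially) the middle bridge edge $z_2$, with a source label computable directly from the strand picture.

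Next I would use Lemma~\ref{lem in span} to compare the twists. The analysis in the proof of that lemma shows that $\bridge{\tau}_a=\tau_a$ for every $a\ne d$, where $d=\pi^{-1}(b{+}1)$; only $\bridge{\tau}_d$ differs from $\tau_d$, and $\bridge{\tau}_d-\tau_d\in\mathrm{span}\{\tau_c:d\impto_{\pi}c\}$. Therefore
\[
\Delta_{\sI(\bridge{f})}(\rt(\bridge{A}))=\det(\bridge{\tau}_a)_{a\in\sI(\bridge{f})}=\det(\tau_a)_{a\in\sI(\bridge{f})}+\text{(corrections)},
\]
where the corrections are nonzero only if $d\in\sI(\bridge{f})$, and are linear combinations of determinants $\det(\tau_a)_{a\in(\sI(\bridge{f})\setminus\{d\})\cup\{c\}}$ with $d\impto_{\pi}c$. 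Applying Lemma~\ref{label implication} in $\bridge{G}$ (whose permutation is $\bridge\pi$) and comparing $\impto_{\bridge\pi}$ with $\impto_{\pi}$—noting that $\bridge\pi$ differs from $\pi$ only in the swap of targets $b\leftrightarrow b{+}1$, so the only new implication created is the one involving $d$ and $c$ where $\pi(c)=b$—one checks that every such $c$ already lies in $\sI(\bridge{f})$ except in one exceptional case that corresponds precisely to the bridge swap. Thus all correction determinants vanish by repeated columns, and $\Delta_{\sI(\bridge{f})}(\rt(\bridge{A}))=\Delta_{S}(\rt(A))$ where $S$ is either $\sI(f)$ or $\sI(f)$ with $b\leftrightarrow b{+}1$ exchanged, matching the combinatorial description from Step~1.

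Finally, I would apply the inductive hypothesis $\Delta_{\sI(f)}(\rt(A))=1/z^{\vecM(f)}$ for $G$ and combine it with the combinatorial description of $\vecM(\bridge{f})$ in terms of $\vecM(f)$ and the bridge edges. The bookkeeping is: if $\sI(\bridge{f})=\sI(f)$ and $\vecM(\bridge{f})=\vecM(f)$ then the identity is immediate; otherwise, the factor $z_2/(z_1z_3)$ that distinguishes $\bridge{z}^{\vecM(\bridge{f})}$ from $z^{\vecM(f)}$ (with the appropriate swap) must be matched against the contribution of the relevant Pl\"ucker coordinate, which is supplied by Proposition~\ref{prop: bridge}. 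The new face $f_{\mathrm{new}}$ is handled by a direct computation of $\Delta_{\sI(f_{\mathrm{new}})}(\rt(\bridge{A}))$ using the explicit formulas for $\bridge{A}$ and for the twist, and comparing with $1/\bridge{z}^{\vecM(f_{\mathrm{new}})}=1/z_2$. The main obstacle is the case analysis distinguishing faces on either side of the bridge, reconciling the change in source labels with the change in matchings, and in particular verifying that the exceptional case above corresponds exactly to the bridge factor arising from Proposition~\ref{prop: bridge}.
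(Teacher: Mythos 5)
Your proposal contains a genuine gap in the combinatorial setup. In Step 1 you assert that $\sI(\bridge{f})$ ``either equals $\sI(f)$ or differs from it by replacing $b$ by $b+1$.'' In fact $\sI(\bridge{f}) = \sI(f)$ holds for \emph{every} face $\bridge{f}$ of $\bridge{G}$ corresponding to a face $f$ of $G$: as you note yourself, the sources of strands are unchanged by the bridge, and the strands are modified only in a small region near the boundary arc between $b$ and $b+1$, so the left-of-strand data for a face corresponding to a face of $G$ is preserved. Likewise $\vecM(\bridge{f}) = \vecM(f)$, because $\bridge{f}$ is never downstream of the bridge edge. This is the essential simplification: the right-hand side $1/z^{\vecM(\bridge{f})}$ of Lemma~\ref{lemma: main} is unchanged, no bridge factor $z_2/(z_1 z_3)$ arises, and the whole inductive step for a non-new face reduces to the single equality $\Delta_I(\rt(\bridge{A})) = \Delta_I(\rt(A))$ where $I = \sI(f)$. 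The reconciliation you plan against Proposition~\ref{prop: bridge} in Step 3 is therefore misdirected, and the case analysis it was meant to resolve does not occur.

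There is a second error in Step 2: your summary of Lemma~\ref{lem in span} is reversed. In fact $\bridge{\tau}_d = \tau_d$ (this is Case 1 of that lemma's proof, since $d \equiv d+n \in [b+1, d+n]$), while the columns that may change are the $a \in (d, c]$ with $\pi(a) \leq b$ --- a set which contains $c$, never contains $d$, and is generally larger than a singleton. Fortunately the argument does not need to identify which columns change: one uses the uniform conclusion that $\bridge{\tau}_a - \tau_a \in \mathrm{span}\{\tau_p : a \impto_\pi p\}$ for all $a$, together with the $\impto_\pi$-closure of $I$ from Lemma~\ref{label implication}, to see that in an $\impto_\pi$-compatible ordering of $I$ the transition matrix between $(\tau_a)_{a \in I}$ and $(\bridge{\tau}_a)_{a \in I}$ is unipotent triangular, so $\Delta_I(\rt(\bridge{A})) = \Delta_I(\rt(A))$. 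The single remaining face of $\bridge{G}$, the new boundary face between $b$ and $b+1$, is covered by Proposition~\ref{prop: lemmaboundary}; no separate ad hoc computation is needed.
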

\begin{proof}
%First, let us 
%
%Each face of $\bridge{G}$ corresponds to a face of $G$, except for the boundary face between $ \ell_b$ and $\ell_{b+1}$.  
%
We reuse the notations $A$, $\bridge{A}$, $\tau$ and $\bridge{\tau}$ of the previous Lemma.

Let us consider a face $\bridge{f}$ of $\bridge{G}$ which is not the boundary face between vertices $b$ and $b+1$.  Then $\bridge{f}$ corresponds to a face $f$ in $G$, and they have the same source-indexed face label. Define $I:= \sI(\bridge{f}) = \sI(f)$.
Furthermore, $\bridge{f}$ is not downstream from the bridge, and if $\bridge{f}$ is downstream from an edge $e$ in $\bridge{G}$, then $f$ is downstream from $e$ as an edge in $G$.  Hence, the minimal matchings coincide: $\vecM(\bridge{f})=\vecM(f)$.
Our assumption is that $\Delta_I(\tau) = 1/z^{\vecM(f)}$, and we have just shown $1/z^{\vecM(\bridge{f})}=1/z^{\vecM(f)}$.
So our goal is to prove that $\Delta_I(\tau) = \Delta_{I}(\bridge{\tau})$. 

By Lemma~\ref{label implication}, if $a \in I$ and $a \impto_{\pi} p$, then $p \in I$. Choose an order of $I$ refining the partial order $\impto_{\pi}$. By Lemma~\ref{lem in span}, when ordered in this manner, the bases $\{ \tau_a : a \in I \}$ and $\{ \bridge{\tau}_a : a \in I \}$ are related by an upper triangular matrix with $1$'s on the diagonal. So $\Delta_I(\tau) = \Delta_{I}(\bridge{\tau})$ as desired.

We have now established Lemma~\ref{lemma: main} at every face of $\bridge{G}$ except the boundary face between vertices $b$ and $b+1$. At this face, Lemma \ref{lemma: main} holds by Proposition~\ref{prop: lemmaboundary}. \end{proof}

\subsection{Conclusion of the proof} We may now complete the proof of the main theorem.

\begin{proof}[Proof of Lemma \ref{lemma: main}]
We have shown that Lemma~\ref{lemma: main} is true for reduced graphs with no internal faces (Proposition~\ref{prop: lemmaboundary}), that it remains true after 
adding a lollipop (Lemma~\ref{lemma: mutation invariance}) or a left bridge (Lemma~\ref{lemma add bridge}), and that it remains true after any mutation (Lemma~\ref{lemma: mutation invariance}). For any bounded affine permutation $\pi$, a reduced graph for $\pi$ can be built via repeatedly adding bridges and lollipops (Lemma~\ref{bridge decompositions exist}) and any two reduced graphs for $\pi$ are connected by a sequence of mutations (\cite[Theorem 13.4]{Pos}, see also \cite{OS14}). 
\end{proof}

\begin{proof}[Proof of Theorem \ref{main theorem}]
We prove the commutativity of the right-hand square in the Theorem; the other square will follow by a mirror argument. The commutative of the pairs of horizontal arrows is equivalent to Proposition \ref{prop: isotori} and Theorem \ref{thm: inversetwists}.

By Lemma \ref{lemma: main}, the composition $\sF\circ \rt \circ \tDD$ is regular and equal to $\rM$. This implies the commutativity of any pair of paths in the right-hand square which begin in the top row. 
In particular, it implies that the restriction of $\lpartial\circ \sF\circ \rt$ to the image of $\tDD$ is a (regular) right inverse to $\tDD$.

The positroid variety $\tPio(\cM)$ has dimension $k(n-k)- \ell(\pi)$ by \cite[Theorem 5.9]{KLS13}. By a combination of Theorem~12.7 and Proposition~17.10 in \cite{Pos}, this is equal to $|F|$, the number of faces of $G$. Hence, $\tDD$ is a regular map between integral varieties of the same dimension with a right inverse; hence it is an open inclusion. 

We compute
\[ \tDD\circ \lpartial \circ \sF \circ \rt \circ \tDD \stackrel{\text{Lemma~}\ref{lemma: main}}{=} \tDD\circ \lpartial \circ \rM \stackrel{\text{Prop.~}\ref{prop: isotori}}{=} \tDD \]
This implies that $\sF\circ \rt \circ \tDD\circ \rt$ is the identity on the image of $\tDD$. Since $\tDD$ is an open inclusion, this implies that $\sF\circ \rt \circ \tDD\circ \rt$ is equal to the identity as rational maps. This implies the commutativity of any pair of paths in the right-hand square which begin in the bottom row.
\end{proof}

\appendix

\section{Examples of the twist}\label{sec nice twist}

This appendix collects several examples in which the twist is simple or notable.

\subsection{Uniform positroid varieties}

\def\cMg{\cM_{uni}}

For fixed $k\leq n$, let $\cMg$ be the \newword{uniform positroid}, the matroid in which every $k$-element subset of $[n]$ is a basis.  The variety $\Mat^\circ(\cMg)$ parametrizes $k\times n$ complex matrices such that each cyclically consecutive minor
\[ \Delta_{12...k},\Delta_{23...(k+1)},...,\Delta_{(n-k+1)(n-k+2)...n},\Delta_{(n-k+2)(n-k+3)...n1},...,\Delta_{n1...(k-1)}\]
is non-zero.  We refer to $\Pio(\cMg)\subset Gr(k,n)$ as the \newword{(open) uniform positroid variety}; it is the open subvariety defined by the non-vanishing of the cyclically consecutive Pl\"ucker coordinates.\footnote{The open uniform positroid variety in $Gr(k,n)$ is the complement of a simple normal-crossing canonical divisor, making it an example of an \emph{affine log Calabi-Yau variety with maximal boundary}, in the sense of \cite{GHK15}.  This may be the source of the cluster structure, according to the perspective of \cite{GHKK}.}

%so that $\Mat^\circ(\cM_{gen})$ is open in $\Mat(k,n)$.  The quotient $Gr(\cM_{gen})$ is the dense positroid cell in $Gr(k,n)$.

%The twist is particular simple on the big positroid cell in $Gr(k,n)$, especially when $k=1$ or $2$.

%\subsection{$Gr(1,n)$ and $Gr(2,n)$}

%\subsubsection{The big open positroid variety in $Gr(1,n)$}

\begin{myexample}
We consider the case $k=1$. The matrices in $\Mat(1,n)$ with uniform positroid envelope are those with no zero entries.
The twist acts on matrices by
\[ \rt\begin{bmatrix}a_1 & a_2 & \cdots & a_n \end{bmatrix} = \begin{bmatrix} a_1^{-1} & a_2^{-1} & \cdots & a_n^{-1}\end{bmatrix} \]
Here, $\Mato(\cMg)$ is an algebraic torus and the twist is inversion in the torus, so it has order $2$.

The Grassmannian $Gr(1,n)$ is projective space $\mathbb{P}^{n-1}$, and the open uniform positroid subvariety $\Pio(\cMg)$ is the subset on which no homogeneous coordinate vanishes.  The twist acts by simultaneously inverting each homogeneous coordinate.
\end{myexample}

%\subsubsection{The generic positroid variety in $Gr(2,n)$}\label{section: Gr2n}

\begin{myexample}\label{example: Gr2n}

We consider the case $k=2$. A matrix $A=\left( \begin{smallmatrix} a_1 & a_2 & \cdots & a_n \\ b_1 & b_2 & \cdots & b_n \end{smallmatrix} \right)$ has uniform positroid envelope if each $\Delta_{i(i+1)}:=a_ib_{i+1}-a_{i+1}b_i$ is non-zero. The twist acts by
\[\rt\begin{bmatrix} a_1 & a_2 & \cdots & a_n \\ b_1 & b_2 & \cdots & b_n \end{bmatrix}
=  \begin{bmatrix} \frac{b_2}{\Delta_{12}} & \frac{b_3}{\Delta_{23}} & \cdots & \frac{b_1}{\Delta_{n1}} \\ \frac{-a_2}{\Delta_{12}} & \frac{-a_3}{\Delta_{23}} & \cdots & \frac{-a_1}{\Delta_{n1}} \end{bmatrix}\]
So the $(i,j)$-th Pl\"ucker coordinate of the twist is $\frac{\Delta_{(i+1)(j+1)}}{\Delta_{i(i+1)}\Delta_{j(j+1)}}$.  In particular, up to an invertible monomial transformation, the $(i,j)$-th Pl\"ucker coordinate of the twist is the same as the $(i+1,j+1)$-st Pl\"ucker coordinate of the original matrix.  Our main result says that the $(i,j)$-th Pl\"ucker coordinate of the twist can be written in terms of the Pl\"ucker coordinates in any cluster as a sum over matchings in a planar graph; see \cite{CP03,Mus11} for examples of such formulas.

We can also observe that the square of the twist acts by
\[\rt^2\begin{bmatrix} a_1 & a_2 & \cdots & a_n \\ b_1 & b_2 & \cdots & b_n \end{bmatrix}
=  \begin{bmatrix} \frac{-\Delta_{12}}{\Delta_{23}}a_3 & \frac{-\Delta_{23}}{\Delta_{34}}a_4 & \cdots & \frac{-\Delta_{n1}}{\Delta_{12}}a_2 \\ \frac{-\Delta_{12}}{\Delta_{23}}b_3 & \frac{-\Delta_{23}}{\Delta_{34}}b_4 & \cdots & \frac{-\Delta_{n1}}{\Delta_{12}}b_2 \end{bmatrix}\]
This implies the order of $\rt$ depends on the parity of $n$.
%This implies distinct behavior for powers of $\rt$. 
If $n$ is odd, then $\rt^{2n}$ is the identity. 

If $n$ is even, then 
\[\rt^{n}\begin{bmatrix} a_1 & a_2 & \cdots & a_n \\ b_1 & b_2 & \cdots & b_n \end{bmatrix}
=  \begin{bmatrix} \alpha a_1 & \alpha^{-1}a_2 & \cdots & \alpha^{-1}a_n \\ \alpha b_1 & \alpha^{-1} b_2 & \cdots & \alpha^{-1}b_n \end{bmatrix}\ \mbox{where}\ \alpha := \frac{\Delta_{12} \Delta_{34}\cdots \Delta_{(n-1)n}}{\Delta_{23} \Delta_{45}\cdots \Delta_{n1}}\]
Since there are matrices on which $\alpha$ is not a root of unity, the twist $\rt$ has infinite order on $\Mato(2,n)$. Moreover, since the $GL_2$-invariant quantity $\Delta_{13}/\Delta_{12}$ scales by a factor of $\alpha^2$ each time $\rt^n$ is applied, and so $\rt$ is not periodic on $\Pio(\cMg)$ either. However, $\rt^n$ is trivial up to the action of $\GG_m^n$ on $\Pio(\cMg)$ by rescaling columns. 

For general $k$, the twist has order $2n/\gcd(k,n)$ on the quotient $\Pio(\cMg)/\GG_m^n\subset Gr(k,n)/\GG_m^n$ by rescaling columns.
\end{myexample}

\subsection{A twist of infinite order} \label{D4 example}

While Example \ref{example: Gr2n} provided a case where the twist has infinite order, that example was finite order modulo column rescaling. We provide a richer example of a twist with infinite order.

Consider the positroid variety $\Pi(\cM)$ in $Gr(4,8)$  cut out by the vanishing of Pl\"ucker coordinates
\[\Delta_{1234} = \Delta_{3456} = \Delta_{5678} = \Delta_{1268} = 0\]
%Let $(p_I)$ be the Pl\"ucker coordinates of a point in $\tPio(\cM)$, and let $(q_I)$ be the Pl\"ucker coordinates of the twist.

\begin{figure}
\begin{tikzpicture}[scale=.8]
		\path[use as  bounding box] (-4.5,-4.5) rectangle (4.5,4.5);
		\draw[fill=black!10] (0,0) circle (4);
		\node[invisible] (6) at (157.5:4) {};
		\node[invisible] (7) at (112.5:4) {};
		\node[invisible] (8) at (67.5:4) {};
		\node[invisible] (1) at (22.5:4) {};
		\node[invisible] (2) at (-22.5:4) {};
		\node[invisible] (3) at (-67.5:4) {};
		\node[invisible] (4) at (-112:4) {};
		\node[invisible] (5) at (-157:4) {};
		
		\node[right] at (1) {$1$};
		\node[right] at (2) {$2$};
		\node[below] at (3) {$3$};
		\node[below] at (4) {$4$};
		\node[left] at (5) {$5$};
		\node[left] at (6) {$6$};
		\node[above] at (7) {$7$};
		\node[above] at (8) {$8$};
		
		\node[dot, fill=white] (a) at (-.75,2.75) {};
		\node[dot, fill=black!50] (b) at (.75, 2.75) {};
		\node[dot, fill=black!50] (c) at (-.75,1.5) {};
		\node[dot, fill=white] (d) at (.75,1.5) {};

		\node[dot, fill=black!50] (e) at (-2.75,.75) {};
		\node[dot, fill=white] (f) at (-1.5,.75) {};
		\node[dot, fill=white] (g) at (-2.75,-.75) {};
		\node[dot, fill=black!50] (h) at (-1.5,-.75) {};

		\node[dot, fill=black!50] (i) at (1.5,.75) {};
		\node[dot, fill=white] (j) at (2.75,.75) {};
		\node[dot, fill=white] (k) at (1.5,-.75) {};
		\node[dot, fill=black!50] (l) at (2.75,-.75) {};

		\node[dot, fill=white] (m) at (-.75,-1.5) {};
		\node[dot, fill=black!50] (n) at (.75, -1.5) {};
		\node[dot, fill=black!50] (o) at (-.75,-2.75) {};
		\node[dot, fill=white] (p) at (.75,-2.75) {};
		
		\draw (1) to (j);
		\draw (2) to (l);
		\draw (3) to (p);
		\draw (4) to (o);
		\draw (5) to (g);
		\draw (6) to (e);
		\draw (7) to (a);
		\draw (8) to (b);
		
		\draw (a) to (b);
		\draw (b) to (d);
		\draw (c) to (d);
		\draw (c) to (a);
		\draw (e) to (f);
		\draw (e) to (g);
		\draw (f) to (h);
		\draw (g) to (h);
		\draw (i) to (j);
		\draw (i) to (k);
		\draw (j) to (l);
		\draw (k) to (l);
		\draw (m) to (n);
		\draw (m) to (o);
		\draw (n) to (p);
		\draw (o) to (p);	
		\draw (d) to (i);
		\draw (k) to (n);
		\draw (m) to (h);
		\draw (f) to (c);

		\node[red!50!black] at (2,2) {$4678$};
		\node[red!50!black] at (3.35,0) {$1678$};
		\node[red!50!black] at (2,-2) {$1268$};
		\node[red!50!black] at (0,-3.35) {$1238$};
		\node[red!50!black] at (-2,-2) {$2348$};
		\node[red!50!black] at (-3.35,0) {$2345$};
		\node[red!50!black] at (-2,2) {$2456$};
		\node[red!50!black] at (0,3.35) {$4567$};
		
		\node[red!50!black] at (0,2.125) {$4568$};
		\node[red!50!black] at (2.125,0) {$2678$};
		\node[red!50!black] at (0,-2.125) {$1248$};
		\node[red!50!black] at (-2.125,0) {$2346$};
		\node[red!50!black] at (0,0) {$2468$};
\end{tikzpicture}
%\centerline{\scalebox{0.5}{\includegraphics{D4Tilde.eps}}}
\caption{A graph with infinite order twist (source-labeled faces)} \label{D4Tilde}
\end{figure}
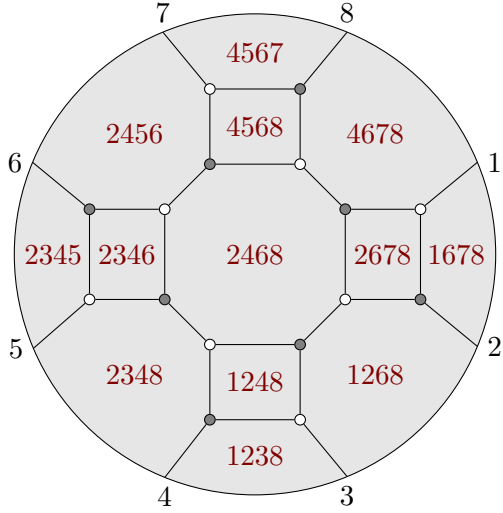

A reduced graph for $\cM$ is shown in Figure~\ref{D4Tilde}. %, with boundary labels $1235$, $2345$, $3457$, $4567$, $1567$, $1678$, $1378$, $1238$ and interior faces labeled $1237$, $1345$, $3567$, $1578$ and $1357$.
Using Proposition \ref{dimer sum is twist}, the left twist of $\Delta_{4568}$ is given by a sum over the two matchings with boundary $4568$.
\begin{equation} \Delta_{4568}\circ \lt 
= \frac{1}{\Delta_{4568}} + \frac{\Delta_{4567}\Delta_{2468}}{\Delta_{4568}\Delta_{2456}\Delta_{4678}} \label{D4Recurrence1} 
\end{equation}

The left twists of the analogous coordinates $\Delta_{2678}$, $\Delta_{1248}$, and $\Delta_{2346}$ are given by similar binomials, obtained from this one by rotation of the graph by $\pi/2$. The right twist of the central coordinate $\Delta_{2468}$ is a sum over $17$ matchings with boundary $2468$.
\begin{equation}
\begin{array}{rl}
\Delta_{2468}\circ\lt 
&= \frac{1}{\Delta_{2468}} + \left[ \frac{\Delta_{2456}\Delta_{4678}\Delta_{1268}\Delta_{2348}}
{\Delta_{2468}\Delta_{4568}\Delta_{2678}\Delta_{1248}\Delta_{2346}}\right. \\
&\times \left.
\left(1+\frac{\Delta_{4567}\Delta_{2468}}{\Delta_{2456}\Delta_{4678}}\right)
\left(1+\frac{\Delta_{1678}\Delta_{2468}}{\Delta_{4678}\Delta_{1268}}\right)
\left(1+\frac{\Delta_{1238}\Delta_{2468}}{\Delta_{1268}\Delta_{2348}}\right)
\left(1+\frac{\Delta_{2345}\Delta_{2468}}{\Delta_{2348}\Delta_{2456}}\right)\right]
\end{array} \label{D4Recurrence2} \end{equation}

We will describe the $\lt$ orbit of the image under $\tDD$ of the identity element of $\mathbb{G}_m^E/\mathbb{G}_m^{V-1}$. This may be given as the row span of the following matrix.
\[\begin{bmatrix}
2 & 1 & 1 & 0 & -1 & 0 & 1 & 0 \\
-1 & 0 & 2 & 1 & 1 & 0 & -1 & 0 \\
1 & 0 & -1 & 0 & 2 & 1 & 1 & 0 \\
-1 & 0 & 1 & 0 & -1 & 0 & 2 & 1 \\
\end{bmatrix}\]
We list the values of the Pl\"ucker coordinates for the source-labelled faces under the first several twists, and then describe the general recursion.
\[ \begin{array}{|c|c|c|c|}
\hline
& 2468& 4568,\ 2678,\ 1248,\ 2346 & \mbox{boundary faces} \\
\hline
x & 1 & 1 & 1 \\
\lt(x) & 17 & 2 & 1 \\ 
\lt^2(x) & 386 & 9 & 1 \\ 
\lt^3(x) & 8857 & 43  & 1 \\ 
\lt^4(x) & 203321 & 206 & 1 \\
\hline
\end{array} \]
In general, if the $i$-th row is $(u_i, v_i, 1)$, Equations~\eqref{D4Recurrence1} and~\eqref{D4Recurrence2} give
\[ \left( u_{i+1}, v_{i+1}, 1 \right) = \left( \frac{v_{i+1}^4+1}{u_i},\ \frac{u_i+1}{v_i},\ 1\right) \]
An easy induction shows that $u_i$ and $v_i$ are given by the linear recursions.
\[ u_{i+1} - 23 u_i + u_{i-1} = -4 \qquad v_{i+1} - 5 v_i + v_{i-1} = 0. \]
%The sequence $v_i$ is \cite[A002310]{Sloane}. 
It is easy to see from the linear recusrion that $v_i$ is increasing without bound, so the torus invariant quantity $(\Delta_{1248} \Delta_{2346} \Delta_{4568} \Delta_{2678})/(\Delta_{2348} \Delta_{2456} \Delta_{4678} \Delta_{1268}) = v_i^4$ is likewise increasing, and we have provided a direct computation that the twist is not periodic even up to column rescaling.

Note that the $u_i$ and $v_i$ had to be integers, because they are sums over matchings of Laurent monomials that evaluate to $1$.
This is a valuable check when performing computations by hand.

\begin{remark}
The mutable part of the quiver for this reduced graph is of type $\tilde{D}_4$, with edges oriented away from its central vertex.
From the above formulas, we may check that the twist is the same (up to torus action) as first mutating at all $4$ outer vertices of $\tilde{D}_4$, and then mutating at the center. 
This is the Coxeter transformation for this quiver, and (as $\tilde{D}_4$ is not of finite type) the Coxeter transformation is not of finite order, even up to torus symmetry. 
\end{remark}
%
%\begin{remark} \label{rt vs mu}
% We take the opportunity to discuss the difference between $\rt^2$ and the map $\mu$ from Section~\ref{double twist}. The double twist of the given matrix is
% \[ 
% \rt^2 \begin{bmatrix}
%2 & 1 & 1 & 0 & -1 & 0 & 1 & 0 \\
%-1 & 0 & 2 & 1 & 1 & 0 & -1 & 0 \\
%1 & 0 & -1 & 0 & 2 & 1 & 1 & 0 \\
%-1 & 0 & 1 & 0 & -1 & 0 & 2 & 1 \\
%\end{bmatrix} = 
% \begin{bmatrix} 5 & -1 & -5 & 2 & 10 & 1 & 4 & -1 \\ -4 & 1 & 5 & -1 & -5 & 2 & 10 &   1 \\ -10 & -1 & -4 & 1 & 5 & -1 & -5 & 2 \\ 5 & -2 & -10 & -1 & -4 & 1 & 5 & -1 \end{bmatrix}=:C
% \] 
% Meanwhile,
% \[\mu  \begin{bmatrix}
%2 & 1 & 1 & 0 & -1 & 0 & 1 & 0 \\
%-1 & 0 & 2 & 1 & 1 & 0 & -1 & 0 \\
%1 & 0 & -1 & 0 & 2 & 1 & 1 & 0 \\
%-1 & 0 & 1 & 0 & -1 & 0 & 2 & 1 \\
%\end{bmatrix} = 
%\begin{bmatrix} 0 & 1 & 0 & 2 & 0 & 1 & 1 & -1 \\ 1 & -1 & 0 & -1 & 0 & 2 & 0 & 1 \\ 0 & 1 & 1 & 1 &   0 & -1 & 0 & 2 \\ 0 & 2 & 0 & -1 & 1 & 1 & 0 & -1 \\ \end{bmatrix} =:D. \]
%If we replace $D$ by $D'$ where columns $2$ and $7$ are negated, then $\Delta_I(C) = \Delta_I(D')$ for all $I$ which appear as source or target labels in a reduced graph for $\cM$. However, $D'$ (and $D$) are not in $\widetilde{\Mat}(\cM)$.
%\end{remark}

\subsection{A reduced graph whose image is not given by nonvanishing of Pl\"ucker coordinates}

Consider the reduced graph in Figure~\ref{NonPluckerTwist}, with interior face labels ${124}$, ${346}$, ${256}$ and ${246}$. This graph is reduced, so $\DD: \GG_m^E/\GG_m^V \to Gr(3,6)$ is an open immersion. 
The complement of $\DD(\GG_m^E/\GG_m^V)$ is a degree $11$ hypersurface which factors as
\[ \Delta_{123} \Delta_{234} \Delta_{345} \Delta_{456} \Delta_{156} \Delta_{126} \Delta_{125} \Delta_{134} \Delta_{356}  X \]
where $X = \Delta_{124} \Delta_{356} - \Delta_{123} \Delta_{456}$. 
Up to column rescaling,  $X$ is the twist of $\Delta_{246}$. 
In particular, $X$ vanishes when the $2$-planes $\mathrm{Span}(v_1, v_2)$, $\mathrm{Span}(v_3, v_4)$ and $\mathrm{Span}(v_5, v_6)$ have a common intersection; 
this description of the non-Pl\"ucker cluster variable on $Gr(3,6)$ was observed by Scott \cite{Sco06}.

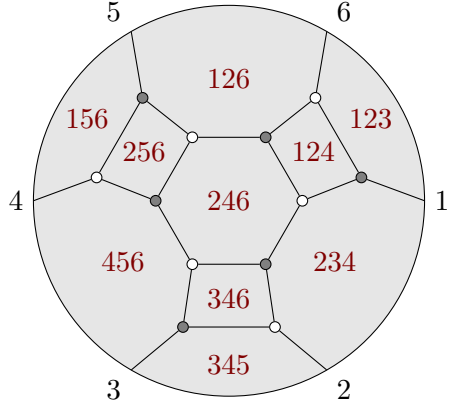
\begin{figure}
\begin{tikzpicture}[scale=.65]
		\path[use as  bounding box] (-4.5,-4.5) rectangle (4.5,4.5);
		\draw[fill=black!10] (0,0) circle (4);
		\node[invisible] (1) at (0:4) {};
		\node[invisible] (2) at (-60:4) {};
		\node[invisible] (3) at (-120:4) {};
		\node[invisible] (4) at (-180:4) {};
		\node[invisible] (5) at (-240:4) {};
		\node[invisible] (6) at (-300:4) {};
		
		\node[right] at (1) {$1$};
		\node[below right] at (2) {$2$};
		\node[below left] at (3) {$3$};
		\node[left] at (4) {$4$};
		\node[above left] at (5) {$5$};
		\node[above right] at (6) {$6$};
		
		\node[dot,fill=white] (a) at (0:1.5) {};
		\node[dot,fill=black!50] (b) at (-60:1.5) {};
		\node[dot,fill=white] (c) at (-120:1.5) {};
		\node[dot,fill=black!50] (d) at (-180:1.5) {};
		\node[dot,fill=white] (e) at (-240:1.5) {};
		\node[dot,fill=black!50] (f) at (-300:1.5) {};

		\node[dot,fill=black!50] (a') at (10:2.75) {};
		\node[dot,fill=white] (b') at (-70:2.75) {};
		\node[dot,fill=black!50] (c') at (-110:2.75) {};
		\node[dot,fill=white] (d') at (-190:2.75) {};
		\node[dot,fill=black!50] (e') at (-230:2.75) {};
		\node[dot,fill=white] (f') at (-310:2.75) {};
		
		\draw (f) to (a) to (a') to (1);
		\draw (a) to (b) to (b') to (2);
		\draw (b) to (c) to (c') to (3);
		\draw (c) to (d) to (d') to (4);
		\draw (d) to (e) to (e') to (5);
		\draw (e) to (f) to (f') to (6);
		
		\draw (a') to (f');
		\draw (c') to (b');
		\draw (e') to (d');
		
		\node[red!50!black] at (0,0) {$246$};
		
		\node[red!50!black] at (30:2) {$124$};
		\node[red!50!black] at (-90:2) {$346$};
		\node[red!50!black] at (-210:2) {$256$};
		
		\node[red!50!black] at (30:3.35) {$123$};
		\node[red!50!black] at (-90:3.35) {$345$};
		\node[red!50!black] at (-210:3.35) {$156$};

		\node[red!50!black] at (-30:2.5) {$234$};
		\node[red!50!black] at (-150:2.5) {$456$};
		\node[red!50!black] at (-270:2.5) {$126$};

\end{tikzpicture}
%\centerline{\scalebox{0.5}{\includegraphics{NonPluckerTwist.eps}}}
\caption{A graph for which the twist uses non-Pl\"ucker cluster variables} \label{NonPluckerTwist}
\end{figure}

\subsection{Double Bruhat cells and the Chamber Ansatz}\label{app: DBC}

Consider a reduced word $\mathbf{s}=s_{i_1}s_{i_2}...s_{i_\ell}$ for an element $w$ in the symmetric group $S_n$. Construct a reduced graph $G_\mathbf{s}$ as follows (an example is given in Figure \ref{fig: ansatzexample}).
\begin{itemize}
	\item Start with a rectangle. Add vertices numbered $1,2,...,n$ down the right side, and $n+1,n+2,...,2n$ up the left side.
	\item Connect each $i$ on the right to $2n-i+1$ on the left with a horizontal line.
	\item Reading left to right, for each $s_i$ in the reduced word $\mathbf{s}$, add a vertical edge between the line containing $i$ and the line containing $i+1$. Color the top vertex white and the bottom vertex black.
	\item Add $2$-valent white vertices to the edges so that the resulting graph is bipartite and every boundary vertex is adjacent to a white vertex.
\end{itemize}
\begin{figure}[h!t]
\begin{tikzpicture}[scale=.5]
	\draw[rounded corners,fill=black!10] (-5,-3) rectangle (5,3);
	\node[dot,fill=white] (a) at (-4,-2) {};
	\node[dot,fill=black!50] (b) at (-2,-2) {};
	\node[dot,fill=white] (c) at (0,-2) {};
	\node[dot,fill=black!50] (d) at (2,-2) {};
	\node[dot,fill=white] (e) at (4,-2) {};
	\node[dot,fill=white] (f) at (-2,0) {};
	\node[dot,fill=black!50] (g) at (0,0) {};
	\node[dot,fill=white] (h) at (2,0) {};
	\node[dot,fill=white] (i) at (0,2) {};
	\draw (-5,-2) to (a) to node[inner sep=0mm] (X') {} (b) to (c) to (d) to node[inner sep=0mm] (Y') {} (e) to (5,-2);
	\draw (-5,0) to (f) to (g) to (h) to (5,0);
	\draw (-5,2) to (i) to (5,2);
	\draw (b) to (f);
	\draw (d) to (h);
	\draw (g) to (i);
	\node[left] at (-5,-2) {\scriptsize $4$};
	\node[left] at (-5,0) {\scriptsize $5$};
	\node[left] at (-5,2) {\scriptsize $6$};
	\node[right] at (5,2) {\scriptsize $1$};
	\node[right] at (5,0) {\scriptsize $2$};
	\node[right] at (5,-2) {\scriptsize $3$};	
\end{tikzpicture}
\caption{The graph $G_\mathbf{s}$ associated to $\mathbf{s} =s_2s_1s_2$}
\label{fig: ansatzexample}
\end{figure}
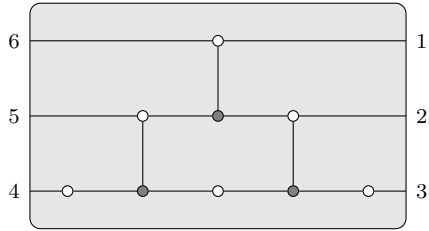
\begin{remark}
The reduced graph $G_\mathbf{s}$ is constructed so that the associated Postnikov diagram is the \emph{pseudoline arrangement} for $\mathbf{s}$ or, equivalently, the \emph{double wiring diagram} for $(\mathbf{s},\mathbf{e})$ \cite{BFZ96,FZ99}.
\end{remark}

Let $\overline{w_0}$ be the antidiagonal $n\times n$ matrix with $1$s in odd columns and $-1$s in even columns. Then the open inclusion
\[ Gl(n,\mathbb{C}) \hookrightarrow Gr(n,2n),\;\;\; A \mapsto \text{rowspan}\left(\begin{bmatrix} A & \overline{w_0} \end{bmatrix}\right)\]
induces an isomorphism from the double Bruhat cell $Gl^{w,e}:= B_+\cap (B_-wB_-)$ to the positroid variety $\Pio(\cM)$ associated to $G_\mathsf{s}$ \cite[Section 6]{KLS13}.

%
%The positroid variety $\Pio(\cM)$ associated to $G_\mathbf{s}$ is the image of the double Bruhat cell $Gl^{w,e}:=B_+\cap (B_-wB_-)$ under the open inclusion
%\[ Gl(n,\mathbb{C}) \hookrightarrow Gr(n,2n),\;\;\; A \mapsto \text{rowspan}\left(\begin{bmatrix} A & \overline{w_0} \end{bmatrix}\right)\]
%where $\overline{w_0}$ is the antidiagonal $n\times n$ matrix with $1$s in odd columns and $-1$s in even columns.

\begin{myexample}
Let $\mathbf{s}= s_2s_1s_2$. The associated positroid $\cM$ contains all $3$-element subsets of $\{1,2,3,4,5,6\}$ except those which contain $\{1,6\}$ and those contained in $\{1,2,5,6\}$. The open positroid variety $\Pio(\cM)$ in $Gr(3,6)$ can be parametrized as the row span of matrices of the form
\begin{equation}\label{eq: unipotent}
\begin{bmatrix}
	a & b & c & 0 & 0 & 1 \\
	0 & d & e & 0 & -1 & 0 \\
	0 & 0 & f & 1 & 0 & 0 
\end{bmatrix}
\end{equation}
such that $acdf(be-cd)\neq0$. This open condition is equivalent to requiring that the matrix
\[ \begin{bmatrix}
	a & b & c \\
	0 & d & e \\
	0 & 0 & f
\end{bmatrix}\in B_+\]
is an element of $B_-wB_-$.
\end{myexample}

Using this isomorphism, the boundary measurement map $\DD$
\[ \mathbb{G}_m^E/\mathbb{G}_m^V \stackrel{\DD}{\longrightarrow} \Pio(\cM)\]% \simeq G^{w,e}\]
is equivalent to an open inclusion of $\mathbb{G}_M^E/\mathbb{G}_m^V$ into the double Bruhat cell $Gl^{w,e}$.\footnote{This example is in the Grassmannian, not the Pl\"ucker cone, and so we use the quotient version of Theorem \ref{main theorem}.}

The domain of the boundary measurement map $\DD$ may also be simplified. Let $E'\subset E$ be the set of edges in $G$ which are either vertical or adjacent to the right boundary. It is a simple exercise to show that the action of the gauge group may be used to set the weight of every edge not in $E$ to $1$, yielding an isomorphism $\mathbb{G}_m^{E}/\mathbb{G}_m^V \garrow{\sim} \mathbb{G}_m^{E'}$.

The resulting incarnation of the boundary measurement map
\[ \mathbb{G}_m^{E'}\garrow{\DD} Gl^{w,e} \]
may be characterized in terms of matrix multiplication. Explicitly, let $d_1,d_2,...d_n$ be non-zero weights on the edges adjacent to the right boundary, and let $t_1,t_2,...,t_\ell$ be non-zero weights on the vertical edges in $E$ (all other weights are $1$). Then the image under $\DD$ is the product
\[ E_{i_1}(t_1)E_{i_2}(t_2)\cdots E_{i_\ell}(t_\ell)D(d_1,d_2,...,d_n) \]
where $D(d_1,d_2,...,d_n)$ is the diagonal matrix with the given entries, and $E_i(t)$ is the matrix with $1$s on the diagonal, $t$ in the $(i+1,i)$-entry, and $0$s elsewhere.

\begin{myexample}
Let $\mathbf{s}= s_2s_1s_2$.
Any set of non-zero edge weights on $G_\mathbf{s}$ is uniquely gauge equivalent to a set of edge weights of following form
%the form given in Figure \ref{fig: ansatz2}.
\[
\begin{tikzpicture}[scale=.5]
	\draw[rounded corners,fill=black!10] (-6,-3) rectangle (6,3);
	\node[dot,fill=white] (a) at (-4,-2) {};
	\node[dot,fill=black!50] (b) at (-2,-2) {};
	\node[dot,fill=white] (c) at (0,-2) {};
	\node[dot,fill=black!50] (d) at (2,-2) {};
	\node[dot,fill=white] (e) at (4,-2) {};
	\node[dot,fill=white] (f) at (-2,0) {};
	\node[dot,fill=black!50] (g) at (0,0) {};
	\node[dot,fill=white] (h) at (2,0) {};
	\node[dot,fill=white] (i) at (0,2) {};
	\draw (-6,-2) to (a) to node[inner sep=0mm] (X') {} (b) to (c) to (d) to node[inner sep=0mm] (Y') {} (e) to (6,-2);
	\draw (-6,0) to (f) to (g) to (h) to (6,0);
	\draw (-6,2) to (i) to (6,2);
	\draw (b) to (f);
	\draw (d) to (h);
	\draw (g) to (i);
	\node[left] at (-6,-2) {\scriptsize $4$};
	\node[left] at (-6,0) {\scriptsize $5$};
	\node[left] at (-6,2) {\scriptsize $6$};
	\node[right] at (6,2) {\scriptsize $1$};
	\node[right] at (6,0) {\scriptsize $2$};
	\node[right] at (6,-2) {\scriptsize $3$};	
	\node[above] at (-3,2) {\small $1$};
	\node[above] at (3,2) {\small $d_1$};
	\node[above] at (-4,0) {\small $1$};
	\node[above] at (-1,0) {\small $1$};
	\node[above] at (1,0) {\small $1$};
	\node[above] at (4,0) {\small $d_2$};
	\node[below] at (-5,-2) {\small $1$};
	\node[below] at (-3,-2) {\small $1$};
	\node[below] at (-1,-2) {\small $1$};
	\node[below] at (1,-2) {\small $1$};
	\node[below] at (3,-2) {\small $1$};
	\node[below] at (5,-2) {\small $d_3$};
	\node[left] at (0,1) {\small $t_2$};
	\node[left] at (-2,-1) {\small $t_1$};
	\node[right] at (2,-1) {\small $t_3$};
\end{tikzpicture}
\]

The boundary measurement map sends these edge weights to the row span of the matrix
\begin{equation*}%\label{eq: unipotent}
\begin{bmatrix}
	d_1 & d_2t_2 & d_3t_2t_3 & 0 & 0 & 1 \\
	0 & d_2 & d_3(t_1+t_3) & 0 & -1 & 0 \\
	0 & 0 & d_3 & 1 & 0 & 0 
\end{bmatrix}
\end{equation*}
The left half of this matrix arises as the product of elementary matrices below.
\begin{align*}%\label{eq: unipotent}
\begin{bmatrix}
	d_1 & d_2t_2 & d_3t_2t_3 \\
	0 & d_2 & d_3(t_1+t_3) \\
	0 & 0 & d_3
\end{bmatrix}
&= 
\begin{bmatrix}
	1 & 0 & 0 \\
	0 & 1 & t_1 \\
	0 & 0 & 1
\end{bmatrix}
\begin{bmatrix}
	1 & t_2 & 0 \\
	0 & 1 & 0 \\
	0 & 0 & 1
\end{bmatrix}
\begin{bmatrix}
	1 & 0 & 0 \\
	0 & 1 & t_3 \\
	0 & 0 & 1
\end{bmatrix}
\begin{bmatrix}
	d_1 & 0 & 0 \\
	0 & d_2 & 0 \\
	0 & 0 & d_3
\end{bmatrix} \\
&= E_2(t_1)E_1(t_2)E_2(t_3)D(d_1,d_2,d_3) \qedhere
\end{align*}
\end{myexample}

The problem of inverting the boundary measurement map $\DD$ for $G_\mathsf{s}$ is then equivalent to the problem of expressing a matrix in $Gl^{w,e}$ as a product of elementary matrices indexed by $\mathbf{s}$. This is a classical problem, whose solution in \cite{BFZ96,FZ99} (dubbed the \emph{Chamber Ansatz}) was an important precursor to both cluster algebras and Postnikov's diagrams. %The Chamber Ansatz gives an explicit formula for each entry, in terms of minors of the \emph{BFZ twist} of the original matrix.

Proposition \ref{inversion} provides an explicit inverse to $\DD$, as the composition $\lpartial \circ \sF \circ \rt$. This composition directly generalizes the Chamber Ansatz, in that the computation exactly replicates the formulas given in \cite{BFZ96}. A key component in this assertion is that our right twist automorphism $\rt$ of $\Pio(\cM)$ induces the \emph{BFZ twist} automorphism of $GL^{w,e}$, as defined in \cite[Section 1.5]{FZ99}.

\begin{myexample}
We continue the running example of $\mathbf{s}=s_2s_1s_2$, and compute the action of $\lpartial \circ \sF \circ \rt$ on the matrix in \eqref{eq: unipotent}. This computation is given in Figure \ref{fig: ansatz}.

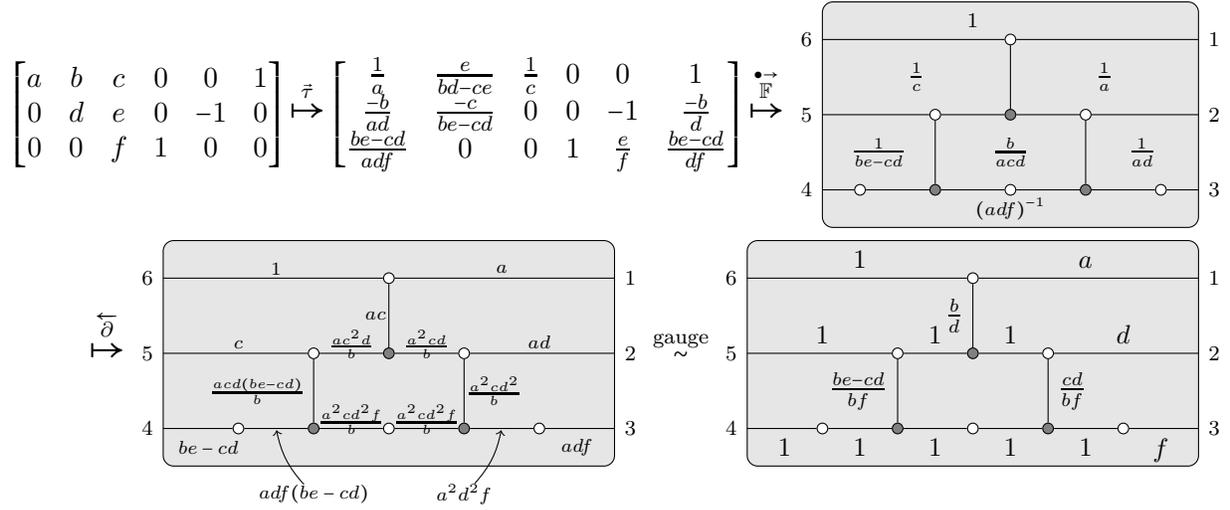
\begin{figure}[h!t]
\begin{align*}
\begin{bmatrix}
	a & b & c & 0 & 0 & 1 \\
	0 & d & e & 0 & -1 & 0 \\
	0 & 0 & f & 1 & 0 & 0 
\end{bmatrix}
\stackrel{\rt}{\scalebox{1.5}{$\mapsto$}} 
\begin{bmatrix}
	\frac{1}{a} & \frac{e}{bd-ce} & \frac{1}{c} & 0 & 0 & 1 \\
	\frac{-b}{ad} & \frac{-c}{be-cd} & 0 & 0 & -1 & \frac{-b}{d} \\
	\frac{be-cd}{adf} & 0 & 0 & 1 & \frac{e}{f} & \frac{be-cd}{df} 
\end{bmatrix}
%\\
%\begin{bmatrix}
%	\frac{1}{a} & \frac{e}{bd-ce} & \frac{1}{c} & 0 & 0 & 1 \\
%	\frac{-b}{ad} & \frac{-c}{be-cd} & 0 & 0 & -1 & \frac{-b}{d} \\
%	\frac{be-cd}{adf} & 0 & 0 & 1 & \frac{e}{f} & \frac{be-cd}{df} 
%\end{bmatrix}
%&
\stackrel{\sF}{\scalebox{1.5}{$\mapsto$}} 
\begin{tikzpicture}[scale=.5,baseline=-.1cm]
	\draw[rounded corners,fill=black!10] (-5,-3) rectangle (5,3);
	\node[dot,fill=white] (a) at (-4,-2) {};
	\node[dot,fill=black!50] (b) at (-2,-2) {};
	\node[dot,fill=white] (c) at (0,-2) {};
	\node[dot,fill=black!50] (d) at (2,-2) {};
	\node[dot,fill=white] (e) at (4,-2) {};
	\node[dot,fill=white] (f) at (-2,0) {};
	\node[dot,fill=black!50] (g) at (0,0) {};
	\node[dot,fill=white] (h) at (2,0) {};
	\node[dot,fill=white] (i) at (0,2) {};
	\draw (-5,-2) to (a) to (b) to (c) to (d) to (e) to (5,-2);
	\draw (-5,0) to (f) to (g) to (h) to (5,0);
	\draw (-5,2) to (i) to (5,2);
	\draw (b) to (f);
	\draw (d) to (h);
	\draw (g) to (i);
	\node[left] at (-5,-2) {\scriptsize $4$};
	\node[left] at (-5,0) {\scriptsize $5$};
	\node[left] at (-5,2) {\scriptsize $6$};
	\node[right] at (5,2) {\scriptsize $1$};
	\node[right] at (5,0) {\scriptsize $2$};
	\node[right] at (5,-2) {\scriptsize $3$};	
	\node at (-1,2.5) {\scriptsize $1$};
	\node at (-2.5,1) {\scriptsize $\frac{1}{c}$};
	\node at (2.5,1) {\scriptsize $\frac{1}{a}$};
	\node at (-3.5,-1) {\scriptsize $\frac{1}{be-cd}$};
	\node at (0,-1) {\scriptsize $\frac{b}{acd}$};
	\node at (3.5,-1) {\scriptsize $\frac{1}{ad}$};
	\node at (0,-2.5) {\tiny $(adf)^{-1}$};
\end{tikzpicture}&
\\
%\begin{tikzpicture}[baseline=(current bounding box.center),scale=.45]
%	\draw[rounded corners,fill=black!10] (-5,-3) rectangle (5,3);
%	\node[dot,fill=white] (a) at (-4,-2) {};
%	\node[dot,fill=black!50] (b) at (-2,-2) {};
%	\node[dot,fill=white] (c) at (0,-2) {};
%	\node[dot,fill=black!50] (d) at (2,-2) {};
%	\node[dot,fill=white] (e) at (4,-2) {};
%	\node[dot,fill=white] (f) at (-2,0) {};
%	\node[dot,fill=black!50] (g) at (0,0) {};
%	\node[dot,fill=white] (h) at (2,0) {};
%	\node[dot,fill=white] (i) at (0,2) {};
%	\draw (-5,-2) to (a) to (b) to (c) to (d) to (e) to (5,-2);
%	\draw (-5,0) to (f) to (g) to (h) to (5,0);
%	\draw (-5,2) to (i) to (5,2);
%	\draw (b) to (f);
%	\draw (d) to (h);
%	\draw (g) to (i);
%	\node[left] at (-5,-2) {\scriptsize $1$};
%	\node[left] at (-5,0) {\scriptsize $2$};
%	\node[left] at (-5,2) {\scriptsize $3$};
%	\node[right] at (5,2) {\scriptsize $4$};
%	\node[right] at (5,0) {\scriptsize $5$};
%	\node[right] at (5,-2) {\scriptsize $6$};	
%	\node at (-1,2.5) {\scriptsize $1$};
%	\node at (-2.5,1) {\scriptsize $\frac{1}{c}$};
%	\node at (2.5,1) {\scriptsize $\frac{1}{a}$};
%	\node at (-3.5,-1) {\scriptsize $\frac{1}{be-cd}$};
%	\node at (0,-1) {\scriptsize $\frac{b}{acd}$};
%	\node at (3.5,-1) {\scriptsize $\frac{1}{ad}$};
%	\node at (0,-2.5) {\tiny $(adf)^{-1}$};
%\end{tikzpicture}
\stackrel{\lpartial}{\scalebox{1.5}{$\mapsto$}} 
\begin{tikzpicture}[scale=.5,baseline=-.1cm]
	\draw[rounded corners,fill=black!10] (-6,-3) rectangle (6,3);
	\node[dot,fill=white] (a) at (-4,-2) {};
	\node[dot,fill=black!50] (b) at (-2,-2) {};
	\node[dot,fill=white] (c) at (0,-2) {};
	\node[dot,fill=black!50] (d) at (2,-2) {};
	\node[dot,fill=white] (e) at (4,-2) {};
	\node[dot,fill=white] (f) at (-2,0) {};
	\node[dot,fill=black!50] (g) at (0,0) {};
	\node[dot,fill=white] (h) at (2,0) {};
	\node[dot,fill=white] (i) at (0,2) {};
	\draw (-6,-2) to (a) to node[inner sep=0mm] (X') {} (b) to (c) to (d) to node[inner sep=0mm] (Y') {} (e) to (6,-2);
	\draw (-6,0) to (f) to (g) to (h) to (6,0);
	\draw (-6,2) to (i) to (6,2);
	\draw (b) to (f);
	\draw (d) to (h);
	\draw (g) to (i);
	\node[left] at (-6,-2) {\scriptsize $4$};
	\node[left] at (-6,0) {\scriptsize $5$};
	\node[left] at (-6,2) {\scriptsize $6$};
	\node[right] at (6,2) {\scriptsize $1$};
	\node[right] at (6,0) {\scriptsize $2$};
	\node[right] at (6,-2) {\scriptsize $3$};	
	\node[] at (-3,2.25) {\tiny $1$};
	\node[] at (3,2.25) {\tiny $a$};
	\node[] at (-4,.25) {\tiny $c$};
	\node[] at (-1,.25) {\tiny $\frac{ac^2d}{b}$};
	\node[] at (1,.25) {\tiny $\frac{a^2cd}{b}$};
	\node[] at (4,.3) {\tiny $ad$};
	\node[] at (-4.8,-2.5) {\tiny $be-cd$};
	\node[inner sep=0mm] (X) at (-2,-3.75) {\tiny $adf(be-cd)$};
	\node[] at (-1,-1.75) {\tiny $\frac{a^2cd^2f}{b}$};
	\node[] at (1,-1.75) {\tiny $\frac{a^2cd^2f}{b}$};
	\node[inner sep=0mm] (Y) at (2,-3.75) {\tiny $a^2d^2f$};
	\node[] at (5,-2.5) {\tiny $adf$};
	\node[] at (-.35,1) {\tiny $ac$};
	\node[] at (-3.5,-1) {\tiny $\frac{acd(be-cd)}{b}$};
	\node[] at (2.8,-1) {\tiny $\frac{a^2cd^2}{b}$};
	\draw[->,relative,out=15,in=165] (X) to (X');
	\draw[->,relative,out=-15,in=195] (Y) to (Y');
\end{tikzpicture}
\stackrel{\text{gauge}}{\sim} 
\begin{tikzpicture}[scale=.5,baseline=-.1cm]
	\draw[rounded corners,fill=black!10] (-6,-3) rectangle (6,3);
	\node[dot,fill=white] (a) at (-4,-2) {};
	\node[dot,fill=black!50] (b) at (-2,-2) {};
	\node[dot,fill=white] (c) at (0,-2) {};
	\node[dot,fill=black!50] (d) at (2,-2) {};
	\node[dot,fill=white] (e) at (4,-2) {};
	\node[dot,fill=white] (f) at (-2,0) {};
	\node[dot,fill=black!50] (g) at (0,0) {};
	\node[dot,fill=white] (h) at (2,0) {};
	\node[dot,fill=white] (i) at (0,2) {};
	\draw (-6,-2) to (a) to node[inner sep=0mm] (X') {} (b) to (c) to (d) to node[inner sep=0mm] (Y') {} (e) to (6,-2);
	\draw (-6,0) to (f) to (g) to (h) to (6,0);
	\draw (-6,2) to (i) to (6,2);
	\draw (b) to (f);
	\draw (d) to (h);
	\draw (g) to (i);
	\node[left] at (-6,-2) {\scriptsize $4$};
	\node[left] at (-6,0) {\scriptsize $5$};
	\node[left] at (-6,2) {\scriptsize $6$};
	\node[right] at (6,2) {\scriptsize $1$};
	\node[right] at (6,0) {\scriptsize $2$};
	\node[right] at (6,-2) {\scriptsize $3$};	
	\node[above] at (-3,2) {\small $1$};
	\node[above] at (3,2) {\small $a$};
	\node[above] at (-4,0) {\small $1$};
	\node[above] at (-1,0) {\small $1$};
	\node[above] at (1,0) {\small $1$};
	\node[above] at (4,0) {\small $d$};
	\node[below] at (-5,-2) {\small $1$};
	\node[below] at (-3,-2) {\small $1$};
	\node[below] at (-1,-2) {\small $1$};
	\node[below] at (1,-2) {\small $1$};
	\node[below] at (3,-2) {\small $1$};
	\node[below] at (5,-2) {\small $f$};
	\node[left] at (0,1) {\small $\frac{b}{d}$};
	\node[left] at (-2,-1) {\small $\frac{be-cd}{bf}$};
	\node[right] at (2,-1) {\small $\frac{cd}{bf}$};
\end{tikzpicture}&
\end{align*}
\caption{Explicitly inverting the boundary measurement map} \label{fig: ansatz}
\end{figure}

In the last step, gauge transformation has been used to normalize the weight of each edge not in $E'$ to $1$.
Since the result is the preimage of \eqref{eq: unipotent} under $\DD$, we have the following matrix identity.
\[ \footnotesize \begin{bmatrix}
a & b & c \\
0 & d & e \\
0 & 0 & f
\end{bmatrix}=
\begin{bmatrix}
1 & 0 & 0 \\
0 & 1 & \frac{be-cd}{bf} \\
0 & 0 & 1
\end{bmatrix}\begin{bmatrix}
1 & \frac{b}{d} & 0 \\
0 & 1 & 0 \\
0 & 0 & 1
\end{bmatrix}\begin{bmatrix}
1 & 0 & 0 \\
0 & 1 & \frac{cd}{bf} \\
0 & 0 & 1
\end{bmatrix}\begin{bmatrix}
a & 0 & 0 \\
0 & d & 0 \\
0 & 0 & f
\end{bmatrix}\qedhere \]
\end{myexample}

\begin{remark}
%We have outlined how to derive the Chamber Ansatz from Proposition \ref{inversion} for a reduced word $\mathbf{s}$ parametrizing a cell in $Gl^{w,e}$. 
For a general double Bruhat cell $Gl^{w,v}$, one would choose a \emph{double reduced word} $\mathbf{s}$ for $(w,v)$ (see Section 1.2 in \cite{FZ99}). The construction of $G_{\mathbf{s}}$ is almost the same, except simple transpositions for $v$ determine vertical edges with black top vertex and white bottom vertex. The boundary measurement map is then equivalent to a product of $D$, $E_i$s and $F_i$s, where $F_i(t)$ is the elementary matrix with $t$ in the $(i,i+1)$-entry. Consequently, inverting the boundary measurement map recovers the \emph{formulas for factorization parameters} in \cite[Theorem 4.9]{FZ99}.
\end{remark}

\subsection{Three flags and plane partitions}

In this section, we will discuss a positroid of rank $m$ on $3m$ elements, which we will name $(u_1, u_2, \ldots, u_m, v_1, \ldots, v_m, w_1, \ldots, w_m)$.
The affine permutation on $\{ 1,2, \ldots, 3m \}$ is
\[ f(i) = \begin{cases} 2m+1-i & 1 \leq i \leq m \\ 4m+1-i & m+1 \leq i \leq 2m \\ 6m+1-i & 2m+1 \leq i \leq 3m \\ \end{cases}. \]
The defining rank conditions are
\[ \begin{array}{lcl}
  \mathrm{rank}( w_{m-r+1}, \ldots, w_{m-1}, w_{m} , u_1, u_2, \ldots, u_r) &=& r\\
 \mathrm{rank}( u_{m-r+1}, \ldots, u_{m-1}, u_{m} , v_1, v_2, \ldots, v_r) &=& r \\
 \mathrm{rank}( v_{m-r+1}, \ldots, v_{m-1}, v_{m} , w_1, w_2, \ldots, w_r) &=& r\\
 \end{array} \]
 and the consequences of these conditions.
 
 Let 
\[ \begin{array}{lclcl}
A_r &=&  \mathrm{Span}( w_{m-r+1}, \ldots, w_{m-1}, w_{m}) &=& \mathrm{Span}( u_1, u_2, \ldots, u_r)\\
B_r &=&  \mathrm{Span}( u_{m-r+1}, \ldots, u_{m-1}, u_{m}) &=& \mathrm{Span}( v_1, v_2, \ldots, v_r)\\
C_r &=&  \mathrm{Span}( v_{m-r+1}, \ldots, v_{m-1}, v_{m}) &=& \mathrm{Span}( w_1, w_2, \ldots, w_r) \\
 \end{array} \]
 So $A_{\bullet}$, $B_{\bullet}$ and $C_{\bullet}$ are three transverse complete flags in $m$-spaces. Conversely, any three transverse flags $A_{\bullet}$, $B_{\bullet}$ and $C_{\bullet}$ in $m$-space can be realized in this way, and uniquely so up to rescaling the $u_i$, $v_i$ and $w_i$. For example, $u_i$ can be recovered up to scaling by the formula $\mathrm{Span}(u_i) = A_i \cap B_{m-i+1}$.
So, for this positroid, $\Pio(\cM)/\GG_m^{3m}$ is the space of three transverse flags in $m$-space, up to symmetries of $m$-dimensional space.
This is the generalized Teichm\"uller space for $GL_n$ local systems on a disc with three marked boundary points~\cite{FG06}.

Let $u'_i$, $v'_i$ and $w'_i$ be the vectors of the twist and let $A'_{\bullet}$, $B'_{\bullet}$ and $C'_{\bullet}$ be the corresponding flags.
By the definition of the twist, $u'_i$ is perpendicular to $\mathrm{Span}(u_{i+1}, u_{i+2}, \ldots, u_m, v_{m-i+2}, \ldots, v_{m-1}, v_{m} ) = B_{m-i} + C_{i-1}$.
We compute that $A'_i = \mathrm{Span}(u_1, u_2, \ldots, u_i)$ is the orthogonal complement of $B_i$. So $A'_{\bullet}$ is the flag $B_{\bullet}^{\perp}$, whose $i$-th subspace is orthogonal to the $(m-i)$-th subspace of the flag $B_{\bullet}$. 
Continuing in this manner, $(A'_{\bullet}, B'_{\bullet}, C'_{\bullet}) = (B^{\perp}_{\bullet}, C^{\perp}_{\bullet}, A^{\perp}_{\bullet})$.

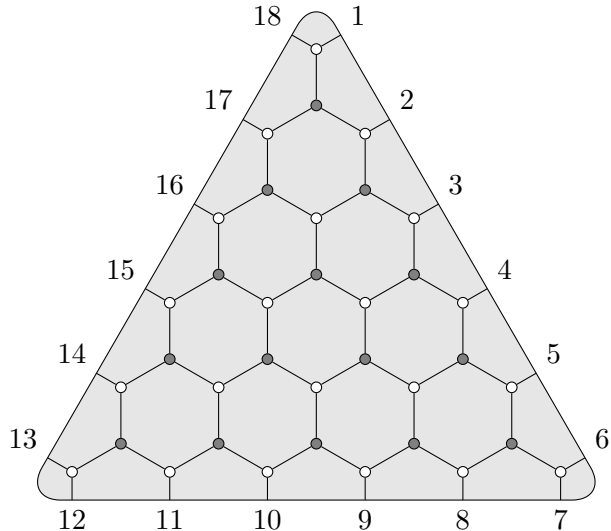
\begin{figure}[h!t]
\begin{tikzpicture}[scale=.75]
	\node[coordinate] (a) at \hexcoor{0}{3}{3} {};
	\node[coordinate] (b) at \hexcoor{0}{6}{0} {};
	\node[coordinate] (c) at \hexcoor{6}{0}{0} {};
	\node[coordinate] (d) at \hexcoor{0}{0}{6} {};
	\draw[rounded corners=5mm,fill=black!10] (a) to (b) to (c) to (d) to (a);
	
	\node[coordinate] (1) at \hexcoor{5.5}{.5}{0} {};
	\node[coordinate] (2) at \hexcoor{4.5}{1.5}{0} {};
	\node[coordinate] (3) at \hexcoor{3.5}{2.5}{0} {};
	\node[coordinate] (4) at \hexcoor{2.5}{3.5}{0} {};
	\node[coordinate] (5) at \hexcoor{1.5}{4.5}{0} {};
	\node[coordinate] (6) at \hexcoor{.5}{5.5}{0} {};
	\node[coordinate] (7) at \hexcoor{0}{5.5}{.5} {};
	\node[coordinate] (8) at \hexcoor{0}{4.5}{1.5} {};
	\node[coordinate] (9) at \hexcoor{0}{3.5}{2.5} {};
	\node[coordinate] (10) at \hexcoor{0}{2.5}{3.5} {};
	\node[coordinate] (11) at \hexcoor{0}{1.5}{4.5} {};
	\node[coordinate] (12) at \hexcoor{0}{0.5}{5.5} {};
	\node[coordinate] (13) at \hexcoor{0.5}{0}{5.5} {};
	\node[coordinate] (14) at \hexcoor{1.5}{0}{4.5} {};
	\node[coordinate] (15) at \hexcoor{2.5}{0}{3.5} {};
	\node[coordinate] (16) at \hexcoor{3.5}{0}{2.5} {};
	\node[coordinate] (17) at \hexcoor{4.5}{0}{1.5} {};
	\node[coordinate] (18) at \hexcoor{5.5}{0}{0.5} {};

	\node[dot,fill=white] (500) at \hexcoor{5}{0}{0} {};
	\node[dot,fill=white] (410) at \hexcoor{4}{1}{0} {};
	\node[dot,fill=white] (401) at \hexcoor{4}{0}{1} {};
	\node[dot,fill=white] (320) at \hexcoor{3}{2}{0} {};
	\node[dot,fill=white] (311) at \hexcoor{3}{1}{1} {};
	\node[dot,fill=white] (302) at \hexcoor{3}{0}{2} {};
	\node[dot,fill=white] (230) at \hexcoor{2}{3}{0} {};
	\node[dot,fill=white] (221) at \hexcoor{2}{2}{1} {};
	\node[dot,fill=white] (212) at \hexcoor{2}{1}{2} {};
	\node[dot,fill=white] (203) at \hexcoor{2}{0}{3} {};
	\node[dot,fill=white] (140) at \hexcoor{1}{4}{0} {};
	\node[dot,fill=white] (131) at \hexcoor{1}{3}{1} {};
	\node[dot,fill=white] (122) at \hexcoor{1}{2}{2} {};
	\node[dot,fill=white] (113) at \hexcoor{1}{1}{3} {};
	\node[dot,fill=white] (104) at \hexcoor{1}{0}{4} {};
	\node[dot,fill=white] (050) at \hexcoor{0}{5}{0} {};
	\node[dot,fill=white] (041) at \hexcoor{0}{4}{1} {};
	\node[dot,fill=white] (032) at \hexcoor{0}{3}{2} {};
	\node[dot,fill=white] (023) at \hexcoor{0}{2}{3} {};
	\node[dot,fill=white] (014) at \hexcoor{0}{1}{4} {};
	\node[dot,fill=white] (005) at \hexcoor{0}{0}{5} {};
	\node[dot,fill=black!50] (400) at \hexcoor{4}{0}{0} {};
	\node[dot,fill=black!50] (310) at \hexcoor{3}{1}{0} {};
	\node[dot,fill=black!50] (301) at \hexcoor{3}{0}{1} {};
	\node[dot,fill=black!50] (220) at \hexcoor{2}{2}{0} {};
	\node[dot,fill=black!50] (211) at \hexcoor{2}{1}{1} {};
	\node[dot,fill=black!50] (202) at \hexcoor{2}{0}{2} {};
	\node[dot,fill=black!50] (130) at \hexcoor{1}{3}{0} {};
	\node[dot,fill=black!50] (121) at \hexcoor{1}{2}{1} {};
	\node[dot,fill=black!50] (112) at \hexcoor{1}{1}{2} {};
	\node[dot,fill=black!50] (103) at \hexcoor{1}{0}{3} {};
	\node[dot,fill=black!50] (040) at \hexcoor{0}{4}{0} {};
	\node[dot,fill=black!50] (031) at \hexcoor{0}{3}{1} {};
	\node[dot,fill=black!50] (022) at \hexcoor{0}{2}{2} {};
	\node[dot,fill=black!50] (013) at \hexcoor{0}{1}{3} {};
	\node[dot,fill=black!50] (004) at \hexcoor{0}{0}{4} {};
		
	\draw (1) to (500) to (400) to (410);
	\draw (2) to (410) to (310) to (320);
	\draw (3) to (320) to (220) to (230);
	\draw (4) to (230) to (130) to (140);
	\draw (5) to (140) to (040) to (050) to (6);
	\draw (400) to (401) to (301) to (311);
	\draw (310) to (311) to (211) to (221);
	\draw (220) to (221) to (121) to (131);
	\draw (130) to (131) to (031) to (041) to (040);
	\draw (301) to (302) to (202) to (212);
	\draw (211) to (212) to (112) to (122);
	\draw (121) to (122) to (022) to (032) to (031);
	\draw (202) to (203) to (103) to (113);
	\draw (112) to (113) to (013) to (023) to (022);
	\draw (103) to (104) to (004) to (014) to (013);
	\draw (004) to (005) to (13);
	\draw (7) to (050);
	\draw (8) to (041);
	\draw (9) to (032);
	\draw (10) to (023);
	\draw (11) to (014);
	\draw (12) to (005);
	\draw (14) to (104);
	\draw (15) to (203);
	\draw (16) to (302);
	\draw (17) to (401);
	\draw (18) to (500);
	
	\node[above right] at (1) {$1$};
	\node[above right] at (2) {$2$};
	\node[above right] at (3) {$3$};
	\node[above right] at (4) {$4$};
	\node[above right] at (5) {$5$};
	\node[above right] at (6) {$6$};
	\node[below] at (7) {$7$};
	\node[below] at (8) {$8$};
	\node[below] at (9) {$9$};
	\node[below] at (10) {$10$};
	\node[below] at (11) {$11$};
	\node[below] at (12) {$12$};
	\node[above left] at (13) {$13$};
	\node[above left] at (14) {$14$};
	\node[above left] at (15) {$15$};
	\node[above left] at (16) {$16$};
	\node[above left] at (17) {$17$};
	\node[above left] at (18) {$18$};
\end{tikzpicture}
\caption{The reduced graph for three transverse $GL_6$ flags}
\label{TriangleGraph}
\end{figure}

There is a unique\footnote{By unique here, we allow isotopies and the first two types of moves, but not the third (urban renewal).} reduced graph for this positroid, shown in Figure~\ref{TriangleGraph}.
The face labels are indexed by $(a,b,c) \in \ZZ_{\geq 0}^3$ with $a+b+c=m$, and are 
\[ q_{abc} := \Delta_{12\cdots a\ (m+1) (m+2) \cdots (m+b) \ (2m+1) (2m+2) \cdots (2m+c)} \]%= (u_1 \wedge u_2 \wedge \cdots \wedge u_a ) \wedge  (v_1 \wedge v_2 \wedge \cdots \wedge v_b ) \wedge (w_1 \wedge w_2 \wedge \cdots \wedge w_c) . \]
Monomials in the $q_{abc}$ which are invariant under rescaling the vectors $u$, $v$ and $w$ form coordinates on the moduli space of triples of transverse flags.
Let $q'_{abc}$ be the corresponding functions for the twisted vectors.
So Proposition~\ref{dimer sum is twist} writes the $q'_{abc}$ as Laurent polynomials in the $q_{abc}$, where we sum over dimer configurations on the graph in Figure~\ref{TriangleGraph}.
Once we eliminate forced edges from these graphs, we see that matchings with the given boundary are in bijection with rhombus tilings of a hexagon with side length $(a,b,c,a,b,c)$ (see Figure~\ref{Hexagon}), which are in turn in bijection to plane partitions in an $a \times b \times c$ box. Hence, the twisted coordinate $q'_{abc}$ is given by a sum over plane partitions of a box, one of the most classically studied questions in enumerative combinatorics, beginning with Major MacMahon in 1916.

%we find that $q'_{abc}$ is given by a sum over dimer placements on a hexagon with side lengths $(a,b,c,a,b,c)$, which are equivalent to plane partitions in an $a \times b \times c$ box. (See Figure~\ref{Hexagon}.)This is one of the most classically studied questions in enumerative combinatorics, beginning with Major MacMahon in 1916.

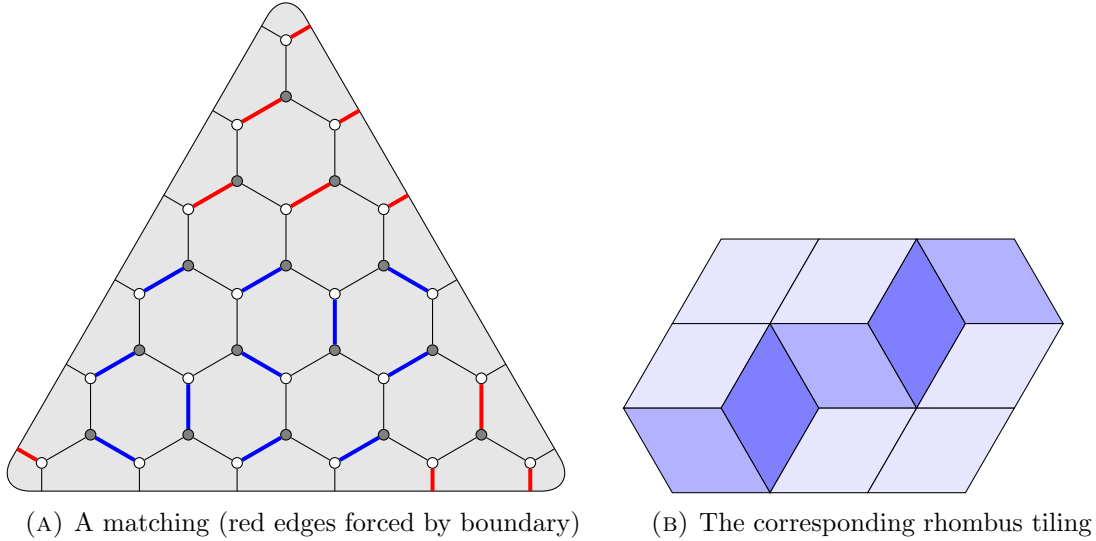
\begin{figure}[h!t]
\begin{subfigure}[b]{.5\textwidth}
\begin{tikzpicture}[scale=.75,baseline=(current bounding box.center)]
	\node[coordinate] (a) at \hexcoor{0}{3}{3} {};
	\node[coordinate] (b) at \hexcoor{0}{6}{0} {};
	\node[coordinate] (c) at \hexcoor{6}{0}{0} {};
	\node[coordinate] (d) at \hexcoor{0}{0}{6} {};
	\draw[rounded corners=5mm,fill=black!10] (a) to (b) to (c) to (d) to (a);

	\node[coordinate] (1) at \hexcoor{5.5}{.5}{0} {};
	\node[coordinate] (2) at \hexcoor{4.5}{1.5}{0} {};
	\node[coordinate] (3) at \hexcoor{3.5}{2.5}{0} {};
	\node[coordinate] (4) at \hexcoor{2.5}{3.5}{0} {};
	\node[coordinate] (5) at \hexcoor{1.5}{4.5}{0} {};
	\node[coordinate] (6) at \hexcoor{.5}{5.5}{0} {};
	\node[coordinate] (7) at \hexcoor{0}{5.5}{.5} {};
	\node[coordinate] (8) at \hexcoor{0}{4.5}{1.5} {};
	\node[coordinate] (9) at \hexcoor{0}{3.5}{2.5} {};
	\node[coordinate] (10) at \hexcoor{0}{2.5}{3.5} {};
	\node[coordinate] (11) at \hexcoor{0}{1.5}{4.5} {};
	\node[coordinate] (12) at \hexcoor{0}{0.5}{5.5} {};
	\node[coordinate] (13) at \hexcoor{0.5}{0}{5.5} {};
	\node[coordinate] (14) at \hexcoor{1.5}{0}{4.5} {};
	\node[coordinate] (15) at \hexcoor{2.5}{0}{3.5} {};
	\node[coordinate] (16) at \hexcoor{3.5}{0}{2.5} {};
	\node[coordinate] (17) at \hexcoor{4.5}{0}{1.5} {};
	\node[coordinate] (18) at \hexcoor{5.5}{0}{0.5} {};

	\node[dot,fill=white] (500) at \hexcoor{5}{0}{0} {};
	\node[dot,fill=white] (410) at \hexcoor{4}{1}{0} {};
	\node[dot,fill=white] (401) at \hexcoor{4}{0}{1} {};
	\node[dot,fill=white] (320) at \hexcoor{3}{2}{0} {};
	\node[dot,fill=white] (311) at \hexcoor{3}{1}{1} {};
	\node[dot,fill=white] (302) at \hexcoor{3}{0}{2} {};
	\node[dot,fill=white] (230) at \hexcoor{2}{3}{0} {};
	\node[dot,fill=white] (221) at \hexcoor{2}{2}{1} {};
	\node[dot,fill=white] (212) at \hexcoor{2}{1}{2} {};
	\node[dot,fill=white] (203) at \hexcoor{2}{0}{3} {};
	\node[dot,fill=white] (140) at \hexcoor{1}{4}{0} {};
	\node[dot,fill=white] (131) at \hexcoor{1}{3}{1} {};
	\node[dot,fill=white] (122) at \hexcoor{1}{2}{2} {};
	\node[dot,fill=white] (113) at \hexcoor{1}{1}{3} {};
	\node[dot,fill=white] (104) at \hexcoor{1}{0}{4} {};
	\node[dot,fill=white] (050) at \hexcoor{0}{5}{0} {};
	\node[dot,fill=white] (041) at \hexcoor{0}{4}{1} {};
	\node[dot,fill=white] (032) at \hexcoor{0}{3}{2} {};
	\node[dot,fill=white] (023) at \hexcoor{0}{2}{3} {};
	\node[dot,fill=white] (014) at \hexcoor{0}{1}{4} {};
	\node[dot,fill=white] (005) at \hexcoor{0}{0}{5} {};
	\node[dot,fill=black!50] (400) at \hexcoor{4}{0}{0} {};
	\node[dot,fill=black!50] (310) at \hexcoor{3}{1}{0} {};
	\node[dot,fill=black!50] (301) at \hexcoor{3}{0}{1} {};
	\node[dot,fill=black!50] (220) at \hexcoor{2}{2}{0} {};
	\node[dot,fill=black!50] (211) at \hexcoor{2}{1}{1} {};
	\node[dot,fill=black!50] (202) at \hexcoor{2}{0}{2} {};
	\node[dot,fill=black!50] (130) at \hexcoor{1}{3}{0} {};
	\node[dot,fill=black!50] (121) at \hexcoor{1}{2}{1} {};
	\node[dot,fill=black!50] (112) at \hexcoor{1}{1}{2} {};
	\node[dot,fill=black!50] (103) at \hexcoor{1}{0}{3} {};
	\node[dot,fill=black!50] (040) at \hexcoor{0}{4}{0} {};
	\node[dot,fill=black!50] (031) at \hexcoor{0}{3}{1} {};
	\node[dot,fill=black!50] (022) at \hexcoor{0}{2}{2} {};
	\node[dot,fill=black!50] (013) at \hexcoor{0}{1}{3} {};
	\node[dot,fill=black!50] (004) at \hexcoor{0}{0}{4} {};
		
	\draw (1) to (500) to (400) to (410);
	\draw (2) to (410) to (310) to (320);
	\draw (3) to (320) to (220) to (230);
	\draw (4) to (230) to (130) to (140);
	\draw (5) to (140) to (040) to (050) to (6);
	\draw (400) to (401) to (301) to (311);
	\draw (310) to (311) to (211) to (221);
	\draw (220) to (221) to (121) to (131);
	\draw (130) to (131) to (031) to (041) to (040);
	\draw (301) to (302) to (202) to (212);
	\draw (211) to (212) to (112) to (122);
	\draw (121) to (122) to (022) to (032) to (031);
	\draw (202) to (203) to (103) to (113);
	\draw (112) to (113) to (013) to (023) to (022);
	\draw (103) to (104) to (004) to (014) to (013);
	\draw (004) to (005) to (13);
	\draw (7) to (050);
	\draw (8) to (041);
	\draw (9) to (032);
	\draw (10) to (023);
	\draw (11) to (014);
	\draw (12) to (005);
	\draw (14) to (104);
	\draw (15) to (203);
	\draw (16) to (302);
	\draw (17) to (401);
	\draw (18) to (500);
	
	\draw[matching,red] (1) to (500);
	\draw[matching,red] (2) to (410);
	\draw[matching,red] (3) to (320);
	\draw[matching,red] (400) to (401);
	\draw[matching,red] (310) to (311);
	\draw[matching,red] (301) to (302);
	\draw[matching,red] (7) to (050);
	\draw[matching,red] (8) to (041);
	\draw[matching,red] (040) to (140);
	\draw[matching,red] (13) to (005);
	
	\draw[matching] (203) to (202);
	\draw[matching] (212) to (211);
	\draw[matching] (221) to (121);
	\draw[matching] (230) to (220);
	\draw[matching] (104) to (103);
	\draw[matching] (113) to (013);
	\draw[matching] (122) to (112);
	\draw[matching] (131) to (130);
	\draw[matching] (014) to (004);
	\draw[matching] (023) to (022);
	\draw[matching] (032) to (031);
\end{tikzpicture}
\caption{A matching (red edges forced by boundary)} 
\end{subfigure}
\begin{subfigure}[b]{.4\textwidth}
\begin{tikzpicture}[scale=.75,baseline=(current bounding box.center)]
	\node[coordinate] (303) at \hexcoor{3}{0}{3} {};	
	\node[coordinate] (312) at \hexcoor{3}{1}{2} {};	
	\node[coordinate] (321) at \hexcoor{3}{2}{1} {};	
	\node[coordinate] (330) at \hexcoor{3}{3}{0} {};	
	\node[coordinate] (204) at \hexcoor{2}{0}{4} {};	
	\node[coordinate] (213) at \hexcoor{2}{1}{3} {};	
	\node[coordinate] (222) at \hexcoor{2}{2}{2} {};	
	\node[coordinate] (231) at \hexcoor{2}{3}{1} {};	
	\node[coordinate] (240) at \hexcoor{2}{4}{0} {};	
	\node[coordinate] (105) at \hexcoor{1}{0}{5} {};	
	\node[coordinate] (114) at \hexcoor{1}{1}{4} {};	
	\node[coordinate] (123) at \hexcoor{1}{2}{3} {};	
	\node[coordinate] (132) at \hexcoor{1}{3}{2} {};	
	\node[coordinate] (141) at \hexcoor{1}{4}{1} {};	
	\node[coordinate] (015) at \hexcoor{0}{1}{5} {};	
	\node[coordinate] (024) at \hexcoor{0}{2}{4} {};	
	\node[coordinate] (033) at \hexcoor{0}{3}{3} {};	
	\node[coordinate] (042) at \hexcoor{0}{4}{2} {};	
	
	\draw[fill=blue!10] (303) to (312) to (213) to (204) to (303);
	\draw[fill=blue!10] (312) to (321) to (222) to (213) to (312);
	\draw[fill=blue!50] (321) to (231) to (132) to (222) to (321);
	\draw[fill=blue!30] (321) to (330) to (240) to (231) to (321);
	\draw[fill=blue!10] (204) to (213) to (114) to (105) to (204);
	\draw[fill=blue!50] (213) to (123) to (024) to (114) to (213);
	\draw[fill=blue!30] (213) to (222) to (132) to (123) to (213);
	\draw[fill=blue!10] (231) to (240) to (141) to (132) to (231);
	\draw[fill=blue!30] (105) to (114) to (024) to (015) to (105);
	\draw[fill=blue!10] (123) to (132) to (033) to (024) to (123);
	\draw[fill=blue!10] (132) to (141) to (042) to (033) to (132);

\end{tikzpicture}
\caption{The corresponding rhombus tiling}
\end{subfigure}
\caption{An example of the correspondence between matchings with boundary $(1,2,3,7,8,13)$ and rhombus tilings of the $(3,2,1,3,2,1)$ hexagon}
\label{Hexagon}
\end{figure}

\section{The lattice structure on matchings} \label{sec Propp}

The set of matchings of $G$ has a natural partial ordering, which makes the set of matchings with a fixed boundary into a combinatorial lattice.  As a consequence, these sets have unique minimal and maximal elements.  In this appendix, we demonstrate that the matchings $\vecM(f)$ and $\cevM(f)$ can be described in terms of this partial order, without reference to strands. Specifically, $\vecM(f)$ is the unique minimal matching with boundary $\sI(f)$, and $\cevM(f)$ is the unique maximal matching with boundary $\tI(f)$.\footnote{The boundaries of $\vecM(f)$ and $\cevM(f)$ are distinct, and so these are extremal elements in \emph{different} posets.}

\subsection{Lattice structure on matchings} \label{sec lattice}

%The matching $\vecM(f)$ has a characterization in terms of a lattice structure on the set of matchings of $G$ with boundary $I$, which we will describe now.
%We find this structure beautiful and enlightening. However, we must confess that the only result from this section which will be used in the rest of the paper is Corollary~\ref{boundary unique} and its corollary, Proposition~\ref{in open cell}, and these can be proven directly without this theory. (We sketch such a proof of Corollary~\ref{boundary unique}
%after the statement of that result.)

Let $G$ be a reduced graph and let $M$ be a matching of $G$.
Let $f$ be an internal face of $G$ such that $M$ contains exactly half the edges in the boundary of $f$, the most possible.  The \newword{swivel} of $M$ at $f$ is the matching $M'$ which contains the other half of the edges in the boundary of $f$ and is otherwise the same as $M$.\footnote{Propp uses the word ``twist" rather than ``swivel", but that word has another meaning for us.}  The new matching $M'$ also has boundary $\partial M'=\partial M=I$.\footnote{Note that, by Lemma~\ref{clean dual}, there are no topological subtleties in defining the boundary of an internal face.}

\begin{figure}[h!t]
\centering
\begin{tikzpicture}%[baseline=(current bounding box.center)]
	\node (left) at (0,0) {
\begin{tikzpicture}[scale=.55]
		\path[use as  bounding box] (-4.5,-4.5) rectangle (4.5,4.5);
		\draw[fill=black!10] (0,0) circle (4);
		\node[invisible] (1) at (180:4) {};
		\node[invisible] (2) at (120:4) {};
		\node[invisible] (3) at (60:4) {};
		\node[invisible] (4) at (0:4) {};
		\node[invisible] (5) at (-60:4) {};
		\node[invisible] (6) at (-120:4) {};
		
		\node[left] at (1) {$5$};
		\node[above left] at (2) {$6$};
		\node[above right] at (3) {$1$};
		\node[right] at (4) {$2$};
		\node[below right] at (5) {$3$};
		\node[below left] at (6) {$4$};
		\node at (0,-.65) {$F$};
		
		\node[dot, fill=white] (a) at (-2.75,.25) {};
		\node[dot, fill=black!50] (b) at (-1.5,2) {};
		\node[dot, fill=white] (c) at (1.5,2) {};
		\node[dot, fill=black!50] (d) at (2.75,.25) {};
		\node[dot, fill=black!50] (e) at (-1.5,-.8) {};
		\node[dot, fill=white] (f) at (-.45,.75) {};
		\node[dot, fill=black!50] (g) at (.45,.75) {};
		\node[dot, fill=white] (h) at (1.5,-.8) {};
		\node[dot, fill=black!50] (i) at (.5,-2) {};
		\node[dot, fill=white] (j) at (-.5,-2) {};
		
		\node[dot,fill=white] (b') at (-1.75,2.75) {};
		\node[dot,fill=white] (d') at (3.4,.125) {};
		\node[dot,fill=white] (i') at (1.25,-2.75) {};
		
		\draw (1) to (a);
		\draw[matching] (a) to (b);
		\draw (b) to (b');
		\draw[matching] (b') to (2);
		\draw (b) to (f);
		\draw[matching] (f) to (g);
		\draw (g) to (c) to (3);
		\draw[matching] (c) to (d);
		\draw (d) to (d');
		\draw[matching] (d') to (4);
		\draw (d) to (h);
		\draw[matching] (h) to (i);
		\draw (i) to (i');
		\draw[matching] (i') to (5);
		\draw (i) to (j);
		\draw (j) to (6);
		\draw[matching] (j) to (e);
		\draw (e) to (a);
		\draw (e) to (f);
		\draw (g) to (h);	
\end{tikzpicture}};
	\node (right) at (3in,0) {
\begin{tikzpicture}[xshift=3in,scale=.55]
		\path[use as  bounding box] (-4.5,-4.5) rectangle (4.5,4.5);
		\draw[fill=black!10] (0,0) circle (4);
		\node[invisible] (1) at (180:4) {};
		\node[invisible] (2) at (120:4) {};
		\node[invisible] (3) at (60:4) {};
		\node[invisible] (4) at (0:4) {};
		\node[invisible] (5) at (-60:4) {};
		\node[invisible] (6) at (-120:4) {};
		
		\node[left] at (1) {$5$};
		\node[above left] at (2) {$6$};
		\node[above right] at (3) {$1$};
		\node[right] at (4) {$2$};
		\node[below right] at (5) {$3$};
		\node[below left] at (6) {$4$};
		\node at (0,-.65) {$F$};
		
		\node[dot, fill=white] (a) at (-2.75,.25) {};
		\node[dot, fill=black!50] (b) at (-1.5,2) {};
		\node[dot, fill=white] (c) at (1.5,2) {};
		\node[dot, fill=black!50] (d) at (2.75,.25) {};
		\node[dot, fill=black!50] (e) at (-1.5,-.8) {};
		\node[dot, fill=white] (f) at (-.45,.75) {};
		\node[dot, fill=black!50] (g) at (.45,.75) {};
		\node[dot, fill=white] (h) at (1.5,-.8) {};
		\node[dot, fill=black!50] (i) at (.5,-2) {};
		\node[dot, fill=white] (j) at (-.5,-2) {};
		
		\node[dot,fill=white] (b') at (-1.75,2.75) {};
		\node[dot,fill=white] (d') at (3.4,.125) {};
		\node[dot,fill=white] (i') at (1.25,-2.75) {};
		
		\draw (1) to (a);
		\draw[matching] (a) to (b);
		\draw (b) to (b');
		\draw[matching] (b') to (2);
		\draw (b) to (f);
		\draw[] (f) to (g);
		\draw (g) to (c) to (3);
		\draw[matching] (c) to (d);
		\draw (d) to (d');
		\draw[matching] (d') to (4);
		\draw (d) to (h);
		\draw[] (h) to (i);
		\draw (i) to (i');
		\draw[matching] (i') to (5);
		\draw[matching] (i) to (j);
		\draw (j) to (6);
		\draw[] (j) to (e);
		\draw (e) to (a);
		\draw[matching] (e) to (f);
		\draw[matching] (g) to (h);	
\end{tikzpicture}};
	\draw[out=15,in=165,-angle 90] (left) to node[above] {Swiveling up} (right);
	\draw[out=-165,in=-15,-angle 90] (right) to node[below] {Swiveling down} (left);
\end{tikzpicture}
\caption{Swiveling up and down at the face $F$}
\label{fig: swiveling}
\end{figure}
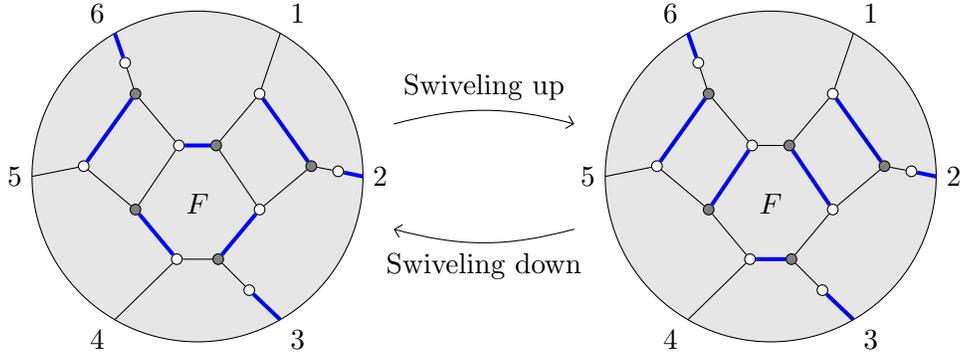

Swiveling twice at the same internal face returns to the original matching, but we may use the orientation of the face $f$ and the coloring of the vertices to distinguish between \newword{swiveling up} and \newword{swiveling down}, as in Figure \ref{fig: swiveling}.  We may extend this to a partial ordering $\preceq$ on the set of matchings with boundary $I$, where $M_1\preceq M_2$ means that $M_2$ can be obtained from $M_1$ by repeatedly swiveling up. An example is given in Figure \ref{fig: poset}. (It is true, though not obvious, that it is impossible to swivel up repeatedly and return to the original matching.)

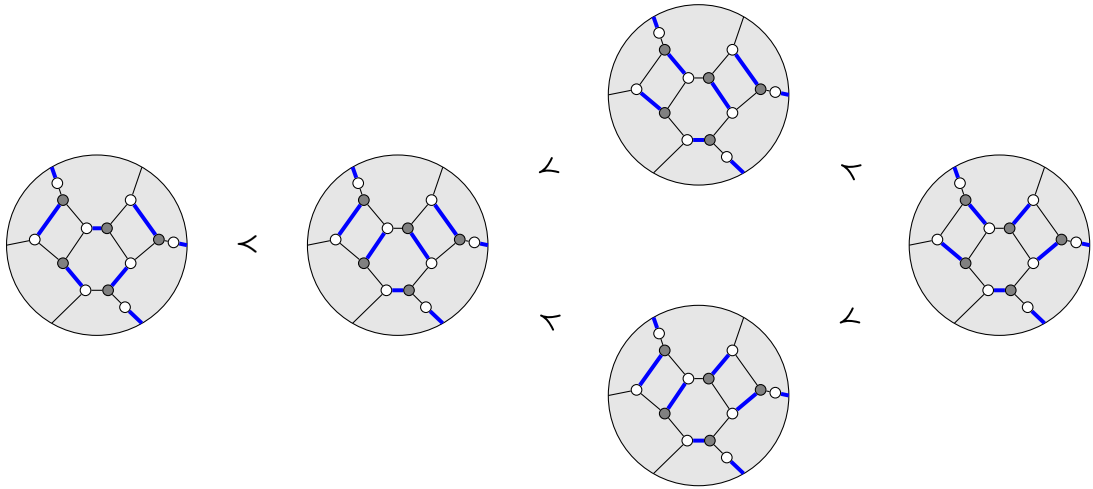
\begin{figure}[h!t]
\begin{tikzpicture}%[baseline=(current bounding box.center)]
	\node (a1) at (0,0) {
\begin{tikzpicture}[scale=.3]
		\path[use as  bounding box] (-4.5,-4.5) rectangle (4.5,4.5);
		\draw[fill=black!10] (0,0) circle (4);
		\node[invisible] (1) at (180:4) {};
		\node[invisible] (2) at (120:4) {};
		\node[invisible] (3) at (60:4) {};
		\node[invisible] (4) at (0:4) {};
		\node[invisible] (5) at (-60:4) {};
		\node[invisible] (6) at (-120:4) {};
		
		\node[dot, fill=white] (a) at (-2.75,.25) {};
		\node[dot, fill=black!50] (b) at (-1.5,2) {};
		\node[dot, fill=white] (c) at (1.5,2) {};
		\node[dot, fill=black!50] (d) at (2.75,.25) {};
		\node[dot, fill=black!50] (e) at (-1.5,-.8) {};
		\node[dot, fill=white] (f) at (-.45,.75) {};
		\node[dot, fill=black!50] (g) at (.45,.75) {};
		\node[dot, fill=white] (h) at (1.5,-.8) {};
		\node[dot, fill=black!50] (i) at (.5,-2) {};
		\node[dot, fill=white] (j) at (-.5,-2) {};
		
		\node[dot,fill=white] (b') at (-1.75,2.75) {};
		\node[dot,fill=white] (d') at (3.4,.125) {};
		\node[dot,fill=white] (i') at (1.25,-2.75) {};
		
		\draw (1) to (a);
		\draw[matching] (a) to (b);
		\draw (b) to (b');
		\draw[matching] (b') to (2);
		\draw (b) to (f);
		\draw[matching] (f) to (g);
		\draw (g) to (c) to (3);
		\draw[matching] (c) to (d);
		\draw (d) to (d');
		\draw[matching] (d') to (4);
		\draw (d) to (h);
		\draw[matching] (h) to (i);
		\draw (i) to (i');
		\draw[matching] (i') to (5);
		\draw (i) to (j);
		\draw (j) to (6);
		\draw[matching] (j) to (e);
		\draw (e) to (a);
		\draw (e) to (f);
		\draw (g) to (h);	
\end{tikzpicture}};
	\node (a2) at (4,0) {
\begin{tikzpicture}[scale=.3]
		\path[use as  bounding box] (-4.5,-4.5) rectangle (4.5,4.5);
		\draw[fill=black!10] (0,0) circle (4);
		\node[invisible] (1) at (180:4) {};
		\node[invisible] (2) at (120:4) {};
		\node[invisible] (3) at (60:4) {};
		\node[invisible] (4) at (0:4) {};
		\node[invisible] (5) at (-60:4) {};
		\node[invisible] (6) at (-120:4) {};
		
		\node[dot, fill=white] (a) at (-2.75,.25) {};
		\node[dot, fill=black!50] (b) at (-1.5,2) {};
		\node[dot, fill=white] (c) at (1.5,2) {};
		\node[dot, fill=black!50] (d) at (2.75,.25) {};
		\node[dot, fill=black!50] (e) at (-1.5,-.8) {};
		\node[dot, fill=white] (f) at (-.45,.75) {};
		\node[dot, fill=black!50] (g) at (.45,.75) {};
		\node[dot, fill=white] (h) at (1.5,-.8) {};
		\node[dot, fill=black!50] (i) at (.5,-2) {};
		\node[dot, fill=white] (j) at (-.5,-2) {};
		
		\node[dot,fill=white] (b') at (-1.75,2.75) {};
		\node[dot,fill=white] (d') at (3.4,.125) {};
		\node[dot,fill=white] (i') at (1.25,-2.75) {};
		
		\draw (1) to (a);
		\draw[matching] (a) to (b);
		\draw (b) to (b');
		\draw[matching] (b') to (2);
		\draw (b) to (f);
		\draw[] (f) to (g);
		\draw (g) to (c) to (3);
		\draw[matching] (c) to (d);
		\draw (d) to (d');
		\draw[matching] (d') to (4);
		\draw (d) to (h);
		\draw[] (h) to (i);
		\draw (i) to (i');
		\draw[matching] (i') to (5);
		\draw[matching] (i) to (j);
		\draw (j) to (6);
		\draw[] (j) to (e);
		\draw (e) to (a);
		\draw[matching] (e) to (f);
		\draw[matching] (g) to (h);	
\end{tikzpicture}};
	\node (u3) at (8,2) {
\begin{tikzpicture}[scale=.3]
		\path[use as  bounding box] (-4.5,-4.5) rectangle (4.5,4.5);
		\draw[fill=black!10] (0,0) circle (4);
		\node[invisible] (1) at (180:4) {};
		\node[invisible] (2) at (120:4) {};
		\node[invisible] (3) at (60:4) {};
		\node[invisible] (4) at (0:4) {};
		\node[invisible] (5) at (-60:4) {};
		\node[invisible] (6) at (-120:4) {};
		
		\node[dot, fill=white] (a) at (-2.75,.25) {};
		\node[dot, fill=black!50] (b) at (-1.5,2) {};
		\node[dot, fill=white] (c) at (1.5,2) {};
		\node[dot, fill=black!50] (d) at (2.75,.25) {};
		\node[dot, fill=black!50] (e) at (-1.5,-.8) {};
		\node[dot, fill=white] (f) at (-.45,.75) {};
		\node[dot, fill=black!50] (g) at (.45,.75) {};
		\node[dot, fill=white] (h) at (1.5,-.8) {};
		\node[dot, fill=black!50] (i) at (.5,-2) {};
		\node[dot, fill=white] (j) at (-.5,-2) {};
		
		\node[dot,fill=white] (b') at (-1.75,2.75) {};
		\node[dot,fill=white] (d') at (3.4,.125) {};
		\node[dot,fill=white] (i') at (1.25,-2.75) {};
		
		\draw (1) to (a);
		\draw[] (a) to (b);
		\draw (b) to (b');
		\draw[matching] (b') to (2);
		\draw[matching] (b) to (f);
		\draw[] (f) to (g);
		\draw (g) to (c) to (3);
		\draw[matching] (c) to (d);
		\draw (d) to (d');
		\draw[matching] (d') to (4);
		\draw (d) to (h);
		\draw[] (h) to (i);
		\draw (i) to (i');
		\draw[matching] (i') to (5);
		\draw[matching] (i) to (j);
		\draw (j) to (6);
		\draw[] (j) to (e);
		\draw[matching] (e) to (a);
		\draw[] (e) to (f);
		\draw[matching] (g) to (h);	
\end{tikzpicture}};
	\node (d3) at (8,-2) {
\begin{tikzpicture}[scale=.3]
		\path[use as  bounding box] (-4.5,-4.5) rectangle (4.5,4.5);
		\draw[fill=black!10] (0,0) circle (4);
		\node[invisible] (1) at (180:4) {};
		\node[invisible] (2) at (120:4) {};
		\node[invisible] (3) at (60:4) {};
		\node[invisible] (4) at (0:4) {};
		\node[invisible] (5) at (-60:4) {};
		\node[invisible] (6) at (-120:4) {};
		
		\node[dot, fill=white] (a) at (-2.75,.25) {};
		\node[dot, fill=black!50] (b) at (-1.5,2) {};
		\node[dot, fill=white] (c) at (1.5,2) {};
		\node[dot, fill=black!50] (d) at (2.75,.25) {};
		\node[dot, fill=black!50] (e) at (-1.5,-.8) {};
		\node[dot, fill=white] (f) at (-.45,.75) {};
		\node[dot, fill=black!50] (g) at (.45,.75) {};
		\node[dot, fill=white] (h) at (1.5,-.8) {};
		\node[dot, fill=black!50] (i) at (.5,-2) {};
		\node[dot, fill=white] (j) at (-.5,-2) {};
		
		\node[dot,fill=white] (b') at (-1.75,2.75) {};
		\node[dot,fill=white] (d') at (3.4,.125) {};
		\node[dot,fill=white] (i') at (1.25,-2.75) {};
		
		\draw (1) to (a);
		\draw[matching] (a) to (b);
		\draw (b) to (b');
		\draw[matching] (b') to (2);
		\draw (b) to (f);
		\draw[] (f) to (g);
		\draw[matching] (g) to (c);
		\draw (c) to (3);
		\draw[] (c) to (d);
		\draw (d) to (d');
		\draw[matching] (d') to (4);
		\draw[matching] (d) to (h);
		\draw[] (h) to (i);
		\draw (i) to (i');
		\draw[matching] (i') to (5);
		\draw[matching] (i) to (j);
		\draw (j) to (6);
		\draw[] (j) to (e);
		\draw (e) to (a);
		\draw[matching] (e) to (f);
		\draw[] (g) to (h);	
\end{tikzpicture}};
	\node (a4) at (12,0) {
\begin{tikzpicture}[scale=.3]
		\path[use as  bounding box] (-4.5,-4.5) rectangle (4.5,4.5);
		\draw[fill=black!10] (0,0) circle (4);
		\node[invisible] (1) at (180:4) {};
		\node[invisible] (2) at (120:4) {};
		\node[invisible] (3) at (60:4) {};
		\node[invisible] (4) at (0:4) {};
		\node[invisible] (5) at (-60:4) {};
		\node[invisible] (6) at (-120:4) {};
		
		\node[dot, fill=white] (a) at (-2.75,.25) {};
		\node[dot, fill=black!50] (b) at (-1.5,2) {};
		\node[dot, fill=white] (c) at (1.5,2) {};
		\node[dot, fill=black!50] (d) at (2.75,.25) {};
		\node[dot, fill=black!50] (e) at (-1.5,-.8) {};
		\node[dot, fill=white] (f) at (-.45,.75) {};
		\node[dot, fill=black!50] (g) at (.45,.75) {};
		\node[dot, fill=white] (h) at (1.5,-.8) {};
		\node[dot, fill=black!50] (i) at (.5,-2) {};
		\node[dot, fill=white] (j) at (-.5,-2) {};
		
		\node[dot,fill=white] (b') at (-1.75,2.75) {};
		\node[dot,fill=white] (d') at (3.4,.125) {};
		\node[dot,fill=white] (i') at (1.25,-2.75) {};
		
		\draw (1) to (a);
		\draw[] (a) to (b);
		\draw (b) to (b');
		\draw[matching] (b') to (2);
		\draw[matching] (b) to (f);
		\draw[] (f) to (g);
		\draw[matching] (g) to (c);
		\draw (c) to (3);
		\draw[] (c) to (d);
		\draw (d) to (d');
		\draw[matching] (d') to (4);
		\draw[matching] (d) to (h);
		\draw[] (h) to (i);
		\draw (i) to (i');
		\draw[matching] (i') to (5);
		\draw[matching] (i) to (j);
		\draw (j) to (6);
		\draw[] (j) to (e);
		\draw[matching] (e) to (a);
		\draw[] (e) to (f);
		\draw[] (g) to (h);	
\end{tikzpicture}};
	\path (a1) to node {\scalebox{1.5}{$\prec$}} (a2);
	\path (a2) to node[rotate=25] {\scalebox{1.5}{$\prec$}} (u3);
	\path (a2) to node[rotate=-25] {\scalebox{1.5}{$\prec$}} (d3);
	\path (u3) to node[rotate=-25] {\scalebox{1.5}{$\prec$}} (a4);
	\path (d3) to node[rotate=25] {\scalebox{1.5}{$\prec$}} (a4);
	%\draw[out=15,in=165,-angle 90] (left) to node[above] {Swiveling up} (right);
	%\draw[out=-165,in=-15,-angle 90] (right) to node[below] {Swiveling down} (left);
\end{tikzpicture}
\caption{The poset of matchings with boundary $236$}
\label{fig: poset}
\end{figure}

\begin{thm} \label{lattice}
Let $G$ be a reduced graph, and let $I$ be a matchable subset of the $[n]$
Then the partial ordering $\preceq$ makes the set of matchings on $G$ with boundary $I$ into a finite distributive lattice.
\end{thm}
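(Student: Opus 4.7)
My plan is to realize $L_I$ (the set of matchings with boundary $I$, ordered by $\preceq$) as a sublattice of the product lattice $(\mathbb{Z}^F, \leq)$ via a height function. Distributivity will then be automatic, and the various structural claims about $\preceq$ (antisymmetry, lattice structure) will follow from local properties of swiveling. First, I would fix an arbitrary reference matching $M_0 \in L_I$. For any $M \in L_I$, the symmetric difference $M \triangle M_0$ covers every internal vertex either $0$ or $2$ times and avoids every boundary vertex (since $\partial M = \partial M_0$); it therefore decomposes canonically into edge-disjoint simple cycles $\gamma_1, \dots, \gamma_r$ that alternate between edges of $M$ and edges of $M_0$. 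By Lemma \ref{clean dual}, each $\gamma_i$ bounds a topological disc $D_i$ inside the ambient disc, and the bipartite alternation gives each $\gamma_i$ a canonical orientation (say, traverse so that $M_0$-edges run white-to-black); let $\epsilon(\gamma_i) \in \{+1,-1\}$ record whether this orientation is counterclockwise or clockwise. I then define
\[
 h_M : F \longrightarrow \mathbb{Z}, \qquad h_M(f) \;=\; \sum_{i \,:\, f \subseteq D_i} \epsilon(\gamma_i).
\]

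The central local fact to verify is this: if $M' \in L_I$ is obtained from $M$ by swiveling up at an internal face $f$, then $h_{M'} = h_M + \mathbf{1}_f$. The argument is a finite case check on the boundary cycle $\partial f$: either $\partial f$ does not appear in $M \triangle M_0$, in which case the swivel inserts $\partial f$ as a new cycle with the right orientation to contribute $+1$ to $h(f)$, or $\partial f$ already appears in $M \triangle M_0$, in which case the swivel removes it, and the sign conventions for ``swivel up'' and for the canonical orientation must be arranged so that the removed cycle contributed $-1$ to $h(f)$. This sign bookkeeping, which reduces via color-swap symmetry to essentially one case, is the main technical obstacle and is where the conventions introduced in Figure~\ref{fig: swiveling} earn their keep.

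Once the local fact is in hand, $M \mapsto h_M$ becomes an injective, order-preserving map from $L_I$ into $\mathbb{Z}^F$, and $M_1 \preceq M_2$ if and only if $h_{M_1} \leq h_{M_2}$ pointwise. In particular $\preceq$ is antisymmetric (settling the parenthetical remark preceding the theorem), $L_I$ is finite, and every cover relation $M \lessdot M'$ corresponds to swiveling up at a single internal face.

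It remains to produce meets and joins. Given $M_1, M_2 \in L_I$, I would construct explicit matchings $M_1 \wedge M_2$ and $M_1 \vee M_2$ realizing the pointwise min and max of $h_{M_1}$ and $h_{M_2}$. The construction is driven by the same cycle decomposition applied to $M_1 \triangle M_2$: on each component cycle of $M_1 \triangle M_2$ one chooses the edges from $M_1$ or $M_2$ according to which locally produces the larger (resp. smaller) height over the enclosed region. The verification that these prescriptions are consistent across cycles, yield genuine matchings with boundary $I$, and have the asserted height functions is the nontrivial combinatorial heart of the proof — one must track how cycles from $M_1 \triangle M_0$ and $M_2 \triangle M_0$ can overlap and ensure that the locally chosen edges patch together into a global matching. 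Given this, $L_I \hookrightarrow (\mathbb{Z}^F,\leq)$ is a sublattice, hence automatically a finite distributive lattice, completing the proof.
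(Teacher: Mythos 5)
Your proposal takes a genuinely different route from the paper. The paper invokes Propp's theorem~\ref{Propp lattice} on $d$-factors of sphere graphs, which requires a nonobvious auxiliary construction: one collapses all boundary vertices of $G$ into a single new vertex $v_{\infty}$ of target degree $k$, producing a graph $\Gamma$ on $S^2$, and then verifies Propp's condition $(\ast)$ using the reducedness of $G$ (via Lemma~\ref{clean dual}). Since Propp's lattice is on \emph{all} matchings of $G$, the paper must then dip into Propp's height-function proof to check that the subset with boundary $I$ is closed under $\vee$ and $\wedge$. Your approach is more direct: by fixing a reference matching $M_0$ with boundary $I$ and defining a winding-number-style height function $h_M$ on faces, you work entirely within the disc and only ever with matchings of boundary $I$, avoiding both the passage to $S^2$ and the verification of $(\ast)$. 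The trade-off is that you must rebuild the core of Propp's argument from scratch, and that is where you should be more careful.

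There is one genuine gap you pass over as if it were automatic. You write that once the local fact ($h_{M'} = h_M + \mathbf{1}_f$ for an upward swivel at $f$) is in hand, one obtains $M_1 \preceq M_2$ \emph{if and only if} $h_{M_1} \leq h_{M_2}$. The ``only if'' direction is indeed immediate, and combined with injectivity it gives antisymmetry of $\preceq$. But the ``if'' direction does not follow from the local fact: you must show that whenever $h_{M_1} \leq h_{M_2}$ pointwise and $M_1 \neq M_2$, there is some internal face $f$ with $h_{M_1}(f) < h_{M_2}(f)$ at which $M_1$ contains exactly half the edges of $\partial f$ and the upward swivel stays below $h_{M_2}$. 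This is essentially the statement that swivels connect any two matchings with the same boundary (with monotonicity), and it is the nontrivial core of Propp's theorem — not a formal consequence of the local step rule. It is needed even to conclude that the explicit $M_1 \wedge M_2$ and $M_1 \vee M_2$ you construct are actually below and above $M_1, M_2$ in $\preceq$, and not merely in the auxiliary order $\leq$ on $\mathbb{Z}^F$. You correctly flag the sign bookkeeping in the local fact and the consistency of the cycle-wise min/max construction as the hard combinatorial points, but this third ingredient deserves equal billing; as written it looks like you believe it is free, which it is not.
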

\noindent We will deduce this result from a similar result of Propp, which we now describe.

Let $\Gamma$ be a planar graph embedded in the two-sphere $S^2$, so that all the faces of $S^2 \setminus \Gamma$ are discs and no edge separates a face from itself. We designate one face $F_{\infty}$ to play a special role.
Let $d$ be a function from the vertices of $\Gamma$ to the positive integers. A \newword{$d$-factor} of $\Gamma$ is a set $M$ of edges such that, for each vertex $v$ of $\Gamma$, there are precisely $d(v)$ edges of $M$ containing $v$.
So, if $d$ is identically one, then a $d$-factor is a perfect matching.
As with matchings, we can define upward and downward swivels taking $d$-factors to other $d$-factors; we do not permit swivels around $F_{\infty}$. 
Again, we define $M_1 \preceq M_2$ if we can obtain $M_2$ from $M_1$ by repeated upward swivels. 

\begin{Theorem}[{\cite[Theorem 2]{Pro02}}] \label{Propp lattice}
Let $\Gamma$, $d$ and $F_{\infty}$ be as above. Assume the following condition:
\begin{quote}
\textbf{Condition $(\ast)$:} For every edge $e$ of $\Gamma$, there is some $d$-factor containing $e$ and some other $d$-factor omitting $e$. 
\end{quote}
Then the partial order $\preceq$ is a finite distributive lattice. 
\end{Theorem}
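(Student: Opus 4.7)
The plan is to prove Propp's Theorem via the classical ``height function'' approach, identifying the poset of $d$-factors with (a sublattice of) the lattice of integer-valued functions on faces under pointwise ordering. Fix an arbitrary orientation of the edges of $\Gamma$, and fix a base $d$-factor $M_0$, which exists by Condition~$(\ast)$. For any other $d$-factor $M$, the formal difference $M-M_0 \in C_1(\Gamma;\ZZ)$ has vanishing boundary at every vertex, since $M$ and $M_0$ both use exactly $d(v)$ edges at each $v$. Thus $M-M_0$ is a $1$-cycle in the cell complex $\Gamma \hookrightarrow S^2$. Deleting the face $F_\infty$ gives a disc, which is acyclic in degree $1$, so there is a unique $2$-chain $h_M$ on $\mathrm{Faces}(\Gamma)\setminus\{F_\infty\}$ with boundary $M-M_0$. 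Declaring $h_M(F_\infty)=0$ extends $h_M$ to a function on all faces.

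The next step is to verify that an \emph{upward} swivel at a face $F\neq F_\infty$ takes $h_M$ to $h_M + \mathbf{1}_F$ (with orientations chosen consistently with how ``up'' vs.~``down'' is defined). This is a local check: a swivel at $F$ modifies $M-M_0$ only along the boundary edges of $F$, and the modification is exactly the signed boundary of the $2$-cell $F$. Once this is established, one concludes that the map $M\mapsto h_M$ is injective and that $M_1 \preceq M_2$ holds if and only if $h_{M_1} \le h_{M_2}$ pointwise.

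To upgrade this from a poset embedding into $\ZZ^{\mathrm{Faces}\setminus\{F_\infty\}}$ to a distributive lattice structure, I need to show that the image is closed under pointwise $\min$ and $\max$. Given two $d$-factors $M_1,M_2$, the symmetric difference $M_1 \triangle M_2$ is a disjoint union of closed trails, and at each vertex the edges of $M_1\setminus M_2$ match up with the edges of $M_2\setminus M_1$ (because the two factors have identical valence at every vertex); one uses the planarity and Condition~$(\ast)$ to refine these trails into a disjoint union of simple even cycles $\gamma_1,\dots,\gamma_r$ along which edges alternate between $M_1$ and $M_2$. Each such cycle bounds a subset of faces in $S^2\setminus F_\infty$, and on that subset $h_1-h_2$ is identically $+1$ or $-1$. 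Define $M_{\vee}$ by choosing, on each $\gamma_i$, the half belonging to whichever of $M_1,M_2$ gives the higher value of $h$; define $M_{\wedge}$ dually. One then checks $M_\vee$ and $M_\wedge$ are $d$-factors and their height functions are $\max(h_1,h_2)$ and $\min(h_1,h_2)$ respectively. Since the ambient lattice $\ZZ^{\mathrm{Faces}\setminus\{F_\infty\}}$ is distributive and the image is a sublattice, the theorem follows; finiteness of the poset comes from the fact that, in the disc $S^2\setminus F_\infty$, $h_M$ is determined by $M-M_0$ and takes only finitely many values as $M$ varies (each face has bounded height determined by the combinatorics of the cycles it can lie in).

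The main obstacle will be the careful sign and orientation bookkeeping in step~(1) of verifying that an upward swivel increments exactly $h_M(F)$ by $+1$; the hypothesis that no edge separates a face from itself (and that each face is a disc) is precisely what is needed so that the boundary of a single $2$-cell $F$ matches, as an oriented $1$-chain, the change $M' - M$ produced by a swivel at $F$. Condition~$(\ast)$ is used twice: once to guarantee that a base $d$-factor $M_0$ exists, and once in the cycle-decomposition step, to rule out degenerate components of $M_1\triangle M_2$ that would otherwise obstruct the construction of $M_\vee$ and $M_\wedge$.
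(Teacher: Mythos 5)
A preliminary remark: the paper does not prove Theorem~\ref{Propp lattice} at all; it is imported verbatim from Propp \cite{Pro02}, and the appendix only enters Propp's argument far enough (in the proof of Theorem~\ref{lattice}) to quote properties of his height functions. So your sketch can only be measured against Propp's own height-function proof, which is indeed the strategy you outline. The problem is that your sketch asserts precisely the step that carries all the weight. From ``an upward swivel at $F$ adds $\mathbf{1}_F$ to $h_M$'' you immediately ``conclude'' that $M_1 \preceq M_2$ if and only if $h_{M_1} \le h_{M_2}$ pointwise. Only the forward implication follows. The reverse implication --- that whenever $h_{M_1} \le h_{M_2}$ with strict inequality somewhere, there is a face $F \neq F_\infty$ at which $M_1$ contains the swivelable half of $\partial F$ and the up-swivel stays below $h_{M_2}$, so that one can induct --- is the heart of the theorem; it needs a genuine argument (e.g.\ analyzing a face extremizing $h_{M_2}-h_{M_1}$, or an innermost alternating cycle of $M_1 \triangle M_2$, and using the $d$-factor condition to force the full ``lower half'' of that face's boundary into $M_1$), and it is also where Condition $(\ast)$ and the hypotheses on faces really enter, not merely ``to guarantee a base $d$-factor exists.'' Without this equivalence, your closure-under-$\min/\max$ argument only equips the \emph{pointwise} order on height functions with a lattice structure; a priori $\preceq$ is a finer relation and could fail to have joins at all, so the theorem as stated is not yet proved.

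There are also two concrete errors in the supporting steps. First, with an \emph{arbitrary} orientation of the edges, $M - M_0$ is not a $1$-cycle: the vanishing of its boundary at each vertex uses the bipartite (black-to-white) orientation, which is the same structure needed so that every up-swivel changes $h_M$ by $+\mathbf{1}_F$ rather than by a face-dependent sign (note that the up/down distinction in the statement already presupposes the $2$-coloring). Second, the claim that $h_{M_1}-h_{M_2}$ is identically $+1$ or $-1$ on the set of faces enclosed by each alternating cycle of $M_1 \triangle M_2$ is false when such cycles are nested; what is constant is only the jump of $h_{M_1}-h_{M_2}$ across a fixed cycle. The cycle-by-cycle surgery defining $M_\vee$ and $M_\wedge$ can be repaired using that weaker statement, but it also needs more care than ``refine the trails into simple cycles'': at vertices with $d(v)>1$ (here $v_\infty$ with $d(v_\infty)=k$) the components of the symmetric difference are closed trails, and producing disjoint simple alternating cycles requires an explicit non-crossing re-pairing argument, which you gesture at but do not give. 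As written, then, both the order-comparison lemma and the construction of meets and joins have gaps.
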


One might hope to prove Theorem~\ref{lattice} from Theorem~\ref{Propp lattice} by deleting certain boundary vertices from $G$ in order to make a graph $\Gamma$ whose matchings correspond to the matchings of $G$ with boundary $\Gamma$. Unfortunately, if we do this in the obvious  way, condition~$(\ast)$ fails. 
We therefore take a different route.

\begin{proof}[Proof of Theorem~\ref{lattice}]
We may assume that every boundary vertex $i$ of $G$ is used in some matching and not used in some other matching. Otherwise, the vertex $i$ lies in a component disconnected from the rest of $G$ and we can delete that component and study the remaining graph. We may also delete any lollipops in the graph, as the corresponding edge is either in every matching or no matching with boundary $I$.

Applying the move from Figure~\ref{fig: boundaryvert} repeatedly, we may assume that all the boundary vertices of $G$ border white vertices.
Now remove the boundary vertices and replace them by one black vertex $v_{\infty}$, which we connect to all of the white vertices which used to border boundary vertices.
Call the resulting graph $\Gamma$; we embed it in $S^2$ in the obvious manner. We choose $F_{\infty}$ to be the face which contains the vertices $1$, $n$, and $v_{\infty}$.
Lemma~\ref{clean dual} implies that all faces of $S^2 \setminus \Gamma$ are discs and no edge separates a face from itself.

Let $d$ be the function which is $1$ on every vertex of $\Gamma$ other than $v_{\infty}$, and $k$ at $v_{\infty}$.
It is straightforward to see that $d$-factors of $\Gamma$ correspond to matchings of $G$.
Also, we claim that every edge $e$ of $\Gamma$ is in some $d$-factor but not in some other $d$-factor. For the edges from $v_{\infty}$, this follows from the reduction in the first paragraph.
For an edge $e$ not adjacent to the boundary of $G$, if $e$ is not used in any matching, then we can delete $G$ from $e$ and obtain a graph with the same boundaries of matchings; by Lemma~\ref{clean dual}, this will merge two faces of $G$, contradicting that $G$ is reduced. If $e$ is used in every matching, then we can likewise delete $e$ and all edges with an endpoint in common with $e$. So the hypotheses of Propp's result apply, and we obtain a lattice structure on the set of $d$-factors of $\Gamma$.

Let $\Lambda$ be this lattice with meet and join operations $\vee$ and $\wedge$. Let $\partial: \Lambda \to \binom{[n]}{k}$ send a $d$-factor of $\Gamma$ to the boundary of the corresponding matching of $G$. Here is our key claim: If $\partial(M_1) = \partial(M_2)=I$, then $\partial(M_1 \vee M_2) = \partial(M_1 \wedge M_2) = I$. 

To prove this, we have to enter the proof of Propp's Theorem~\ref{Propp lattice}. 
Propp defines a correspondence between $d$-factors $M$ of $\Gamma$ and certain real valued height functions $h_M$ on the faces of $\Gamma$. Let $e$ be an edge of $\Gamma$ incident to $v_{\infty}$ and let $F$ and $F'$ be the faces separated by $e$. Then there is some number $0 < \delta < 1$ such that $h_M(F) - h_M(F') = \delta$ if $e \in M$ and $=\delta-1$ otherwise.  We have $h_{M_1 \vee M_2}(F) = \max(h_{M_1}(F), h_{M_2}(F))$ and $h_{M_1 \wedge M_2}(F) = \min(h_{M_1}(F), h_{M_2}(F))$. Moreover, $h_{M_1}(F)-h_{M_2}(F)$ and $h_{M_1}(F') - h_{M_2}(F')$ are integers. It follows from these formulas that that, if $e$ is in both $M_1$ and $M_2$, then it is in $M_1 \vee M_2$ and $M_1 \wedge M_2$ and, if $e$ is in neither $M_1$ nor $M_2$, then it is also not in $M_1 \vee M_2$ or $M_1 \wedge M_2$. In particular, our key claim holds.

So the subset of $\Lambda$ with boundary $I$ is closed under $\vee$ and $\wedge$. Restricting the operations of $\Lambda$ to this subset, we have a finite distributive lattice as claimed.
\end{proof}

\begin{cor} \label{max and min}
The set of matchings of $G$ with boundary $I$ has a unique $\preceq$-minimal element and unique $\preceq$-maximal element, assuming the set is non-empty.
\end{cor}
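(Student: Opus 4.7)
The plan is to deduce this as an immediate consequence of Theorem~\ref{lattice}. Given a non-empty matchable boundary $I$, that theorem provides the set $\mathcal{M}_I$ of matchings of $G$ with boundary $I$ with the structure of a finite distributive lattice under $\preceq$. In any finite lattice $\Lambda$, the meet $\bigwedge_{M \in \Lambda} M$ and the join $\bigvee_{M \in \Lambda} M$ both exist (being finite iterated meets and joins), and by definition they are the unique $\preceq$-minimum and $\preceq$-maximum of $\Lambda$. Applied to $\Lambda = \mathcal{M}_I$, this yields both the existence and the uniqueness of the extremal matchings with boundary $I$.

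In this sense there is essentially nothing to prove: all the substantive content has been packaged into Theorem~\ref{lattice}, whose proof in turn reduces the claim via the correspondence between matchings of $G$ and $d$-factors of an auxiliary graph $\Gamma$ to Propp's Theorem~\ref{Propp lattice}. The only check worth mentioning explicitly is the finiteness of $\mathcal{M}_I$, which is obvious since a matching is a subset of the finite edge set $E$. Consequently I would present the proof as a two-line argument invoking Theorem~\ref{lattice} and the general fact about finite lattices; there is no real obstacle. The result is useful precisely because the $\preceq$-minimum of $\mathcal{M}_{\sI(f)}$ will later be identified with the extremal matching $\vecM(f)$ (and analogously the $\preceq$-maximum of $\mathcal{M}_{\tI(f)}$ with $\cevM(f)$), giving an intrinsic, strand-free description of the matchings constructed in Section~\ref{sec extremal}.
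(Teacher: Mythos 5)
Your proof is correct and takes essentially the same route as the paper, which simply notes that a (finite) lattice has a unique minimal and unique maximal element. Your expansion — that these are obtained as the iterated meet and join over the finite set $\mathcal{M}_I$ — is an unobjectionable spelling-out of that one-line observation.
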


\begin{proof}
A lattice has a unique minimal element and unique maximal element.
\end{proof}

\begin{cor}
Any two matchings  of $G$ with boundary $I$ are related by a sequence of swivels.
\end{cor}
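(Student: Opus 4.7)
The plan is to deduce this immediately from the lattice structure established in Theorem~\ref{lattice}, together with the existence of extremal elements in Corollary~\ref{max and min}.

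First, I would recall that Theorem~\ref{lattice} equips the set $\mathcal{M}_I$ of matchings of $G$ with boundary $I$ with the partial order $\preceq$, and that this partial order is a finite lattice. In a finite lattice, any two elements $M_1, M_2 \in \mathcal{M}_I$ have a meet $M_1 \wedge M_2$ (or, equivalently, $\mathcal{M}_I$ has a unique minimal element $\hat{0}$ by Corollary~\ref{max and min}). By the definition of $\preceq$, the relation $\hat{0} \preceq M_i$ for $i=1,2$ means that $M_i$ is obtained from $\hat{0}$ by a finite sequence of upward swivels.

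Next, I would invert: since each upward swivel has a downward swivel as its inverse operation (swiveling at the same face reverses the move), the sequence of upward swivels from $\hat{0}$ to $M_1$ can be reversed to give a sequence of downward swivels from $M_1$ to $\hat{0}$. Concatenating this reversed sequence with the sequence of upward swivels from $\hat{0}$ to $M_2$ yields a sequence of swivels (some up, some down) taking $M_1$ to $M_2$.

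This argument uses only that $\mathcal{M}_I$ is nonempty (so that $\hat{0}$ exists) and that $\preceq$ is generated by swivels. Both are given, so no step should present a real obstacle; the only thing to be careful about is emphasizing that the statement allows both upward and downward swivels, since the lattice structure only guarantees a monotone path to the extremal elements, not a monotone path directly between $M_1$ and $M_2$.
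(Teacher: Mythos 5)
Your proof is correct and is essentially the same as the paper's: the paper swivels both matchings \emph{up} to the unique maximal element, while you swivel both \emph{down} to the unique minimal element; the underlying idea (route through an extremal element guaranteed by Corollary~\ref{max and min}, using that swivels are invertible) is identical.
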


\begin{proof}
They may both be swiveled up to the maximal matching.
\end{proof}

\begin{remark}
This proof leads to some results about positroids which appear to be new.
Place a partial order $\preceq$ on $\binom{[n]}{k}$ by $(i_1, i_2, \ldots, i_k) \leq (j_1, \ldots, j_k)$ if and only if $i_a \leq j_a$ for all $a$.
It is well known that $\preceq$ is a distributive lattice, with $\wedge$ and $\vee$ given by termwise $\min$ and $\max$. 
One can show that $\partial : \Lambda \to \binom{[n]}{k}$ obeys $\partial(M_1 \vee M_2) = \partial(M_1) \vee \partial(M_2)$ and  $\partial(M_1 \wedge M_2) = \partial(M_1) \wedge \partial(M_2)$. We therefore obtain the following corollary: The set of bases of the positroid $\cM$ is closed under termwise $\min$ and $\max$. Also, assume that $G$ is connected, which is the same as assuming that the positroid is connected as a matroid. Then upward swivels around the faces of $\Gamma$ incident to $v_{\infty}$ change the boundary by turning $i$ into $i+1$. We deduce that it is possible to turn any basis of $\cM$ into the maximal basis by repeatedly replacing $i$ by $i+1$.
\end{remark}

\subsection{Extremal matchings}

We connect this lattice structure to the matchings $\vecM(f)$ and $\cevM(f)$.

\begin{prop}
For any face $f\in F$, the matching $\vecM(f)$ is the minimal matching of $G$ with boundary $\sI(f)$.  
\end{prop}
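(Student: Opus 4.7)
The plan is to combine Theorem~\ref{thm: minmatchprop} with the lattice structure on matchings (Theorem~\ref{lattice}) and its consequence of a unique minimum (Corollary~\ref{max and min}). Since the set of matchings with boundary $\sI(f)$ is a finite distributive lattice, a matching is the minimum iff it admits no downward swivels. So the task reduces to showing that no downward swivel is available from $\vecM(f)$.

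First I would control swivels at all faces other than $f$. Recall that a swivel at an internal face $f'$ requires the matching to contain exactly $\tfrac12 \#\partial f'$ of the boundary edges of $f'$, the maximum possible. But by Theorem~\ref{thm: minmatchprop}, for every internal $f' \neq f$, the matching $\vecM(f)$ contains precisely $B_{f'}-1 = \tfrac12\#\partial f' - 1$ edges of $\partial f'$, which is one short of the maximum. Hence no swivel (up or down) at any internal $f' \neq f$ is possible.

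The second, and more delicate, step is to rule out a downward swivel at $f$ itself, where Theorem~\ref{thm: minmatchprop} tells us that $\vecM(f)$ does contain exactly half of $\partial f$ (namely $B_f$ edges), so some swivel is available. The edges of $\vecM(f)$ on $\partial f$ are by definition the edges $e \in \partial f$ with $f$ directly downstream of $e$. Inspecting Figure~\ref{fig: strands} and Figure~\ref{fig: downstreamwedge} at each edge of $\partial f$ shows that ``$f$ is directly downstream of $e$'' alternates with ``$f$ is directly upstream of $e$'' as we traverse $\partial f$, and that the downstream edges at $f$ are precisely the ``lower half'' in the convention of Figure~\ref{fig: swiveling}. (As a sanity check, the interior hexagonal face in Figure~\ref{fig: intromatching} is exactly the left-hand configuration of Figure~\ref{fig: swiveling}, confirmed to lie below its swivel in the convention used there.) Therefore the only available swivel at $f$ is upward, and $\vecM(f)$ admits no downward swivels.

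Putting these together, Theorem~\ref{lattice} and Corollary~\ref{max and min} give a unique $\preceq$-minimum in the lattice of matchings with boundary $\sI(f)$, and $\vecM(f)$ must be it. I expect the genuinely nontrivial part to be step two: making the orientation bookkeeping precise, i.e.\ verifying that the ``directly downstream'' half of $\partial f$ coincides with the ``lower'' half under Propp's sign convention for swivels. Once this identification is made --- a short local calculation at a single boundary edge of $f$, since both halves alternate and hence the assignment is determined by checking any one edge --- the argument is complete. The analogous statement for $\cevM(f)$ and $\tI(f)$ follows by the upstream/downstream symmetry noted at the end of Section~\ref{sec extremal}.
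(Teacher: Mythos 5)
Your proposal is correct and follows the same route as the paper: use Theorem~\ref{thm: minmatchprop} to rule out any swivel at internal faces $f' \neq f$, observe by inspection of Figure~\ref{fig: swiveling} that the available swivel at $f$ is upward, and then invoke the uniqueness of the minimal element (Corollary~\ref{max and min}). The paper's proof is simply a terser version of this argument, leaving the ``downstream half equals lower half'' identification to a reading of Figure~\ref{fig: swiveling}.
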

\begin{proof}
If $f'\in F$ is an internal face with $f'\neq f$, then $\vecM(f)$ contains one fewer than half the edges in the boundary of $f'$; hence, $\vecM(f)$ cannot be swiveled at $f'$.  If $f$ is internal, then $\vecM(f)$ contains those edges $e$ in the boundary of $f$ such that $f$ is directly downstream from $e$.  Consulting to Figure \ref{fig: swiveling}, we see that swiveling $\vecM(f)$ at $f$ is always increasing for $\preceq$.  Hence, $\vecM(f)$ is minimal for $\preceq$.
\end{proof}

%Since $\vecM(f)$ contains less than half the edges in the boundary of any internal $f'\neq f$, the only face of $\vecM(f)$ which can be swiveled is $f$ itself.  If $f$ is a boundary face, the matching$\vecM(f)$ cannot be swiveled at all, which implies the following.

A corollary of this result is an alternate proof of Proposition \ref{prop: boundaryunique}.

\begin{corollary} \label{boundary unique}
For a boundary face $f$, $\vecM(f)$ is the unique matching of $G$ with boundary $\sI(f)$.
\end{corollary}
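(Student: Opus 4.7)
The plan is to use the lattice structure of Theorem~\ref{lattice} together with the count of boundary edges of $\vecM(f)$ around internal faces given in Theorem~\ref{thm: minmatchprop}. The preceding proposition already shows that $\vecM(f)$ is the $\preceq$-minimal matching with boundary $\sI(f)$. It therefore suffices to show that, when $f$ is a boundary face, $\vecM(f)$ is also $\preceq$-maximal: then the lattice of matchings with boundary $\sI(f)$ has a unique minimum which coincides with its unique maximum, so it consists of the single element $\vecM(f)$.

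To prove maximality, I would check that $\vecM(f)$ admits no upward swivel. Upward swivels occur only at internal faces $f'$, and a necessary condition to swivel at $f'$ is that $\vecM(f)$ already contains half of the edges on the boundary of $f'$ (the ``up'' half, in the orientation of Figure~\ref{fig: swiveling}). By Theorem~\ref{thm: minmatchprop}, the number of edges in $\vecM(f)$ bounding an internal face $f'$ with $\partial_{f'e}=1$ is $B_{f'}$ if $f'=f$ and $B_{f'}-1 = \tfrac12 \#\partial f'-1$ otherwise. Since $f$ is assumed to be a boundary face, we have $f'\neq f$ for every internal face $f'$, so $\vecM(f)$ contains strictly fewer than half the boundary edges of every internal face. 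Hence no upward swivel is possible at any internal face, and $\vecM(f)$ is $\preceq$-maximal.

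Combining this with the fact that $\vecM(f)$ is $\preceq$-minimal, Corollary~\ref{max and min} (unique maximum equals unique minimum) forces the lattice of matchings with boundary $\sI(f)$ to be a singleton, giving uniqueness. No step here looks obstructed: the only thing to verify carefully is the case analysis of Figure~\ref{fig: swiveling} to confirm that the ``half'' count really is the prerequisite for an upward swivel, and that is a routine inspection.
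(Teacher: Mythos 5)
Your proof is correct and follows essentially the same strategy as the paper's: both hinge on the count from Theorem~\ref{thm: minmatchprop} showing that $\vecM(f)$ contains strictly fewer than half the boundary edges of every internal face (since $f$, being a boundary face, is never an internal face), so no swivel is possible. The paper states the conclusion a bit more directly — no swivel at all is available, so since all matchings with the same boundary are connected by swivels, $\vecM(f)$ is unique — whereas you route through the lattice structure by showing $\vecM(f)$ is simultaneously $\preceq$-minimal (from the preceding proposition) and $\preceq$-maximal (no upward swivel), which forces the lattice to be a singleton; both arguments use the same facts and are equally valid.
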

\begin{proof}
The matching $\vecM(f)$ does not contain enough edges around any internal face to swivel.  Since any matching with boundary $\sI(f)$ is obtained from a sequences of swivels of $\vecM(f)$, it is unique.
\end{proof}

%\begin{cor}
%With the above notations, $D_{\vec{I}(f)}$ is a character of $\mathbb{G}_m^E/\mathbb{G}_m^{V-1}$, and hence nowhere zero.
%In other words, the Pl\"ucker coordinate $p_{\vec{I}(f)}$ is nonzero on $\tilde{D}(\mathbb{G}_m^E/\mathbb{G}_m^{V-1})$.
%\end{cor}
%
%\begin{prop} \label{in open cell}
%The boundary measurement map $\DD:\mathbb{G}_m^E/\mathbb{G}_m^{V-1}\rightarrow \tPi(\cM)$ lands inside the open positroid variety $\tPio(\cM)$.
%\end{prop}
%
%\begin{proof}
%We already know that the boundary measurement map lands in the closed variety $\tPi(\cM)$.
%
%Recall that the labels $\vec{f}$ of the boundary faces are the elements of the Grassman necklace $(I_1, I_2, \ldots, I_n)$. 
%The open positroid variety $\tPio(\cM)$ is the intersection of the cones on the permuted open Schubert cells for the $I_a$. (CITATION)
%The nonvanishing of $p_{I_a}$ is exactly what picks out the open Schubert cell for $I_a$ from the closed Schubert variety.
%\end{proof}

\bibliography{MyNewBib}{}

\newcommand{\etalchar}[1]{$^{#1}$}
\def\cprime{$'$} \def\cprime{$'$} \def\cprime{$'$} \def\cprime{$'$}
  \def\cprime{$'$} \def\cprime{$'$}
\begin{thebibliography}{AHBC{\etalchar{+}}12}

\bibitem[AHBC{\etalchar{+}}12]{AHBCGPT12}
Nima Arkani-Hamed, Jacob~L. Bourjaily, Freddy Cachazo, Alexander~B. Goncharov,
  Alexander Postnikov, and Jaroslav Trnka.
\newblock Scattering amplitudes and the positive grassmannian.
\newblock {\em arXiv preprint}, 12 2012, 1212.5605.

\bibitem[ARW13]{ARW13}
Federico Ardila, Felipe Rinc{\'o}n, and Lauren Williams.
\newblock Positively oriented matroids are realizable.
\newblock {\em arXiv preprint}, 10 2013, 1310.4159.

\bibitem[BFZ96]{BFZ96}
Arkady Berenstein, Sergey Fomin, and Andrei Zelevinsky.
\newblock Parametrizations of canonical bases and totally positive matrices.
\newblock {\em Adv. Math.}, 122(1):49--149, 1996.

\bibitem[Ciu98]{Ciu98}
Mihai Ciucu.
\newblock A complementation theorem for perfect matchings of graphs having a
  cellular completion.
\newblock {\em J. Combin. Theory Ser. A}, 81(1):34--68, 1998.

\bibitem[CP03]{CP03}
G.~Carroll and G~Price.
\newblock Two new combinatorial models for the {P}tolemy recurrence.
\newblock unpublished memo, 2003.

\bibitem[FG06]{FG06}
Vladimir Fock and Alexander Goncharov.
\newblock Moduli spaces of local systems and higher {T}eichm\"uller theory.
\newblock {\em Publ. Math. Inst. Hautes \'Etudes Sci.}, (103):1--211, 2006.

\bibitem[FG09]{FG09}
Vladimir~V. Fock and Alexander~B. Goncharov.
\newblock Cluster ensembles, quantization and the dilogarithm.
\newblock {\em Ann. Sci. {\'E}c. Norm. Sup{\'e}r. (4)}, 42(6):865--930, 2009.

\bibitem[Fra15]{Fra15}
Chris Fraser.
\newblock Quasi-homomorphisms of cluster algebras.
\newblock {\em arXiv preprint}, 09 2015, 1509.05385.

\bibitem[FZ99]{FZ99}
Sergey Fomin and Andrei Zelevinsky.
\newblock Double {B}ruhat cells and total positivity.
\newblock {\em J. Amer. Math. Soc.}, 12(2):335--380, 1999.

\bibitem[GGMS87]{GGMS87}
I.~M. Gel{\cprime}fand, R.~M. Goresky, R.~D. MacPherson, and V.~V. Serganova.
\newblock Combinatorial geometries, convex polyhedra, and {S}chubert cells.
\newblock {\em Adv. in Math.}, 63(3):301--316, 1987.

\bibitem[GHK15]{GHK15}
Mark Gross, Paul Hacking, and Sean Keel.
\newblock Mirror symmetry for log {C}alabi-{Y}au surfaces {I}.
\newblock {\em Publ. Math. Inst. Hautes {\'E}tudes Sci.}, 122:65--168, 2015.

\bibitem[GHKK14]{GHKK}
Mark Gross, Paul Hacking, Sean Keel, and Maxim Kontsevich.
\newblock Canonical bases for cluster algebras.
\newblock {\em preprint, arxiv:1411.1394}, 11 2014, 1411.1394.

\bibitem[GK13]{GK13}
Alexander~B. Goncharov and Richard Kenyon.
\newblock Dimers and cluster integrable systems.
\newblock {\em Ann. Sci. {\'E}c. Norm. Sup{\'e}r. (4)}, 46(5):747--813, 2013.

\bibitem[KLS09]{KLS09}
Allen Knutson, Thomas Lam, and David~E Speyer.
\newblock Positroid varieties {I}: juggling and geometry.
\newblock {\em arXiv preprint}, 03 2009, 0903.3694.

\bibitem[KLS13]{KLS13}
Allen Knutson, Thomas Lam, and David~E. Speyer.
\newblock Positroid varieties: juggling and geometry.
\newblock {\em Compos. Math.}, 149(10):1710--1752, 2013.

\bibitem[Kuo04]{Kuo04}
Eric~H. Kuo.
\newblock Applications of graphical condensation for enumerating matchings and
  tilings.
\newblock {\em Theoret. Comput. Sci.}, 319(1-3):29--57, 2004.

\bibitem[Lam16]{Lam16}
Thomas Lam.
\newblock Totally nonnegative {G}rassmannian and {G}rassmann polytopes.
\newblock In {\em Current developments in mathematics 2014}, pages 51--152.
  Int. Press, Somerville, MA, 2016.

\bibitem[Lec14]{Lec14}
Bernard Leclerc.
\newblock Cluster structures on strata of flag varieties.
\newblock {\em arXiv preprint}, 02 2014, 1402.4435.

\bibitem[Lus98]{Lus98}
G.~Lusztig.
\newblock Total positivity in partial flag manifolds.
\newblock {\em Represent. Theory}, 2:70--78, 1998.

\bibitem[Mn{\"e}88]{Mnev88}
N.~E. Mn{\"e}v.
\newblock The universality theorems on the classification problem of
  configuration varieties and convex polytopes varieties.
\newblock In {\em Topology and geometry---{R}ohlin {S}eminar}, volume 1346 of
  {\em Lecture Notes in Math.}, pages 527--543. Springer, Berlin, 1988.

\bibitem[MS14]{MS14}
Greg Muller and David~E Speyer.
\newblock Cluster algebras of grassmannians are locally acyclic.
\newblock {\em to appear, Proceedings of the AMS}, 01 2014, 1401.5137.

\bibitem[MS16]{MS16}
R.~J. Marsh and J.~S. Scott.
\newblock Twists of {P}l{\"u}cker coordinates as dimer partition functions.
\newblock {\em Comm. Math. Phys.}, 341(3):821--884, 2016.

\bibitem[Mus11]{Mus11}
Gregg Musiker.
\newblock A graph theoretic expansion formula for cluster algebras of classical
  type.
\newblock {\em Ann. Comb.}, 15(1):147--184, 2011.

\bibitem[Oh11]{Oh11}
Suho Oh.
\newblock Positroids and {S}chubert matroids.
\newblock {\em J. Combin. Theory Ser. A}, 118(8):2426--2435, 2011.

\bibitem[OPS15]{OPS15}
Suho Oh, Alexander Postnikov, and David~E. Speyer.
\newblock Weak separation and plabic graphs.
\newblock {\em Proc. Lond. Math. Soc. (3)}, 110(3):721--754, 2015.

\bibitem[OS14]{OS14}
SuHo Oh and David~E Speyer.
\newblock Links in the complex of weakly separated collections.
\newblock {\em arXiv preprint}, 05 2014, 1405.5191.

\bibitem[Pos06]{Pos}
Alexander Postnikov.
\newblock Total positivity, {G}rassmannians, and networks.
\newblock {\em arXiv preprint}, 2006, math/0609764.

\bibitem[Pro02]{Pro02}
James Propp.
\newblock Lattice structure for orientations of graphs.
\newblock arXiv preprint, 2002, math/0209005.

\bibitem[Pro03]{Pro03}
James Propp.
\newblock Generalized domino-shuffling.
\newblock {\em Theoret. Comput. Sci.}, 303(2-3):267--301, 2003.
\newblock Tilings of the plane.

\bibitem[PSW09]{PSW09}
Alexander Postnikov, David Speyer, and Lauren Williams.
\newblock Matching polytopes, toric geometry, and the totally non-negative
  {G}rassmannian.
\newblock {\em J. Algebraic Combin.}, 30(2):173--191, 2009.

\bibitem[Rie06]{Rie06}
K.~Rietsch.
\newblock Closure relations for totally nonnegative cells in {$G/P$}.
\newblock {\em Math. Res. Lett.}, 13(5-6):775--786, 2006.

\bibitem[Sco01]{ScoThesis}
J.S Scott.
\newblock {\em Grassmannians and cluster algebras}.
\newblock PhD thesis, Northeastern University, 2001.

\bibitem[Sco06]{Sco06}
J.~S. Scott.
\newblock Grassmannians and cluster algebras.
\newblock {\em Proc. London Math. Soc. (3)}, 92(2):345--380, 2006.

\bibitem[Spe07]{Spe07}
David~E. Speyer.
\newblock Perfect matchings and the octahedron recurrence.
\newblock {\em J. Algebraic Combin.}, 25(3):309--348, 2007.

\bibitem[Spe15]{Spe15}
David~E. Speyer.
\newblock Variations on a theme of {K}asteleyn, with application to the totally
  nonnegative grassmannian.
\newblock to appear in {E}lec. {J}our. of {C}omb., 10 2015, 1510.03501.

\bibitem[Stu87]{Stu87}
Bernd Sturmfels.
\newblock On the decidability of {D}iophantine problems in combinatorial
  geometry.
\newblock {\em Bull. Amer. Math. Soc. (N.S.)}, 17(1):121--124, 1987.

\bibitem[Tal08]{Tal08}
Kelli Talaska.
\newblock A formula for {P}l{\"u}cker coordinates associated with a planar
  network.
\newblock {\em Int. Math. Res. Not. IMRN}, pages Art. ID rnn 081, 19, 2008.

\bibitem[Tal11]{Tal11}
Kelli Talaska.
\newblock Combinatorial formulas for {$\Gamma$}-coordinates in a totally
  nonnegative {G}rassmannian.
\newblock {\em J. Combin. Theory Ser. A}, 118(1):58--66, 2011.

\end{thebibliography}
\bibliographystyle{halpha}

\end{document}